\theoremstyle{plain}
\patchcmd\Gread@eps{\@inputcheck#1 }{\@inputcheck"#1"\relax}{}{}
\numberwithin{equation}{section}
\newtheorem{theorem}{Theorem}[section]
\newtheorem{proposition}[theorem]{Proposition}
\newtheorem{lemma}[theorem]{Lemma}
\newtheorem{corollary}[theorem]{Corollary}
\theoremstyle{definition}
\newcommand{\appsection}[1]{\let\oldthesection\thesection
\renewcommand{\thesection}{Appendix \oldthesection}
\section{#1}\let\thesection\oldthesection}
\newtheorem{definition}[theorem]{Definition}
\theoremstyle{remark}
\newtheorem{remark}[theorem]{Remark}
\newtheorem{example}[theorem]{Example}
\DeclareMathOperator{\spec}{Spec}
\def\Z{{\mathbb{Z}}}
\def\Q{{\mathbb{Q}}}
\def\C{{\mathbb{C}}}
\def\P{{\mathbb{P}}}
\def\A{{\mathbb{A}}}
\def\O{{\mathcal{O}}}
\def\L{{\mathcal{L}}}
\def\SS{{\mathcal{S}}}
\begin{document}
\bibliographystyle{amsplain}
\title{Characterization of Koll\'ar surfaces}
\author{\textrm{Giancarlo Urz\'ua and Jos\'e Ignacio Y\'a\~nez}}

\address{Facultad de Matem\'aticas\\ Pontificia Universidad Cat\'olica de Chile\\ Campus San Joaqu\'in\\ Avenida Vicu\~na Mackenna 4860\\ Santiago\\ Chile.}

%\address{Current address: Department of Mathematics and Statistics, University of Massachusetts, 710 N. Pleasant Street, Amherst, MA 01003-9305, USA.}

\email{urzua@mat.puc.cl}
\email{jgyanez@uc.cl}

%
%\subjclass[2010]{ (primary), (secondary)}
%
%\keywords{}
%
%\date{\today}

\begin{abstract}
Koll\'ar introduced in \cite{Ko08} the surfaces $$(x_1^{a_1}x_2+x_2^{a_2}x_3+x_3^{a_3}x_4+x_4^{a_4}x_1=0)\subset \P(w_1,w_2,w_3,w_4)$$ where $w_i=W_i/w^*$, $W_i=a_{i+1}a_{i+2}a_{i+3}-a_{i+2}a_{i+3}+a_{i+3}-1$, and $w^*=$gcd$(W_1,\ldots,W_4)$. The aim was to give many interesting examples of $\Q$-homology projective planes. They occur when $w^*=1$. For that case, we prove that Koll\'ar surfaces are Hwang-Keum \cite{HK12} surfaces. For $w^*>1$, we construct a geometrically explicit birational map between Koll\'ar surfaces and cyclic covers $z^{w^*}=l_1^{a_2 a_3 a_4} l_2^{-a_3 a_4} l_3^{a_4} l_4^{-1}$, where $\{l_1,l_2,l_3,l_4\}$ are four general lines in $\P^2$. In addition, by using various properties on classical Dedekind sums, we prove that:

\begin{itemize}
%\item[a)] Any cyclic cover totally branch along four general lines is realizable as Koll\'ar surface in an infinite number of ways.

%\item[b)] Any $p_g$ is realizable by some Koll\'ar surface, and that given $m>0$ there exists an $N$ such that $p_g >m$ if $w^*>N$.

\item[(a)] For any $w^*>1$, we have $p_g=0$ iff the Koll\'ar surface is rational. This happens when $a_{i+1} \equiv 1$ or $a_{i}a_{i+1} \equiv -1 ($mod $w^*)$ for some $i$.

\item[(b)] For any $w^*>1$, we have $p_g=1$ iff the Koll\'ar surface is birational to a K3 surface. We classify this situation.

\item[(c)] For $w^*>>0$, we have that the smooth minimal model $S$ of a generic Koll\'ar surface is of general type with $K_{S}^2/e(S) \to 1$.
\end{itemize}
\end{abstract}
\maketitle
\tableofcontents
%----------------------------------------------------------------------------------------------------------------------------------------------
\section{Introduction} \label{s0}

The ground field is $\C$. Let $n \geq 3$ be an integer, and let $a_1,\ldots, a_n$ be positive integers such that there is no $(a_i,a_{i+2},\ldots,a_{i+n-2}) = (1,\ldots,1)$ when $n$ is even. The indices are and will be taken modulo $n$. For every $1 \leq i \leq n$, we define the positive integers $$W_i := \sum_{j=1}^n (-1)^{j-1} \prod_{l=i+j}^{i+n-1} a_l \ \ \ \text{and} \ \ \ D:= \prod_{l=1}^n a_l + (-1)^{n-1}.$$ For example, for $n=4$ we have $$W_i=a_{i+1}a_{i+2}a_{i+3}-a_{i+2}a_{i+3}+a_{i+3}-1 \ \ \ \text{and} \ \ \ D= a_1 a_2 a_3 a_4-1.$$ We also define $$w^*:= gcd(W_1,\ldots,W_n).$$ Then $w^*=gcd(W_i,W_{i+1})=gcd(W_i,D)$ since $a_i W_i + W_{i+1}=D$ for all $i$.

Set $$w_i:= \frac{W_i}{w^*} \ \ \ \text{and} \ \ \ d:= \frac{D}{w^*}.$$ Notice that $gcd(a_i,w^*)=1$ for all $i$.

The \textit{Koll\'ar hypersurface} \cite{Ko08} of type $(a_1,\ldots,a_n)$ is $$X(a_1,\ldots,a_n):=(x_1^{a_1}x_2+x_2^{a_2}x_3+\ldots+x_n^{a_n}x_1=0)\subset \P(w_1,\ldots,w_n)$$

Let $0<\mu_i<w^*$ be such that $\mu_i \equiv (-1)^{i+1} \prod_{l=i+1}^{i+n-1} a_l \, ($mod $w^*)$. We consider the normal projective variety $Y'$ given by the $w^*$-th root cover $Y' \to \P^{n-2}=\{y_1+\ldots+y_n=0\} \subset \P^{n-1}$ branch along $\{y_1^{\mu_1} \cdots y_n^{\mu_n}=0 \}$; see Section \ref{s1} for precise definitions. The map $\psi$ associated to the linear system $|x_1^{a_1}x_2,\ldots,x_n^{a_n}x_1|$ in the Koll\'ar hypersurface shows that the varieties $X(a_1,\ldots,a_n)$ and $Y'$ are birational; this is worked out in Section \ref{s1}.

In this paper we consider in detail the case $n=4$; the surface $X=X(a_1,\ldots,a_4)$ will be called \textit{Koll\'ar surface}. First, we note that Koll\'ar surfaces are birational to infinitely many Koll\'ar surfaces with gcd$(w_i,w_{i+2})=1$ and $a_i>1$ (see Theorem \ref{gcd}), and so we assume these numerical conditions to simplify the exposition. Section \ref{s2} is devoted to prove the following.

\begin{theorem}
There is a configuration $\Gamma$ of 6 rational curves in $X$ such that if $\hat{X} \to X$ is a log resolution of $(X,\Gamma)$, then $\hat{X} \to X \stackrel{\psi}{\dashrightarrow} \P^2$ is a morphism which factors through $Y' \to \P^2$ via a birational morphism $\hat{X} \to Y'$.
\end{theorem}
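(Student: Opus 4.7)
My plan is to take $\Gamma$ to be the six rational curves cut out on $X$ by the four coordinate hyperplanes $\{x_i=0\}$ of the ambient weighted projective space, construct a log resolution $\hat X\to X$ by explicit toric blowups, and then invoke normalization to obtain the factorization through $Y'$.

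A direct computation gives $X\cap\{x_i=0\}=E_{i^\ast}\cup C_i$, where $E_1:=\{x_1=x_3=0\}\subset X$ and $E_2:=\{x_2=x_4=0\}\subset X$ are the two ``diagonal'' lines (each a copy of $\mathbb P^1$), $i^\ast\in\{1,2\}$ records which diagonal sits in $\{x_i=0\}$, and $C_i$ is the residual curve, cut out by $x_i=0$ together with $x_{i+1}^{a_{i+1}}+x_{i+2}^{a_{i+2}-1}x_{i+3}=0$. An explicit rational parametrization shows each $C_i$ is rational. Set $\Gamma:=E_1+E_2+C_1+C_2+C_3+C_4$. With $y_i:=x_i^{a_i}x_{i+1}$, the base locus of $\psi$ on $X$ is exactly $E_1\cup E_2$ (which contains all four vertices of $\mathbb P(w_1,\dots,w_4)$); on each $C_i$, two of the $y_j$ vanish identically and the Koll\'ar relation $\sum_j y_j=0$ forces the remaining two to be proportional, so $\psi$ contracts $C_i$ to a point $P_i\in\mathbb P^2$.

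Next, I will construct $\hat X\to X$ in two stages: first resolve the cyclic quotient singularities of $X$ at the four vertices of $\mathbb P(w_1,\dots,w_4)$ (all of which lie on $E_1\cup E_2$) via the standard Hirzebruch--Jung procedure, then iteratively blow up the strict transforms of $E_1,E_2$ together with the infinitely near indeterminacy points of $\psi$ until the total transform of $\Gamma$ is simple normal crossings and no base points of $\psi$ remain. In each toric chart of $\hat X$, the four monomials $y_i$ share a common monomial factor that can be cancelled, after which they no longer vanish simultaneously, which upgrades $\psi$ to a morphism $\tilde\psi:\hat X\to\mathbb P^2$. Finally, by Section \ref{s1} the function fields $k(\hat X)=k(X)$ and $k(Y')$ agree as extensions of $k(\mathbb P^2)$; since $Y'\to\mathbb P^2$ is the normalization of $\mathbb P^2$ in $k(Y')$ and $\hat X$ is normal, the universal property of normalization produces a unique morphism $\hat X\to Y'$ over $\mathbb P^2$, necessarily birational.

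The main obstacle is the explicit construction of the log resolution: the weighted blowups must be chosen so that the Hirzebruch--Jung strings resolving the vertex singularities of $X$ mesh correctly with the subsequent blowups along $E_1$ and $E_2$, and one must verify chart-by-chart that the common monomial factor extracted from the $y_i$ really does eliminate all remaining base points. This is where the interplay between the weights $w_i$ and the exponents $a_i$ becomes essential, and it is the computationally delicate core of the argument.
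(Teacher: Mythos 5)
You have chosen the same configuration $\Gamma$ as the paper (your $E_1,E_2$ are its $C_1,C_2$, and your residual curves $C_i$ are its $\Gamma_{i,i+1}$), and your final step --- obtaining $\hat X\to Y'$ from the universal property of the normalization of $\P^2$ in $k(Y')$, using that $\hat X$ is normal and carries a morphism to $\P^2$ inducing the degree-$w^*$ extension of Proposition \ref{cyclic} --- is a clean and valid alternative to the paper's argument, which instead contracts six explicit chains and invokes uniqueness of the minimal resolution of the singularities of $Y'$. However, the middle of your argument has a concrete error and a genuine gap. The error: the indeterminacy locus of $\psi$ on $X$ is not $E_1\cup E_2$. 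Although all four monomials $y_j=x_j^{a_j}x_{j+1}$ vanish identically on these two curves, after clearing a common factor using the equation of $X$ the map extends across $E_1\setminus\{p_2,p_4\}$ and $E_2\setminus\{p_1,p_3\}$ and \emph{contracts} them to $(0:1:0:-1)$ and $(1:0:-1:0)$ (Proposition \ref{imageCurves}); the map is undefined only at the four vertices $p_i$. You are conflating the base locus of the linear system with the indeterminacy locus of the rational map, and as a result your proposed resolution procedure (``blow up the strict transforms of $E_1,E_2$'') is not the right operation: all of the work happens over the four singular points, and one also needs to know that the six curves of $\Gamma$ are contracted to the six nodes of $L_1+L_2+L_3+L_4$.

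The gap: the core assertion --- that the log resolution of $(X,\Gamma)$ is precisely enough to make $\psi$ a morphism --- is exactly the step you defer, and ``cancel a common monomial factor chart by chart'' is not by itself a proof that the cancellation removes every base point. The mechanism is as follows (stated for $\Gamma_{3,4}$ over $p_4$): in the toric chart $U_j$ of the Hirzebruch--Jung resolution, which monomial can be factored out of $\psi\circ\sigma$ is governed by the sign of $a_2\alpha_{4,j}-(a_3-1)\beta_{4,j}$, and a residual base point survives at the origin of $U_j$ exactly when this quantity changes sign strictly across the chart, which happens exactly when the strict transform of $\Gamma_{3,4}$ fails to meet the exceptional divisor transversally in a single component (Theorem \ref{definedTransversally}). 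Transversality after the log resolution is in turn forced by the intersection-number identities $a_3-1=\alpha_{4,j}m_j+\alpha_{4,j+1}m_{j+1}$ and $a_2=\beta_{4,j}m_j+\beta_{4,j+1}m_{j+1}$ of Proposition \ref{intersectionCurves} together with the unibranch property of $\Gamma_{3,4}$ at $p_4$. Without an argument identifying the residual indeterminacy of $\psi$ with the non-transversality of the strict transforms of the $\Gamma_{i,i+1}$, you have not shown that resolving $(X,\Gamma)$ --- as opposed to some other, possibly larger, sequence of blowups --- suffices, which is what the statement asserts.
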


The aim of Koll\'ar surfaces \cite{Ko08} was to give examples of \textit{$\Q$-homology projective planes} ($\Q$HPP) with ample canonical class. This occurs for $w^*=1$ after contracting $(x_1=x_3=0)$ and $(x_2=x_4=0)$ in $X$, when possible. This contraction gives a $\Q$HPP with two cyclic quotient singularities, and, when $a_i \geq 4$ for all $i$, the canonical class is ample. On the other hand, Hwang and Keum constructed in \cite{HK12} a series of examples of $\Q$HPP with ample canonical class and same singularities as Koll\'ar examples. In Section \ref{s3} we prove the following.

\begin{theorem}
Koll\'ar $\Q$-homology projective planes are Hwang-Keum surfaces.
\end{theorem}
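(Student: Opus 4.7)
The strategy is to realize both families as the same projective surface by matching their weighted projective data, defining equations, and cyclic quotient singularities. First, I would make the Koll\'ar $\Q$HPP $Z$ concrete: assuming $w^{*}=1$ and $a_i\geq 4$, the divisors $C_{13}=(x_1=x_3=0)$ and $C_{24}=(x_2=x_4=0)$ in $X=X(a_1,a_2,a_3,a_4)\subset \P(w_1,w_2,w_3,w_4)$ are rational with negative self-intersection and get contracted to two cyclic quotient singularities of type $\frac{1}{n_k}(1,q_k)$, $k=1,2$, on $Z$. Using the weighted embedding, the equation $\sum x_i^{a_i}x_{i+1}=0$, and the fact that $\gcd(w_i,w_{i+2})=1$ (which we may assume by Theorem \ref{gcd}), I would compute $(n_k,q_k)$ explicitly as functions of $(a_1,\ldots,a_4)$, for instance via the local toric description near the coordinate points $p_i \in X$.

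Next, I would recall from \cite{HK12} the explicit construction of the Hwang-Keum $\Q$HPPs: they are also given as weighted projective hypersurfaces (or finite quotients of such) with ample canonical class, carrying two cyclic quotient singularities of type $\frac{1}{m_k}(1,p_k)$ parameterized by analogous combinatorial data. The core step is then to exhibit a parameter assignment in the Hwang-Keum input producing the same quadruple of weights $(w_1,\ldots,w_4)$ and the same $\frac{1}{n_k}(1,q_k)$ singularities as those computed for $Z$. Once parameters are aligned, a direct change of variables in weighted $\P^3$ should identify the two defining equations up to a scalar rescaling of the $x_i$, yielding an isomorphism of the two projective surfaces.

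The main obstacle will be the combinatorial bookkeeping: the singularity types $(n_k,q_k)$ depend on the Hirzebruch–Jung continued fraction expansions attached to the weights, and the parameter dictionary between the Koll\'ar input $(a_1,a_2,a_3,a_4)$ and the Hwang-Keum input must be set up so that these expansions agree on both sides (possibly after a cyclic relabeling of indices). Once this matching is performed, the isomorphism either follows by comparing the two hypersurface equations directly, or, failing that, by invoking a uniqueness result for $\Q$HPPs of Picard rank one with the prescribed singularities. I expect the first, more explicit route to succeed since both constructions are rigid enough: with matching weights and matching singular analytic types on a $\Q$HPP with $b_2=1$, the equation is essentially forced up to a scalar, completing the identification.
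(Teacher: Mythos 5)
Your plan rests on two premises that fail. First, the Hwang--Keum surfaces are not weighted projective hypersurfaces: they are built by blowing up $\P^2$ along a configuration of four general lines (four of the six nodes blown up twice, then $a_i-2$ further blowups at $E_i\cap L_i$) and contracting two chains of rational curves. There is no quadruple of weights and no defining equation on the Hwang--Keum side to match against $(w_1,\ldots,w_4)$ and $\sum x_i^{a_i}x_{i+1}=0$, so the ``parameter dictionary plus change of variables in weighted $\P^3$'' step has nothing to attach to. Second, your fallback --- that a $\Q$HPP of Picard rank one with prescribed weights and analytic singularity types is ``essentially forced'' --- is not a theorem; the classification of $\Q$HPPs with ample canonical class and cyclic quotient singularities is open (this is exactly why the statement is nontrivial), and matching the two singular points of type $\frac{1}{s_1}(w_2,w_4)$ and $\frac{1}{s_2}(w_1,w_3)$ does not by itself produce an isomorphism. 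A further, more minor issue: the singularities of the contracted surface $X'$ do not arise from the local toric charts at the coordinate points $p_i$ alone; they come from contracting $C_1$ and $C_2$, each of which passes through two of the $p_i$, so their types ($s_1=a_4w_4-w_3$, etc., with continued fractions $[2,\ldots,2,a_3,a_1,2,\ldots,2]$) have to be assembled from the resolution data along the whole curve.

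The paper's actual mechanism, which your proposal is missing, is the degree-one map $\psi\colon X\dashrightarrow\P^2$ given by the linear system $|x_1^{a_1}x_2,\ldots,x_4^{a_4}x_1|$ when $w^*=1$. One shows $\psi\circ\sigma$ is a birational \emph{morphism} from the minimal resolution $\tilde X$ to $\P^2$, sending the proper transforms of the curves $\Gamma_{i,i+1}$ (equivalently, the $-a_i$-curves in the exceptional chains) onto the four lines, and then verifies that $C_1'$, $C_2'$ and the $\Gamma_{i,i+1}'$ are all $(-1)$-curves whose contraction pattern reproduces exactly the Hwang--Keum blowup sequence. This identifies $\tilde{X'}$ with $Z(a_1,a_2,a_3,a_4)$ as the same iterated blowup of $\P^2$, and the isomorphism $X'\simeq T$ follows by contracting the same two chains on both sides. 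Without an explicit birational morphism to $\P^2$ of this kind, comparing invariants and singularities cannot close the argument.
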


As an intriguing problem, we point out that $\Q$HPP with ample canonical class and cyclic quotient singularities have not yet been classified. The number of possible singularities is at most four, and examples with one, two, and three singularities have been constructed. It is conjectured that the case of four singularities is impossible; see \cite{Ko08,HK12}.

In Section \ref{s4} we write down formulas for the invariants of Koll\'ar surfaces via $Y'$ when $w^*>1$. Particularly interesting is the geometric genus, which depends on classical Dedekind sums on the exponents $a_i$'s. For example, by comparing the two models $X$ and $Y'$, we write down an identity for Dedekind sums in Corollary \ref{identity}. More importantly, in Section \ref{s5} we use new bounds on their values, essentially due to Girstmair \cite{Girs16}, to prove the following (see Theorem \ref{pg0}, Theorem \ref{pg1}, and Theorem \ref{generic}).

\begin{theorem}
For $w^*>1$, we have that

\begin{itemize}
%\item[a)] Any cyclic cover totally branch along four general lines is realizable as Koll\'ar surface in an infinite number of ways.

%\item[b)] Any $p_g$ is realizable by some Koll\'ar surface, and that given $m>0$ there exists an $N$ such that $p_g >m$ if $w^*>N$.

\item[(a)] $p_g=0$ if and only if the Koll\'ar surface is rational. This happens when $a_{i} \equiv 1$ or $a_{i}a_{i+1} \equiv -1$ modulo $w^*$ for some $i$.

\item[(b)] $p_g=1$ if and only if the Koll\'ar surface is birational to a K3 surface. We classify this situation in $8$ cases (see Table 1).

\item[(c)] For $w^*>>0$, the smooth minimal model $S$ of a generic Koll\'ar surface is of general type with $K_{S}^2/e(S) \to 1$, where $K_{S}$ is the canonical class, and $e(S)$ is the topological Euler characteristic.
\end{itemize}
\end{theorem}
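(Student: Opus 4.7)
My approach rests on the birational model $Y'\to\P^2$ of Section~\ref{s1}, the explicit formulas for $p_g(S)$, $K_{S}^2$, $e(S)$ derived in Section~\ref{s4} (with $S$ the smooth minimal model of a resolution of $Y'$), and sharp bounds on classical Dedekind sums due to Girstmair \cite{Girs16}. The starting observation is that $D\equiv 0\pmod{w^*}$ yields $a_1a_2a_3a_4\equiv 1\pmod{w^*}$, hence $\mu_i\equiv(-1)^{i+1}a_i^{-1}\pmod{w^*}$; this lets one translate freely between conditions on the $a_i$ and on the $\mu_i$. With the Section~\ref{s4} formulas in hand, each of (a)--(c) becomes essentially a question about Dedekind sums.

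For (a), the $p_g$ formula can be written as a sum of nonnegative integer lattice counts, one per nontrivial character of the cover. Vanishing of every term is a combinatorial condition on $(\mu_1,\ldots,\mu_4)\pmod{w^*}$; careful lattice-point analysis should reduce it to the stated conditions, namely that some $a_i\equiv 1$ or some $a_ia_{i+1}\equiv -1$ modulo $w^*$. The rationality direction is geometric: in each such case the cover equation $z^{w^*}=\prod l_j^{\mu_j}$ admits an explicit rational reduction, either by solving for $l_i$ when $\mu_i\equiv \pm 1$, or by absorbing $l_i^{\mu_i}l_{i+1}^{\mu_{i+1}}$ into a new $w^*$th root when $\mu_i+\mu_{i+1}\equiv 0$, producing a cover of $\P^2$ branched over at most two lines, hence rational.

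For (b), $p_g=1$ selects residue patterns where the $p_g$ sum equals exactly one. Girstmair's near-extremal bounds should bound the number of such patterns; a case analysis, which I expect to reproduce the eight entries of Table~1, exhausts them. In each case one resolves $Y'$ explicitly and verifies from the invariant formulas that $K_{S}^2=0$, $b_1(S)=0$, and $\kappa(S)=0$, so $S$ is a K3 surface.

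For (c), the Section~\ref{s4} expressions for $K_{S}^2$ and $e(S)$ decompose as a leading term linear in $w^*$ (coming from the branch divisor) plus Dedekind-sum corrections from the six singularities of $Y'$. Girstmair's average bounds force these corrections to be $o(w^*)$ for generic $(a_1,\ldots,a_4)\pmod{w^*}$, so $K_{S}^2$ and $e(S)$ share the same positive leading term, giving $K_{S}^2/e(S)\to 1$ and general type for $w^*\gg 0$. The main obstacle I anticipate is the enumeration in (b): completeness of the eight-case list requires a delicate near-extremal Dedekind-sum analysis, and identifying each surviving case as a K3 surface requires an explicit resolution computation case-by-case.
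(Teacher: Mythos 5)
Your overall architecture (work on the model $Y'$, use the Dedekind-sum formulas of Section \ref{s4}, invoke Girstmair's bounds, reduce (b) to a finite computer search, treat (c) asymptotically) is the same as the paper's, and your part (c) is essentially the argument given there via \cite{Urz15}. But two of the three hard steps are left as assertions, and one step as stated would fail. In (a), the difficult implication is $p_g=0\Rightarrow$ ($a_i\equiv 1$ or $a_ia_{i+1}\equiv -1$ for some $i$); you say a ``careful lattice-point analysis should reduce it to the stated conditions,'' which is precisely the content that needs proving. Your reformulation is correct and clean: $p_g$ equals the number of $1\le i\le w^*-1$ with $\sum_j\{\mu_j i/w^*\}=3$, so $p_g=0$ forces $\sum_j\{\mu_j i/w^*\}=2$ for \emph{every} $i$, and one must show this implies $\mu_i+\mu_j\equiv 0$ for some pair. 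That is a genuine combinatorial classification, not a routine check. The paper instead proves the contrapositive through the explicit inequalities of Lemma \ref{dedekindBound} and Corollary \ref{coroDedekind} followed by a lengthy case analysis; some argument of comparable substance is unavoidable here.

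Second, your rationality argument in (a) does not work as written. When $\mu_i+\mu_j\equiv 0$, ``absorbing $l_i^{\mu_i}l_j^{\mu_j}$ into a new $w^*$-th root'' only replaces $l_i^{\mu_i}l_j^{\mu_j}$ by $(l_i/l_j)^{\mu_i}$ times a $w^*$-th power; after the coordinate change the cover is still branched along four lines (the line $L_j$ has merely been moved to infinity, where it reappears with multiplicity $\equiv\mu_j$), not two. Likewise, when some $\mu_i\equiv\pm1$ you cannot ``solve for $l_i$,'' since $l_i$ is already a linear combination of the other three forms and is not an independent generator of the function field. The correct argument---and the paper's---is to pull back the pencil of lines through $L_i\cap L_j$: because $\mu_i+\mu_j\equiv 0$ the cover is unramified over the base point, so a general member pulls back to a $w^*$-fold cover of $\P^1$ totally branched at two points, giving a pencil of rational curves on $Y$ and hence rationality. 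Finally, in (b) you propose to verify ``$\kappa(S)=0$'' from the invariant formulas, but those formulas compute $K_Y^2$ and $e(Y)$, not the Kodaira dimension of the minimal model; the paper closes this gap by exhibiting, in each of the eight resolved configurations of Table 1, an explicit connected divisor $F$ with $F^2=0$, $p_a(F)=1$, together with a $(-2)$-curve meeting it once, which yields an elliptic fibration with a section and forces the minimal model to be K3. Without some such concrete certificate your case-by-case identification is incomplete.
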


Moreover we note that any $p_g$ is realizable by some Koll\'ar surface (Proposition \ref{anypg}), and that given $m>0$ there exists an $N$ such that $p_g >m$ if $w^*>N$ (Lemma \ref{pgBound}). At the end, we give explicit examples of Kodaira dimension $1$ elliptic fibrations (Example \ref{ex1}) and surfaces of general type (Example \ref{ex2}), arising as Koll\'ar surfaces for $w^*$ arbitrarily large.

\subsection*{Acknowledgements}
We would like to thank Kurt Girstmair for useful discussions on Dedekind sums. This is part of the Master's thesis of the second author at the Pontificia Universidad Cat\'olica de Chile. A large part of this paper was written while the authors were visiting the Department of Mathematics of the University of Massachusetts at Amherst. We are thankful for the hospitality. The authors were supported by the FONDECYT regular grant 1150068.

%-------------------------------------------------------------------------------------------------------------------
\section{Koll\'ar hypersurfaces} \label{s1}

Koll\'ar proves in \cite[Thm.39]{Ko08} the following.

\begin{theorem}
\mbox{}
\begin{itemize}
\item[(1)] The weighted projective space $\P(w_1,\ldots,w_n)$ is well formed, and its singular set has dimension $\leq [n/2]-1$.

\item[(2)] The hypersurface $X(a_1,\ldots,a_n)$ is quasi-smooth, and $\P(w_1,\ldots,w_n) \setminus X(a_1,\ldots,a_n)$ is smooth.

\item[(3)] If $w^*=1$, then $X(a_1,\ldots,a_n)$ is birational to $\P^{n-2}$.

\end{itemize}
\label{kollarTheorem}
\end{theorem}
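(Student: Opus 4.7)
The identity $a_i W_i + W_{i+1} = D$ gives $w^* = \gcd(W_i, W_{i+1})$, hence $\gcd(w_i, w_{i+1}) = 1$ for every $i$. Well-formedness is then immediate: any $n-1$ of the $w_i$'s include two cyclically consecutive indices and hence have gcd $1$. For the singular locus, the torus stratum with support $S \subseteq \{1,\ldots,n\}$ (i.e.\ $x_j \neq 0$ exactly for $j \in S$) is singular in $\P(w_1,\ldots,w_n)$ only when $\gcd(w_j : j \in S) > 1$. Coprimality of successive weights forces $S$ to be an independent set on the cyclic graph $C_n$, so $|S| \leq \lfloor n/2 \rfloor$; since the stratum has dimension $|S|-1$, the singular locus has dimension at most $\lfloor n/2 \rfloor - 1$.

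\textbf{Part (2).} With $f = \sum_i x_i^{a_i} x_{i+1}$, I apply the Jacobian criterion to the affine cone, using
$\partial f / \partial x_j = a_j x_j^{a_j-1} x_{j+1} + x_{j-1}^{a_{j-1}}$.
Assume all partials vanish. If some $x_{j_0} = 0$, then $\partial f / \partial x_{j_0} = x_{j_0-1}^{a_{j_0-1}} = 0$ forces $x_{j_0 - 1} = 0$; iterating around the cycle drives every coordinate to $0$ (the vertex). If instead all $x_j \neq 0$, the $n$ relations $a_j x_j^{a_j-1} x_{j+1} = -x_{j-1}^{a_{j-1}}$ combined with $f = 0$ must be shown to be inconsistent, and here the parity hypothesis excluding alternating all-ones $a_i$ when $n$ is even is used to rule out the degenerate solution. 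For the smoothness of $\P \setminus X$: any singular point of $\P$ lies on some $\{x_j = 0 : j \notin S\}$ with $S \subset C_n$ independent; then every monomial $x_i^{a_i} x_{i+1}$ vanishes on this locus (at least one of $i,i+1$ is outside $S$), so $f \equiv 0$ there and the whole singular stratum sits inside $X$.

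\textbf{Part (3).} The base-point-free linear system $|x_1^{a_1} x_2, \ldots, x_n^{a_n} x_1|$ defines $\psi : X \dashrightarrow \P^{n-1}$, and the equation of $X$ places its image inside the hyperplane $\{y_1 + \cdots + y_n = 0\} \cong \P^{n-2}$. To invert $\psi$ when $w^* = 1$, I look for integer exponents $(e_j)$ satisfying $a_j e_j + e_{j-1} = k w_j$ for all $j$ (for a common $k$): then $\prod_j y_j^{e_j}$ equals $\prod_i x_i^{k w_i}$, recovering the weighted-projective coordinates of a preimage from its image. This circular linear system is solvable over $\Z$ exactly when the gcd of a certain circulant determinant is $1$, which is a direct consequence of $w^* = 1$ via the structural relation $a_i W_i + W_{i+1} = D$.

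\textbf{Main obstacle.} The subtlest step is the nonvanishing case of the Jacobian analysis in (2): while zero-propagation around the cycle is mechanical, excluding critical points of $f$ with all $x_i \neq 0$ requires simultaneously manipulating the $n$ coupled algebraic equations and invoking the $n$-even parity hypothesis in the precise way that rules out the exceptional configuration $(a_i, a_{i+2},\ldots) = (1,\ldots,1)$.
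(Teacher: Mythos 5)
A preliminary remark: the paper does not prove this theorem at all --- it is quoted from Koll\'ar \cite[Thm.\ 39]{Ko08} --- and the only ingredient the paper itself supplies is Proposition \ref{cyclic}, which yields part (3) by showing that $\psi|_X$ induces a field extension of degree exactly $w^*$ (generated by $z=x_1^d/y_n^{w_1}$ with $z^{w^*}=f/y_n^{W_1}$), hence is birational onto $\P^{n-2}$ precisely when $w^*=1$. Your part (1) is correct and standard: $w^*=\gcd(W_i,W_{i+1})$ gives $\gcd(w_i,w_{i+1})=1$, which yields well-formedness, and a singular stratum must be supported on an independent set of the cycle $C_n$, hence has dimension at most $[n/2]-1$.

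Part (2) contains a genuine gap, and moreover you locate the role of the hypothesis in the wrong place. The backward propagation ``$x_{j_0}=0\Rightarrow x_{j_0-1}=0$'' uses that the term $a_{j_0}x_{j_0}^{a_{j_0}-1}x_{j_0+1}$ vanishes, which fails when $a_{j_0}=1$ (and the theorem is stated for arbitrary positive $a_i$; the reduction to $a_i\ge 2$ happens only later in the paper, for $n=4$). In fact zero-propagation can stall on an alternating pattern: at a point with $x_j=0$ for all odd $j$ and $x_j\ne 0$ for all even $j$ ($n$ even), the partials $\partial f/\partial x_j$ for $j$ even vanish automatically, and the odd ones become $x_{j-1}^{a_{j-1}}+a_jx_j^{a_j-1}x_{j+1}$, which can vanish exactly when $a_j=1$ for all odd $j$ --- this is precisely the configuration $(a_i,a_{i+2},\ldots)=(1,\ldots,1)$ that the hypothesis excludes. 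So the parity hypothesis is needed already in the ``some coordinate vanishes'' case, not only in the all-nonzero case. As for the all-nonzero case, which you leave as ``must be shown to be inconsistent'': multiplying $\partial f/\partial x_j=0$ by $x_j$ gives $a_jy_j=-y_{j-1}$ with $y_j:=x_j^{a_j}x_{j+1}\ne 0$, and going once around the cycle forces $a_1\cdots a_n=(-1)^n$, impossible for positive integers unless $n$ is even and every $a_i=1$, which is again excluded. Without these two computations the quasi-smoothness claim is not established.

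In part (3), the inversion is under-argued: a single monomial identity $\prod_j y_j^{e_j}=\prod_i x_i^{kw_i}$ does not recover the point $(x_1:\cdots:x_n)$; one must recover each coordinate, or equivalently compare function fields as Proposition \ref{cyclic} does. (Also, the linear system is not base-point-free --- see Proposition \ref{imageCurves}, which shows it is undefined exactly at the coordinate points.) Your circulant-determinant observation (the determinant of the system $a_je_j+e_{j-1}=kw_j$ is $\pm D$) is the same arithmetic underlying the paper's computation, but as written it is a sketch rather than a proof.
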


To prove (3) above, Koll\'ar uses the linear system $|x_1^{a_1}x_2,x_2^{a_2}x_3,\ldots,x_{n}^{a_n}x_1|$. In general, this linear system defines a rational map $$\psi \colon \P(w_1,\ldots,w_n) \dashrightarrow \P_{y_1,\ldots,y_n}^{n-1}$$ given by $y_i=x_i^{a_i} x_{i+1}$.

\begin{proposition}
The rational map $\psi$ defines the field extension $$\C(y_1/y_n,\ldots,y_{n-1}/y_n) \subset \C(y_1/y_n,\ldots,y_{n-1}/y_n)[z]/(z^{w^*}-f/y_n^{W_1})$$ where $z=x_1^d/y_n^{w_1}$ and $f=y_1^{a_2 a_3 \cdots a_n} y_2^{-a_3 \cdots a_n} y_3^{a_4 \cdots a_n} \cdots y_{n-1}^{(-1)^{n-2} a_n} y_n^{(-1)^{n-1}}$.
\label{cyclic}
\end{proposition}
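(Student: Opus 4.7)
The plan is to verify the algebraic relation $z^{w^*} = f/y_n^{W_1}$ by direct monomial manipulation, then compute the degree of $\psi|_X$ by a fiber count to show $[\C(X):\C(H)] = w^*$, and finally exhibit a Galois action of $\mu_{w^*}$ under which $z$ transforms faithfully by roots of unity to conclude that $z$ generates the extension.

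First I would establish the setup: dividing $a_iW_i + W_{i+1} = D$ by $w^*$ shows every $y_i = x_i^{a_i}x_{i+1}$ has the same weight $d$ in $\P(w_1,\ldots,w_n)$, so $\psi$ maps to $\P^{n-1}$; since $\sum y_i$ is the defining equation of $X$, the image of $\psi|_X$ lies in the hyperplane $H := \{\sum y_i = 0\} \cong \P^{n-2}$. The element $z = x_1^d/y_n^{w_1}$ has weighted degree zero, so $z \in \C(X)$ and $z^{w^*} = x_1^D/y_n^{W_1}$. The identity $x_1^D = f$ follows from iterating $y_i = x_i^{a_i}x_{i+1}$ cyclically: the exponent of $x_j$ in the monomial $f = \prod y_i^{e_i}$, where $e_i = (-1)^{i-1}\prod_{k=i+1}^n a_k$, comes out to $e_{j-1} + a_j e_j$, which vanishes for $j>1$ by the alternating structure and equals $a_1 e_1 + e_n = \prod a_k + (-1)^{n-1} = D$ for $j=1$. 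Since $\sum e_i = W_1$, the quotient $f/y_n^{W_1}$ is a Laurent monomial in the $y_i/y_n$, hence lies in $\C(H)$.

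The main obstacle is to show $[\C(X):\C(H)] = w^*$. I would count preimages under $\psi|_X$ of a generic point $[\bar y_1:\ldots:\bar y_n] \in H$: after normalizing the ambient scalar on $X$, a preimage corresponds to a solution $(x_1,\ldots,x_n) \in (\C^*)^n$ of the cyclic monomial system $x_i^{a_i}x_{i+1} = \bar y_i$, modulo the action of $\mu_d \subset \C^*$ through the weights $w_i$. The exponent matrix of this system (with $a_i$'s on the diagonal and $1$'s on the cyclic superdiagonal) has determinant $D$, giving $D$ torus solutions, and the $\mu_d$-action is free because $\gcd(w_1,\ldots,w_n) = 1$, producing $D/d = w^*$ orbits. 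To finish, a parallel analysis of the scalar automorphisms $(x_i)\mapsto(\lambda_i x_i)$ of $\P(w_1,\ldots,w_n)$ fixing every $y_i$ (i.e., $\lambda_i^{a_i}\lambda_{i+1}=1$, forcing $\lambda_1^D = 1$ by cyclic consistency) produces, after quotienting by the $\mu_d$ coming from the weighted $\C^*$-action (which injects into $\mu_D$ via $t\mapsto t^{w_1}$ because $\gcd(d,w_1) = \gcd(D,W_1)/w^* = 1$), a deck group $\mu_{w^*}$ acting on $z$ by $z\mapsto \lambda_1^d z$. Since the map $\lambda_1\mapsto\lambda_1^d$ on $\mu_D$ is surjective onto $\mu_{w^*}$ with kernel exactly $\mu_d$, this action sweeps out all $w^*$-th roots of unity, so $z$ has $w^*$ distinct Galois conjugates over $\C(H)$; its minimal polynomial is therefore $t^{w^*} - f/y_n^{W_1}$, yielding $\C(X) = \C(H)[z]/(z^{w^*} - f/y_n^{W_1})$.
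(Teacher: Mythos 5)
Your proposal is correct, and it fills in, rather than merely reproduces, the paper's rather terse argument. The paper works at the level of the affine cone: it asserts that $\psi$ induces the extension $\C(y_1,\ldots,y_n)\subset \C(y_1,\ldots,y_n)[x_1]/(x_1^D-f)$, passes to $\C^*$-invariants, observes that $z=x_1^d/y_n^{w_1}$ is invariant and satisfies $z^{w^*}=f/y_n^{W_1}$, and then concludes by invoking --- without proof --- that ``geometrically the map $\psi$ has degree $w^*$.'' You supply precisely the pieces left implicit there: the monomial verification that $x_1^D=f$ and $\sum_i e_i = W_1$ (so $f/y_n^{W_1}$ really lies in $\C(H)$), and a proof of the degree claim via the torus count (exponent matrix of determinant $D$, quotiented by a free $\mu_d$-action, freeness coming from $\gcd(w_1,\ldots,w_n)=1$). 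Your closing step is also organized differently: instead of identifying the $\C^*$-invariant subfield and appealing to the degree, you exhibit the deck group $\mu_D/\mu_d\cong\mu_{w^*}$ acting on $z$ through all $w^*$-th roots of unity, which simultaneously shows that $z$ generates $\C(X)$ over $\C(H)$ and that $z^{w^*}-f/y_n^{W_1}$ is irreducible --- a point needed for the displayed equality of fields but not addressed in the paper's one-line conclusion. The arithmetic inputs you rely on, $\gcd(d,w_1)=\gcd(D,W_1)/w^*=1$ and $\gcd(w_1,\ldots,w_n)=1$, are both correct consequences of the identities $a_iW_i+W_{i+1}=D$ and $w^*=\gcd(W_1,\ldots,W_n)$ recorded in the Introduction, so the argument is complete.
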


\begin{proof}
At the affine cover level, the field extension induced by $\psi$ is $$ \C(y_1,\ldots,y_n) \subset \C(y_1,\ldots,y_n)[x_1]/(x_1^D-f)$$ where the other variables $x_2, \ldots, x_n$ can be written using $y_1,\ldots,y_n,x_1$. The action of $\C^*$ compatible with the map is: Given $\lambda \in \C^*$, $y_i \mapsto \lambda^d y_i$ and $x_i \mapsto \lambda^{w_i} x_i$. Then the rational map $\psi$ is determined by $$\big(\C(y_1,\ldots,y_n)\big)^{\C^*} \subset \big(\C(y_1,\ldots,y_n)[x_1]/(x_1^D-f) \big)^{\C^*}.$$ Notice that $\big(\C(y_1,\ldots,y_n)\big)^{\C^*}=\C(y_1/y_n,\ldots,y_{n-1}/y_n)$, and that $z=x_1^d/y_n^{w_1}$ is a $\C^*$-invariant element such that $z^{w^*}-f/y_n^{W_1}=0$. Since geometrically the map $\psi$ has degree $w^*$, then $$\big(\C(y_1,\ldots,y_n)[x_1]/(x_1^D-f) \big)^{\C^*} = \C(y_1/y_n,\ldots,y_{n-1}/y_n)[z]/(z^{w^*}-f/y_n^{W_1}).$$
\end{proof}

\begin{corollary}
The corresponding restriction map $$\psi|_X \colon X(a_1,\ldots,a_n) \dashrightarrow \P^{n-2} =\{y_1+\ldots+y_n=0 \}$$ is cyclic of degree $w^*$ totally branch along $(y_1\cdots y_n=0) \subset \P^{n-2}$.
\end{corollary}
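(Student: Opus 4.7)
The plan is to deduce the corollary directly from Proposition \ref{cyclic} by restricting everything to $X$ on the source and to the hyperplane $\{y_1+\ldots+y_n=0\}$ on the target. The starting observation is that the substitution $y_i = x_i^{a_i}x_{i+1}$ turns the defining equation of $X$ into the linear relation $y_1+y_2+\ldots+y_n=0$, so $\psi(X)$ is contained in the hyperplane $\P^{n-2}=\{y_1+\ldots+y_n=0\}\subset\P^{n-1}$ featured in the statement.

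Next I would check that $\psi|_X$ is still generically finite of degree $w^*$. Fix a general point $p$ of $\P^{n-2}$ with $y_i(p)\neq 0$ for every $i$. By Proposition \ref{cyclic}, $p$ has exactly $w^*$ preimages in $\P(w_1,\ldots,w_n)$ under $\psi$, and for each preimage $(x_1,\ldots,x_n)$ one has $x_i^{a_i}x_{i+1}=y_i(p)$; summing gives $\sum_i x_i^{a_i}x_{i+1}=\sum_i y_i(p)=0$, so the preimage already lies on $X$. Hence all $w^*$ preimages come from $X$, and $\psi|_X$ is of degree $w^*$. The cyclic Kummer structure is then inherited from Proposition \ref{cyclic}: the element $z=x_1^d/y_n^{w_1}$ is a degree-$w^*$ Kummer generator satisfying $z^{w^*}=f/y_n^{W_1}$ over the function field of the hyperplane.

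For the branch locus I would read off the exponents of the $y_i$'s in the monomial $f/y_n^{W_1}$ modulo $w^*$. For $1\leq i\leq n-1$ the exponent is $(-1)^{i-1}a_{i+1}\cdots a_n$, which is coprime to $w^*$ because every $a_l$ is coprime to $w^*$, as recorded in the introduction. For $i=n$ the exponent is $(-1)^{n-1}-W_1\equiv(-1)^{n-1}\pmod{w^*}$ since $w^*\mid W_1$, again coprime to $w^*$. Standard Kummer theory then yields total ramification along each divisor $\{y_i=0\}$ and no further branching, which is exactly the claim.

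The only step requiring a genuine argument rather than bookkeeping is the degree verification in the middle paragraph; it hinges on the coincidence that the linear form $\sum y_i$ vanishes on the entire Galois orbit of any preimage, which itself falls out immediately from the identity $\sum_i x_i^{a_i}x_{i+1}=\sum_i y_i$. The remaining ingredients --- cyclicity, the value of the degree $w^*$, and the explicit Kummer equation --- are transported wholesale from Proposition \ref{cyclic}, while the coprimality $\gcd(a_l,w^*)=1$ guarantees that every coordinate hyperplane is in the ramification locus with full multiplicity.
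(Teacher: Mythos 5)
Your argument is correct and follows the route the paper intends: the corollary is stated as an immediate consequence of Proposition \ref{cyclic}, with the image landing in $\{y_1+\cdots+y_n=0\}$ because the defining equation of $X$ becomes that linear relation, and with total branching justified exactly as you do — the exponents of the $y_i$ in $f/y_n^{W_1}$ are (up to sign) products of the $a_l$, hence coprime to $w^*$ (this is the same observation the paper makes right afterwards via $\gcd(\mu_i,w^*)=1$). Your explicit fiber-counting verification that restricting to the hyperplane preserves the degree $w^*$ is a worthwhile detail the paper leaves implicit, but it is not a different method.
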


In this way, we can write down another normal projective model $Y'$ of $X(a_1,\ldots,a_n)$ using a $w^*$-th root cover as described in \cite{EV92}.

As in the introduction, let $0<\mu_i<w^*$ be such that $$\mu_i \equiv (-1)^{i+1} \prod_{l=i+1}^{i+n-1} a_l \, (\text{mod} \ \ w^*).$$ In $\P^{n-2} =\{y_1+\ldots+y_n=0 \}$, we write $L_i:= \{y_i=0\}$, and so $$ \O_{\P^{n-2}}(t)^{\otimes w^*} \simeq \O_{\P^{n-2}}( \mu_1 L_1 + \ldots+ \mu_n L_n),$$ where $t w^* = \sum_{i=1}^n \mu_i$. Then $$Y_0 :=\spec_{\P^{n-2}} \Big( \bigoplus_{i=0} ^{w^*-1} \O_{\P^{n-2}}(-ti) \Big) \to \P^{n-2}$$ is the cyclic cover given by $z^{w^*}-f/y_n^{W_1}$ above. We want to consider the normalization of $Y_0$. As in \cite{EV92}, we define the line bundles $\L^{(i)}$ on $\P^{n-2}$ as $$ \L^{(i)}:= \O_{\P^{n-2}}(ti) \otimes \O_{\P^{n-2}}\Bigl( - \sum_{j=1}^n \Bigl[\frac{\mu_j \ i}{w^*}\Bigr] L_j \Bigr)$$ for $i \in{\{0,1,...,w^*-1 \}}$, where $[x]$ is the integer part of $x$. Then, the normalization of $Y_0$ is $Y':= \spec_{\P^{n-2}} \Big( \bigoplus_{i=0}^{w^*-1} {\L^{(i)}}^{-1} \Big)$; see \cite[Cor. 3.11]{EV92}. Notice that gcd$(\mu_i,w^*)=1$, and so this cyclic morphism is totally branch at the $L_i$'s.

\begin{corollary}
There is a birational map $X(a_1,\ldots,a_n) \dashrightarrow Y'$.
\label{bir}
\end{corollary}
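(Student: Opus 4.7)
The plan is to compare function fields. By Proposition~\ref{cyclic}, the rational map $\psi\colon\P(w_1,\ldots,w_n)\dashrightarrow\P^{n-1}$ induces on the Koll\'ar hypersurface $X=X(a_1,\ldots,a_n)$ a degree-$w^*$ cyclic field extension
\[
\C(y_1/y_n,\ldots,y_{n-1}/y_n)\;\subset\;\C(y_1/y_n,\ldots,y_{n-1}/y_n)[z]/(z^{w^*}-f/y_n^{W_1}),
\]
where $z=x_1^d/y_n^{w_1}$. By the corollary immediately following the proposition, the image of $\psi|_X$ is the hyperplane $\P^{n-2}=\{y_1+\cdots+y_n=0\}$, and $\psi|_X$ realizes $X$ as a (geometric) cyclic $w^*$-cover of $\P^{n-2}$ totally branched along $y_1\cdots y_n=0$, with the given defining equation.

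Next, by construction $Y_0\to\P^{n-2}$ is precisely the relative $\mathrm{Spec}$ of the graded algebra determined by the same equation $z^{w^*}=f/y_n^{W_1}$; that is, $Y_0$ is the (possibly non-normal) cyclic cover whose generic fiber corresponds to the same degree-$w^*$ field extension above. Therefore $X$ and $Y_0$ have canonically isomorphic function fields over $\C(\P^{n-2})$, so there is a birational map $X\dashrightarrow Y_0$. Since $Y'\to Y_0$ is the normalization \cite[Cor.~3.11]{EV92}, $Y'$ and $Y_0$ have the same function field, and composing we obtain a birational map
\[
X(a_1,\ldots,a_n)\;\dashrightarrow\;Y'.
\]

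There is essentially no obstacle here: everything follows from matching the two constructions at the level of the generic point. The only point that deserves a line of justification is that the field extension written in Proposition~\ref{cyclic} genuinely agrees with the one defining $Y_0$ as a cover of $\P^{n-2}\subset\P^{n-1}$, but this is immediate from the formula for $f$ and from the relation $tw^*=\sum_i\mu_i$ which ensures that $f/y_n^{W_1}$ is a rational function on $\P^{n-2}$ whose $w^*$-th root defines the branched cover by the $\mu_i$'s, as in the Esnault--Viehweg set-up.
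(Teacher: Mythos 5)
Your argument is correct and is exactly the one the paper intends: the paper presents Corollary \ref{bir} as an immediate consequence of Proposition \ref{cyclic} together with the identification of $Y_0$ as the cyclic cover defined by $z^{w^*}=f/y_n^{W_1}$ and of $Y'$ as its normalization, which does not change the function field. Your write-up just makes these function-field identifications explicit, so there is nothing to add.
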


In the next section we describe explicitly this birational map for $n=4$.

%-------------------------------------------------------------------------------------------------------------------
\section{Explicit birational map for Koll\'ar surfaces} \label{s2}

From now on we concentrate in the case of Koll\'ar surfaces, where $n=4$. We will be working with cyclic quotient surface singularities, which we now review. A cyclic quotient singularity $S$, denoted by $\frac{1}{m}(a,b)$, is a germ at the origin of the quotient of $\C^2$ by the action $(x,y)\mapsto (\zeta^a x, \zeta^b y)$, where $\zeta$ is a primitive $m$-th root of $1$, and $a,b$ are integers coprime to $m$; cf. \cite[III \S5]{BHPV04}. Let $0<q<m$ be such that $aq-b\equiv 0$ modulo $m$. Then, $\frac{1}{m}(a,b) = \frac{1}{m}(1,q)$. Let $\sigma \colon \tilde{S} \to S$ be the minimal resolution of $S$. Figure \ref{exdiv} shows the exceptional curves $E_i=\P^1$ of $\sigma$, for $1 \leq i \leq s$, and the strict transforms $E_0$ and $E_{s+1}$ of $(y=0)$ and $(x=0)$ respectively.

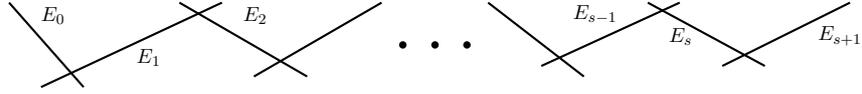
\begin{figure}[htbp]
\psscalebox{.7 .7} % Change this value to rescale the drawing.
{
\begin{pspicture}(0,-0.81812537)(15.975043,0.81812537)
\psline[linecolor=black, linewidth=0.04](0.015043564,0.79996765)(1.4150436,-0.8000324)
\psline[linecolor=black, linewidth=0.04](0.6150436,-0.8000324)(4.0150437,0.79996765)
\psline[linecolor=black, linewidth=0.04](3.2150435,0.79996765)(5.6150436,-0.6000323)
\psline[linecolor=black, linewidth=0.04](4.6150436,-0.6000323)(7.0150437,0.79996765)
\psdots[linecolor=black, dotsize=0.14](7.4150434,0.0)
\psdots[linecolor=black, dotsize=0.14](8.015043,0.0)
\psdots[linecolor=black, dotsize=0.14](8.615044,0.0)
\rput[bl](0.6150436,0.39996764){$E_0$}
\rput[bl](2.4150436,-0.40003234){$E_1$}
\rput[bl](4.4150434,0.39996764){$E_2$}
\psline[linecolor=black, linewidth=0.04](9.015043,0.79996765)(10.815043,-0.6000323)
\psline[linecolor=black, linewidth=0.04](10.015043,-0.40003234)(12.615044,0.79996765)
\psline[linecolor=black, linewidth=0.04](12.015043,0.79996765)(14.215044,-0.40003234)
\psline[linecolor=black, linewidth=0.04](13.415044,-0.40003234)(15.815043,0.79996765)
\rput[bl](10.615044,0.39996764){$E_{s-1}$}
\rput[bl](12.415044,0.0){$E_{s}$}
\rput[bl](15.215044,0.0){$E_{s+1}$}
\end{pspicture}
}
\caption{Exceptional divisors over $\frac{1}{m}(1,q)$, $E_0$ and
$E_{s+1}$} \label{exdiv}
\end{figure}

The numbers $E_i^2=-b_i$ are computed using the {\em Hirzebruch-Jung continued fraction}
$$ \frac{m}{q} = b_1 - \frac{1}{b_2 - \frac{1}{\ddots - \frac{1}{b_s}}} =: [b_1, \ldots ,b_s].$$
We denote $|[b_1, \ldots ,b_s]| := m$.  This continued fraction defines the sequence of integers $$ 0=\beta_{s+1} < 1=\beta_s < \ldots < q=\beta_1 < m= \beta_0 $$ where $\beta_{i+1}= b_{i}\beta_i - \beta_{i-1}$. In this way, $\frac{\beta_{i-1}}{\beta_{i}}=[b_i,\ldots,b_s]$. Partial fractions $\frac{\alpha_i}{\gamma_i} =[b_1,\ldots,b_{i-1}]$ are computed through the sequences $$ 0=\alpha_0 < 1=\alpha_1 < \ldots < q^{-1}=\alpha_s < m= \alpha_{s+1},$$ where $\alpha_{i+1}=b_i\alpha_{i} - \alpha_{i-1}$ ($q^{-1}$ is the integer such that $0<q^{-1}<m$ and $q q^{-1} \equiv 1 ($mod $m)$), and $\gamma_0=-1$, $\gamma_1=0$, $\gamma_{i+1}=b_i \gamma_i - \gamma_{i-1}$. We have $\alpha_{i+1}\gamma_i - \alpha_i \gamma_{i+1}=-1$, $\beta_i = q \alpha_i - m \gamma_i$, and $\frac{m}{q^{-1}}=[b_s,\ldots,b_1]$. These numbers appear in the pull-back formulas \begin{equation} \sigma^*\big((y=0)\big) = \sum_{i=0}^{s+1} \frac{\beta_i}{m} E_i, \ \ \ \text{and} \ \ \ \sigma^*\big((x=0)\big)= \sum_{i=0}^{s+1} \frac{\alpha_i}{m} E_i,\label{pullbackFormula}\end{equation} and $K_{\tilde{S}}
\equiv \sigma^*(K_{S}) + \sum_{i=1}^s (-1 +\frac{\beta_i+\alpha_i}{m}) E_i$.

\bigskip

\bigskip

Let $X(a_1,a_2,a_3,a_4)$ be a Koll\'ar surface. Let $$p_1=(1:0:0:0), \ p_2=(0:1:0:0), \ p_3=(0:0:1:0), \ p_4=(0:0:0:1).$$

\begin{proposition}
The surface $X(a_1,a_2,a_3,a_4)$ is normal, and it has only singularities of type $\frac{1}{w_i}(w_{i+2},w_{i+3})$ at the points $p_i$ when gcd$(w_i,w_{i+2}) = 1$, and of type $\frac{1}{t_i}(t_{i+2},w_{i+3})$ when gcd$(w_i,w_{i+2}) = h>1$, where $w_j = ht_j$.
\label{singularityType}
\end{proposition}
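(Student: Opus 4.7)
The plan is to use the quasi-smoothness of $X=X(a_1,a_2,a_3,a_4)$ (Theorem \ref{kollarTheorem}(2)): the affine cone $C_X\subset\A^4$ is smooth away from the origin, so every singularity of $X$ arises from an orbit of the weighted $\C^*$-action on $C_X\setminus\{0\}$ with non-trivial stabilizer, and is therefore a cyclic quotient singularity. Normality of $X$ will follow automatically once the singular locus is shown to be finite.

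First I would enumerate the coordinate strata of $\P(w_1,\dots,w_4)$ whose stabilizer is non-trivial. Since $\gcd(w_i,w_{i+1})=1$ (from $a_iW_i+W_{i+1}=D$ and the definition of $w^\ast$) and any three of the $w_i$ are coprime by well-formedness, the only candidates are the four vertices $p_i$ (stabilizer $\mu_{w_i}$) and the two diagonal coordinate lines $L_{13}=\{x_1=x_3=0\}$, $L_{24}=\{x_2=x_4=0\}$ (with transverse stabilizers $\mu_{\gcd(w_2,w_4)}$ and $\mu_{\gcd(w_1,w_3)}$). A direct substitution gives $L_{13},L_{24}\subset X$, while each edge with consecutive indices meets $X$ only at the two adjacent vertices. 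Thus $\mathrm{Sing}(X)\subset\{p_1,\dots,p_4\}\cup L_{13}\cup L_{24}$.

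The non-routine step is to show that $X$ is smooth along $L_{13}\setminus\{p_1,p_3\}$ (and symmetrically on $L_{24}$). In the chart $x_2=1$ with coordinates $(x_1,x_3,x_4)$ and $\mu_{w_2}$-weights $(w_1,w_3,w_4)$, the defining equation reads
\[
x_3+x_1^{a_1}+x_3^{a_3}x_4+x_4^{a_4}x_1=0,
\]
whose partial in $x_3$ is non-zero at every point of $L_{13}$; the implicit function theorem eliminates $x_3$ and realizes the local $\mu_{w_2}$-equivariant model of $X$ as $\C^2_{(x_1,x_4)}$ with weights $(w_1,w_4)$, with $L_{13}$ becoming the $x_4$-axis. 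At a general point $q\in L_{13}$ the stabilizer $\mu_g$, $g=\gcd(w_2,w_4)$, acts trivially on $x_4$ and by the coprime weight $w_1\bmod g$ on $x_1$ (well-formedness gives $\gcd(g,w_1)=1$); the local quotient is therefore smooth. This confines $\mathrm{Sing}(X)$ to $\{p_1,\dots,p_4\}$ and establishes normality.

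Finally, at $p_i$ I work in the chart $x_i=1$ with coordinates $(x_{i+1},x_{i+2},x_{i+3})$ and $\mu_{w_i}$-weights $(w_{i+1},w_{i+2},w_{i+3})$. The equation of $X$ contains the linear term $x_{i+1}$ (from $x_i^{a_i}x_{i+1}$ with $x_i=1$), so implicit function elimination presents $(X,p_i)$ as $\C^2_{(x_{i+2},x_{i+3})}/\mu_{w_i}$ with weights $(w_{i+2},w_{i+3})$; this is precisely $\frac{1}{w_i}(w_{i+2},w_{i+3})$ when $\gcd(w_i,w_{i+2})=1$. When $\gcd(w_i,w_{i+2})=h>1$, write $w_i=ht_i$ and $w_{i+2}=ht_{i+2}$; the subgroup $\mu_h\subset\mu_{w_i}$ fixes $x_{i+2}$ pointwise (since $h\mid w_{i+2}$) and acts faithfully on $x_{i+3}$, since well-formedness yields $\gcd(h,w_{i+3})=1$. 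The substitution $v=x_{i+3}^h$ effects the quotient by $\mu_h$, and the residual group $\mu_{w_i}/\mu_h\cong\mu_{t_i}$ then acts on $(x_{i+2},v)$ with weights $(t_{i+2},w_{i+3})$, giving $\frac{1}{t_i}(t_{i+2},w_{i+3})$. The conceptual heart of the argument is the smoothness assertion along $L_{13}\cup L_{24}$, since those curves lie inside the singular locus of the ambient weighted projective space, so the smoothness of $X$ there must be detected from the equation rather than from the ambient geometry; the rest is routine implicit-function bookkeeping and a change of variables.
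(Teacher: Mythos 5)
Your proof is correct, and at the vertices $p_i$ it is the same computation as the paper's: pass to the chart $x_i=1$, use the linear term coming from $x_i^{a_i}x_{i+1}$ and the implicit function theorem to eliminate $x_{i+1}$, identify the local model as $\C^2_{(x_{i+2},x_{i+3})}$ with the $\mu_{w_i}$-action of weights $(w_{i+2},w_{i+3})$, and, when $h=\gcd(w_i,w_{i+2})>1$, quotient out the subgroup $\mu_h$ of quasi-reflections to arrive at $\frac{1}{t_i}(t_{i+2},w_{i+3})$. The genuine difference is that you also justify the word ``only'' in the statement. The paper's proof checks the four vertices and stops, while the curves $C_1=(x_1=x_3=0)$ and $C_2=(x_2=x_4=0)$ lie on $X$ and inside the singular locus of the ambient $\P(w_1,w_2,w_3,w_4)$ whenever $\gcd(w_2,w_4)>1$ or $\gcd(w_1,w_3)>1$, so smoothness of $X$ along them is not automatic from quasi-smoothness alone. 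Your computation at a general point of these curves --- the transverse stabilizer $\mu_{\gcd(w_2,w_4)}$ acts trivially along the curve and, by well-formedness, with weight coprime to its order on the normal coordinate $x_1$ surviving the implicit-function elimination, hence by quasi-reflections --- closes exactly this point and, combined with $\gcd(w_i,w_{i+1})=1$, confines $\mathrm{Sing}(X)$ to $\{p_1,\dots,p_4\}$. (A small remark: normality does not actually require finiteness of the singular locus; it is already immediate from your local description of $X$ as finite quotients of smooth slices of the quasi-smooth cone.) So the key local argument is the same, but your version is the more complete one.
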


\begin{proof}
Here we follow the idea in \cite[\S 10.1]{Ian00}. Without loss of generality, it is enough to check the singularity at $p_1$. Consider the affine cone $C_X \subset \C^4$ of $X(a_1,a_2,a_3,a_4)$ and the corresponding action of $\C^*$ given by $$ \lambda \in \C^*, \ \ \ \ \lambda \cdot (x_1,x_2,x_3,x_4) = (\lambda^{w_1}x_1,\lambda^{w_2}x_2,\lambda^{w_3}x_3,\lambda^{w_4}x_4).$$ Then to study the singularities around $p_1$, we check how the action behaves when we restrict to $(x_1 = 1)$. Notice that, when $x_1 \neq 0$, \[ \frac{\partial}{\partial x_2}(x_1^{a_1}x_2 + x_2^{a_2}x_3 + x_3^{a_3}x_4 + x_4^{a_4}x_1) = x_1^{a_1} + a_2x_2^{a_2-1}x_3 \neq 0,\] so locally, by the Implicit Function Theorem, we can write $x_2$ as a function of $x_3$ and $x_4$, which become local parameters. Then the action of $\C^*$ restricted to $(x_1 = 1)$ is \[\zeta_1\cdot (1,x_2,x_3,x_4) = (1,\zeta_1^{w_2}x_2,\zeta_1^{w_3}x_3,\zeta_1^{w_4}x_4),\] where $\zeta_1$ is a $w_1$-th primitive root of $1$. Therefore, after taking the quotient, the singularity is a cyclic singularity of type $\frac{1}{w_1}(w_3,w_4)$, if $gcd(w_i,w_{i+2}) =1$. If $gcd(w_i,w_{i+2}) = h> 1$, then there are elements which fix the axis $(x_3=0)$, so they are quasi-reflections. We eliminate them by dividing $w_i = ht_i$ and $w_{i+2} = ht_{i+2}$ by $h$, obtaining that the singularity is $\frac{1}{t_i}(t_{i+2},w_{i+3})$.
\end{proof}

Assume that $a_i \geq 2$ for all $i$ \footnote{This is to have the key configuration of curves as shown. By Theorem \ref{gcd}, Koll\'ar surfaces with $a_i=1$ are birationally included in our analysis. Also, check Corollary \ref{a=1} when $w^*=1$.}. We have the following key configuration of curves on $X(a_1,a_2,a_3,a_4)$: \[\begin{array}{lcl}
C_1 & := & (x_1 = x_3 = 0)\\
C_2 & := & (x_2 = x_4 = 0)\\
\Gamma_{1,2} & := & (x_3 = x_4^{a_4} + x_1^{a_1-1}x_2 = 0)\\
\Gamma_{2,3} & := & (x_4 = x_1^{a_1} + x_2^{a_2-1}x_3 = 0)\\
\Gamma_{3,4} & := & (x_1 = x_2^{a_2} + x_3^{a_3-1}x_4 = 0)\\
\Gamma_{4,1} & := & (x_2 = x_3^{a_3} + x_4^{a_4-1}x_1 = 0)
\end{array}\]

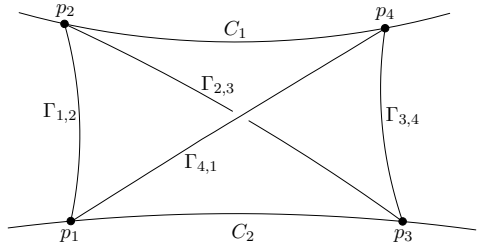
\begin{figure}[h]
\centering
\psscalebox{.7 .7}
{
\begin{tikzpicture}[line cap=round,line join=round,>=triangle 45,x=0.7163616792249728cm,y=0.5910301953818828cm]
\clip(-2.728219383921863,-4.365965439519161) rectangle (11.231209616829457,4.093839218632604);
\draw [shift={(3.999553863722877,20.027933003425957)}] plot[domain=4.384346663084768:5.036891478372022,variable=\t]({1.*17.627944860866872*cos(\t r)+0.*17.627944860866872*sin(\t r)},{0.*17.627944860866872*cos(\t r)+1.*17.627944860866872*sin(\t r)});
\draw [shift={(3.973233397593565,-41.81489795918368)}] plot[domain=1.4066425627285466:1.7251781816514435,variable=\t]({1.*38.715348451922516*cos(\t r)+0.*38.715348451922516*sin(\t r)},{0.*38.715348451922516*cos(\t r)+1.*38.715348451922516*sin(\t r)});
\draw [shift={(-16.079421781026042,-0.5870466510328181)}] plot[domain=-0.1729193261297013:0.22499344547697542,variable=\t]({1.*15.99333835353281*cos(\t r)+0.*15.99333835353281*sin(\t r)},{0.*15.99333835353281*cos(\t r)+1.*15.99333835353281*sin(\t r)});
\draw [shift={(-26.24342260132135,-42.46275538822232)}] plot[domain=0.8462119471603695:0.9452017313997237,variable=\t]({1.*52.23429570366096*cos(\t r)+0.*52.23429570366096*sin(\t r)},{0.*52.23429570366096*cos(\t r)+1.*52.23429570366096*sin(\t r)});
\draw [shift={(-26.24342260132135,-42.46275538822232)}] plot[domain=0.9552017313997237:1.0552017313997237,variable=\t]({1.*52.23429570366096*cos(\t r)+0.*52.23429570366096*sin(\t r)},{0.*52.23429570366096*cos(\t r)+1.*52.23429570366096*sin(\t r)});
\draw [shift={(213.98340313955862,-280.63123190500966)}] plot[domain=2.199366023226847:2.2287675896035837,variable=\t]({1.*350.45540143703397*cos(\t r)+0.*350.45540143703397*sin(\t r)},{0.*350.45540143703397*cos(\t r)+1.*350.45540143703397*sin(\t r)});
\draw [shift={(23.636083459666064,0.893259300972552)}] plot[domain=3.0182660548521616:3.4129124498965346,variable=\t]({1.*15.836683027325408*cos(\t r)+0.*15.836683027325408*sin(\t r)},{0.*15.836683027325408*cos(\t r)+1.*15.836683027325408*sin(\t r)});
\draw[color=black] (4,2.8) node {$C_1$};
\draw[color=black] (4.198903080390686,-3.7) node {$C_2$};
\draw [fill=black] (-0.4891864140265314,2.9810667008859824) circle (2.0pt);
\draw[color=black] (-0.45924868519909745,3.417655897821184) node {$p_2$};
\draw [fill=black] (-0.324597370462421,-3.338842334380736) circle (2.0pt);
\draw[color=black] (-0.339038317054845,-3.8) node {$p_1$};
\draw [fill=black] (7.919681484603742,2.8413964281362816) circle (2.0pt);
\draw[color=black] (7.925424492862513,3.3) node {$p_4$};
\draw [fill=black] (8.378737560278907,-3.3510221946908203) circle (2.0pt);
\draw[color=black] (8.406265965439523,-3.8) node {$p_3$};
\draw[color=black] (-0.6,0.11187077385424224) node {$\Gamma_{1,2}$};
\draw[color=black] (3.5377460555972973,0.9833959429000724) node {$\Gamma_{2,3}$};
\draw[color=black] (3.1,-1.5) node {$\Gamma_{4,1}$};
\draw[color=black] (8.38605559729527,0.0066867017280213825) node {$\Gamma_{3,4}$};
\end{tikzpicture}
}
\caption{Key configuration of curves on a Koll\'ar surface.} \label{curveConfiguration}
\end{figure}

\begin{proposition}
The curves $C_1,C_2$ are smooth and rational. The curve $\Gamma_{i,j}$ is rational, and it may only have a unibranch singularity at $p_j$.
\end{proposition}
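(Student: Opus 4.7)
The plan is to treat $C_1, C_2$ by direct inspection, then use the cyclic symmetry to reduce the statement for the $\Gamma_{i,j}$ to the single case $\Gamma_{1,2}$.

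For $C_1 = (x_1 = x_3 = 0)$, substituting $x_1 = x_3 = 0$ into the Koll\'ar equation gives $0$ identically, so $C_1 = \{(0{:}x_2{:}0{:}x_4)\} = \P(w_2, w_4) \cong \P^1$, smooth and rational. The same argument gives $C_2 \cong \P^1$.

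For $\Gamma_{1,2}$, observe it lies in $\{x_3 = 0\} = \P(w_1, w_2, w_4)$ and is cut out by $f := x_1^{a_1-1}x_2 + x_4^{a_4}$. Since $f$ is linear in $x_2$ with coefficient $x_1^{a_1-1}$, it is irreducible in $\C[x_1, x_2, x_4]$, so $\Gamma_{1,2}$ is irreducible. A Jacobian calculation on the affine cone $V(f) \subset \C^3$ shows that the singular locus is contained in the $x_2$-axis, which corresponds to the single point $p_2 = (0{:}1{:}0{:}0)$; hence $\Gamma_{1,2}$ is smooth away from $p_2$. Moreover, setting $x_1 = 0$ in $f = 0$ forces $x_4 = 0$, so $\Gamma_{1,2}\setminus\{p_2\}$ lies in the chart $\{x_1 \neq 0\}$; there, normalizing $x_1 = 1$, the equation becomes $x_2 = -x_4^{a_4}$, which is parametrized by $x_4 \in \A^1$. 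Quotienting by the residual $\mu_{w_1}$-action (acting only on $x_4$, compatibly with the equation) still yields an $\A^1$, so $\Gamma_{1,2}\setminus\{p_2\} \cong \A^1$. Consequently $\Gamma_{1,2}$ has function field $\C(t)$ and is rational.

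The main technical content is showing that $\Gamma_{1,2}$ has only a unibranch singularity at $p_2$. In the chart $x_2 = 1$, $\Gamma_{1,2}$ becomes $V(x_1^{a_1-1} + x_4^{a_4})$ inside $\C^2/\mu_{w_2}$, with $\mu_{w_2}$ acting by $(x_1, x_4) \mapsto (\zeta^{w_1}x_1, \zeta^{w_4}x_4)$. Setting $g = \gcd(a_1-1, a_4)$, $p = (a_1-1)/g$, $q = a_4/g$, the plane curve factors as $\prod_{\xi^g = -1}(x_1^p - \xi x_4^q)$, yielding $g$ local branches at the origin. The crucial calculation is that $\mu_{w_2}$ permutes these branches transitively: $\zeta$ sends the branch labeled by $\xi$ to the one labeled by $\xi\, \zeta^{w_4 q - w_1 p}$, and the identities $w_4 a_4 + w_1 = w_1 a_1 + w_2 = d$ give $w_4 q - w_1 p = w_2/g$. (Integrality of $w_2/g$ follows from $\gcd(a_i, w^*) = 1$, forcing $\gcd(g, w^*) = 1$, combined with $g \mid W_2$.) The homomorphism $\mu_{w_2} \to \mu_g$, $\zeta \mapsto \zeta^{w_2/g}$, is therefore surjective, so the $g$ branches are identified into one in the quotient, proving unibranchness. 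This transitivity computation is the one nontrivial obstacle; everything else reduces to linear bookkeeping with the weights.
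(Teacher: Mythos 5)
Your proof is correct and follows essentially the same strategy as the paper's: identify $C_1,C_2$ with $\P^1$ directly, reduce to one $\Gamma_{i,j}$, and establish unibranchness at $p_j$ by passing to a smooth cover, factoring the equation into $\gcd$-many branches, and checking that the residual cyclic group permutes them transitively via the weight identity $W_{i+1}a_{i+1}-W_{i+2}(a_{i+2}-1)=W_{i+3}$ (your $w_4a_4-w_1(a_1-1)=w_2$ is the cyclic shift of the paper's $a-w_2b=w_3/r$). The only differences are cosmetic: you work in the local orbifold chart $\C^2/\mu_{w_2}$ at $p_2$ rather than the paper's global cover $\P^2\to\P(w_1,w_2,w_3)$ (so your branch count is $\gcd(a_1-1,a_4)$ with no weights in the exponents), and your observation that $\Gamma_{1,2}\setminus\{p_2\}\cong\A^1$ gives rationality and smoothness away from $p_2$ a bit more directly than the paper's remark that the branches of the preimage curve are rational.
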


\begin{proof}
The curves $C_1, C_2$ are obviously isomorphic to $\P^1$. To prove the assertion about $\Gamma_{i,j}$, it is enough to do it for $\Gamma_{2,3}$. Notice that this curve lives in $(x_4=0)=\P(w_1,w_2,w_3)$, and that it is possibly singular only at $(0:0:1)$. Let us consider the $\Z/w_1 \oplus \Z/w_2 \oplus \Z/w_3$ quotient map $$\P^2 \to \P(w_1,w_2,w_3)$$ given by $(x:y:z) \mapsto (x^{w_1}:y^{w_2}:z^{w_3})$. Then the preimage of $\Gamma_{2,3}$ is $$\Gamma'_{2,3}=(x^{w_1 a_1}+y^{w_2(a_2-1)}z^{w_3}=0),$$ and so $\Gamma_{2,3}$ is rational since all irreducible components (branches at $(0:0:1)$) of $\Gamma'_{2,3}$ are rational curves.

To see that $\Gamma_{2,3}$ is unibranch at $(0:0:1)$, we will show that the (possible) branches of $\Gamma'_{2,3}$ form one orbit under the $\Z/w_1 \oplus \Z/w_2 \oplus \Z/w_3$ action. We take the canonical affine chart at $(0:0:1)$, where $\Gamma'_{2,3}=(x^{w_1 a_1}+y^{w_2(a_2-1)}=0)$. We consider the action of $\Z/w_3$ given by $(x,y) \mapsto (\zeta_3^k x, \zeta_3^k y)$ where $k \in \Z$ and $\zeta_3=e^{\frac{2\pi i}{w_3}}$. Notice that gcd$(w_2,w_1)=1$ and gcd$(w_2,a_1)=1$ by definition, and so we write $a_2-1=rb$ and $w_1 a_1 =ra$ where gcd$(a,b)=1$, to factor in branches $$x^{w_1 a_1}+y^{w_2(a_2-1)}= \prod_{c=0}^{r-1} (y^{w_2 b} - \zeta_{2r}^{2c+1} x^{a})$$ where $\zeta_{2r}=e^{\frac{\pi i}{r}}$. Then we take $ y^{w_2 b} - \zeta_{2r} x^{a}$ and apply $(x,y) \mapsto (\zeta_3^k x, \zeta_3^k y)$ to obtain the branch $ y^{w_2 b} - \zeta_{2r} \zeta_{3}^{k(a-w_2b)}  x^{a},$ but $a-w_2b = \frac{w_3}{r}$, and so it goes to $ y^{w_2 b} - \zeta_{2r}^{2k+1} x^{a}$. Therefore branches form one orbit, and the curve $\Gamma_{2,3}$ is unibranch at $(0:0:1)$.
\end{proof}

\begin{proposition}
Assume that $a_i > w^*$ for some $i$. Then $\Gamma_{i+2,i+3}$ is nonsingular.
\label{paGeneral}
\end{proposition}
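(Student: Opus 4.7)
By the cyclic symmetry in the indices $i\mapsto i+1$, I may assume $i=4$, so $a_4>w^*$ and the curve under study is $\Gamma_{2,3}$. By the previous proposition $\Gamma_{2,3}$ is smooth away from $p_3$ and unibranch at $p_3$, so the plan is to work locally at $p_3$, identify $\Gamma_{2,3}$ there as a $\mu_{w_3}$-quotient of an explicit plane curve, translate smoothness into a lattice-point existence problem, and then force this by the hypothesis $a_4>w^*$.

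At $p_3$ the surface $X$ is locally $\C^2/\mu_{w_3}$ with action $(x_1,x_2)\mapsto(\zeta^{w_1}x_1,\zeta^{w_2}x_2)$. Applying the Implicit Function Theorem to the Koll\'ar equation yields $x_4=f(x_1,x_2)$ with the identity $f(1+f^{a_4-1}x_1)=-x_2\pi$, where $\pi:=x_1^{a_1}+x_2^{a_2-1}$; inverting the unit in parentheses shows $f\in(\pi)$, so the pullback of $\Gamma_{2,3}$ to the cover equals the plane curve $V(\pi)$. Setting $r=\gcd(a_1,a_2-1)$, $\alpha=a_1/r$, $\beta=(a_2-1)/r$, this curve splits into $r$ smooth branches $B_c$ parametrized by $(\eta_c t^\beta,t^\alpha)$. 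Since $r\mid a_1$ and $r\mid a_2-1$, one checks $r\mid W_3=a_1a_2(a_4-1)+(a_2-1)$; combined with $\gcd(r,w^*)\mid\gcd(a_1,w^*)=1$, this gives $r\mid w_3$.

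Because $\mu_{w_3}$ acts freely on $\C^2\setminus\{0\}$ (using $\gcd(w_i,w_3)=1$), the stabilizer of $B_0$ in $\mu_{w_3}$ is cyclic of order $w_3/r$ and acts faithfully on the parameter $t$. Hence the normalization of $\Gamma_{2,3}$ at $p_3$ has a uniformizer that pulls back to $t^{w_3/r}$ on $B_0$, and $\Gamma_{2,3}$ is smooth at $p_3$ exactly when the invariant ring $R^{\mu_{w_3}}$, with $R=\C[[x_1,x_2]]/(\pi)$, contains an element of $t$-order exactly $w_3/r$ on $B_0$. By the isotypic decomposition $R=\bigoplus_\chi R_\chi$, it is enough to produce an invariant monomial, i.e., a pair $(i,j)\in\Z_{\geq 0}^2$ with
\[ i\beta+j\alpha=\frac{w_3}{r} \quad\text{and}\quad iw_1+jw_2\equiv 0\pmod{w_3}. \]

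Integer solutions of the first equation form the arithmetic progression $(i^*+k\alpha,j^*-k\beta)$, $k\in\Z$; taking the base point $(i^*,j^*)=(w_3/r)(v,u)$ with $u\alpha+v\beta=1$, the weight $i^*w_1+j^*w_2$ is already a multiple of $w_3/r$, and the unibranch identity $\alpha w_1-\beta w_2=m\cdot w_3/r$ with $\gcd(m,r)=1$ pins down the full congruence mod $w_3$ on every $r$-th integer value of $k$. The range of $k$ producing $i,j\geq 0$ has length $w_3/(r\alpha\beta)$, so an admissible solution exists whenever $w_3\geq r^2\alpha\beta=a_1(a_2-1)$. Finally, $a_4>w^*$ gives $a_4-1\geq w^*$, whence $W_3=a_1a_2(a_4-1)+(a_2-1)\geq w^*a_1a_2+(a_2-1)>w^*a_1(a_2-1)$, and therefore $w_3>a_1(a_2-1)$ as required. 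The main obstacle is the third step---correctly reducing smoothness of the quotient $V(\pi)/\mu_{w_3}$ to the lattice existence problem, which hinges on the faithfulness of the stabilizer action and the orthogonality of the weight spaces in $R$; once that reduction is in place the final numerical estimate follows mechanically from the Koll\'ar formula for $W_3$.
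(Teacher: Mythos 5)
Your route is genuinely different from the paper's, and its core is correct. The paper argues globally: it computes the arithmetic genus of $\Gamma_{3,4}$ from the exact sequence $0\to\O_{\P}(-a_2w_2)\to\O_{\P}\to\O_{\Gamma_{3,4}}\to 0$ on $\P(w_2,w_3,w_4)$, identifies $p_a$ with the number of nonnegative solutions of $w_2x+w_3y+w_4z=a_2w_2-w_2-w_3-w_4$, and uses $a_i>w^*$ to show that count is zero. You instead work locally at the unique possibly singular point, identify the complete local ring of the curve with the invariant ring $R^{\mu_{w_3}}$, and produce a lattice point --- an invariant monomial of minimal positive $t$-order $w_3/r$ --- whose existence forces regularity. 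The two reductions are dual in polarity: the paper needs a lattice region to be empty, you need one to be nonempty. Your numerics check out: the identity $\alpha w_1-\beta w_2=w_3/r$ (which follows from $a_1w_1-(a_2-1)w_2=w_3$, so your $m$ is in fact $1$), the divisibility $r\mid w_3$, the interval-length count, and the final estimate $w_3>a_1(a_2-1)$ are all correct, and the criterion ``regular iff $R^{\mu_{w_3}}$ contains an element of $t$-order exactly $w_3/r$'' is sound. (Minor slip: the branches $B_c$ are unibranch but not smooth when $\alpha,\beta\geq 2$; this is harmless since you only use their normalizations and the orders $i\beta+j\alpha$.)

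The one genuine gap is coprimality. Your freeness claim for $\mu_{w_3}$ on $\C^2\setminus\{0\}$, hence the order-$w_3/r$ stabilizer acting faithfully on $t$, needs $\gcd(w_1,w_3)=1$; for general $i$ the analogous step at $p_{i+3}$ needs $\gcd(w_{i+1},w_{i+3})=1$. But Proposition \ref{paGeneral} is stated \emph{before} the paper's standing assumption $\gcd(w_1,w_3)=\gcd(w_2,w_4)=1$ and is invoked in Section \ref{s3} precisely where that assumption is dropped; accordingly the paper's proof has a separate second half for $\gcd(w_2,w_4)=h>1$, passing to the well-formed plane $\P(t_2,w_3,t_4)$. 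In the non-coprime case the action contains quasi-reflections, the local model becomes $\frac{1}{t_3}(t_1,w_2)$, and the upstairs equation changes (one exponent is divided by $h$), so the group order $w_3/r$, the branch count, and the congruence bookkeeping must all be redone with $t_3$ in place of $w_3$. Your strategy should survive this adaptation, but as written the proof covers only the coprime case.
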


\begin{proof}
We take $a_1 > w^*$ to prove that $\Gamma_{3,4}$ is nonsingular. For this we will compute the arithmetic genus of $\Gamma_{3,4}$. Let $\P = \P(w_2,w_3,w_4)$, and consider the exact sequence of sheaves $ 0 \to \O_\P(-a_2w_2) \to \O_\P \to \O_{\Gamma_{3,4}} \to 0$. From it we have that $\chi(\O_{\Gamma_{3,4}}) = \chi(\O_\P) - \chi(\O_\P(-a_2w_2))$. If $\text{gcd}(w_2,w_4) = 1$, then by \cite[Section 1.4]{Dolg82} we have that $\chi(\O_\P) - \chi(\O_\P(-a_2w_2)) = 1 - h^0(\P,\O_\P(a_2w_2-w_2-w_3-w_4))$. Then \[ p_a(\Gamma_{3,4}) = 1 - \chi(\O_{\Gamma_{3,4}}) = h^0(\P,\O_\P(a_2w_2-w_2-w_3-w_4)), \] so we have to compute the number of nonnegative integer solutions of the equation $w_2x + w_3y + w_4z = a_2w_2 - w_2-w_3-w_4$. As $a_2w_2 + w_3 = a_3w_3 + w_4$, then our equation can be written as $$w_2(x+a_2z) + w_3(y+(1-a_3)z) = (a_3-2)w_3 - w_2$$ and its solutions are \begin{equation}\label{solutionPa}
x = -1 -tw_3 - a_2z\quad , \quad y  =  a_3-2+tw_2+(a_3-1)z \quad , \quad z = z
\end{equation}
If $x$, $y$ and $z$ are nonnegative, then $t < 0$, so we will change the sign of $t$ and assume that $t > 0$. Then from Equations \eqref{solutionPa} we obtain that \[ a_2z \leq tw_3 - 1 \] and $(a_3-1)z\geq tw_2-a_3+2$. Hence we have that \begin{equation}\label{systemPa}
\frac{tw_3-1}{a_2} \geq z \geq \frac{tw_2 + 2-a_3}{a_3-1}
\end{equation} Replacing with $w_2=\frac{1}{w^*}(a_3a_4a_1 - a_4a_1 + a_1 -1)$ and $w_3 =\frac{1}{w^*}(a_4a_1a_2-a_1a_2+a_2 -1)$ we obtain \[ ta_4a_1 -t(a_1 -1) - \frac{t+w^*}{a_2} \geq w^*z \geq ta_4a_1 - w^* +\frac{t(a_1-1)+w^*}{a_3-1}.\] Because $a_1 > w^*$ and $t\geq 1$, then $t(a_1-1) \geq w^*$, so $ta_4a_1 - w^* \geq ta_4a_1 -t(a_1 -1)$. We have that both $\frac{t+w^*}{a_2}$ and $\frac{t(a_1-1)+w^*}{a_3-1}$ are positive, therefore the RHS of the system \eqref{systemPa} is greater than the LHS, so the system has no solution. Hence the arithmetic genus of $\Gamma_{3,4}$ is zero and therefore nonsingular.

If $\text{gcd}(w_2,w_4) = h>1$, then $p_a(\Gamma_{3,4}) = h^1(\P,\O_\P(-a_2w_2))$. To compute it, we first have to consider the well formed weighted projective plane $\P' = \P(t_2,w_3,t_4) \simeq \P$, where $t_2 = w_2/h$ and $t_4 = w_4/h$, and following \cite[Remarks 1.3.2]{Dolg82}, we have that $\O_\P(-a_2w_2) \simeq \O_{\P'}(-a_2t_2)$. Then $p_a(\Gamma_{3,4}) = h^0(\P',\O_{\P'}(a_2t_2-t_2-w_3-t_4))$, which is equivalent to the number of nonnegative integer solutions of the equation \[ t_2x + w_3y + t_4z = a_2t_2 - t_2 - w_3 - t_4. \] The general solution of this equation is \begin{equation*}
x = -1 -tw_3 - a_2z\quad , \quad y  =  \frac{a_3-1}{h}-1+t_2t+\frac{a_3-1}{h}z \quad , \quad z = z,
\end{equation*} with $t\in \Z$. Then $t<0$, and changing the sign of $t$ as above, we have that the arithmetic genus is equal to the number of solutions of the system \[ a_1a_4t - t(a_1-1) - \frac{t+w^*}{a_2}\geq w^*z \geq a_1a_4t - w^* + \frac{hw^*+(a_1-1)t}{a_3-1}, \] but again, as $a_i > w^*$, then the RHS is greater than the LHS, so the arithmetic genus is 0.
\end{proof}

\begin{proposition}
The map $\psi$ is defined precisely in $X(a_1,a_2,a_3,a_4) \setminus \{p_1,p_2,p_3,p_4\}$, and it contracts
$$ \psi(C_1\setminus \{p_2,p_4\}) = (0:1:0:-1) \ \ \ \ \ \ \ \psi(C_2\setminus \{p_1,p_3\}) = (1:0:-1:0)$$
$$ \psi(\Gamma_{1,2}\setminus \{p_1,p_2\}) = (-1:0:0:1) \ \ \ \ \ \ \psi(\Gamma_{2,3}\setminus \{p_2,p_3\}) = (1:-1:0:0) $$
$$ \psi(\Gamma_{3,4}\setminus \{p_3,p_4\}) = (0:1:-1:0) \ \ \ \ \ \ \psi(\Gamma_{4,1}\setminus \{p_4,p_1\}) = (0:0:1:-1) $$
\label{imageCurves}
\end{proposition}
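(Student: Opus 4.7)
The plan is to handle the statement in three stages: first locate the base locus of $\psi$ restricted to $X$, next compute the image of each named curve, and finally show that no extension is possible at the four points $p_i$.

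First I would restrict $\psi\colon\P(w_1,\ldots,w_4)\dashrightarrow \P^3_{y_1,\ldots,y_4}$ to $X$ and compute its base locus. A point of $\P(w_1,\ldots,w_4)$ lies in the base locus iff $x_i^{a_i}x_{i+1}=0$ for every $i$, i.e.\ for each $i$ either $x_i=0$ or $x_{i+1}=0$. A case-by-case analysis shows this forces either two alternating coordinates to vanish (giving $C_1$ or $C_2$) or at least three coordinates to vanish (giving a $p_i$). In particular, $\psi$ is automatically defined by the given formulas on $X\setminus(C_1\cup C_2)$, and the only remaining issue is to extend it through $C_1\setminus\{p_2,p_4\}$ and $C_2\setminus\{p_1,p_3\}$, and to compute its images on the six curves.

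For the four curves $\Gamma_{i,i+1}$, which are not contained in the base locus (they lie in a single hyperplane $x_{i+2}=0$ but the remaining $x_j$'s are generically nonzero along them), I would just plug in the defining relations. For instance, on $\Gamma_{2,3}=(x_4=0,\,x_1^{a_1}+x_2^{a_2-1}x_3=0)$ one has $y_3=y_4=0$ directly and $y_2=x_2^{a_2}x_3=x_2\cdot x_2^{a_2-1}x_3=-x_2x_1^{a_1}=-y_1$, so $\psi(\Gamma_{2,3}\setminus\{p_2,p_3\})=(1:-1:0:0)$; the other three $\Gamma_{i,i+1}$ follow by the obvious cyclic symmetry.

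For $C_1$ and $C_2$, I would use the Koll\'ar equation to extract a common factor. Working near a point $q=(1:0:t:0)\in C_2\setminus\{p_1,p_3\}$ in the affine chart $x_1=1$, the local equation $x_2+x_2^{a_2}x_3+x_3^{a_3}x_4+x_4^{a_4}=0$ has $\partial/\partial x_2\ne 0$ at $q$, so by the Implicit Function Theorem we may solve $x_2=g(x_3,x_4)$ with $g(x_3,0)\equiv 0$ and $g=-x_3^{a_3}x_4-x_4^{a_4}+\text{h.o.t.}$ Hence every $y_i$ is divisible by $x_4$, and after dividing out $x_4$ the limits at $x_4=0$ are $(y_1:y_2:y_3:y_4)/x_4\to(-t^{a_3}:0:t^{a_3}:0)=(1:0:-1:0)$, independently of $t\ne 0$. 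Exactly the same argument, with the roles of the variables cycled, yields $\psi(C_1\setminus\{p_2,p_4\})=(0:1:0:-1)$. This establishes both the extension and the stated value on each $C_k$.

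Finally, to show that $\psi$ genuinely cannot be extended at any $p_i$, I would observe that three of the curves above meet at $p_i$ (for instance $C_2$, $\Gamma_{1,2}$, and $\Gamma_{4,1}$ all pass through $p_1$) and by the previous two paragraphs have three distinct images $(1:0:-1:0)$, $(-1:0:0:1)$, $(0:0:1:-1)$. Thus no continuous extension of $\psi$ to $p_i$ exists, which finishes the proof. The only delicate step is the local analysis on $C_1$ and $C_2$; everywhere else the computation is a direct substitution into the defining equation.
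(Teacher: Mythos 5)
Your proposal is correct, and its computational core coincides with the paper's: the images of the $\Gamma_{i,i+1}$ are obtained by the same direct substitution, and for $C_1,C_2$ both arguments extract the common vanishing factor from the surface equation --- the paper by multiplying the coordinates of $\psi$ by $1+bx_1^{a_1-1}$ and trading $x_1(1+bx_1^{a_1-1})$ for $-x_3(b^{a_2}+x_3^{a_3-1})$ before cancelling, you by solving $x_2=g(x_3,x_4)$ via the Implicit Function Theorem and dividing the four sections by $x_4$ (note your limit computation uses $a_2,a_4\geq 2$, which is the standing hypothesis in this section). Where you go beyond the paper's written proof is in addressing the word ``precisely'': you identify the set-theoretic base locus as $C_1\cup C_2$, you observe that your local factorization actually extends $\psi$ as a morphism across $C_1\setminus\{p_2,p_4\}$ and $C_2\setminus\{p_1,p_3\}$, and you rule out extension at each $p_i$ by exhibiting curves through $p_i$ with distinct constant images. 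The paper leaves these definedness claims implicit, so your version is, if anything, more complete; no gaps.
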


\begin{proof}
We have that $\psi|_{\Gamma_{1,2}\setminus \{p_1,p_2\}} = (x_1^{a_1-1}x_2 : 0 : 0 : x_4^{a_4})$, and because $x_1^{a_1-1}x_2 = -x_4^{a_4}$ over $\Gamma_{1,2}$, then $\psi|_{\Gamma_{1,2}\setminus \{p_1,p_2\}} = (-1:0:0:1)$. This gives the result for all curves $\Gamma_{i,i+1}$.

For $C_1$, let $x_4=1$ and $x_2=b\neq 0$. Then the equation of the surface with these restrictions is \[ bx_1^{a_1} + b^{a_2}x_3 + x_3^{a_3} + x_1 = x_1(1+bx_1^{a_1-1}) + x_3(b^{a_2} + x_3^{a_3-1}) = 0. \] The map is $\psi(x_1:b:x_3:1) = (bx_1^{a_1} : b^{a_2}x_3 : x_3^{a_3} : x_1)$. We multiply every coordinate by $(1+bx_1^{a_1-1})$, and use the relation $x_1(1+bx_1^{a_1-1}) = -x_3(b^{a_2} + x_3^{a_3-1})$, to write down $\psi(x_1:b:x_3:1)$ as
$$ (bx_1^{a_1}(1+bx_1^{a_1-1}) : b^{a_2}x_3(1+bx_1^{a_1-1}) : x_3^{a_3}(1+bx_1^{a_1-1}): x_1(1+bx_1^{a_1-1}))=$$
$$ (-x_3bx_1^{a_1-1}(b^{a_2} + x_3^{a_3-1}) : b^{a_2}x_3(1+bx_1^{a_1-1}): x_3^{a_3}(1+bx_1^{a_1-1}) : -x_3(b^{a_2} + x_3^{a_3-1}))$$
$$= (-bx_1^{a_1-1}(b^{a_2} + x_3^{a_3-1}) : b^{a_2}(1+bx_1^{a_1-1}):x_3^{a_3-1}(1+bx_1^{a_1-1}) : -(b^{a_2} + x_3^{a_3-1})).$$
Hence $\psi(0:b:0:1) = (0:b^{a_2}:0:-b^{a_2}) = (0:1:0:-1)$. A similar argument works for $C_2$.
\end{proof}

\begin{remark}
By Theorem \ref{gcd}, we know that any $X(a_1,a_2,a_3,a_4)$ has a birational model $X(a'_1,a'_2,a'_3,a'_4)$ with gcd$(w'_i,w'_{i+2})=1$. \textbf{From now on, we assume that} gcd$(w_1,w_3)=$gcd$(w_2,w_4)=1$.
\end{remark}

Now we want to study the behavior of $\psi$ on a resolution of the singularities in $X(a_1,a_2,a_3,a_4)$. To do so, we need to write this map in terms of local coordinates in the resolution, which are described in the following theorem.

\begin{theorem}[\cite{R}, Theorem 3.2]
Let $X = \mathbb{C}^2/\mathbb{Z}/m$ be a cyclic singularity of type $\frac{1}{m}(a,b)$, and let $\frac{1}{m}(a,b)=\frac{1}{m}(1,q)$ as explained at the beginning of Section \ref{s2}. Let $N$ be the lattice $N = \mathbb{Z}^2 + \mathbb{Z}\cdot \frac{1}{m}(1,q)$, and \[ M = \{ (r,s) : r + qs \equiv 0 \mod m \} \subset \mathbb{Z}^2 \] the dual lattice of invariant monomials under the action $(x,y) \mapsto (\zeta_m x,\zeta_m^q y)$ with $\zeta_m$ an $m$-th primitive root of unity.

Let $\frac{m}{q} = [b_1,\ldots,b_s]$ and let $z_0,z_1,\ldots , z_{s+1}$ vectors in $N$ defined as \[ z_i = \frac{1}{m}(\alpha_i,\beta_i), \] where $\alpha_i$ and $\beta_i$ are as defined at the beginning of Section \ref{s2}. Then for each $i=0,\ldots ,s$, let $u_i,v_i$ be monomials forming the dual basis of $M$ to $z_i,z_{i+1}$; that is, $ u_i = (\beta_i,-\alpha_i); v_i=(-\beta_{i+1},\alpha_{i+1}).$

Then $X$ has a resolution of singularities $Y\to X$ constructed as follows: \[ Y = U_0\cup U_1 \cup \cdots \cup U_s,\] where $U_i \simeq \mathbb{C}^2$ with coordinates $u_i,v_i$.

The glueing $U_i\cup U_{i+1}$ and the morphism $Y\to X$ are both determined by the definition of $u_i,v_i$ and they consist of \[ U_i\setminus (v_i = 0) \stackrel{\simeq}{\longrightarrow} U_{i+1} \setminus (u_{i+1} = 0) \ \text{ given by }\  u_{i+1} = v_i^{-1},\, v_{i+1} = u_iv_i^{b_i}. \] It follows from the definition of the numbers $\alpha_i$ and $\beta_i$ that $u_0 = x^m$ and $v_s = y^m$, and they satisfy the relations \[ x^m = u_i^{\alpha_{i+1}}v_i^{\alpha_i} \ \text{ and }\  y^m = u_i^{\beta_{i+1}}v_i^{\beta_i}.\]

\label{reidResolution}
\end{theorem}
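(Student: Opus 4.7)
The plan is to prove this via toric geometry, which is the natural setting: the cyclic quotient $X=\C^2/\frac{1}{m}(1,q)$ is the affine toric variety $U_\sigma$ associated to the cone $\sigma=\langle(1,0),(-q,m)\rangle_{\R_{\geq 0}}$ in the lattice $N=\Z^2+\Z\cdot\tfrac{1}{m}(1,q)$, with coordinate ring $\C[\sigma^{\vee}\cap M]$. First I would verify this presentation by checking that the condition $r+qs\equiv 0\pmod m$ defining $M$ is exactly the condition for $x^ry^s$ to be invariant under $(x,y)\mapsto(\zeta_m x,\zeta_m^q y)$, and that the two boundary rays of $\sigma$ correspond to the axes $(y=0)$ and $(x=0)$ with $u_0=x^m$, $v_s=y^m$.

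Next I would build the resolution by subdividing $\sigma$ with the rays $z_i=\tfrac{1}{m}(\alpha_i,\beta_i)$ for $i=0,\ldots,s+1$. Using $\alpha_0=0,\beta_0=m$ and $\alpha_{s+1}=m,\beta_{s+1}=0$ these are the two boundary rays, and the recurrences $\alpha_{i+1}=b_i\alpha_i-\alpha_{i-1}$, $\beta_{i+1}=b_i\beta_i-\beta_{i-1}$ are equivalent to the affine relation $z_{i-1}+z_{i+1}=b_i z_i$, which is the standard toric condition forcing $E_i^2=-b_i$. To show smoothness of each 2-cone $\langle z_i,z_{i+1}\rangle$, I would use the identity $\alpha_{i+1}\gamma_i-\alpha_i\gamma_{i+1}=-1$ together with $\beta_i=q\alpha_i-m\gamma_i$ to compute that the $2\times 2$ determinant is $\pm 1$ in $N$, so $\{z_i,z_{i+1}\}$ is a $\Z$-basis of $N$.

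The dual basis in $M$ is then forced by the equations $\langle u_i,z_i\rangle=1,\ \langle u_i,z_{i+1}\rangle=0$ and $\langle v_i,z_i\rangle=0,\ \langle v_i,z_{i+1}\rangle=1$; solving gives precisely $u_i=(\beta_i,-\alpha_i)$ and $v_i=(-\beta_{i+1},\alpha_{i+1})$, and one checks $u_i,v_i\in M$ using the congruence defining $M$ together with $\beta_i\equiv q\alpha_i\pmod m$. The glueing $u_{i+1}=v_i^{-1}$, $v_{i+1}=u_iv_i^{b_i}$ follows by expressing the new dual basis in terms of the old one along the shared ray $z_{i+1}$; concretely, the column vectors of the new dual basis are integer combinations of the old ones dictated exactly by the recurrence $z_{i+2}=b_{i+1}z_{i+1}-z_i$. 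Finally, the relations $x^m=u_i^{\alpha_{i+1}}v_i^{\alpha_i}$ and $y^m=u_i^{\beta_{i+1}}v_i^{\beta_i}$ are obtained by writing the lattice points $(m,0),(0,m)\in M$ (which represent $x^m,y^m$) in the basis $\{u_i,v_i\}$, i.e.\ by inverting the matrix $\begin{pmatrix}\beta_i&-\alpha_i\\ -\beta_{i+1}&\alpha_{i+1}\end{pmatrix}$ and reading off the coordinates.

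The substantive content is in the subdivision and smoothness step; the rest is dualization. The main obstacle I expect is keeping the sign conventions and the simultaneous recurrences for $\alpha_i$, $\beta_i$, $\gamma_i$ consistent throughout, and matching them against the Hirzebruch--Jung algorithm so that the continued fraction $[b_1,\ldots,b_s]$ really produces the correct subdivision. Once those integer identities are in place, the toric framework delivers the charts, the glueing, and the pullback formulas essentially for free.
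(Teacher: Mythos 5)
The paper does not prove this statement at all: it is quoted verbatim from Reid's notes (\cite{R}, Theorem 3.2), so there is no internal proof to compare against. Your toric argument is the standard (indeed Reid's own) route, and its substance is correct: refine the cone by the rays $z_i=\frac{1}{m}(\alpha_i,\beta_i)$, check each $\{z_i,z_{i+1}\}$ is a $\Z$-basis of $N$ via $\alpha_{i+1}\gamma_i-\alpha_i\gamma_{i+1}=-1$ and $\beta_i=q\alpha_i-m\gamma_i$ (which give $\alpha_{i+1}\beta_i-\alpha_i\beta_{i+1}=m=[N:\Z^2]^{-1}\cdot\det$-normalization), and read off charts, gluing and pullbacks by dualizing. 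Two bookkeeping slips should be fixed, since the explicit formulas are reused heavily later in the paper. First, your initial presentation is internally inconsistent: with the lattice $N=\Z^2+\Z\cdot\frac{1}{m}(1,q)$ the cone must be the first quadrant $\langle(0,1),(1,0)\rangle=\langle z_0,z_{s+1}\rangle$; the cone $\langle(1,0),(-q,m)\rangle$ belongs to the alternative presentation in the standard lattice $\Z^2$, and only in the first convention does $M=\{(r,s):r+qs\equiv 0 \bmod m\}$ parametrize invariant monomials $x^ry^s$ with $u_0=x^m$, $v_s=y^m$. You silently switch to the correct convention when you name the boundary rays, but as written the setup clashes. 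Second, your duality equations are transposed: with $u_i=(\beta_i,-\alpha_i)$ one computes $\langle u_i,z_i\rangle=0$ and $\langle u_i,z_{i+1}\rangle=1$ (and symmetrically $\langle v_i,z_i\rangle=1$, $\langle v_i,z_{i+1}\rangle=0$), so solving the system you wrote ($\langle u_i,z_i\rangle=1$, $\langle u_i,z_{i+1}\rangle=0$) would interchange $u_i$ and $v_i$ and hence transpose the gluing maps and the relations $x^m=u_i^{\alpha_{i+1}}v_i^{\alpha_i}$, $y^m=u_i^{\beta_{i+1}}v_i^{\beta_i}$. These are exactly the sign-convention hazards you anticipated; once corrected, the toric framework does deliver the theorem as you describe.
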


\begin{theorem}
Let $\sigma\colon \tilde{X} \to X(a_1,a_2,a_3,a_4)$ be the minimal resolution, and let $$\hat{X} \stackrel{\varphi}{\longrightarrow} \tilde{X} \stackrel{\sigma}{\longrightarrow}X(a_1,a_2,a_3,a_4)$$ be the minimal log resolution of $X$ together with the key configuration of curves. Then $\psi\circ \sigma \circ \varphi$ is a birational morphism.
\label{morphism}
\end{theorem}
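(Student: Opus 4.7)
The plan is to work locally at each of the four singular points of $X$, using the cyclic symmetry to reduce to the analysis at $p_1$. By Proposition \ref{imageCurves}, the rational map $\psi$ is already a morphism on $X \setminus \{p_1,p_2,p_3,p_4\}$, so the only obstruction to $\psi \circ \sigma \circ \varphi$ being a morphism is the indeterminacy above these four singular points. At $p_1$, as in the proof of Proposition \ref{singularityType}, we can take $x_3,x_4$ as local coordinates on the ambient $\Z/w_1$-cover, with $x_2$ determined implicitly via the Koll\'ar equation as $x_2 = -x_3^{a_3}x_4 - x_4^{a_4} + (\text{higher order})$. Substituting into $y_j = x_j^{a_j}x_{j+1}$ shows that all four sections defining $\psi$ vanish at $p_1$, so $p_1$ lies in the base locus and must be resolved.

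Next, I would use Reid's Theorem \ref{reidResolution} to describe $\sigma \colon \tilde{X} \to X$ above $p_1$ explicitly, where the singularity type is $\tfrac{1}{w_1}(w_3,w_4) = \tfrac{1}{w_1}(1,q_1)$. On each affine chart $U_i \simeq \C^2$ of the resolution with coordinates $(u_i,v_i)$, the relations $x_3^{w_1} = u_i^{\alpha_{i+1}}v_i^{\alpha_i}$ and $x_4^{w_1} = u_i^{\beta_{i+1}}v_i^{\beta_i}$ let me write the pulled-back sections $\sigma^*y_j$ as monomials in $u_i,v_i$ times analytic units. A direct computation of the orders of vanishing of these four monomials along each exceptional curve $E_i$ of $\sigma^{-1}(p_1)$ yields the minimum order on each component; this minimum determines the base locus of $\psi \circ \sigma$ on that component, and by standard arguments each $E_i$ either is no longer in the base locus of $\psi \circ \sigma$ or contributes at most finitely many base points.

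I would then compare this base locus with the key configuration $\Gamma$. Exactly three members of $\Gamma$ pass through $p_1$, namely $C_2$, $\Gamma_{1,2}$, and $\Gamma_{4,1}$, and the pullback formulas \eqref{pullbackFormula} applied to $C_2 = (x_2=x_4=0)$, $\Gamma_{1,2} = (x_3 = x_4^{a_4}+x_1^{a_1-1}x_2 = 0)$, and $\Gamma_{4,1} = (x_2 = x_3^{a_3}+x_4^{a_4-1}x_1=0)$ give the multiplicities of their strict transforms along the exceptional chain. The claim I would verify is that the residual base points of $\psi \circ \sigma$ on the chain are exactly the (transverse) intersection points of the strict transforms of these three curves with the exceptional components (and, where applicable, with each other). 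Blowing up precisely these points—which is exactly what the minimal log resolution $\varphi$ of $(X,\Gamma)$ does above $p_1$—then separates the four sections defining $\psi$, making $\psi \circ \sigma \circ \varphi$ a morphism in a neighborhood of $\sigma^{-1}(p_1)$. The cases of $p_2, p_3, p_4$ are identical by cyclic relabelling of the indices.

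The main obstacle is the bookkeeping in the Reid-coordinate calculation: one must write down the four vanishing orders $\mathrm{ord}_{E_i}(\sigma^*y_j)$ as explicit linear combinations of $\alpha_i, \beta_i, \alpha_{i+1}, \beta_{i+1}$ and the exponents $a_j, w_j$, identify which two of the four sections realize the minimum on each $E_i$, and match the resulting residual base points on the chain with the intersection points of the strict transforms of $C_2$, $\Gamma_{1,2}$, $\Gamma_{4,1}$ dictated by \eqref{pullbackFormula}. This combinatorial comparison between the Hirzebruch-Jung data $[b_1,\ldots,b_s]$ attached to $\tfrac{1}{w_1}(1,q_1)$ and the geometry of the key configuration is where the real content of the theorem lies; the formula $a_iW_i + W_{i+1}=D$ (and its residues modulo $w_i$) should be the arithmetic identity that makes the match work.
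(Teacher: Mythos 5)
Your strategy is the same as the paper's: the proof of Theorem \ref{morphism} rests on writing $\psi\circ\sigma$ explicitly in the Reid charts $U_i$ over each $p_k$ (Theorem \ref{reidResolution}), factoring out the common monomial in $u_i,v_i$ from the four sections, and reading off the indeterminacy locus, exactly the order-of-vanishing bookkeeping you describe; the relevant arithmetic is carried by the quantities $a_2\alpha_{4,i}-(a_3-1)\beta_{4,i}$ (at $p_4$), and the compatibility of coordinates under further blowups is Proposition \ref{relationBlowup}. So in outline your plan reproduces Propositions \ref{strictTransformEquation}--\ref{relationBlowup} and Theorem \ref{definedTransversally}.

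There is, however, one claim in your plan that is false as stated and would derail the verification step: the residual base points of $\psi\circ\sigma$ over $p_1$ are \emph{not} the transverse intersection points of the strict transforms of $C_2$, $\Gamma_{1,2}$, $\Gamma_{4,1}$ with the exceptional components. Two of those three curves ($C_2'$ and $\Gamma'_{1,2}$) are the toric boundary divisors $E_{1,0}$ and $E_{1,s_1+1}$ and always meet the chain transversally at its ends, where the map is already defined; more to the point, Theorem \ref{definedTransversally} says that when $\Gamma'_{4,1}$ meets the chain transversally in a single component, $\psi\circ\sigma$ is defined on the \emph{entire} exceptional divisor over $p_1$. The genuine base locus is at most one point per $p_i$: the node $E_{i,j}\cap E_{i,j+1}$ through which $\Gamma'_{i-1,i}$ passes when $a_2\alpha_{4,j}-(a_3-1)\beta_{4,j}<0<a_2\alpha_{4,j+1}-(a_3-1)\beta_{4,j+1}$ holds strictly (in the $p_4$ normalization). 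These nodes are exactly the non-SNC points of the total transform, hence exactly the centers of the minimal log resolution $\varphi$, which is the correct matching you need. You should also make explicit that one blowup may not suffice (the proper transform can again pass through a node of the new chain), so the argument must be iterated with the coordinate compatibility of Proposition \ref{relationBlowup} and a termination statement; the paper handles this by observing that the sign-change interval shrinks at each step.
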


To prove the Theorem \ref{morphism} we have to compute the strict transform of the curves $\Gamma_{i,i+1}$ on $\tilde{X}$. Let $E_{i,j}$ be the components of the exceptional divisor over the point $p_i$, let $\frac{1}{w_i}(w_{i+2},w_{i+3}) = \frac{1}{w_i}(1,q_i)$, and let $\alpha_{i,j}$, $\beta_{i,j}$ and $\gamma_{i,j}$ the integers defined for the continued fraction of $\frac{w_i}{q_i}$. Recall from the proof of Proposition \ref{singularityType} that $x_{i+2}$ and $x_{i+3}$ are toric local coordinates at $p_i$, so we have that $E_{i,0}$ and $E_{i,s_i+1}$ are the strict transform of $(x_{i+3} = 0)$ and $(x_{i+2} = 0)$ at the open set $(x_i \neq 0)$. This means that $E_{1,0} = E_{3,0}$ and $E_{2,0} = E_{4,0}$ and correspond to the strict transform of $C_2$ and $C_1$ respectively. On the other hand, $E_{i,s_i+1}$ corresponds to the strict transform of the curve $\Gamma_{i,i+1}$. Then it remains to compute the strict transform of $\Gamma_{i,i+1}$ around the point $p_{i+1}$, and without loss of generality, we will compute the strict transform $\Gamma_{3,4}$ at the point $p_4$. As all the results will hold locally for $\Gamma_{3,4}$, we can modify the following proofs for every $\Gamma_{i,i+1}$.

%%%%%%%%%%%%%% Figura de resolución de curvas.

\begin{figure}[h]
\psscalebox{.6 .6} % Change this value to rescale the drawing.
{
\begin{pspicture}(0,-7.461949)(18.83,7.461949)
\psline[linecolor=black, linewidth=0.038](0.7882351,6.992537)(2.2,5.5807724)(2.2,5.5807724)
\rput[bl](0.7999999,7.1219487){$E_{2,s_2}$}
\psline[linecolor=black, linewidth=0.038](5.4823527,5.5807724)(6.894117,6.992537)(6.894117,6.992537)
\psline[linecolor=black, linewidth=0.038](6.423529,6.992537)(7.8352942,5.5807724)(7.8352942,5.5807724)
\psline[linecolor=black, linewidth=0.038](7.152941,5.9454784)(11.3620615,5.9454784)(11.752941,5.9454784)
\psline[linecolor=black, linewidth=0.038](10.988235,5.5807724)(12.4,6.992537)(11.458823,6.0513606)(11.952941,6.5454783)(12.152941,6.7454786)
\psline[linecolor=black, linewidth=0.038](11.929412,6.992537)(13.341176,5.5807724)
\psline[linecolor=black, linewidth=0.038](13.752941,5.5807724)(15.164705,6.992537)(15.164705,6.992537)
\psdots[linecolor=black, dotsize=0.08](13.3529415,5.945478)
\psdots[linecolor=black, dotsize=0.08](13.552941,5.945478)
\psdots[linecolor=black, dotsize=0.08](13.752941,5.945478)
\psdots[linecolor=black, dotsize=0.08](4.952941,5.945478)
\psdots[linecolor=black, dotsize=0.08](5.152941,5.945478)
\psdots[linecolor=black, dotsize=0.08](5.352941,5.945478)
\psline[linecolor=black, linewidth=0.038](14.729412,6.992537)(16.141176,5.5807724)
\psdots[linecolor=black, dotsize=0.08](16.15294,5.945478)
\psdots[linecolor=black, dotsize=0.08](16.352942,5.945478)
\psdots[linecolor=black, dotsize=0.08](16.55294,5.945478)
\psline[linecolor=black, linewidth=0.038](16.55294,5.5807724)(17.964706,6.992537)(17.964706,6.992537)
\psline[linecolor=black, linewidth=0.038](2.552941,5.5807724)(3.9647062,6.992537)(3.9647062,6.992537)
\psdots[linecolor=black, dotsize=0.08](2.152941,5.945478)
\psdots[linecolor=black, dotsize=0.08](2.352941,5.945478)
\psdots[linecolor=black, dotsize=0.08](2.552941,5.945478)
\psline[linecolor=black, linewidth=0.038](3.5294118,6.992537)(4.9411764,5.5807724)
\rput[bl](2.6,6.4519486){$E_{2,j_2}$}
\rput[bl](4.2,6.321949){$E_{2,j_2-1}$}
\rput[bl](5.66,6.421949){$E_{2,2}$}
\rput[bl](7.1,6.321949){$E_{2,1}$}
\rput[bl](8.042941,5.4854784){$E_{2,0} = C_1' = E_{4,0}$}
\rput[bl](10.9,6.321949){$E_{4,1}$}
\rput[bl](12.34,6.521949){$E_{4,2}$}
\rput[bl](13.52,6.521949){$E_{4,j_4-1}$}
\rput[bl](15.600001,6.2019486){$E_{4,j_4}$}
\rput[bl](17.06,6.961949){$E_{4,s_4}$}
\psline[linecolor=black, linewidth=0.038](0.7882351,2.992537)(2.2,1.5807723)(2.2,1.5807723)
\rput[bl](0.0,2.5219488){$E_{1,s_1}$}
\psline[linecolor=black, linewidth=0.038](5.4823527,1.5807723)(6.894117,2.992537)(6.894117,2.992537)
\psline[linecolor=black, linewidth=0.038](6.423529,2.992537)(7.8352942,1.5807723)(7.8352942,1.5807723)
\psline[linecolor=black, linewidth=0.038](7.152941,1.9454783)(11.3620615,1.9454783)(11.752941,1.9454783)
\psline[linecolor=black, linewidth=0.038](11.929412,2.992537)(13.341176,1.5807723)
\psline[linecolor=black, linewidth=0.038](10.988235,1.5807723)(12.4,2.992537)(11.458823,2.0513606)(11.952941,2.5454783)(12.152941,2.7454782)
\psline[linecolor=black, linewidth=0.038](13.752941,1.5807723)(15.164705,2.992537)(15.164705,2.992537)
\psdots[linecolor=black, dotsize=0.08](13.3529415,2.1454782)
\psdots[linecolor=black, dotsize=0.08](13.552941,2.1454782)
\psdots[linecolor=black, dotsize=0.08](13.752941,2.1454782)
\psdots[linecolor=black, dotsize=0.08](4.952941,2.1454782)
\psdots[linecolor=black, dotsize=0.08](5.152941,2.1454782)
\psdots[linecolor=black, dotsize=0.08](5.352941,2.1454782)
\psline[linecolor=black, linewidth=0.038](14.729412,2.992537)(16.141176,1.5807723)
\psdots[linecolor=black, dotsize=0.08](16.15294,2.1454782)
\psdots[linecolor=black, dotsize=0.08](16.352942,2.1454782)
\psdots[linecolor=black, dotsize=0.08](16.55294,2.1454782)
\psline[linecolor=black, linewidth=0.038](16.55294,1.5807723)(17.964706,2.992537)(17.964706,2.992537)
\psline[linecolor=black, linewidth=0.038](2.552941,1.5807723)(3.9647062,2.992537)(3.9647062,2.992537)
\psdots[linecolor=black, dotsize=0.08](2.152941,2.1454782)
\psdots[linecolor=black, dotsize=0.08](2.352941,2.1454782)
\psdots[linecolor=black, dotsize=0.08](2.552941,2.1454782)
\psline[linecolor=black, linewidth=0.038](3.5294118,2.992537)(4.9411764,1.5807723)
\rput[bl](3.0,1.6219488){$E_{1,j_1}$}
\rput[bl](4.2,2.3219488){$E_{1,j_1-1}$}
\rput[bl](5.89,1.6219488){$E_{1,2}$}
\rput[bl](7.1,2.3219488){$E_{1,1}$}
\rput[bl](8.042941,1.4854783){$E_{1,0} = C_2' = E_{3,0}$}
\rput[bl](10.9,2.3219488){$E_{3,1}$}
\rput[bl](12.25,1.7219487){$E_{3,2}$}
\rput[bl](13.52,2.5219488){$E_{3,j_3-1}$}
\rput[bl](15.100001,1.6019489){$E_{3,j_3}$}
\rput[bl](18.06,2.5619488){$E_{3,s_3}$}
\rput[bl](3.7996078,4.4254785){$E_{2,s_2+1} = \Gamma'_{2,3}$}
\rput[bl](0.2596078,3.7354782){$\Gamma'_{1,2} = E_{1,s_1+1}$}
\rput[bl](16.369608,3.6254783){$E_{3,s_3+1} = \Gamma'_{3,4}$}
\rput[bl](7.299608,3.4454782){$E_{4,s_4+1} = \Gamma'_{4,1}$}
\rput[bl](9.689783,-0.32820603){$\sigma$}
\psbezier[linecolor=black, linewidth=0.038](5.335525,-2.440988)(6.656396,-3.0430932)(13.90076,-3.315474)(15.139927,-2.72770471422942)
\psbezier[linecolor=black, linewidth=0.038](13.802172,-7.171202)(11.313487,-6.3116646)(7.432577,-6.0392833)(4.2766485,-6.799696187816097)
\psbezier[linecolor=black, linewidth=0.038](6.177771,-1.9174432)(6.552248,-2.9990857)(6.0893087,-5.3860884)(5.010733,-7.293956141584525)
\psbezier[linecolor=black, linewidth=0.04](5.895798,-2.2545218)(6.352941,-3.1545217)(12.652941,-6.9545217)(12.552941,-6.854521756835937)(12.452941,-6.754522)(11.952941,-6.354522)(12.010084,-5.9116645)
\rput{22.435516}(-1.1863874,-3.899587){\psframe[linecolor=white, linewidth=0.04, fillstyle=solid, dimen=outer](9.55378,-4.830249)(8.9222,-5.051301)}
\psline[linecolor=black, linewidth=0.038, arrowsize=0.05291667cm 2.0,arrowlength=1.4,arrowinset=0.0]{<-}(9.489783,-0.9703112)(9.489783,0.29284665)
\psbezier[linecolor=black, linewidth=0.04](15.752941,7.145478)(15.552941,6.145478)(14.752941,6.945478)(14.952941,6.74547787890625)(15.152941,6.545478)(14.752941,4.7454777)(17.910835,1.8717941)
\psframe[linecolor=white, linewidth=0.04, fillstyle=solid, dimen=outer](15.695045,4.819163)(15.48452,4.6086364)
\psbezier[linecolor=black, linewidth=0.038](3.489783,2.1875834)(2.7266254,2.1296887)(2.5695207,3.3641381)(3.552941,3.545478243164063)(4.5363617,3.7268183)(16.183655,4.69954)(17.15294,4.9454784)(18.122227,5.1914163)(17.663467,6.4717937)(16.858204,6.6086364)
\rput{10.674068}(1.0506088,-2.3416893){\psframe[linecolor=white, linewidth=0.04, fillstyle=solid, dimen=outer](13.526965,4.704877)(12.590123,4.1996136)}
\psbezier[linecolor=black, linewidth=0.038](1.4897833,6.6086364)(0.5213915,6.8580704)(-0.33519426,5.3192406)(0.647678,5.134951927374589)(1.6305503,4.9506636)(13.3529415,4.9454784)(14.552941,4.1454782)(15.752941,3.3454783)(14.542415,2.1560047)(14.552941,2.1454782)
\psframe[linecolor=white, linewidth=0.04, fillstyle=solid, dimen=outer](3.2792568,5.134952)(3.0687308,4.819163)
\psbezier[linecolor=black, linewidth=0.038](2.9634676,6.39811)(3.352941,5.9454784)(3.373832,5.4152946)(2.9634676,4.5033729800061675)(2.5531032,3.5914514)(1.5529411,2.1454782)(1.0687306,2.1875834)
\rput[bl](5.58452,-3.038732){$p_2$}
\rput[bl](14.531888,-2.491364){$p_4$}
\rput[bl](12.779257,-6.7682056){$p_3$}
\rput[bl](4.968731,-6.5229425){$p_1$}
\rput[bl](9.695046,-2.8545218){$C_1$}
\rput[bl](8.958204,-6.854522){$C_2$}
\rput[bl](5.279257,-4.549258){$\Gamma_{1,2}$}
\rput[bl](13.595046,-4.7597847){$\Gamma_{3,4}$}
\rput[bl](6.6687307,-5.5439954){$\Gamma_{4,1}$}
\rput[bl](10.610836,-5.607153){$\Gamma_{2,3}$}
\psline[linecolor=black, linewidth=0.04, arrowsize=0.05291667cm 2.0,arrowlength=1.4,arrowinset=0.0]{->}(6.5424147,-2.5492587)(7.4897833,-2.6545217)
\rput[bl](6.658204,-2.433469){$x_1=0$}
\psline[linecolor=black, linewidth=0.04, arrowsize=0.05291667cm 2.0,arrowlength=1.4,arrowinset=0.0]{->}(13.910836,-2.8650482)(12.858204,-2.8650482)
\psline[linecolor=black, linewidth=0.04, arrowsize=0.05291667cm 2.0,arrowlength=1.4,arrowinset=0.0]{->}(13.70031,-3.2861006)(12.752941,-3.7071536)
\rput[bl](6.58452,-4.091364){$x_4=0$}
\rput[bl](12.884521,-2.7492588){$x_3=0$}
\psline[linecolor=black, linewidth=0.04, arrowsize=0.05291667cm 2.0,arrowlength=1.4,arrowinset=0.0]{->}(5.4897833,-6.6545215)(6.647678,-6.549258)
\psline[linecolor=black, linewidth=0.04, arrowsize=0.05291667cm 2.0,arrowlength=1.4,arrowinset=0.0]{->}(5.38452,-6.3387322)(5.7003098,-5.3913636)
\psline[linecolor=black, linewidth=0.04, arrowsize=0.05291667cm 2.0,arrowlength=1.4,arrowinset=0.0]{->}(12.647678,-6.7597847)(12.963467,-5.7071533)
\psline[linecolor=black, linewidth=0.04, arrowsize=0.05291667cm 2.0,arrowlength=1.4,arrowinset=0.0]{->}(12.437152,-6.8703113)(11.06873,-6.6597853)
\psline[linecolor=black, linewidth=0.04, arrowsize=0.05291667cm 2.0,arrowlength=1.4,arrowinset=0.0]{->}(6.4371514,-2.9703114)(7.2792573,-3.7071536)
\rput[bl](11.18452,-7.291364){$x_2=0$}
\rput[bl](4.358204,-6.033469){$x_3=0$}
\rput[bl](12.98452,-6.1492586){$x_1=0$}
\rput[bl](12.300309,-3.986101){$x_2=0$}
\rput[bl](5.9003096,-6.9861){$x_4=0$}
\psbezier[linecolor=black, linewidth=0.04](4.5529413,-7.054522)(5.352941,-6.2545223)(14.3529415,-3.054522)(14.552941,-2.6545221210937497)
\psbezier[linecolor=black, linewidth=0.04](12.552941,-7.454522)(12.152941,-6.454522)(13.552941,-3.6545222)(13.952941,-3.0545221210937497)(14.3529415,-2.4545221)(13.152941,-4.2545223)(14.3529415,-3.8545222)
\end{pspicture}
}
\caption{Key configuration of curves on $X(a_1,a_2,a_3,a_4)$ and the curve configuration of the minimal resolution $\tilde{X}$. }
\label{keyConfigurationResolution}
\end{figure}
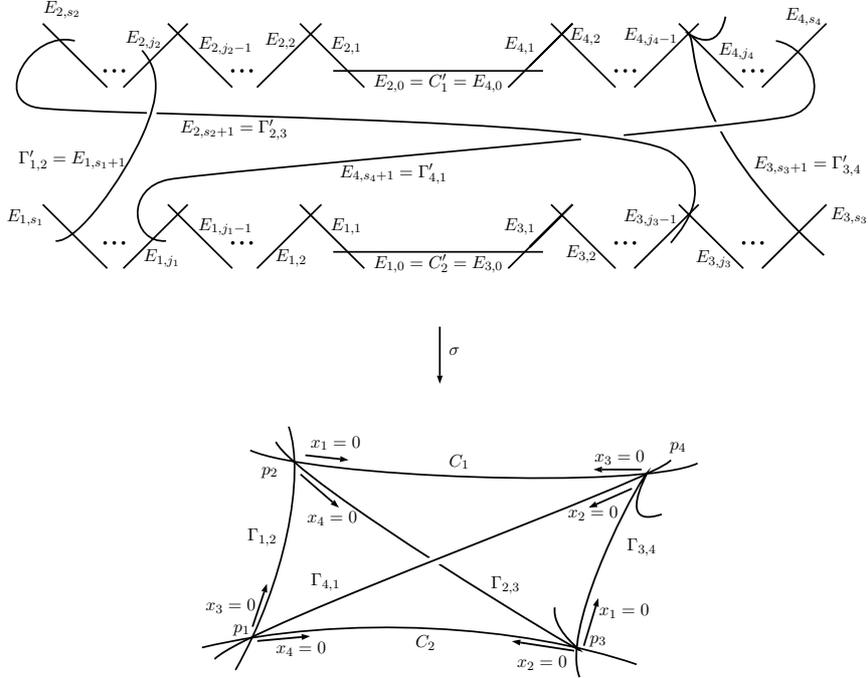

%%%%%%%%%%%% Termina figura resolución curvas.

\begin{proposition}
Let $U_{4,j}$ the open sets of the resolution of $\frac{1}{w_4}(1,q_4)$ as defined in Theorem \ref{reidResolution}. Then the local equation of the strict transform of the curve $\Gamma_{3,4}$ restricted to the open set $U_{4,j}$ is \[\Gamma_{34}' = \left\{ \begin{array}{r}
1 + u_j^{((a_3-1)\beta_{4,j+1} - a_2\alpha_{4,j+1})/w_4}v_j^{((a_3-1)\beta_{4,j} - a_2\alpha_{4,j})/w_4} = 0\\
u_j^{(a_2\alpha_{4,j+1} - (a_3-1)\beta_{4,j+1})/w_4}v_j^{(a_2\alpha_{4,j} - (a_3-1)\beta_{4,j})/w_4} + 1 = 0\\
u_j^{(a_2\alpha_{4,j+1} - (a_3-1)\beta_{4,j+1})/w_4} + v_j^{((a_3-1)\beta_{4,j} - a_2\alpha_{4,j})/w_4} = 0
\end{array} \right.,\] if \begin{itemize}
\item[] $a_2\alpha_{4,j} - (a_3 - 1)\beta_{4,j} < a_2\alpha_{4,j+1} - (a_3-1)\beta_{4,j+1} \leq 0$,
\item[] $0\leq a_2\alpha_{4,j} - (a_3 - 1)\beta_{4,j} < a_2\alpha_{4,j+1} - (a_3-1)\beta_{4,j+1}$,
\item[] $a_2\alpha_{4,j} - (a_3 - 1)\beta_{4,j} \leq 0 \leq a_2\alpha_{4,j+1} - (a_3-1)\beta_{4,j+1}$,
\end{itemize} respectively. \label{strictTransformEquation}
\end{proposition}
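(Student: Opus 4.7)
The plan is to work locally at $p_4$, translate the equation of $\Gamma_{3,4}$ into Reid's toric charts via Theorem \ref{reidResolution}, and then split into three subcases according to the signs of the resulting exponents.

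First I would set up the local picture. In the affine chart $\{x_4=1\}$ the surface is cut by
$$F = x_1^{a_1}x_2 + x_2^{a_2}x_3 + x_3^{a_3} + x_1 = 0,$$
and $\partial F/\partial x_1(p_4)=1\neq 0$, so by the implicit function theorem $x_1$ is analytic in $(x_2,x_3)$ near $p_4$. Hence $(x_2,x_3)$ serve as uniformizing coordinates on the smooth cover of the singularity $\frac{1}{w_4}(w_2,w_3)$, with action $(x_2,x_3)\mapsto(\zeta^{w_2}x_2,\zeta^{w_3}x_3)$ as in the proof of Proposition \ref{singularityType}. Restricting the equation to $\{x_1=0\}$ gives $x_3(x_2^{a_2}+x_3^{a_3-1})=0$, whose two factors cut out $C_1$ and $\Gamma_{3,4}$ respectively; thus the local equation of $\Gamma_{3,4}$ at $p_4$ is $x_2^{a_2}+x_3^{a_3-1}=0$.

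Next I would translate to Reid's toric charts. By Theorem \ref{reidResolution}, applied with $x=x_2$ and $y=x_3$, the coordinate monomials on $U_{4,j}$ are $u_j = x_2^{\beta_{4,j}}x_3^{-\alpha_{4,j}}$ and $v_j = x_2^{-\beta_{4,j+1}}x_3^{\alpha_{4,j+1}}$. The identity $a_i W_i + W_{i+1} = D$ for $i=2,3$ yields $a_2w_2-(a_3-1)w_3 = w_4$, so $x_2^{a_2}x_3^{-(a_3-1)}$ is a $\Z/w_4$-invariant monomial, hence expressible uniquely as $u_j^A v_j^B$. Using the determinant identity $\alpha_{4,j+1}\beta_{4,j}-\alpha_{4,j}\beta_{4,j+1}=w_4$, inversion of the associated $2\times 2$ linear system gives
$$A = \frac{a_2\alpha_{4,j+1}-(a_3-1)\beta_{4,j+1}}{w_4},\qquad B = \frac{a_2\alpha_{4,j}-(a_3-1)\beta_{4,j}}{w_4}.$$

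Finally I would perform the sign analysis. Since $\alpha_{4,\bullet}$ is strictly increasing and $\beta_{4,\bullet}$ strictly decreasing in its index, and $a_2,a_3-1>0$, we have $A>B$ in every chart. The equation $x_2^{a_2}+x_3^{a_3-1}=0$ is equivalent to $u_j^Av_j^B+1=0$, and the three possible sign configurations $B<A\le 0$, $0\le B<A$, and $B\le 0\le A$ are mutually exclusive and exhaustive; clearing denominators by $u_j^{-A}v_j^{-B}$, by nothing, and by $v_j^{-B}$ respectively produces precisely the three equations of the proposition. In each case the resulting polynomial is coprime to $u_j$ and $v_j$, so it already cuts out the strict transform. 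The main technical point is the monomial identification in step two, where one must be careful that the expression $u_j^A v_j^B$ is exact and not just correct up to a $w_4$-th root of unity; this is ensured by solving in the dual lattice $M$ rather than taking $w_4$-th roots. The remainder is routine.
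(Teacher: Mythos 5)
Your proof is correct and follows essentially the same route as the paper: reduce to the local equation $x_2^{a_2}+x_3^{a_3-1}=0$ at $p_4$, transfer it to the charts $U_{4,j}$ of Theorem \ref{reidResolution}, and split into the three sign cases by clearing the appropriate monomial. The only (harmless) difference is that you obtain the exponents $A,B$ by solving for the invariant monomial $x_2^{a_2}x_3^{-(a_3-1)}$ in the dual lattice, whereas the paper substitutes the formal fractional powers $x_2=u_j^{\alpha_{4,j+1}/w_4}v_j^{\alpha_{4,j}/w_4}$, $x_3=u_j^{\beta_{4,j+1}/w_4}v_j^{\beta_{4,j}/w_4}$ and factors directly; both yield the same integers.
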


\begin{proof}
We can assume that $x_4 = 1$ and $x_1 = 0$, so we must study the curve $(x_2^{a_2} + x_3^{a_3-1} =0) \subset (x_4 \not = 0) \subset \P(w_2,w_3,w_4)$. By Theorem \ref{reidResolution}, to find the total transform of $\Gamma_{3,4}$ in $U_i$ we replace $x_2$ and $x_3$ with $u_i^{\alpha_{4,i+1}/w_4}v_i^{\alpha_{4,i}/w_4}$ and $u_i^{\beta_{4,i+1}/w_4}v_i^{\beta_{4,i}/w_4}$ respectively, and so the total transform is \[ (u_i^{\alpha_{4,i+1}/w_4}v_i^{\alpha_{4,i}/w_4})^{a_2} + (u_i^{\beta_{4,i+1}/w_4}v_i^{\beta_{4,i}/w_4})^{a_3-1} = 0. \]

Recall that $\alpha_{4,i} < \alpha_{4,i+1}$ and $\beta_{4,i+1} < \beta_{4,i}$, so \[a_2\alpha_{4,i} - (a_3-1)\beta_{4,i} < a_2\alpha_{4,i+1} - (a_3-1)\beta_{4,i+1}.\] Thus if $a_2\alpha_{4,i} - (a_3 - 1)\beta_{4,i} < a_2\alpha_{4,i+1} - (a_3-1)\beta_{4,i+1} \leq 0$, we factor out $(u_i^{\alpha_{4,i+1}/w_4}v_i^{\alpha_{4,i}/w_4})^{a_2}$. If $0\leq a_2\alpha_{4,i} - (a_3 - 1)\beta_{4,i} < a_2\alpha_{4,i+1} - (a_3-1)\beta_{4,i+1}$, we factor out $(u_i^{\beta_{4,i+1}/w_4}v_i^{\beta_{4,i}/w_4})^{a_3-1}$. If $a_2\alpha_{4,i} - (a_3 - 1)\beta_{4,i} \leq 0 \leq a_2\alpha_{4,i+1} - (a_3-1)\beta_{4,i+1}$, we factor out $u_i^{((a_3-1)\beta_{4,i+1})/w_4}$ and $v_i^{a_2\alpha_{4,i}/w_4}$, obtaining what we wanted to prove.
\end{proof}

Notice that $\Gamma'_{3,4}$ intersects the exceptional divisor if and only if $$ a_2\alpha_{4,i} - (a_3 - 1)\beta_{4,i} \leq 0 \leq a_2\alpha_{4,i+1} - (a_3-1)\beta_{4,i+1}.$$ If $a_2\alpha_{4,i} - (a_3 - 1)\beta_{4,i} < 0 < a_2\alpha_{4,i+1} - (a_3-1)\beta_{4,i+1}$, then the curve intersects two components of the exceptional divisor, and if $a_2\alpha_{4,i} - (a_3 - 1)\beta_{4,i}= 0$ or $a_2\alpha_{4,i+1} - (a_3-1)\beta_{4,i+1}=0$, then it intersects only one component.

\begin{proposition}
Let us say that $\Gamma'_{3,4}$ intersects the exceptional divisor over $p_4$ at the components $E_{4,j}$ and $E_{4,j+1}$ with multiplicity $m_j$ and $m_{j+1}$ respectively (possibly $m_{j+1} = 0$). Then $a_3 - 1 = \alpha_{4,j}m_j + \alpha_{4,j+1}m_{j+1}$ and $a_2 = \beta_{4,j}m_j + \beta_{4,j+1}m_{j+1}$.
\label{intersectionCurves}
\end{proposition}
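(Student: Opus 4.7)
My plan is to read off $m_j$ and $m_{j+1}$ directly from the local equations of Proposition \ref{strictTransformEquation} and then reduce the two claimed identities to the classical determinant relation
$$\alpha_{4,j+1}\beta_{4,j} - \alpha_{4,j}\beta_{4,j+1} = w_4,$$
which is standard for Hirzebruch--Jung sequences and follows from $\alpha_{i+1}\gamma_i - \alpha_i\gamma_{i+1} = -1$ together with $\beta_i = q_4\alpha_i - w_4\gamma_i$, both recalled at the start of Section \ref{s2}.

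First I would fix the orientation convention in the chart $U_{4,j}$ of Theorem \ref{reidResolution}: a direct check in the base case (where $u_0 = x_2^{w_4}$ vanishes along the newly created curve while $v_0 = x_3/x_2^{q_4}$ vanishes along the strict transform of the axis) shows that locally in $U_{4,j}$ the exceptional component $E_{4,j}$ is cut out by $v_j = 0$ and $E_{4,j+1}$ is cut out by $u_j = 0$. By Proposition \ref{strictTransformEquation}, $\Gamma'_{3,4}$ meets the exceptional divisor over $p_4$ exactly in the middle-case charts, where its local equation is $u_j^A + v_j^B = 0$ with
$$A = \tfrac{a_2\alpha_{4,j+1} - (a_3-1)\beta_{4,j+1}}{w_4} \ge 0,\qquad B = \tfrac{(a_3-1)\beta_{4,j} - a_2\alpha_{4,j}}{w_4} \ge 0.$$
A length-of-quotient-ring computation then gives
$$m_j = \dim_{\C}\mathcal{O}_{U_{4,j},0}/(v_j,\, u_j^A+v_j^B) = A,\qquad m_{j+1} = \dim_{\C}\mathcal{O}_{U_{4,j},0}/(u_j,\, u_j^A+v_j^B) = B.$$
The degenerate situation $B = 0$ (where $\Gamma'_{3,4}$ crosses $E_{4,j}$ in $A$ distinct transverse points rather than at the node $E_{4,j}\cap E_{4,j+1}$) is subsumed by the same formula, and analogously for $A = 0$.

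Substituting these values into the two linear combinations and carrying out the algebra, the $\alpha_{4,j}\alpha_{4,j+1}$ and $\beta_{4,j}\beta_{4,j+1}$ cross terms cancel, leaving
$$\alpha_{4,j}m_j + \alpha_{4,j+1}m_{j+1} = \tfrac{(a_3-1)\bigl(\alpha_{4,j+1}\beta_{4,j} - \alpha_{4,j}\beta_{4,j+1}\bigr)}{w_4},$$
$$\beta_{4,j}m_j + \beta_{4,j+1}m_{j+1} = \tfrac{a_2\bigl(\alpha_{4,j+1}\beta_{4,j} - \alpha_{4,j}\beta_{4,j+1}\bigr)}{w_4},$$
which collapse to $a_3-1$ and $a_2$ respectively by the determinant identity above. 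The only real obstacle is the bookkeeping of the orientation convention, i.e.\ ensuring that $A$ is paired with $(\alpha_{4,j},\beta_{4,j})$ and $B$ with $(\alpha_{4,j+1},\beta_{4,j+1})$ rather than the reverse; the preliminary toric check in $U_{4,0}$ fixes this once and for all.
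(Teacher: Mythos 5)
Your proof is correct, but it takes a genuinely different route from the paper's. The paper argues globally: it computes $w_1H\cdot w_2H=\frac{1}{w_3}+\frac{a_3}{w_4}$ on $X$ directly and again via $\sigma^*(w_1H)\cdot\sigma^*(w_2H)$, using that pull-backs are orthogonal to exceptional curves together with the pull-back coefficients $\alpha_{4,i}/w_4$ and $\beta_{4,i}/w_4$ from \eqref{pullbackFormula}; matching the two computations (and repeating the argument inside $\P(w_2,w_3,w_4)$ for the second identity) yields exactly the two linear combinations $\sum_i\alpha_{4,i}\,\Gamma'_{3,4}\cdot E_{4,i}=a_3-1$ and $\sum_i\beta_{4,i}\,\Gamma'_{3,4}\cdot E_{4,i}=a_2$, with no need to identify which chart coordinate cuts which component. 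Your argument is instead purely local: you read $m_j=A$ and $m_{j+1}=B$ off the equations of Proposition \ref{strictTransformEquation} and reduce to the determinant identity $\alpha_{4,j+1}\beta_{4,j}-\alpha_{4,j}\beta_{4,j+1}=w_4$, which indeed follows from $\beta_i=q\alpha_i-m\gamma_i$ and $\alpha_{i+1}\gamma_i-\alpha_i\gamma_{i+1}=-1$. This buys you strictly more than the proposition asserts — the individual values of $m_j$ and $m_{j+1}$, not just the two linear combinations — at the price of the orientation bookkeeping you flag (your identification $E_{4,j}=(v_j=0)$, $E_{4,j+1}=(u_j=0)$ is the correct one, as one checks by pairing $u_j,v_j$ against the rays $z_j,z_{j+1}$). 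One small point worth making explicit in the degenerate case $B=0$: there the $A$ intersection points with $E_{4,j}$ lie away from the node and are also visible in the adjacent chart $U_{j-1}$, where the local equation is $1+v_{j-1}^{C}=0$; the recursion $\alpha_{j+1}=b_j\alpha_j-\alpha_{j-1}$, $\beta_{j+1}=b_j\beta_j-\beta_{j-1}$ gives $C=A$ and shows these are the same $A$ transverse points, so nothing is double counted and your formula survives. Both proofs are valid; the paper's is shorter and convention-free, yours is more informative.
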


\begin{proof}

Let $H$ be the restriction to $X(a_1,a_2,a_3,a_4)$ of a generator of the class group of $\P(w_1,w_2,w_3,w_4)$. We have that $$w_1H\cdot w_2H = \frac{w_1w_2(a_3w_3 + w_4)}{w_1w_2w_3w_4} = \frac{1}{w_3} + \frac{a_3}{w_4}.$$ On the other hand, $w_1H\cdot w_2H = \sigma^*(w_1H)\cdot \sigma^*(w_2H)$, where $\sigma^*(w_1H) = \sigma^*(\Gamma_{3,4} + C_1)$, and $\sigma^*(w_2H) = \sigma^*(\Gamma_{4,1} + C_2)$. Because the pull-back of a divisor has intersection zero with any component of the exceptional divisor, and using the pull-back formulas in \eqref{pullbackFormula} we have that $\sigma^*(w_1H)\cdot \sigma^*(w_2H)$  $$ =  (\Gamma_{3,4}' + C_1')\cdot (\sum_{i=0}^{s_3+1} \frac{\beta_{3,i}}{w_3}E_{3,i} + \sum_{i=0}^{s_4+1} \frac{\alpha_{4,i}}{w_4}E_{4,i})$$
 $$ =  \Gamma_{3,4}'\cdot \sum_{i=0}^{s_3+1} \frac{\beta_{3,i}}{w_3}E_{3,i} + C_1'\cdot \sum_{i=0}^{s_4+1} \frac{\alpha_{4,i}}{w_4}E_{4,i} + \Gamma_{3,4}'\cdot \sum_{i=0}^{s_4+1} \frac{\alpha_{4,i}}{w_4}E_{4,i}$$
 $$= \frac{1}{w_3} +  \frac{1}{w_4}+ \sum_{i=0}^{s_4+1} \frac{\alpha_{4,i}}{w_4}\Gamma_{3,4}'\cdot E_{4,i}.$$ Then $a_3 - 1 = \alpha_{4,j}\Gamma_{3,4}'\cdot E_{4,j}+ \alpha_{4,j+1}\Gamma_{3,4}'\cdot E_{4,j+1}= \alpha_{4,j}m_j + \alpha_{4,j+1}m_{j+1}$. To simplify the computation of the second equality, we will restrict to the plane $\P(w_2,w_3,w_4)$, with $L$ a generator of the class group. We can do this because at the point $p_4$ the singularity is the same as the one at the point $(0:0:1) \in \P(w_2,w_3,w_4)$, so locally $\sigma$ does not change.

Then $w_3L\cdot a_2w_2L = \frac{a_2w_2w_3}{w_2w_3w_4} = \frac{a_2}{w_4}$ and also \[\sigma^*(w_3L)\cdot \sigma^*(a_2w_2L) = \Gamma_{3,4}'\cdot \sum_{i=0}^{s_4+1} \frac{\beta_{4,i}}{w_4}E_{4,i},\] where $\sigma^*(w_3L) = \sigma^*(C_1)$ and $\sigma^*(a_2w_2L) = \sigma^*(\Gamma_{3,4})$. Then $a_2 = \beta_{4,j}m_j + \beta_{4,j+1}m_{j+1}$.
\end{proof}

\begin{corollary}
If $\Gamma'_{3,4}$ intersects the exceptional divisor in one component, then it does it transversally.
\label{intersectionTransversally}
\end{corollary}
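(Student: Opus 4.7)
My plan is to combine the explicit local equation given by Proposition~\ref{strictTransformEquation} with the fact, proven earlier in the paper, that $\Gamma_{3,4}$ is unibranch at $p_4$. Suppose $\Gamma'_{3,4}$ meets only one exceptional component above $p_4$, which we may call $E_{4,j}$; then the second multiplicity in Proposition~\ref{intersectionCurves} vanishes, say $m_{j+1}=0$ (the case $m_j=0$ is symmetric, working in the neighboring chart). Inverting the $2\times 2$ system of Proposition~\ref{intersectionCurves} by Cramer's rule, whose coefficient determinant is $\alpha_{4,j+1}\beta_{4,j}-\alpha_{4,j}\beta_{4,j+1}=w_4$ (a standard Hirzebruch--Jung identity, immediate from $\alpha_{i+1}\gamma_i-\alpha_i\gamma_{i+1}=-1$ together with $\beta_i=q_4\alpha_i-w_4\gamma_i$), identifies the exponents appearing in case~(3) of Proposition~\ref{strictTransformEquation} with $m_j$ and $m_{j+1}$. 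Hence in the chart $U_{4,j}$ the local equation of $\Gamma'_{3,4}$ is
\[
u_j^{m_j}+v_j^{m_{j+1}}\;=\;u_j^{m_j}+1\;=\;0.
\]

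Over $\mathbb{C}$, the polynomial $u_j^{m_j}+1$ splits as $\prod_{k}(u_j-\zeta_k)$ where $\zeta_k$ runs through the $m_j$-th roots of $-1$, so the scheme it cuts out in $U_{4,j}$ is a disjoint union of $m_j$ smooth affine lines, each meeting $E_{4,j}=\{v_j=0\}$ transversally at a distinct point $(\zeta_k,0)$. On the other hand, the unibranch property forces the preimage of $p_4$ in $\Gamma'_{3,4}$ to be a single point, and since $E_{4,j}\subset\sigma^{-1}(p_4)$ this forces $\Gamma'_{3,4}\cap E_{4,j}$ to consist of a single point as well. Comparing these two descriptions forces $m_j=1$. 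The equation then becomes $u_j+1=0$, a smooth affine line meeting the smooth curve $E_{4,j}$ transversally at $(-1,0)$, which is exactly the desired conclusion.

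The main obstacle I anticipate is a bookkeeping check: verifying via Theorem~\ref{reidResolution} and the pull-back formulas~\eqref{pullbackFormula} that in the chart $U_{4,j}$ the coordinate $v_j$ really cuts out $E_{4,j}$ (so that the exponent $m_j$ in the equation above is correctly paired with $\Gamma'_{3,4}\cdot E_{4,j}$, rather than with $\Gamma'_{3,4}\cdot E_{4,j+1}$). Once this convention is fixed, the remainder of the argument is essentially formal given Propositions~\ref{strictTransformEquation} and~\ref{intersectionCurves}.
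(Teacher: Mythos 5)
Your proof is correct and follows essentially the same route as the paper's: both use Proposition \ref{intersectionCurves} to show that the exponents appearing in the local equation of Proposition \ref{strictTransformEquation} are multiples of $m_j$, and both then invoke the unibranch property of $\Gamma_{3,4}$ at $p_4$ to force $m_j=1$. Your version is slightly more explicit — pinning the exponents down as exactly $m_j$ and $m_{j+1}$ via the determinant identity $\alpha_{4,j+1}\beta_{4,j}-\alpha_{4,j}\beta_{4,j+1}=w_4$ and checking that $E_{4,j}=\{v_j=0\}$ in the chart $U_{4,j}$ — but the underlying argument is the same.
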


\begin{proof}
Recall that in the open subset $U_{4,i}$, the exponents of the variables $u_i$ and $v_i$ of the strict transform of $\Gamma_{3,4}$ are $\pm(a_2\alpha_{4,i+1} - (a_3-1)\beta_{4,i+1})/w_4$ and $\pm(a_2\alpha_{4,i} - (a_3-1)\beta_{4,i})/w_4$.

Suppose that $\Gamma'_{3,4}$ intersects $E_j$ with multiplicity $m_j$. Then, using Proposition \ref{intersectionCurves}, we have that for all $i$ \[ \frac{a_2\alpha_{4,i} - (a_3 - 1)\beta_{4,i}}{w_4} = m_j\frac{\beta_{4,j}\alpha_{4,i} - \alpha_{4,j}\beta_{4,i}}{w_4}, \] but the singularity at $p_4$ was unibranch, so it is locally irreducible. Therefore the exponents on the resolution must be relatively prime. Thus $m_j=1$.
\end{proof}

\begin{theorem}
The curve $\Gamma'_{3,4}$ intersects the exceptional divisor in one component if and only if $\psi\circ\sigma$ is defined on the whole exceptional divisor over $p_4$.
\label{definedTransversally}
\end{theorem}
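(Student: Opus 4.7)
The plan is to work on the toric charts $U_{4,k}$ of the minimal resolution around $p_4$ (Theorem \ref{reidResolution}) and determine the indeterminacy of $\psi\circ\sigma$ via a Newton-polygon analysis of the four sections $y_1=x_1^{a_1}x_2$, $y_2=x_2^{a_2}x_3$, $y_3=x_3^{a_3}$, $y_4=x_1$ on the affine chart $\{x_4=1\}$. Since $\partial F/\partial x_1=a_1x_1^{a_1-1}x_2+x_4^{a_4}$ equals $1$ at $p_4$, the implicit function theorem expresses $x_1$ as an analytic function of $x_2,x_3$ whose two leading monomials are $-x_3^{a_3}$ and $-x_2^{a_2}x_3$. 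By iteratively back-substituting into $-x_1^{a_1}x_2$ and using $a_1\geq 2$, every further monomial appearing in $x_1$ lies strictly above the Newton edge joining $(0,a_3)$ and $(a_2,1)$; in particular $(0,a_3)$ and $(a_2,1)$ are the only Newton vertices of $x_1$ on the lower-left portion of its Newton boundary.

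Pulling back to $U_{4,k}$ where $E_{4,k}=(v_k=0)$ and $E_{4,k+1}=(u_k=0)$, I substitute the (fractional) monomial expressions of Theorem \ref{reidResolution} for $x_2,x_3$ and read off the bi-orders of each $y_i$. Although individual orders may be fractional, the differences that matter are integral; in particular
\[ \mathrm{ord}_{v_k}(y_2)-\mathrm{ord}_{v_k}(y_3)=\frac{a_2\alpha_{4,k}-(a_3-1)\beta_{4,k}}{w_4}=\frac{D_k}{w_4}, \]
and analogously the $u_k$-order difference equals $D_{k+1}/w_4$. These are precisely the quantities that control the position of $\Gamma'_{3,4}$ in Proposition \ref{strictTransformEquation}. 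Since $D_k:=a_2\alpha_{4,k}-(a_3-1)\beta_{4,k}$ is strictly increasing with $D_0=-(a_3-1)w_4<0$ and $D_{s_4+1}=a_2w_4>0$, there is a unique transition index $j^*$: either $D_{j^*}=0$ (the one-component case, in which $\Gamma'_{3,4}$ meets only the interior of $E_{4,j^*}$), or $D_{j^*}<0<D_{j^*+1}$ (the two-components case, in which $\Gamma'_{3,4}$ meets the node $E_{4,j^*}\cap E_{4,j^*+1}$).

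The map $\psi\circ\sigma$ is defined at $q\in U_{4,k}$ iff, after dividing all $y_i$ by the maximal common monomial $u_k^Av_k^B$ (with $A=\min_i\mathrm{ord}_{u_k}y_i$, $B=\min_i\mathrm{ord}_{v_k}y_i$), some quotient $y_i/u_k^Av_k^B$ is nonzero at $q$; equivalently, some $y_i$ must have a Newton vertex exactly at $(A,B)$. Generic points of the exceptional divisor are automatic, so the decisive test is at the nodes. In the one-component case, at each node either the monomial $y_2$ or the monomial $y_3$ sits exactly at $(A,B)$ (depending on the common sign of $D_k,D_{k+1}$, or on which of them vanishes), so $\psi\circ\sigma$ extends to a morphism on the entire exceptional divisor over $p_4$. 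In the two-components case at the node $E_{4,j^*}\cap E_{4,j^*+1}$, the minimal $u_{j^*}$-order is achieved uniquely by $y_3$ and the minimal $v_{j^*}$-order uniquely by $y_2$, so neither monomial sits at $(A,B)$; the Newton polygon of $y_4=x_1$ has its two lowest vertices precisely at those of $y_2$ and $y_3$ by the first paragraph, and $y_1$ has strictly larger bi-order since $a_1\geq 2$. Consequently
\[ y_4/(u_{j^*}^Av_{j^*}^B)=c_2\,u_{j^*}^{D_{j^*+1}/w_4}+c_3\,v_{j^*}^{-D_{j^*}/w_4}+(\text{terms vanishing at the origin}), \]
which vanishes at $(0,0)$, so no section supplies a nonzero value and the indeterminacy survives at this node. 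The main technical point is the Newton-polygon estimate on the iterated implicit expansion of $x_1$ in the first paragraph; once that estimate is in hand, the biconditional follows.
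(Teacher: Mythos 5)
Your proposal is correct and follows essentially the same route as the paper's proof: both expand $x_1=-x_2^{a_2}x_3-x_3^{a_3}+(\text{h.o.t.})$ via the implicit function theorem, pull the four sections back to the toric charts $U_{4,k}$ of Theorem \ref{reidResolution}, and run the case analysis on the sign of $D_k=a_2\alpha_{4,k}-(a_3-1)\beta_{4,k}$, factoring out the maximal common monomial to detect the surviving base point at the node of the unique chart with $D_{j^*}<0<D_{j^*+1}$. The Newton-polygon language is only a repackaging of the paper's explicit coordinate computation.
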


\begin{proof}
The equation of our surface is $x_1^{a_1}x_2 + x_2^{a_2}x_3 + x_3^{a_3}x_4+x_4^{a_4}x_1 = 0$, so locally at $p_4$ our surface is $(x_1^{a_1}x_2 + x_2^{a_2}x_3 + x_3^{a_3}+x_1 = 0)$. Then analytically the power series expansion of $x_1$ in terms of $x_2$ and $x_3$ is \[  x_1 = -x_2^{a_2}x_3 - x_3^{a_3} + (\text{higher order terms in }x_2\text{ and }x_3). \]

Therefore, at the open set $U_i$ \begin{eqnarray*}
\sigma^*(x_1) & = & -(u_i^{\alpha_{4,i+1}/w_4}v_i^{\alpha_{4,i}/w_4})^{a_2}(u_i^{\beta_{4,i+1}/w_4}v_i^{\beta_{4,i}/w_4}) - (u_i^{\beta_{4,i+1}/w_4}v_i^{\beta_{4,i}/w_4})^{a_3}\\ & & + (\text{higher order terms}).
\end{eqnarray*} and so \[
\psi \circ \sigma|_{U_i} = (({}^\ast):u_i^{(a_2\alpha_{4,i+1}+\beta_{4,i+1})/w_4}v_i^{(a_2\alpha_{4,i}+\beta_{4,i+1})/w_4}: u_i^{a_3\beta_{4,i+1}/w_4}v_i^{a_3\beta_{4,i}/w_4}:\]
 \[\ \ \ \ \ \ \ \ \ -u_i^{(a_2\alpha_{4,i+1}+\beta_{4,i+1})/w_4}v_i^{(a_2\alpha_{4,i}+\beta_{4,i+1})/w_4} - u_i^{a_3\beta_{4,i+1}/w_4}v_i^{a_3\beta_{4,i}/w_4} + ({}^*)),
\] where $({}^*)$ are terms in $u_i$ and $v_i$ of degree higher than $(a_2\alpha_{4,i+1}+\beta_{4,i+1}+a_2\alpha_{4,i}+\beta_{4,i+1})/w_4$ and $(a_3\beta_{4,i+1}+a_3\beta_{4,i})/w_4$.

Assume now that $u_i$ and $v_i$ are both nonzero. If $a_2\alpha_{4,i} - (a_3 - 1)\beta_{4,i} < a_2\alpha_{4,i+1} - (a_3-1)\beta_{4,i+1} < 0$, then we can factor out \[ (u_i^{\alpha_{4,i+1}/w_4}v_i^{\alpha_{4,i}/w_4})^{a_2}(u_i^{\beta_{4,i+1}/w_4}v_i^{\beta_{4,i}/w_4}) \] from $\psi\circ \sigma$ to obtain \[ \psi\circ \sigma|_{U_i}  =  (({}^*) : 1 : u_i^{(a_2\alpha_{4,i+1} - (a_3-1)\beta_{4,i+1})/w_4}v_i^{(a_2\alpha_{4,i} - (a_3 - 1)\beta_{4,i})/w_4}:-1 +({}^*) ) \] Then $(\psi\circ \sigma|_{U_i})(u_i,0) = (\psi\circ \sigma|_{U_i})(0,v_i) = (0:1:0:-1)$. Repeating the same procedure for $0< a_2\alpha_{4,i} - (a_3 - 1)\beta_{4,i} < a_2\alpha_{4,i+1} - (a_3-1)\beta_{4,i+1}$, we obtain that restricted to that open set $U_i$, \[(\psi\circ \sigma|_{U_i})(u_i,0) = (\psi\circ \sigma|_{U_i})(0,v_i) = (0:0:1:-1).\]

Now we are left with the case $a_2\alpha_{4,i} - (a_3 - 1)\beta_{4,i} \leq 0 \leq a_2\alpha_{4,i+1} - (a_3-1)\beta_{4,i+1}$. Suppose first that the curve $\Gamma'_{3,4}$ intersect transversally the exceptional divisor, so we know that there is some $j$ such that $a_2\alpha_{4,j} - (a_3 - 1)\beta_{4,j}=0$, and by Corollary \ref{intersectionTransversally}, $a_2\alpha_{4,j+1} - (a_3-1)\beta_{4,j+1}=1$, and $a_2\alpha_{4,j-1} - (a_3-1)\beta_{4,j-1}=-1$. Then in $U_{j-1}$ we can still factor out \[ (u_i^{\alpha_{4,i+1}/w_4}v_i^{\alpha_{4,i}/w_4})^{a_2}(u_i^{\beta_{4,i+1}/w_4}v_i^{\beta_{4,i}/w_4}), \] so assuming that $u_{j-1}$ and $v_{j-1}$ are not zero, the maps looks like \[\psi\circ \sigma|_{U_{j-1}} = (({}^*) : 1 : v_{j-1}:-1 - v_{j-1} + ({}^*) ).\] Therefore $(\psi\circ \sigma|_{U_{j-1}})(u_{j-1},0) = (0:1:0:-1)$ and $(\psi \circ \sigma|_{U_{j-1}})(0,v_{j-1}) = (0:1:v_{j-1}:-1-v_{j-1})$. Doing the same for $U_j$ we find that $(\psi\circ \sigma|_{U_j})(0,v_j) = (0:0:1:-1)$ and $(\psi\circ \sigma|_{U_j})(u_j,0) = (0:u_j:1:-u_j-1)$.
Then we see that $\psi\circ \sigma(\bigcup_{i=0}^{j-1} E_{4,i}) = (0:1:0:-1)$, $\psi\circ \sigma(\bigcup_{i=j+1}^{s_{4}+1} E_{4,i}) = (0:0:1:-1)$. Notice that $v_{j-1}$ and $u_j$ are the coordinates of the charts of $E_j\simeq \P^1$ and that \[(\psi \circ \sigma|_{U_{j-1}})(0,v_{j-1}) = (0:1:v_{j-1}:-1-v_{j-1})\] and \[ (\psi\circ \sigma|_{U_j})(u_j,0) = (0:u_j:1:-u_j-1). \] So $\psi\circ \sigma$ is an isomorphism from $E_j$ onto the line $(y_1 = 0)\subset (y_1+y_2+y_3+y_4 = 0) \subset \P_{y_1,y_2,y_3,y_4}^3$. Therefore $\psi\circ\sigma$ is defined at the exceptional divisor over $p_4$, and it is totally branch over the line $L_1=(y_1 = 0)\subset (y_1+y_2+y_3+y_4 = 0)$.

Now, if $\Gamma'_{3,4}$ does not intersect transversally the exceptional divisor, then $a_2\alpha_{4,i} - (a_3 - 1)\beta_{4,i}\neq 0$ for all $i$, so we will have some $j$ such that \[a_2\alpha_{4,j} - (a_3 - 1)\beta_{4,j} < 0 < a_2\alpha_{4,j+1} - (a_3-1)\beta_{4,j+1},\] and we will not be able to define the map on the open set $U_j$. This because we can factor out $u_j^{a_3\beta_{4,j+1}}v_j^{a_2\alpha_{4,j}+\beta_{4,j}}$ from $\psi\circ\sigma|_{U_j}$, so the map will be \begin{eqnarray*}\psi\circ\sigma|_{U_j} & = & (({}^*): u_j^{(a_2\alpha_{4,j+1} - (a_3-1)\beta_{4,j+1})/w_4}: v_j^{((a_3 - 1)\beta_{4,j}-a_2\alpha_{4,j})/w_4}:\\ & & -u_j^{(a_2\alpha_{4,j+1} - (a_3-1)\beta_{4,j+1})/w_4} - v_j^{((a_3 - 1)\beta_{4,j}-a_2\alpha_{4,j})/w_4} + ({}^*)) \end{eqnarray*} Then if $v_j\neq 0$, $(\psi\circ\sigma|_{U_j})(0,v_j) = (0:0:1:-1)$, and if $u_j\neq 0$, we have $(\psi\circ\sigma|_{U_j})(u_j,0) = (0:1:0:-1)$, and so it is not well-defined when $u_j = v_j = 0$. \end{proof}

\begin{proposition}
Assume that $\Gamma'_{3,4}$ does not intersect transversally the exceptional divisor, so it intersect it at the point $(0,0)$ of some affine open set $U_j$. Let $\varphi_1\colon X_1 \to \tilde{X}$ be the blowup over that point, let $E^{(1)}_{4,j}$ the new component of the exceptional divisor, and let $u_j,v'_{j,1}$ and $u'_{j,1}, v_j$ be the affine coordinates of $U^{(1,1)}_j$ and $U^{(1,2)}_j$, the two affine charts over $U_j$. Then they satisfy the relation $x_2^{w_4} = u_j^{\alpha_{4,j}+\alpha_{4,j+1}}v'^{\alpha_{4,j}}_{j,1}=u'^{\alpha_{4,j+1}}_{j,1}v_j^{\alpha_{4,j}+\alpha_{4,j+1}}$ and $x_3^{w_4} = u_j^{\beta_{4,j}+\beta_{4,j+1}}v'^{\beta_{4,j}}_{j,1}=u'^{\beta_{4,j+1}}_{j,1}v_j^{\beta_{4,j}+\beta_{4,j+1}}$.
\label{relationBlowup}
\end{proposition}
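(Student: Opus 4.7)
The plan is a direct substitution of the blowup charts into the toric relations of Theorem \ref{reidResolution}. First, I recall that in the affine chart $U_{4,j} \simeq \C^2$ with coordinates $(u_j,v_j)$, the invariant monomials $x_2^{w_4}$ and $x_3^{w_4}$ satisfy
\begin{equation*}
x_2^{w_4} = u_j^{\alpha_{4,j+1}} v_j^{\alpha_{4,j}}, \qquad x_3^{w_4} = u_j^{\beta_{4,j+1}} v_j^{\beta_{4,j}},
\end{equation*}
by Theorem \ref{reidResolution} (applied at $p_4$, where $x_2,x_3$ are the toric local coordinates as noted in Proposition \ref{singularityType}).

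Next, I describe the blowup $\varphi_1\colon X_1 \to \tilde{X}$ of the point $(u_j,v_j)=(0,0) \in U_{4,j}$ in the standard way, by two affine charts. On $U_j^{(1,1)}$ I use coordinates $(u_j, v'_{j,1})$ with $v_j = u_j v'_{j,1}$; on $U_j^{(1,2)}$ I use coordinates $(u'_{j,1}, v_j)$ with $u_j = u'_{j,1} v_j$. The new exceptional divisor $E^{(1)}_{4,j}$ is $(u_j=0)$ in the first chart and $(v_j=0)$ in the second chart.

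Substituting $v_j = u_j v'_{j,1}$ into the expression for $x_2^{w_4}$ on $U_j^{(1,1)}$ yields
\begin{equation*}
x_2^{w_4} = u_j^{\alpha_{4,j+1}}\bigl(u_j v'_{j,1}\bigr)^{\alpha_{4,j}} = u_j^{\alpha_{4,j}+\alpha_{4,j+1}} v'^{\alpha_{4,j}}_{j,1},
\end{equation*}
and similarly $x_3^{w_4} = u_j^{\beta_{4,j}+\beta_{4,j+1}} v'^{\beta_{4,j}}_{j,1}$. On $U_j^{(1,2)}$, substituting $u_j = u'_{j,1} v_j$ gives the symmetric identities
\begin{equation*}
x_2^{w_4} = \bigl(u'_{j,1} v_j\bigr)^{\alpha_{4,j+1}} v_j^{\alpha_{4,j}} = u'^{\alpha_{4,j+1}}_{j,1} v_j^{\alpha_{4,j}+\alpha_{4,j+1}},
\end{equation*}
and $x_3^{w_4} = u'^{\beta_{4,j+1}}_{j,1} v_j^{\beta_{4,j}+\beta_{4,j+1}}$, as claimed.

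There is no real obstacle in this argument beyond keeping track of the toric charts correctly; the content is purely the functoriality of the pullback of monomials under a point blowup. The only thing one must verify, and it follows from $\gcd(\alpha_{4,j},\alpha_{4,j+1})$ and $\gcd(\beta_{4,j},\beta_{4,j+1})$ dividing $w_4$ together with $x_2^{w_4}$ and $x_3^{w_4}$ being integral monomials in the blowup coordinates, is that the exponents appearing are indeed nonnegative integers; this is immediate from the strict inequalities $0 \le \alpha_{4,j}<\alpha_{4,j+1}$ and $0\le \beta_{4,j+1}<\beta_{4,j}$ recorded in Section \ref{s2}.
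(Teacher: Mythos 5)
Your proof is correct and follows essentially the same route as the paper: the paper simply cites the toric description of the blowup as the star subdivision of the cone at $z_{4,j}+z_{4,j+1}$, while you carry out the equivalent explicit substitution $v_j=u_jv'_{j,1}$, $u_j=u'_{j,1}v_j$ in the monomial relations from Theorem \ref{reidResolution}. The closing remark about verifying integrality of the exponents is unnecessary (the $\alpha_{4,i},\beta_{4,i}$ are nonnegative integers by construction), but it does no harm.
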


\begin{proof}
This follows from the fact that the resolution was constructed as a toric variety, and the blowup of an affine variety defined by vectors $v_1$ and $v_2$, is the variety associated to the fan generated by the vectors $v_1$, $v_1 + v_2$ and $v_2$.
\end{proof}

%%%%%%%%%% Figura de blowup local

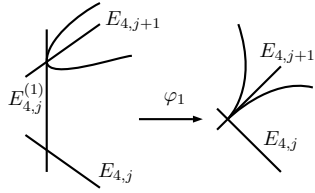
\begin{figure}[h]
\psscalebox{.7 .7} % Change this value to rescale the drawing.
{
\begin{pspicture}(0,-1.7670864)(5.738193,1.7670864)
\psline[linecolor=black, linewidth=0.04](5.133333,-1.4508055)(3.9333334,-0.2508056)
\psline[linecolor=black, linewidth=0.04](3.9333334,-0.6508056)(5.133333,0.5491944)
\psbezier[linecolor=black, linewidth=0.04](4.3333335,1.3491944)(4.733333,0.14919442)(4.133333,-0.45080557)(4.133333,-0.45080558776855467)(4.133333,-0.45080557)(4.9333334,0.3491944)(5.733333,0.14919442)
\rput[bl](4.8,-1.0508056){$E_{4,j}$}
\rput[bl](4.733333,0.61586106){$E_{4,j+1}$}
\psline[linecolor=black, linewidth=0.04, arrowsize=0.05291667cm 2.0,arrowlength=1.4,arrowinset=0.0]{<-}(3.6666667,-0.45080557)(2.4666667,-0.45080557)
\rput[bl](2.9333334,-0.18413892){$\varphi_1$}
\psline[linecolor=black, linewidth=0.04](0.33333334,0.3491944)(1.7333333,1.3491944)
\psline[linecolor=black, linewidth=0.04](1.7333333,-1.7508056)(0.33333334,-0.7508056)
\psline[linecolor=black, linewidth=0.04](0.73333335,-1.4508055)(0.73333335,1.2491944)
\psbezier[linecolor=black, linewidth=0.04](1.7333333,1.7491944)(1.3333334,1.5491945)(0.73333335,1.0491945)(0.73333335,0.6491944122314454)(0.73333335,0.24919441)(2.1333334,0.7491944)(2.3333333,0.7491944)
\rput[bl](1.8333334,1.2158611){$E_{4,j+1}$}
\rput[bl](1.7,-1.6508056){$E_{4,j}$}
\rput[bl](0.0,-0.35080558){$E^{(1)}_{4,j}$}
\end{pspicture}
}
\caption{An example of the situation in Proposition \ref{relationBlowup}.}
\end{figure}

%%%%%%%%% Termina figura del blowup local

Notice that if $a_2\alpha_{4,j} - (a_3 - 1)\beta_{4,j} < 0 < a_2\alpha_{4,j+1} - (a_3-1)\beta_{4,j+1}$, then \[a_2\alpha_{4,j} - (a_3 - 1)\beta_{4,j} < a_2(\alpha_{4,j} + \alpha_{4,j+1}) - (a_3-1)(\beta_{4,j}+\beta_{4,j+1})\] and \[a_2(\alpha_{4,j} + \alpha_{4,j+1}) - (a_3-1)(\beta_{4,j}+\beta_{4,j+1}) < a_2\alpha_{4,j+1} - (a_3-1)\beta_{4,j+1},\] so we can use Proposition \ref{strictTransformEquation} to see that the strict transform of $\Gamma'_{3,4}$ in the blowup intersects at most two components of the exceptional divisor, and that the singularity of the curve is ``better''. Therefore the map $\psi\circ \sigma\circ \varphi_1$ is defined in one of the charts $U^{(1,i)}_j$, and if $a_2(\alpha_{4,j} + \alpha_{4,j+1}) - (a_3-1)(\beta_{4,j}+\beta_{4,j+1}) = 0$, then it is defined in all the exceptional divisor on $X_1$ over $p_4$.

\begin{proof}[Proof of Theorem \ref{morphism}]
If all the curves $\Gamma'_{i,i+1}$ intersect transversally the exceptional divisor on $\tilde{X}$, then the result follows from Theorem \ref{definedTransversally}. If not, then consider the log resolution $\varphi\colon \hat{X}\to X$ of all the curves $\Gamma'_{i,i+1}$. Proposition \ref{relationBlowup} shows that the relations of the new local coordinates are compatible with the previous ones, and as the strict transform of the curves $\Gamma'_{i,i+1}$ intersect transversally the exceptional divisor, we can use the proof of Theorem \ref{definedTransversally} to show that the composition $\psi\circ \sigma \circ \varphi$ is defined over $\hat{X}$.
\end{proof}

\begin{corollary}
The morphisms $\psi\circ \sigma \circ \varphi \colon \hat{X} \to \P^2$ and $Y' \to \P^2$ (defined at the end of Section \ref{s1}) factor through a birational morphism $\hat{X} \to Y'$ which contracts precisely six chains of smooth rational curves in $$(\sigma \circ \varphi) ^*(C_1+C_2+\Gamma_{1,2}+\Gamma_{2,3}+\Gamma_{3,4}+\Gamma_{4,1}),$$ each containing one of the proper transforms of $C_1, C_2,\Gamma_{1,2},\Gamma_{2,3},\Gamma_{3,4},\Gamma_{4,1}$, and each contracting to the six cyclic quotient singularities in $Y'$.
\label{factors}
\end{corollary}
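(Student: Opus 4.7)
The plan is to first produce the morphism $\hat X\to Y'$ by a universal property argument, and then use the local analysis from the proof of Theorem \ref{definedTransversally} to identify the exceptional locus as a union of six chains.

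By Theorem \ref{morphism} and Corollary \ref{bir}, both $\hat X$ and $Y'$ are normal surfaces birational to $X(a_1,\ldots,a_4)$ that sit over $\P^2$ as generically $w^\ast$-to-$1$ covers, and their function fields coincide with the cyclic extension of $\C(\P^2)$ described in Proposition \ref{cyclic}. Since $Y'$ is the normalization of $\P^2$ in this cyclic extension and $\hat X$ is smooth (hence normal), the universal property of normalization will supply a unique morphism $\hat X\to Y'$ over $\P^2$, and it is birational because both dominate $\P^2$ with the same generic degree $w^\ast$. Its exceptional locus must therefore map to the singular locus of $Y'$, which from the $w^\ast$-th root cover structure together with $\gcd(\mu_i,w^\ast)=1$ consists of exactly six cyclic quotient singularities, one over each of the six pairwise intersections of the branch lines $L_1,\ldots,L_4$.

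Next, I will identify the contracted curves. Proposition \ref{imageCurves} already shows that the six proper transforms $C_1'$, $C_2'$, $\Gamma'_{1,2}$, $\Gamma'_{2,3}$, $\Gamma'_{3,4}$, $\Gamma'_{4,1}$ are each crushed to one of the six nodes of the branch divisor. Moreover the local computation in the proof of Theorem \ref{definedTransversally} shows that over each $p_i$ the exceptional chain decomposes as two subchains flanking a single middle component $E_{i,j_i}$: the middle component maps isomorphically onto one of the four branch lines $L_\ast$, while the two flanking subchains collapse to the two nodes incident to $L_\ast$ corresponding to the two members of the key configuration meeting $E_{i,0}$ and $E_{i,s_i+1}$ respectively. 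Proposition \ref{relationBlowup} guarantees that the extra exceptional curves introduced by the log-resolution $\varphi$ over non-transverse intersections with $\Gamma'_{i,i+1}$ lie inside such a subchain and contract along with it.

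Putting these local pictures together at $p_1,\ldots,p_4$, for each curve $\Gamma$ in the key configuration the proper transform $\Gamma'$ is linked at each of its two endpoints to one of the collapsed subchains, yielding a chain of smooth rational curves contracted to the cyclic quotient singularity of $Y'$ lying over $\psi(\Gamma)$. The six resulting chains are pairwise disjoint because the middle components $E_{i,j_i}$ separate them, and they exhaust $(\sigma\circ\varphi)^\ast(C_1+C_2+\Gamma_{1,2}+\Gamma_{2,3}+\Gamma_{3,4}+\Gamma_{4,1})$. The main obstacle I expect is the matching step: checking that each assembled chain contracts to a cyclic quotient singularity of precisely the type dictated by the local structure of $Y'\to\P^2$ at the corresponding node. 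This requires comparing the Hirzebruch--Jung data of $\frac{w_i}{q_i}$ truncated at the index $j_i$ determined by the trichotomy of signs in Proposition \ref{strictTransformEquation} with the continued fraction describing the cyclic cover around a node $\mu_iL_i+\mu_jL_j$, and is the only nontrivial bookkeeping in an otherwise purely formal assembly.
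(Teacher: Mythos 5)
Your argument is correct and, in its main content, runs parallel to the paper's: both proofs lean entirely on the local computation in Theorem \ref{definedTransversally} (extended through Proposition \ref{relationBlowup} in the non-transverse case) to see that over each $p_i$ exactly one exceptional component maps isomorphically onto a branch line while the two flanking subchains collapse to the adjacent nodes, and then assemble the six contracted chains from these pieces together with Proposition \ref{imageCurves}. Where you differ is in how the factorization $\hat{X}\to Y'$ is produced: you invoke the universal property of normalization of $\P^2$ in the common function field, which is arguably cleaner and more self-contained than the paper's argument (the paper observes that the birational map $\hat X\dashrightarrow Y'$ is defined away from the six chains and then appeals to uniqueness of the minimal resolution of a normal surface singularity to force the chains to contract onto the six singular points). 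One remark: the ``main obstacle'' you flag at the end --- matching the Hirzebruch--Jung data of the assembled chains against the local cyclic-cover data of $Y'$ at each node --- is not actually needed. Once the morphism $\hat X\to Y'$ exists and you know which curves $\hat X\to\P^2$ contracts, each chain necessarily maps to the unique point of $Y'$ lying over the corresponding node (the cover being totally branched), and that point is the cyclic quotient singularity; no continued-fraction bookkeeping is required for the statement as claimed. Also, a small wording slip: the six chains do not exhaust $(\sigma\circ\varphi)^*(C_1+C_2+\Gamma_{1,2}+\Gamma_{2,3}+\Gamma_{3,4}+\Gamma_{4,1})$, since the middle components mapping onto the lines $L_j$ also lie in that total transform; the corollary only asserts containment, which is what you actually use.
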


\begin{proof}
First, by Theorem \ref{morphism}, we note that  $\psi\circ \sigma \circ \varphi \colon \hat{X} \to \P^2$ contracts precisely six chains of smooth rational curves in $(\sigma \circ \varphi) ^*(C_1+C_2+\Gamma_{1,2}+\Gamma_{2,3}+\Gamma_{3,4}+\Gamma_{4,1}),$ each containing one of the proper transforms of $C_1$, $C_2$, $\Gamma_{1,2}$, $\Gamma_{2,3}$, $\Gamma_{3,4}$, $\Gamma_{4,1}$. This was done locally when we proved definition of the map in Theorem \ref{definedTransversally} at a certain exceptional component over the $p_i$. Each of these components maps to each of the $4$ lines in $\P^2$. Therefore, the birational map $\hat{X} \dashrightarrow Y'$ is defined over these components except possibly over the six singularities of $Y'$. Because there is a unique minimal resolution for normal two dimensional singularities, the $6$ chains of curves in $\hat{X}$ mapping to the $6$ nodes of the four lines in $\P^2$ must contract to the $6$ singularities of $Y'$.
\end{proof}

%-------------------------------------------------------------------------------------------------------------------
\section{Koll\'ar surfaces are Hwang-Keum surfaces} \label{s3}

We now study the case $w^* = 1$. In this section, we allow $\text{gcd}(w_1,w_3)$ and $\text{gcd}(w_2,w_4)$ to be greater than $1$.

In \cite[p. 231]{Ko08}, it is shown that the curves $C_1$ and $C_2$ are extremal rays of the $K_{X(a_1,a_2,a_3,a_4)} + (1-\epsilon)(C_1 + C_2)$ minimal model program if $C_1^2 <0$ and $C_2^2<0$. They are both contractible to quotient singularities. In \cite{HK12} they computed explicitly the type of these singularities.

\begin{theorem}[\cite{HK12}, Theorem 1.1]
The contraction of the curve $C_1$ forms a singularity of type $\frac{1}{s_1}(w_2,w_4)$, with $s_1 = a_4w_4-w_3$, and the contraction of the curve $C_2$ forms a singularity of type $\frac{1}{s_2}(w_1,w_3)$, with $s_2 = a_3w_3 - w_2$. If $w^* = 1$, then their Hirzebruch-Jung continued fractions are \[ [\underbrace{2,\ldots, 2}_{a_4-1} ,a_3,a_1,\underbrace{2,\ldots, 2}_{a_2-1}]\quad and\quad [\underbrace{2,\ldots, 2}_{a_3-1}, a_2,a_4,\underbrace{2,\ldots, 2}_{a_1-1}], \]respectively.
\label{singularitiesContraction}
\end{theorem}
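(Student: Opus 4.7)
The plan is to handle the singularity type first (valid for any $w^*$), and then the Hirzebruch--Jung continued fraction in the case $w^* = 1$.

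First I would compute $C_1^2$ on $X$ by intersection theory on the ambient weighted projective space. From $H^3 = 1/(w_1 w_2 w_3 w_4)$ on $\P(w_1,w_2,w_3,w_4)$ and $[X] = D\cdot H$, one has $H^2|_X = D/(w_1 w_2 w_3 w_4)$. Factoring the defining equation of $X$ yields $(x_1=0)|_X = C_1 + \Gamma_{3,4}$ and $(x_3=0)|_X = C_1 + \Gamma_{1,2}$. Local analytic computations in the orbifold charts at $p_4$ and $p_2$ identify $\Gamma_{3,4}$ as $x_2^{a_2} + x_3^{a_3-1} = 0$ (meeting $C_1 = (x_3=0)$ with orbifold multiplicity $a_2/w_4$) and $\Gamma_{1,2}$ as $x_1^{a_1-1} + x_4^{a_4} = 0$ (meeting $C_1 = (x_1=0)$ with orbifold multiplicity $a_4/w_2$). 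A parallel local computation shows $C_1\cdot\Gamma_{4,1}=1/w_4$, whence $H\cdot C_1 = 1/(w_2 w_4)$. Combining via $C_1 = w_1 H - \Gamma_{3,4}$,
\[
C_1^2 \;=\; \frac{w_1 - a_2 w_2}{w_2 w_4} \;=\; -\frac{a_4 w_4 - w_3}{w_2 w_4},
\]
using the identity $a_i w_i + w_{i+1} = d$. By Proposition \ref{singularityType}, $C_1$ meets $p_2$ and $p_4$ as an axis of each local cyclic quotient, so its strict transform $\tilde C_1 \subset \tilde X$ meets exactly one exceptional curve on each resolution chain and joins them into a single chain of smooth rational curves. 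Contracting this chain produces a cyclic quotient singularity whose local class-group order equals $|C_1^2|\cdot w_2 w_4 = a_4 w_4 - w_3 = s_1$; the transverse curves $\Gamma_{2,3}$ and $\Gamma_{4,1}$ (which meet $C_1$ with orbifold multiplicities $1/w_2$ and $1/w_4$ respectively) identify the weights, giving the type $\frac{1}{s_1}(w_2, w_4)$. Symmetrically, $C_2$ contracts to $\frac{1}{s_2}(w_1, w_3)$ with $s_2 = a_3 w_3 - w_2$.

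For the Hirzebruch--Jung continued fraction, assuming $w^* = 1$, I would verify directly that $[\underbrace{2,\ldots,2}_{a_4-1}, a_3, a_1, \underbrace{2,\ldots,2}_{a_2-1}]$ equals $s_1/q$ with $w_2 q \equiv w_4 \pmod{s_1}$. Using $\alpha_{i+1} = b_i\alpha_i - \alpha_{i-1}$, the initial run of $2$'s gives $\alpha_{a_4} = a_4$; then $\alpha_{a_4+1} = a_3 a_4 - a_4 + 1$ and $\alpha_{a_4+2} = a_1 a_3 a_4 - a_1 a_4 + a_1 - a_4$; the trailing run of $2$'s produces an arithmetic progression with common difference $\alpha_{a_4+2} - \alpha_{a_4+1}$, so $\alpha_{s+1} = \alpha_{a_4+2} + (a_2-1)(\alpha_{a_4+2} - \alpha_{a_4+1})$. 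Expanding and substituting the defining formula for the $W_i$'s matches $a_4 w_4 - w_3 = s_1$. A parallel run of the $\beta$-recursion from the opposite end computes $\beta_1$ and verifies the congruence $w_2\beta_1 \equiv w_4 \pmod{s_1}$, thereby identifying the continued fraction as that of $\frac{1}{s_1}(w_2,w_4)$. The case of $C_2$ follows by the same argument with indices shifted.

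The main obstacle is the final polynomial identity: the congruence $w_2 q \equiv w_4 \pmod{s_1}$ reduces to an identity in $a_1,\ldots,a_4$ which, while elementary, is tedious to expand by hand. A more structural alternative would be to first establish $\tilde C_1^2 = -1$ (via the pull-back formula $C_1^2 = \tilde C_1^2 + q_2/w_2 + q_4/w_4$, after computing the invariants $q_2, q_4$ of the individual singularities at $p_2, p_4$ recursively when $w^* = 1$), then iteratively blow down $(-1)$-curves in the composite chain on $\tilde X$ until only $(\leq -2)$-curves remain; the resulting chain then exhibits the claimed continued fraction geometrically. Either route depends essentially on the arithmetic available when $w^* = 1$.
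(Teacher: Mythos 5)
The paper offers no proof of this statement to compare against: it is quoted directly from Hwang--Keum (\cite{HK12}, Theorem 1.1) and used as an external input, so your proposal has to be judged on its own. As a strategy it is sound, and the computations you actually carry out are correct: $(x_1=0)|_X=C_1+\Gamma_{3,4}$ and $(x_3=0)|_X=C_1+\Gamma_{1,2}$, $H\cdot C_1=1/(w_2w_4)$, $C_1\cdot\Gamma_{3,4}=a_2/w_4$ at $p_4$, hence $C_1^2=(w_1-a_2w_2)/(w_2w_4)=-(a_4w_4-w_3)/(w_2w_4)$ via $a_iw_i+w_{i+1}=d$ (note your ``$[X]=D\cdot H$'' should read $d\cdot H$, but $H^2|_X$ never enters your final formula, so this is harmless); and the determinant identity $a_2\alpha_{a_4+2}-(a_2-1)\alpha_{a_4+1}=a_4w_4-w_3$ does expand correctly. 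The multiplicativity of discriminants, $s_1=|C_1^2|\cdot w_2w_4$, giving the order of the contracted chain, is also standard since $C_1$ is an axis ($E_0$-type end) at both $p_2$ and $p_4$ by Proposition \ref{singularityType}.

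The gap is that the two steps you defer are where the actual content of the theorem sits. Knowing the order $s_1$ does not determine the type: ``the transverse curves $\Gamma_{2,3}$ and $\Gamma_{4,1}$ identify the weights'' is an assertion, not an argument, and pinning down $\frac{1}{s_1}(w_2,w_4)$ (equivalently, verifying $w_2q\equiv w_4 \pmod{s_1}$ for the $q$ produced by the continued fraction) is exactly the congruence you postpone as ``tedious''. Until one of these is done, you have proved only the order of the singularity, not its type, and not the continued fraction. Your ``structural alternative'' is in fact the better route and closes both gaps at once: computing the corrections $q_2/w_2+q_4/w_4$ from the pull-back formulas \eqref{pullbackFormula} shows $\tilde C_1^2=-1$, and blowing down $\tilde C_1$ and the subsequent $(-1)$-curves inside the combined chain produces the claimed fraction $[2,\ldots,2,a_3,a_1,2,\ldots,2]$ together with the identification of its two end-axes as the images of $\Gamma_{2,3}$ and $\Gamma_{4,1}$; I checked on the example $(a_1,\ldots,a_4)=(2,2,2,3)$, where the chain $[2,2,3,1,3,2]$ contracts to $[2,2,2,2,2]$ of determinant $6=s_1$, that this reproduces the statement. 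So: right plan, correct preliminary computations, but the decisive verification is still only sketched.
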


Let $\eta\colon X(a_1,a_2,a_3,a_4) \to X'(a_1,a_2,a_3,a_4)$ be the contraction of $C_1$ and $C_2$. In \cite[\S 4]{HK12} they construct several examples of rational $\Q$-homology projective planes with two cyclic singularities. In certain cases the singularities are the same as for $X'(a_1,a_2,a_3,a_4)$ when $w^* = 1$.

The construction of Hwang-Keum is as follows. Let $L_1,L_2,L_3,L_4$ be four general lines in $\P^2$ and choose four points from the six intersection points, such that every $L_i$ passes through two of them. After blowing up each of these four points twice, we obtain the curve configuration

\bigskip
\psscalebox{.9 .9} % Change this value to rescale the drawing.
{
\begin{pspicture}(-4.5, -1.5)(5,1.5)
\psdots[dotsize=0.14,fillstyle=solid,dotstyle=o](0.7650486,1.0255765)
\psdots[dotsize=0.14,fillstyle=solid,dotstyle=o](1.7650486,1.0204202)
\psdots[dotsize=0.14,fillstyle=solid,dotstyle=o](2.7450485,1.0055765)
\psdots[dotsize=0.14,fillstyle=solid,dotstyle=o](3.7450485,1.0455763)
\psdots[dotsize=0.14,fillstyle=solid,dotstyle=o](0.7450486,-0.9795798)
\psdots[dotsize=0.14,fillstyle=solid,dotstyle=o](1.7450486,-0.9795797)
\psdots[dotsize=0.14,fillstyle=solid,dotstyle=o](2.7450485,-0.9795797)
\psdots[dotsize=0.14,fillstyle=solid,dotstyle=o](3.7450485,-0.9744236)
\psdots[dotsize=0.14](0.7650486,0.040420208)
\psdots[dotsize=0.14](1.7450486,0.040420208)
\psdots[dotsize=0.14](2.7450485,0.040420208)
\psdots[dotsize=0.14](3.7450485,0.040420208)
\psline[linewidth=0.02cm](1.7409375,0.0296875)(1.7409375,-0.8903125)
\psline[linewidth=0.02cm](2.7409377,0.0496875)(2.7409377,-0.8903125)
\psline[linewidth=0.02cm](3.7409377,0.0496875)(3.7409377,-0.8703125)
\psline[linewidth=0.02cm](0.7609375,0.0496875)(0.7409375,-0.8903125)
\psline[linewidth=0.02cm](2.8009374,0.96968746)(3.7409377,0.0296875)
\psline[linewidth=0.02cm](0.7409375,0.0496875)(1.7209375,0.96968746)
\psline[linewidth=0.02cm](1.7409375,0.0296875)(3.0009375,0.6896875)
\psline[linewidth=0.02cm](3.1009374,0.7296875)(3.6809375,1.0296875)
\psline[linewidth=0.02cm](0.8209375,0.9896875)(1.3809375,0.7096875)
\psline[linewidth=0.02cm](1.4409375,0.6696875)(2.1809375,0.3096875)
\psline[linewidth=0.02cm](2.2809374,0.2696875)(2.7209377,0.0496875)
\usefont{T1}{ptm}{m}{n}
\rput(2.7323437,1.3196875){$L_1$}
\usefont{T1}{ptm}{m}{n}
\rput(1.7123437,1.3196875){$L_3$}
\usefont{T1}{ptm}{m}{n}
\rput(1.7323438,-1.2803125){$L_2$}
\usefont{T1}{ptm}{m}{n}
\rput(2.7123437,-1.2803125){$L_4$}
\usefont{T1}{ptm}{m}{n}
\rput(4.112344,0.040420208){$E_1$}
\usefont{T1}{ptm}{m}{n}
\rput(1.3123437,0.040420208){$E_2$}
\usefont{T1}{ptm}{m}{n}
\rput(0.43234375,0.040420208){$E_3$}
\usefont{T1}{ptm}{m}{n}
\rput(3.1523438,0.040420208){$E_4$}
\psline[linewidth=0.02cm](0.8209375,1.0296875)(1.7009375,1.0096875)
\psline[linewidth=0.02cm](1.8209375,1.0296875)(2.6609375,1.0096875)
\psline[linewidth=0.02cm](2.8009374,1.0096875)(3.7009375,1.0296875)
\psline[linewidth=0.02cm](0.8009375,-0.9703125)(1.6809375,-0.9903125)
\psline[linewidth=0.02cm](1.8009375,-0.9703125)(2.6809375,-0.9703125)
\psline[linewidth=0.02cm](2.8009374,-0.9703125)(3.6809375,-0.9903125)
\end{pspicture}
}

\bigskip

\noindent where $\bullet$ is a $(-1)$-curve and $\circ$ is a $(-2)$-curve. We now blow up $r_i$ times the point $E_i\cap L_i$ to obtain the surface $Z(a_1,a_2,a_3,a_4)$, where $a_i = 2 + r_i$. The curve configuration on $Z(a_1,a_2,a_3,a_4)$ is shown in Figure \ref{curveConfigurationHK}.

\bigskip
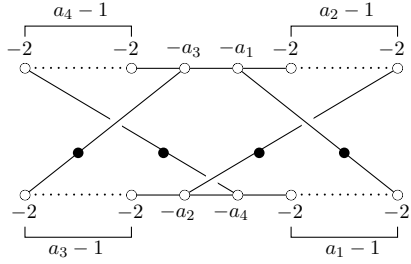
\begin{figure}[htb]
\psscalebox{.7 .7} % Change this value to rescale the drawing.
{
\begin{pspicture}(0,-2.3905556)(7.5288887,2.3905556)
\psline[linecolor=black, linewidth=0.02](4.3555555,-1.2127777)(0.35555556,1.1872222)
\psframe[linecolor=white, linewidth=0.04, fillstyle=solid, dimen=outer](3.9777777,-0.635)(3.7555556,-1.0794444)
\psline[linecolor=black, linewidth=0.02](3.3555555,-1.2127777)(7.3555555,1.1872222)
\psframe[linecolor=white, linewidth=0.04, fillstyle=solid, dimen=outer](5.7777777,0.365)(5.5555553,-0.079444446)
\psframe[linecolor=white, linewidth=0.04, fillstyle=solid, dimen=outer](2.1777778,0.365)(1.9555556,-0.079444446)
\psline[linecolor=black, linewidth=0.02](3.3555555,1.1872222)(0.35555556,-1.2127777)
\psline[linecolor=black, linewidth=0.02](4.3555555,1.1872222)(7.3555555,-1.2127777)
\psline[linecolor=black, linewidth=0.04, linestyle=dotted, dotsep=0.10583334cm](0.39033815,1.1959178)(2.3033817,1.1959178)
\psline[linecolor=black, linewidth=0.04, linestyle=dotted, dotsep=0.10583334cm](5.390338,1.1959178)(7.3033814,1.1959178)
\psline[linecolor=black, linewidth=0.02](2.3555555,1.1872222)(5.3555555,1.1872222)
\psdots[linecolor=black, dotstyle=o, dotsize=0.2, fillcolor=white](2.3555555,1.1872222)
\psdots[linecolor=black, dotstyle=o, dotsize=0.2, fillcolor=white](0.35555556,1.1872222)
\psdots[linecolor=black, dotstyle=o, dotsize=0.2, fillcolor=white](3.3555555,1.1872222)
\psdots[linecolor=black, dotstyle=o, dotsize=0.2, fillcolor=white](4.3555555,1.1872222)
\psdots[linecolor=black, dotstyle=o, dotsize=0.2, fillcolor=white](5.3555555,1.1872222)
\psdots[linecolor=black, dotstyle=o, dotsize=0.2, fillcolor=white](7.3555555,1.1872222)
\psline[linecolor=black, linewidth=0.04, linestyle=dotted, dotsep=0.10583334cm](0.39033815,-1.2040821)(2.3033817,-1.2040821)
\psline[linecolor=black, linewidth=0.04, linestyle=dotted, dotsep=0.10583334cm](5.390338,-1.2040821)(7.3033814,-1.2040821)
\psline[linecolor=black, linewidth=0.02](2.3555555,-1.2127777)(5.3555555,-1.2127777)
\psdots[linecolor=black, dotstyle=o, dotsize=0.2, fillcolor=white](2.3555555,-1.2127777)
\psdots[linecolor=black, dotstyle=o, dotsize=0.2, fillcolor=white](0.35555556,-1.2127777)
\psdots[linecolor=black, dotstyle=o, dotsize=0.2, fillcolor=white](3.3555555,-1.2127777)
\psdots[linecolor=black, dotstyle=o, dotsize=0.2, fillcolor=white](4.3555555,-1.2127777)
\psdots[linecolor=black, dotstyle=o, dotsize=0.2, fillcolor=white](5.3555555,-1.2127777)
\psdots[linecolor=black, dotstyle=o, dotsize=0.2, fillcolor=white](7.3555555,-1.2127777)
\psdots[linecolor=black, dotsize=0.2](1.3555555,-0.41277778)
\psdots[linecolor=black, dotsize=0.2](2.9555554,-0.41277778)
\psdots[linecolor=black, dotsize=0.2](4.7555556,-0.41277778)
\psdots[linecolor=black, dotsize=0.2](6.3555555,-0.41277778)
\rput[bl](0.0,1.4094445){$-2$}
\rput[bl](2.0,1.4094445){$-2$}
\rput[bl](5.0,1.4094445){$-2$}
\rput[bl](7.0,1.4094445){$-2$}
\rput[bl](3.0,1.4094445){$-a_3$}
\rput[bl](4.0,1.4094445){$-a_1$}
\rput[bl](0.08888889,-1.6794444){$-2$}
\rput[bl](2.088889,-1.6794444){$-2$}
\rput[bl](5.088889,-1.6794444){$-2$}
\rput[bl](7.088889,-1.6794444){$-2$}
\rput[bl](2.8888888,-1.6794444){$-a_2$}
\rput[bl](3.8888888,-1.6794444){$-a_4$}
\psline[linecolor=black, linewidth=0.02](0.35555556,1.7872223)(0.35555556,1.9872222)(2.3555555,1.9872222)(2.3555555,1.7872223)
\psline[linecolor=black, linewidth=0.02](5.3555555,1.7872223)(5.3555555,1.9872222)(7.3555555,1.9872222)(7.3555555,1.7872223)
\psline[linecolor=black, linewidth=0.02](0.35555556,-1.8127778)(0.35555556,-2.0127778)(2.3555555,-2.0127778)(2.3555555,-1.8127778)
\psline[linecolor=black, linewidth=0.02](5.3555555,-1.8127778)(5.3555555,-2.0127778)(7.3555555,-2.0127778)(7.3555555,-1.8127778)
\rput[bl](0.8888889,2.1205556){$a_4-1$}
\rput[bl](5.888889,2.1205556){$a_2-1$}
\rput[bl](0.8,-2.3905556){$a_3-1$}
\rput[bl](6.0,-2.3905556){$a_1-1$}
\end{pspicture}
}
\caption{Curve configuration over $Z(a_1,a_2,a_3,a_4)$.}
\label{curveConfigurationHK}
\end{figure}

Let $T(a_1,a_2,a_3,a_4)$ be the surface obtained by contracting the two chains of rational curves corresponding to the white vertices. Then this surface is a rational $\Q$-homology projective plane with two cyclic singularities. By Theorem \ref{singularitiesContraction}, it has the same singularities as $X'(a_1,a_2,a_3,a_4)$ when $w^*=1$.

\begin{theorem}
Let $X(a_1,a_2,a_3,a_4)$ be a Koll\'ar surface with $w^*=1$, and assume that $a_i\geq 2$ for all $i$. Then $X'(a_1,a_2,a_3,a_4)$ is the Hwang-Keum surface $T(a_1,a_2,a_3,a_4)$.
\label{kollarEqualHK}
\end{theorem}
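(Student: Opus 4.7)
The plan is to exhibit $\hat X$ as a common smooth model dominating both $X'(a_1,a_2,a_3,a_4)$ and the Hwang-Keum surface $T(a_1,a_2,a_3,a_4)$, with matching contraction structures on the two sides.

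First, when $w^*=1$ the cyclic cover $Y' \to \P^2$ has degree $1$, so $Y'=\P^2$, and Corollary \ref{factors} gives a birational morphism $\pi\colon \hat X \to \P^2$ contracting exactly six chains of smooth rational curves, each containing one of the proper transforms of $C_1, C_2, \Gamma_{1,2}, \Gamma_{2,3}, \Gamma_{3,4}, \Gamma_{4,1}$. By Proposition \ref{imageCurves} these six chains collapse to the six intersection points of the four lines $L_1,\ldots,L_4\subset \P^2$: the chains through $C_1'$ and $C_2'$ collapse to the ``diagonal'' points $L_1\cap L_3$ and $L_2\cap L_4$, while the four chains through the $\Gamma'_{i,i+1}$ collapse to the four ``corner'' points $L_j\cap L_k$. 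The two diagonal points are exactly those omitted in the Hwang-Keum construction, while the four corner points are exactly those blown up by Hwang-Keum.

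Second, I would identify each of the six chains with the corresponding chain on $Z(a_1,a_2,a_3,a_4)$. Combining Theorem \ref{singularityType} for the singularities $\tfrac{1}{w_i}(w_{i+2},w_{i+3})$ of $X$ at $p_i$, Proposition \ref{intersectionCurves} for where $\Gamma'_{i,i+1}$ meets the minimal resolution, the extra blowups of $\varphi$ that separate $\Gamma'_{i,i+1}$ from that resolution, and the identity $a_iW_i+W_{i+1}=D$ specialized to $w^*=1$, one finds that the four corner chains on $\hat X$ realize Hwang-Keum's two initial blowups (a $(-2)$-curve next to a $(-1)$-curve) followed by $r_i=a_i-2$ further blowups at the intersection of the $(-1)$-curve with the strict transform of $L_i$, and that the two diagonal chains have Hirzebruch--Jung continued fractions $[\underbrace{2,\ldots,2}_{a_4-1}, a_3, a_1, \underbrace{2,\ldots,2}_{a_2-1}]$ and $[\underbrace{2,\ldots,2}_{a_3-1}, a_2, a_4, \underbrace{2,\ldots,2}_{a_1-1}]$ as predicted by Theorem \ref{singularitiesContraction}. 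Hence $\hat X$ coincides, up to blowups that are exceptional for both subsequent contractions, with the Hwang-Keum surface $Z(a_1,a_2,a_3,a_4)$.

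Finally, contracting the two diagonal chains on $\hat X$ yields $T(a_1,a_2,a_3,a_4)$ on one hand (the four corner chains are already contracted by $\pi$, and what remains is exactly Hwang-Keum's definition of $T$), and $X'(a_1,a_2,a_3,a_4)$ on the other (first contract the exceptional divisors of $\sigma\circ\varphi$ over $p_1,\ldots,p_4$ to recover $X$, then contract $C_1,C_2$). Both constructions produce the same pair of cyclic quotient singularities, with identical Hirzebruch--Jung continued fractions given by Theorem \ref{singularitiesContraction}; by uniqueness of the minimal resolution of a normal surface singularity the induced birational map $T \dashrightarrow X'$ is therefore an isomorphism. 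The main technical obstacle is the middle step: the continued-fraction bookkeeping needed to verify that the resolutions at $p_1,\ldots,p_4$, split around the $\Gamma'_{i,i+1}$ positions by Proposition \ref{intersectionCurves}, really reassemble into precisely the six chains of Figure \ref{curveConfigurationHK}.
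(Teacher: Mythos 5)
Your overall strategy is the same as the paper's: exhibit a common smooth model dominating both $X'(a_1,a_2,a_3,a_4)$ and $T(a_1,a_2,a_3,a_4)$, identify its curve configuration with that of $Z(a_1,a_2,a_3,a_4)$, and conclude by contracting the two ``diagonal'' chains. The framing via Corollary \ref{factors} with $Y'=\P^2$, and the matching of the six contracted chains with the six nodes of the line arrangement via Proposition \ref{imageCurves}, are correct (though note that Corollary \ref{factors} is stated under the running assumption $\gcd(w_1,w_3)=\gcd(w_2,w_4)=1$, which Section \ref{s3} explicitly drops; the paper re-establishes that $\psi\circ\sigma$ is a morphism in this generality using Proposition \ref{paGeneral} -- applicable since $a_i\geq 2>1=w^*$ -- together with Proposition \ref{intersectTransversally}, and you would need to do the same).

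The genuine gap is your middle step, which you yourself flag as ``the main technical obstacle'' but then do not carry out: the claim that the chains on the common model ``really reassemble into precisely the six chains of Figure \ref{curveConfigurationHK}.'' This is not bookkeeping that follows formally from Theorem \ref{singularityType}, Proposition \ref{intersectionCurves}, and the identity $a_iW_i+W_{i+1}=D$; it is the mathematical content of the theorem, and the paper devotes two propositions to it. Proposition \ref{C(-1)} must show that $C_1'$ and $C_2'$ are $(-1)$-curves and, crucially, that the single blowup relating the chain $K_1$ on $\tilde X$ to the minimal resolution of $\frac{1}{s_1}(w_2,w_4)$ occurs \emph{exactly} at the intersection of the $-a_3$ and $-a_1$ curves; this requires ruling out every other possible position by comparing the values $|[\,\cdots]|$ of the candidate continued fractions against $w_2$, and nothing in your outline substitutes for that argument. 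Likewise, Proposition \ref{Gamma(-1)} shows that the four curves $\Gamma'_{i,i+1}$ are $(-1)$-curves by an inductive contraction argument using the fact that the images of the $-a_i$-curves are the four lines and hence cannot be contracted; knowing merely that $\Gamma'_{i-1,i}$ meets the exceptional divisor over $p_i$ transversally at the component determined by $\beta_{i,j}=a_{i+2}$ does not by itself give the self-intersection $-1$. Without these two verifications the identification of your common model with $Z(a_1,a_2,a_3,a_4)$ is an assertion, and the final contraction step (which is otherwise sound, since contracting the same connected configurations from a common smooth model yields isomorphic normal surfaces) has nothing to stand on.
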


To prove Theorem \ref{kollarEqualHK} we will show that we can find the same curve configuration of $Z(a_1,a_2,a_3,a_4)$ (Figure \ref{curveConfigurationHK}) in $\tilde{X'}$ the minimal resolution of $X'(a_1,a_2,a_3,a_4)$.

First of all, we prove that the rational map $\psi$ is defined in the minimal resolution of $X$. For this we will use the following proposition.

\begin{proposition}
Let $X$ be a surface with a cyclic quotient singularity at the point $p$, and let $C\subset X$ be a curve passing through $p$. Then $C$ is nonsingular at $p$ if and only if the strict transform of $C$ intersects transversally at one point only one component of the exceptional divisor of the minimal resolution of $X$.
\label{intersectTransversally}
\end{proposition}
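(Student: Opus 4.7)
My plan is to analyze $\sigma$ in the toric local coordinates from Theorem \ref{reidResolution} and study the inclusion $\hat{\mathcal{O}}_{C,p} \hookrightarrow \hat{\mathcal{O}}_{\tilde{C},q} \cong \mathbb{C}[[t]]$ at the intersection point $q$ of the strict transform $\tilde{C}$ with the exceptional divisor. Concretely, $\hat{\mathcal{O}}_{C,p}$ is the image of the ring of invariants $\mathbb{C}[[x,y]]^G$ in $\mathbb{C}[[t]]$ after restriction to $\tilde{C}$, so $C$ is smooth at $p$ iff this image contains an element of order $1$ in $t$.

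For the backward implication, assume $\tilde{C}$ meets only the exceptional component $E_i$, transversally, at an interior point $q$. Work in the chart $U_i$ with coordinates $(u_i, v_i)$, where $E_i = \{v_i = 0\}$ and $q = (u_0, 0)$ with $u_0 \neq 0$, so that $t := v_i$ is a uniformizer on $\tilde{C}$. The identities $x^m = u_i^{\alpha_{i+1}} v_i^{\alpha_i}$ and $y^m = u_i^{\beta_{i+1}} v_i^{\beta_i}$ imply that every invariant monomial $x^a y^b$ (with $a + qb \equiv 0 \pmod{m}$) restricts to $\tilde{C}$ with order $(a\alpha_i + b\beta_i)/m$ in $t$. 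Using the Hirzebruch-Jung identity $\alpha_i \beta_{i-1} - \alpha_{i-1} \beta_i = m$, the family $(a, b) = (\beta_{i-1} - k\beta_i,\, k\alpha_i - \alpha_{i-1})$ lies in $M$ and satisfies $a\alpha_i + b\beta_i = m$ for every integer $k$. Picking $k$ in the nonempty interval $[\alpha_{i-1}/\alpha_i, \beta_{i-1}/\beta_i]$ makes $a, b \geq 0$, giving an invariant monomial of order $1$ on $\tilde{C}$. Hence $\hat{\mathcal{O}}_{C,p}$ contains a uniformizer and $C$ is smooth at $p$.

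For the forward implication, suppose $C$ is smooth at $p$. Then $C$ is analytically irreducible, so $\tilde{C}$ is a single smooth branch meeting the exceptional divisor at a unique point $q$. Two bad configurations must be excluded. First, if $\tilde{C}$ meets $E_i$ with multiplicity $m_i \geq 2$ at an interior point, choose a local parameter $s$ on $\tilde{C}$; then $u_i(q) \neq 0$ is a unit and $v_i \sim s^{m_i}$, so every invariant monomial $x^a y^b$ restricts to an order $m_i \cdot (a\alpha_i + b\beta_i)/m$ divisible by $m_i \geq 2$, contradicting the presence of a uniformizer. Second, if $q$ lies at an interior node $E_i \cap E_{i+1}$ with intersection multiplicities $m_i, m_{i+1} \geq 1$, parametrize $\tilde{C}$ by $u_i \sim s^{m_{i+1}}$ and $v_i \sim s^{m_i}$; then $\mathrm{ord}_s(x^a y^b|_{\tilde{C}}) = m_i n_i + m_{i+1} n_{i+1}$ where $n_j := (a\alpha_j + b\beta_j)/m \in \mathbb{Z}_{\geq 0}$. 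For this sum to equal $1$, one of $n_i, n_{i+1}$ must vanish. Since $\alpha_j, \beta_j > 0$ for $1 \leq j \leq s$, any nonnegative $(a, b)$ with $a\alpha_j + b\beta_j = 0$ must be $(0, 0)$, which forces the other $n$-value to vanish as well, contradicting the equation.

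The main obstacle is justifying the existence of the integer $k$ in the backward direction. This interval has length $m/(\alpha_i\beta_i) > 0$, and its endpoints are constrained by the Hirzebruch-Jung recursion $\alpha_{j+1}=b_j\alpha_j - \alpha_{j-1}$, $\beta_{j+1}=b_j\beta_j - \beta_{j-1}$; unwinding these yields the needed integer. Geometrically, this step expresses the primitivity of the ray $z_i = \tfrac{1}{m}(\alpha_i, \beta_i)$ in $N$ and the fact that a generic toric hyperplane section corresponding to such an $(a,b)$ is a smooth curve whose strict transform meets $E_i$ transversally at a single interior point.
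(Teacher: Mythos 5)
The paper does not actually prove this proposition: its ``proof'' is the single line ``See \cite{GL97}.'' Your argument is therefore a genuinely different, self-contained route, and its core is sound: characterizing smoothness of $C$ at $p$ by whether the value semigroup $\{\mathrm{ord}_t(f|_{\tilde C}) : f \in \hat{\mathcal{O}}_{X,p}\}$ contains $1$, and computing the orders of the invariant monomials chart by chart from $x^m=u_i^{\alpha_{i+1}}v_i^{\alpha_i}$, $y^m=u_i^{\beta_{i+1}}v_i^{\beta_i}$ is exactly the right mechanism, and the interior-point and interior-node cases are handled correctly (note that since the constant is the only monomial of order $0$, cancellation in a power series can only raise the order, so bounding individual monomials does suffice). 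Two loose ends remain. First, the ``main obstacle'' you flag is in fact immediate: since $\alpha_{i-1}<\alpha_i$ and $\beta_i<\beta_{i-1}$ (monotonicity is built into the definitions at the start of Section \ref{s2}), the interval $[\alpha_{i-1}/\alpha_i,\,\beta_{i-1}/\beta_i]$ always contains $k=1$, giving the invariant monomial $(a,b)=(\beta_{i-1}-\beta_i,\,\alpha_i-\alpha_{i-1})$ of order $1$; no unwinding of the recursion is needed. Second, your case analysis silently assumes the intersection point $q$ avoids the strict transforms $E_0$ and $E_{s+1}$ of the two coordinate axes: in the backward direction you need $u_i(q)\neq 0$, and in the forward direction your node argument invokes $\alpha_j,\beta_j>0$, both of which fail when $q=E_0\cap E_1$ or $q=E_s\cap E_{s+1}$. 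These configurations do occur for curves tangent to (or crossing) the axes and must be treated separately --- they are easy (at $E_0\cap E_1$ one has $x^m=u_0$, so $x^m$ restricts to a uniformizer precisely when $\tilde C$ meets $E_1$ transversally) but are not covered by the written argument.
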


\begin{proof}
See \cite{GL97}.
\end{proof}

By Proposition \ref{paGeneral} we have that the curves $\Gamma_{i,i+1}$ are smooth, so Proposition \ref{intersectTransversally} says that the curves $\Gamma'_{i,i+1}$ intersect transversally the exceptional divisor over $p_{i+1}$. If $\text{gcd}(w_1,w_3) = \text{gcd}(w_2,w_4) = 1$, then we already know that the map $\psi$ is defined on the minimal resolution of $X$. Therefore we only need to check the same assertion when $\text{gcd}(w_1,w_3) > 1$ or $\text{gcd}(w_2,w_4) > 1$.

\begin{proposition}
The map $\psi\circ \sigma \colon \tilde{X}\to \P^2$ is a morphism.
\end{proposition}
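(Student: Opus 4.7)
The strategy is to combine two inputs from Section \ref{s2}: (i) Proposition \ref{paGeneral}, which (since $w^*=1$ and $a_i\geq 2$ gives $a_i>w^*$ for every $i$) forces each curve $\Gamma_{i,i+1}$ to be smooth, and (ii) Proposition \ref{intersectTransversally}, which then forces the strict transform $\Gamma'_{i,i+1}\subset\tilde X$ to meet the exceptional divisor over $p_{i+1}$ transversally at a single point of a single component. When additionally $\gcd(w_1,w_3)=\gcd(w_2,w_4)=1$, Theorem \ref{definedTransversally} identifies this transversal one-component condition with $\psi\circ\sigma$ being defined on the whole exceptional divisor over $p_{i+1}$; together with Proposition \ref{imageCurves} this would immediately finish the proof. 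So the only case requiring new work is the one in which some $\gcd(w_i,w_{i+2})=h>1$.

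In that case, by Proposition \ref{singularityType} the singularity at $p_i$ is $\frac{1}{t_i}(t_{i+2},w_{i+3})$ with $w_i=ht_i$, $w_{i+2}=ht_{i+2}$, rather than $\frac{1}{w_i}(w_{i+2},w_{i+3})$. The plan is to redo Propositions \ref{strictTransformEquation}--\ref{intersectionCurves}, Corollary \ref{intersectionTransversally}, and Theorem \ref{definedTransversally} verbatim for this modified singularity type. The argument of Theorem \ref{definedTransversally} is purely local and toric: one writes an analytic power series $x_{i-1}=-x_{i+1}^{a_{i+1}}x_{i+2}-x_{i+2}^{a_{i+2}}+(\text{higher order})$, substitutes the Reid coordinates from Theorem \ref{reidResolution} applied to the new continued fraction $t_i/\bar q_i$ of $\frac{1}{t_i}(t_{i+2},w_{i+3})$, and then cancels the common monomial factor between $\sigma^*(x_{i-1})$ and the remaining components of $\psi$. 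Because the smoothness of $\Gamma_{i-1,i}$ already forces the transversal one-component intersection, the ``balancing'' identity $a_{i+1}\alpha_j-(a_{i+2}-1)\beta_j=0$ holds at exactly one index $j$, and the cancellation proceeds chart by chart just as in Section \ref{s2}, producing the contraction of the exceptional chain onto the appropriate coordinate line $L_k\subset\P^2$.

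The main (though essentially bookkeeping) obstacle is verifying that the divisibility relations $w_i=ht_i,\ w_{i+2}=ht_{i+2}$ are compatible with the integer exponents appearing after cancellation. Concretely, one must check that the analogue of the intersection numbers of Proposition \ref{intersectionCurves}, now computed in $\P(t_{i+1},w_{i+2},t_{i+3})\simeq\P(w_{i+1},w_{i+2},w_{i+3})$ (via \cite[Remarks 1.3.2]{Dolg82}, as already used in the proof of Proposition \ref{paGeneral}), still yields $a_{i+2}-1=\alpha_{j}m_j+\alpha_{j+1}m_{j+1}$ and $a_{i+1}=\beta_j m_j+\beta_{j+1}m_{j+1}$ with the new $\alpha,\beta$. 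Since transversal intersection at one component forces $m_j=1$ and the other $m$'s to vanish, the exponents of the monomial factor extracted from $\sigma^*(x_{i-1})$ are automatically nonnegative integers, and the argument of Theorem \ref{definedTransversally} concludes that $\psi\circ\sigma$ is defined on the full exceptional divisor over each $p_i$. Combined with Proposition \ref{imageCurves}, this shows $\psi\circ\sigma$ is a morphism on all of $\tilde X$.
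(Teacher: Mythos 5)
Your proposal follows essentially the same route as the paper: smoothness of the $\Gamma_{i,i+1}$ from Proposition \ref{paGeneral} (using $a_i\geq 2>1=w^*$) plus Proposition \ref{intersectTransversally} gives the transversal one-component intersection, the case $\gcd(w_i,w_{i+2})=1$ is Theorem \ref{definedTransversally} directly, and the case $h>1$ is handled by rerunning the local toric computation for the reduced singularity $\frac{1}{t_i}(t_{i+2},w_{i+3})$ to get the balancing identities $=-1,0,1$ at a single index and then invoking the argument of Theorem \ref{definedTransversally}. Only minor bookkeeping slips differ from the paper (the dependent variable in the power series should be $x_{i+1}$, and the balancing identity acquires a factor $\frac{a_{i+2}-1}{h}$ because the reduced coordinate is $x_{i+3}^h$), but these are exactly the divisibility checks you flag, and the paper resolves them the same way.
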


\begin{proof}
We study the case over the point $p_4$, with $\text{gcd}(w_2,w_4) = h >1$. The singularity at $p_4$ is $1/w_4(w_2,w_3)$ with toric coordinates $x_2$ and $x_3$. From Proposition \ref{singularityType} we have that $1/w_4(w_2,w_3) \simeq 1/t_4(t_2,w_3)$, with toric coordinates $x'_2$ and $x'_3$, and the relation $x'_2 = x_2$ and $x'_3 = x^h_3$. Then from Theorem \ref{reidResolution} we have $Y=U_1\cup \cdots U_{s_4}$ in the resolution of $p_4$, with $u_i,v_i$ the local coordinates in $U_i$, and the relation $x'^{t_4}_2 = u_i^{\alpha_{4,i}}v_i^{\alpha_{4,{i+1}}}$ and $x'^{t_4}_3 = u_i^{\beta_i}v_i^{\beta_{i+1}}$. The curve $\Gamma_{3,4}\subset \P(t_2,w_3,t_4)$, restricted to the open set $(x_4=1)$, has equation $x'^{a_2}_2 + x'^{(a_3-1)/h}_3 = 0$, and we can use Proposition \ref{strictTransformEquation} to find the equation of the curve in every $U_i$.

Following the proof of Proposition \ref{intersectionCurves}, we have that the intersection number \[ \Gamma_{3,4}'\cdot \sum_{i=0}^{s_4+1} \frac{\beta_{4,i}}{t_4}E_{4,i} = \frac{a_2}{t_4}, \] and using the fact that the curve $\Gamma'_{3,4}$ intersects tranversally one component, we have that there exists $\beta_{4,j} = a_2$ and $\alpha_{4,j} = (a_3-1)/h$. Therefore
\begin{eqnarray*}
a_2\alpha_{4,j-1} - \frac{a_3-1}{h}\beta_{4,j-1} & = & -1\\
a_2\alpha_{4,j}  - \frac{a_3-1}{h}\beta_{4,j} & = & 0\\
a_2\alpha_{4,j+1}  - \frac{a_3-1}{h}\beta_{4,j+1} & = & 1\\
\end{eqnarray*}
Hence considering the composition
\[ \xymatrix@1{ \tilde{X} \ar[r]^-{\sigma} & \frac{1}{t_4}(t_2,w_3) \ar[r]^-{\simeq} & \frac{1}{w_4}(w_2,w_3) \ar@{-->}[r]^-{\psi} & X(a_1,a_2,a_3,a_4) } \] we have the hypothesis of Theorem \ref{definedTransversally}, therefore the map is defined on the whole exceptional divisor.
\end{proof}

\begin{proposition}
The curves $C'_1$ and $C'_2$ in $\tilde{X}$ are $(-1)$-curves. To obtain the chain of curves \[ K_1:= E_{2,s_2}\cup \cdots \cup E_{2,1} \cup C'_1 \cup E_{4,1} \cup \cdots \cup E_{4,s_4}\] and \[ K_2:= E_{1,s_1}\cup \cdots \cup E_{1,1} \cup C'_2 \cup E_{3,1} \cup \cdots \cup E_{3,s_3}\] we blowup $\tilde{X'}$ on the intersection points of the curves with self-intersections $-a_3$ and $-a_1$, and $-a_2$ and $-a_4$ respectively.
\label{C(-1)}
\end{proposition}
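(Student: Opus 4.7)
The plan is to establish the two assertions in turn: first that $(C_1')^2 = (C_2')^2 = -1$, and then that the chain $K_1$ (resp.\ $K_2$) is obtained from $\tilde{X'}$ by blowing up at the indicated intersection point. By the symmetry of the Koll\'ar equation under cyclic permutation of indices, it suffices to handle $C_1'$ and $K_1$.

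For $(C_1')^2 = -1$, I would first compute $C_1^2$ on $X$. The hyperplane sections give $w_1 H = C_1 + \Gamma_{3,4}$ and $w_3 H = C_1 + \Gamma_{1,2}$ as $\mathbb{Q}$-divisor identities on $X$, and $H^2 = D/(w_1 w_2 w_3 w_4)$ from the ambient weighted projective space. Multiplying out,
\[
\frac{D}{w_2 w_4} = w_1 w_3 H^2 = C_1^2 + C_1 \cdot \Gamma_{1,2} + C_1 \cdot \Gamma_{3,4} + \Gamma_{1,2}\cdot\Gamma_{3,4}.
\]
Here $\Gamma_{1,2}$ and $\Gamma_{3,4}$ are disjoint (their defining equations force $x_1 = x_2 = x_3 = x_4 = 0$), so the last term vanishes. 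Since Proposition~\ref{paGeneral} gives that the $\Gamma$'s are smooth when $w^*=1$ and $a_i \geq 2$, the local intersections at $p_2$ and $p_4$ are computed directly from the local equations $\Gamma_{1,2}\colon x_4^{a_4} + x_1^{a_1-1} = 0$ near $p_2$ and $\Gamma_{3,4}\colon x_2^{a_2} + x_3^{a_3-1} = 0$ near $p_4$: restricting to $C_1$ gives multiplicities $a_4$ and $a_2$ upstairs on the cyclic covers, hence $C_1\cdot\Gamma_{1,2} = a_4/w_2$ and $C_1\cdot\Gamma_{3,4} = a_2/w_4$ downstairs. Using the Koll\'ar relations $D = a_2 w_2 + w_3 = a_4 w_4 + w_1$ and the definition $s_1 = a_4 w_4 - w_3$, one simplifies to $C_1^2 = -s_1/(w_2 w_4)$.

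The pull-back formula \eqref{pullbackFormula} yields $\sigma^*(C_1) = \tilde{C}_1 + \sum_{i\geq 1}(\beta_{2,i}/w_2) E_{2,i} + \sum_{i\geq 1}(\beta_{4,i}/w_4) E_{4,i}$, and since $\sigma^*(C_1)\cdot E = 0$ for every exceptional $E$ we obtain $C_1^2 = (\sigma^*C_1)^2 = (\tilde{C}_1)^2 + \beta_{2,1}/w_2 + \beta_{4,1}/w_4$. Because $\beta_{i,1}$ equals the $q$-parameter $q_i$ of the singularity $\frac{1}{w_i}(1,q_i)$ at $p_i$, verifying $(\tilde{C}_1)^2 = -1$ is equivalent to the arithmetic identity
\[
s_1 + q_2 w_4 + q_4 w_2 = w_2 w_4.
\]
Writing $q_2 w_4 = w_1 + k_2 w_2$ and $q_4 w_2 = w_3 + k_4 w_4$ from the defining congruences $w_4 q_2 \equiv w_1 \pmod{w_2}$ and $w_2 q_4 \equiv w_3 \pmod{w_4}$, and substituting $a_4 w_4 + w_1 = D$, this collapses to $k_2 w_2 + k_4 w_4 = w_2 w_4 - D$, a statement that follows from the explicit form of $w_i$ and $q_i$ in terms of the $a_i$.

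For the chain structure, Theorem~\ref{singularitiesContraction} identifies the minimal resolution chain of the singularity on $\tilde{X'}$ coming from $C_1$ as $[2,\ldots,2_{a_4-1}, a_3, a_1, 2,\ldots,2_{a_2-1}]$. Blowing up the intersection of the $-a_3$ and $-a_1$ curves in this chain introduces a new $(-1)$-curve between them (which, by the self-intersection count just verified, we identify with $C_1'$) and changes the self-intersections of the adjacent curves to $-(a_3+1)$ and $-(a_1+1)$; iterating blowups on the emerging $(-1)$-curve as dictated by the continued fractions of the singularities at $p_2$ and $p_4$ reconstructs the full chain $K_1$. The analogous argument with indices shifted by two gives $K_2$. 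The main obstacle is the arithmetic identity in the second paragraph, which is the heart of the computation.
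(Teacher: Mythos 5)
Your strategy is genuinely different from the paper's: you try to compute $(C_1')^2$ directly, via intersection theory on the weighted hypersurface plus the pull-back formula \eqref{pullbackFormula}, whereas the paper never computes $C_1^2$ at all --- it factors $\tilde{X}\to\tilde{X'}\to X'$ through the minimal resolution of $\frac{1}{s_1}(w_2,w_4)$ and locates the blow-up point by \emph{excluding} every other position on the chain $[2,\ldots,2,a_3,a_1,2,\ldots,2]$, using the bound $\beta_i<w_2$ on the partial denominators of the singularity at $p_2$. Your first paragraph does check out: $w_1H=C_1+\Gamma_{3,4}$, $w_3H=C_1+\Gamma_{1,2}$, the disjointness of the two $\Gamma$'s, the local multiplicities $a_4/w_2$ and $a_2/w_4$, and the simplification to $C_1^2=-s_1/(w_2w_4)$ (via $a_2w_2=D-w_3$, $a_4w_4=D-w_1$) are all correct. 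But the two places where the content of the proposition actually lives are both left unproved.

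First, the identity $s_1+q_2w_4+q_4w_2=w_2w_4$ \emph{is} the statement $(C_1')^2=-1$; it is true (for $(a_1,\ldots,a_4)=(2,2,2,3)$ one gets $6+3\cdot 5+2\cdot 7=35=w_2w_4$), but it does not simply "follow from the explicit form of $w_i$ and $q_i$": $q_2$ is defined by $w_4q_2\equiv w_1 \pmod{w_2}$, and inverting $w_4$ modulo $w_2$ in closed form in the $a_i$ is essentially the computation carried out in \cite{HK12}. Congruence considerations alone cannot close this, because $k_2w_2+k_4w_4$ is only determined modulo $w_2w_4$ while its a priori range $(-w_1-w_3,\,2w_2w_4-w_1-w_3)$ has length $2w_2w_4$ and so contains two candidates; you need the explicit Hirzebruch--Jung data at $p_2,p_4$ or an indirect argument. (When $\gcd(w_2,w_4)>1$, which Section \ref{s3} allows, the local analysis changes further because of quasi-reflections.) Second, and more seriously, your last paragraph assumes what is to be proved: you describe what happens \emph{if} one blows up at the node between the $-a_3$ and $-a_1$ curves and assert that iterating this "reconstructs the full chain $K_1$," but the proposition's content is precisely that the blow-ups occur there and nowhere else. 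Knowing $(C_1')^2=-1$ makes $C_1'$ the unique $(-1)$-curve of $K_1$, but it does not determine to which point of the minimal chain $C_1'$ contracts; to pin that down you must either match the self-intersections along $K_1$ (i.e. the continued fractions of $w_2/q_2$ and $w_4/q_4$) against the blown-up chain, or, as the paper does, rule out the alternative positions by showing each would force some $\beta_i\geq|[2,\ldots,2,a_3,a_1+1]|>w_2$. As written, the proposal reduces the proposition to two nontrivial claims and establishes neither.
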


\begin{proof}
We have the following commutative diagram

$$ \xymatrix{   \tilde{X} \ar[r]^-{\sigma} \ar[d] & X(a_1,a_2,a_3,a_4)  \ar[d]^{\eta}  \\  \tilde{X'} \ar[r]^-{\sigma'}  & X'(a_1,a_2,a_3,a_4)}$$

Then, to obtain the chain of curves $K_1$ we have to blowup on the exceptional divisor over the singularity $\frac{1}{s_1}(w_2,w_4)$. This is because if no blowup were needed, then $C'_1$ would be some of the curves in the exceptional divisor over the singularity $\frac{1}{s_1}(w_2,w_4)$, so we would have that $w_2\leq a_4-1$ or $w_4\leq a_2 - 1$, which can happen only if one of the $a_i$ is $1$. Recall from Theorem \ref{singularitiesContraction} that the Hirzebruch-Jung continued fraction of the singularity $\frac{1}{s_1}(w_2,w_4)$ is $[2,\ldots,2,a_3,a_1,2,\ldots,2]$. Then we want to show that the blowups needed must be done between the curves with self-intersection $-a_3$ and $-a_1$. For this, we will rule out every other possibility. Suppose first that the blowups are done on the point \begin{center}
 	\begin{tikzpicture}[scale=.2,node distance = 1cm,main node/.style={circle,minimum width=2mm,draw,inner sep=0.1mm}]
    	\node[main node] (a){};
    	\node (b) [right of=a] {$\cdots$};
    	\node[main node] (k) [right of=b] {};
    	\node[main node] (c) [right of=k,label = {[shift={(0,0)}]$-a_3$}] {};
    	\node[main node] (d) [right of=c,label = {[shift={(0,0)}]$-a_1$}] {};
    	\node[main node] (e) [right of=d] {};
    	\node (f) [right of=e] {$\cdots$};
    	\node[main node] (g) [right of=f] {};
    	\draw[-] (a) to node {} (b);
    	\draw[-] (b) to node {} (k);
    	\draw[-] (k) to node {} (c);
    	\draw[-] (c) to node {} (d);
    	\draw[-] (d) to node {$\times$} (e);
    	\draw[-] (e) to node {} (f);
    	\draw[-] (f) to node {} (g);
	\end{tikzpicture}
\end{center} then we would obtain that the continued fraction associated to the singularity at $p_2$ would have an $\beta_i$ such that \[\beta_i \geq |[\underbrace{2,\ldots, 2}_{a_4-1},a_3,a_1+1]|,\] but $|[\underbrace{2,\ldots, 2}_{a_4-1},a_3,a_1+1]| = w_2 + 2 + a_3a_4 - 2a_4 > w_2$, which is a contradiction. If the blowups are done on the point \begin{center}
 	\begin{tikzpicture}[scale=.2,node distance = 1cm,main node/.style={circle,minimum width=2mm,draw,inner sep=0.1mm}]
    	\node[main node] (a){};
    	\node (b) [right of=a] {$\cdots$};
    	\node[main node] (c) [right of=b] {};
    	\node[main node] (d) [right of=c,label = {[shift={(0,0)}]$-a_3$}] {};
    	\node[main node] (e) [right of=d,label = {[shift={(0,0)}]$-a_1$}] {};
    	\node[main node] (f) [right of=e] {};
    	\node (g) [right of=f] {$\cdots$};
    	\node[main node] (h) [right of=g] {};
    	\node (i) [right of=h] {$\cdots$};
    	\node[main node] (j) [right of=i] {};
    	\draw[-] (a) to node {} (b);
    	\draw[-] (b) to node {} (c);
    	\draw[-] (c) to node {} (d);
    	\draw[-] (d) to node {} (e);
    	\draw[-] (e) to node {} (f);
    	\draw[-] (f) to node {} (g);
    	\draw[-] (g) to node {} (h);
    	\draw[-] (h) to node {$\times$} (i);
    	\draw[-] (i) to node {} (j);
    	\draw [decorate,decoration={brace,amplitude=10pt,raise=4pt}]
(f) -- (h) node [black,midway,xshift=0cm,yshift=0.7cm] {$e+1$};
	\end{tikzpicture}
\end{center} with $e\geq 0$, we would have \[ \beta_i \geq |[\underbrace{2,\ldots, 2}_{a_4-1},a_3,a_1,\underbrace{2,\ldots, 2}_{e},3]|, \] but $|[\underbrace{2,\ldots, 2}_{a_4-1},a_3,a_1,\underbrace{2,\ldots, 2}_{e},3]| = (2e+3)w_2 - (2e+1)a_3a_4 - 2a_4 + 1 > w_2$.

Therefore, the blowups to obtain the chain of curves $K_1$ desired have to be done at the point \begin{center}
 	\begin{tikzpicture}[scale=.2,node distance = 1cm,main node/.style={circle,minimum width=2mm,draw,inner sep=0.1mm}]
    	\node[main node] (a){};
    	\node (b) [right of=a] {$\cdots$};
    	\node[main node] (k) [right of=b] {};
    	\node[main node] (c) [right of=k,label = {[shift={(0,0)}]$-a_3$}] {};
    	\node[main node] (d) [right of=c,label = {[shift={(0,0)}]$-a_1$}] {};
    	\node[main node] (e) [right of=d] {};
    	\node (f) [right of=e] {$\cdots$};
    	\node[main node] (g) [right of=f] {};
    	\draw[-] (a) to node {} (b);
    	\draw[-] (b) to node {} (k);
    	\draw[-] (k) to node {} (c);
    	\draw[-] (c) to node {$\times$} (d);
    	\draw[-] (d) to node {} (e);
    	\draw[-] (e) to node {} (f);
    	\draw[-] (f) to node {} (g);
	\end{tikzpicture}
\end{center}

\end{proof}

%%%%%%%%%AQUIIIIIIIIIIIIII

From the proof of Prop. \ref{C(-1)}, we have that the singularity at $p_i$ of the Koll\'ar surface has Hirzebruch-Jung continued fraction \[ [\ldots, c_i,\underbrace{2,\ldots, 2}_{a_{i+2}-1}] \] with $c_i>2$. The intersection of $\Gamma'_{i-1,i}$ with the exceptional divisor over $p_i$ is $ \beta_{i,j}/w_i = a_{i+2}/w_i$, so the curve $\Gamma'_{i-1,i}$ intersects the exceptional divisor over $p_i$ at the mentioned component with self-intersection $-c_i$. This because $\beta_{i,s_i+1}=0$ and $\beta_{i,s_i} = 1$, and $\beta_{i,k-1} = b_k\beta_{i,k} -\beta_{i,k+1}$. This implies that $\beta_{i,s_i - (a_2-1)} = a_2 = \beta_j$. Therefore we have the curve configuration shown in Figure \ref{curveConfigurationXtilde}.

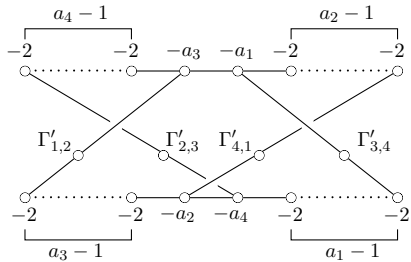
\begin{figure}[ht]
\psscalebox{.7 .7} % Change this value to rescale the drawing.
{
\begin{pspicture}(0,-2.3905556)(7.5288887,2.3905556)
\psline[linecolor=black, linewidth=0.02](4.3555555,-1.2127777)(0.35555556,1.1872222)
\psframe[linecolor=white, linewidth=0.04, fillstyle=solid, dimen=outer](3.9777777,-0.635)(3.7555556,-1.0794444)
\psline[linecolor=black, linewidth=0.02](3.3555555,-1.2127777)(7.3555555,1.1872222)
\psframe[linecolor=white, linewidth=0.04, fillstyle=solid, dimen=outer](5.7777777,0.365)(5.5555553,-0.079444446)
\psframe[linecolor=white, linewidth=0.04, fillstyle=solid, dimen=outer](2.1777778,0.365)(1.9555556,-0.079444446)
\psline[linecolor=black, linewidth=0.02](3.3555555,1.1872222)(0.35555556,-1.2127777)
\psline[linecolor=black, linewidth=0.02](4.3555555,1.1872222)(7.3555555,-1.2127777)
\psline[linecolor=black, linewidth=0.04, linestyle=dotted, dotsep=0.10583334cm](0.39033815,1.1959178)(2.3033817,1.1959178)
\psline[linecolor=black, linewidth=0.04, linestyle=dotted, dotsep=0.10583334cm](5.390338,1.1959178)(7.3033814,1.1959178)
\psline[linecolor=black, linewidth=0.02](2.3555555,1.1872222)(5.3555555,1.1872222)
\psdots[linecolor=black, dotstyle=o, dotsize=0.2, fillcolor=white](2.3555555,1.1872222)
\psdots[linecolor=black, dotstyle=o, dotsize=0.2, fillcolor=white](0.35555556,1.1872222)
\psdots[linecolor=black, dotstyle=o, dotsize=0.2, fillcolor=white](3.3555555,1.1872222)
\psdots[linecolor=black, dotstyle=o, dotsize=0.2, fillcolor=white](4.3555555,1.1872222)
\psdots[linecolor=black, dotstyle=o, dotsize=0.2, fillcolor=white](5.3555555,1.1872222)
\psdots[linecolor=black, dotstyle=o, dotsize=0.2, fillcolor=white](7.3555555,1.1872222)
\psline[linecolor=black, linewidth=0.04, linestyle=dotted, dotsep=0.10583334cm](0.39033815,-1.2040821)(2.3033817,-1.2040821)
\psline[linecolor=black, linewidth=0.04, linestyle=dotted, dotsep=0.10583334cm](5.390338,-1.2040821)(7.3033814,-1.2040821)
\psline[linecolor=black, linewidth=0.02](2.3555555,-1.2127777)(5.3555555,-1.2127777)
\psdots[linecolor=black, dotstyle=o, dotsize=0.2, fillcolor=white](2.3555555,-1.2127777)
\psdots[linecolor=black, dotstyle=o, dotsize=0.2, fillcolor=white](0.35555556,-1.2127777)
\psdots[linecolor=black, dotstyle=o, dotsize=0.2, fillcolor=white](3.3555555,-1.2127777)
\psdots[linecolor=black, dotstyle=o, dotsize=0.2, fillcolor=white](4.3555555,-1.2127777)
\psdots[linecolor=black, dotstyle=o, dotsize=0.2, fillcolor=white](5.3555555,-1.2127777)
\psdots[linecolor=black, dotstyle=o, dotsize=0.2, fillcolor=white](7.3555555,-1.2127777)
\psdots[linecolor=black, dotstyle=o, dotsize=0.2, fillcolor=white](1.3555555,-0.41277778)
\psdots[linecolor=black, dotstyle=o, dotsize=0.2, fillcolor=white](2.9555554,-0.41277778)
\psdots[linecolor=black, dotstyle=o, dotsize=0.2, fillcolor=white](4.7555556,-0.41277778)
\psdots[linecolor=black, dotstyle=o, dotsize=0.2, fillcolor=white](6.3555555,-0.41277778)
\rput[bl](0.0,1.4094445){$-2$}
\rput[bl](2.0,1.4094445){$-2$}
\rput[bl](5.0,1.4094445){$-2$}
\rput[bl](7.0,1.4094445){$-2$}
\rput[bl](3.0,1.4094445){$-a_3$}
\rput[bl](4.0,1.4094445){$-a_1$}
\rput[bl](0.08888889,-1.6794444){$-2$}
\rput[bl](2.088889,-1.6794444){$-2$}
\rput[bl](5.088889,-1.6794444){$-2$}
\rput[bl](7.088889,-1.6794444){$-2$}
\rput[bl](2.8888888,-1.6794444){$-a_2$}
\rput[bl](3.8888888,-1.6794444){$-a_4$}
\psline[linecolor=black, linewidth=0.02](0.35555556,1.7872223)(0.35555556,1.9872222)(2.3555555,1.9872222)(2.3555555,1.7872223)
\psline[linecolor=black, linewidth=0.02](5.3555555,1.7872223)(5.3555555,1.9872222)(7.3555555,1.9872222)(7.3555555,1.7872223)
\psline[linecolor=black, linewidth=0.02](0.35555556,-1.8127778)(0.35555556,-2.0127778)(2.3555555,-2.0127778)(2.3555555,-1.8127778)
\psline[linecolor=black, linewidth=0.02](5.3555555,-1.8127778)(5.3555555,-2.0127778)(7.3555555,-2.0127778)(7.3555555,-1.8127778)
\rput[bl](0.8888889,2.1205556){$a_4-1$}
\rput[bl](5.888889,2.1205556){$a_2-1$}
\rput[bl](0.8,-2.3905556){$a_3-1$}
\rput[bl](6.0,-2.3905556){$a_1-1$}
\rput[bl](0.6,-0.39055556){$\Gamma_{1,2}'$}
\rput[bl](3.0,-0.39055556){$\Gamma_{2,3}'$}
\rput[bl](4.0,-0.39055556){$\Gamma_{4,1}'$}
\rput[bl](6.6,-0.39055556){$\Gamma_{3,4}'$}
\end{pspicture}
}
\caption{Curve configuration on $\tilde{X'}$.}
\label{curveConfigurationXtilde}
\end{figure}

\begin{proposition}
The curves $\Gamma'_{i,i+1}$ are $(-1)$-curves.
\label{Gamma(-1)}
\end{proposition}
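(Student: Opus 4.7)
By the cyclic symmetry of the configuration it suffices to prove $(\Gamma'_{3,4})^2 = -1$, and my plan is to transfer the computation to $\tilde{X}$, where the intersection theory set up in Section~\ref{s2} is directly available. The first step will be to check that on $\tilde{X}$ the curve $\Gamma'_{3,4}$ is disjoint from both of the $(-1)$-curves $C'_1 = E_{2,0} = E_{4,0}$ and $C'_2 = E_{1,0} = E_{3,0}$: from the chain description recorded just before Figure~\ref{curveConfigurationXtilde}, the curve $\Gamma'_{3,4}$ coincides with the outer component $E_{3,s_3+1}$ at $p_3$ and meets the exceptional divisor over $p_4$ only at the interior component $E_{4,s_4-a_2+1}$ of self-intersection $-c_4$; none of these components equals $C'_1$ or $C'_2$. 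Since the birational morphism $\tilde{X} \to \tilde{X'}$ contracts precisely $C'_1$ and $C'_2$ (Proposition~\ref{C(-1)}), self-intersection is preserved and $(\Gamma'_{3,4})^2_{\tilde{X'}} = (\Gamma'_{3,4})^2_{\tilde{X}}$.

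Second, I will compute $(\Gamma'_{3,4})^2_{\tilde{X}}$ via the pullback identity $(\sigma^*\Gamma_{3,4})^2 = \Gamma_{3,4}^2$. Writing $\sigma^*\Gamma_{3,4} = \Gamma'_{3,4} + \sum_{i,j} \lambda_{i,j} E_{i,j}$ with the $\lambda_{i,j}$ determined by orthogonality $\sigma^*\Gamma_{3,4}\cdot E_{i,j} = 0$, and using the standard formula $(M^{-1})_{kk} = -\alpha_k\beta_k/m$ for the diagonal of the inverse Hirzebruch-Jung intersection matrix (with $\beta_{3,s_3}=1$ and $\beta_{4,s_4-a_2+1}=a_2$ from the chain data), the square reduces to
\[
(\Gamma'_{3,4})^2_{\tilde{X}} \;=\; \Gamma_{3,4}^2 \;-\; \frac{\alpha_{3,s_3}}{w_3} \;-\; \frac{a_2\,\alpha_{4,s_4-a_2+1}}{w_4}.
\]
In parallel, $\Gamma_{3,4}^2$ on $X$ is extracted from the divisor relation $C_1 + \Gamma_{3,4} \sim w_1 H|_X$, the adjunction formula $C_1^2 = -2 - (D - \sum w_k)/(w_2 w_4)$, and $H \cdot C_1 = 1/(w_2 w_4)$ (since $C_1 \cong \P(w_2, w_4)$), yielding a rational expression in the $w_i$'s and $a_i$'s.

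Finally, I would collapse the two expressions to $-1$ using the cyclic defining relations $a_i w_i + w_{i+1} = D$ (valid for $w^* = 1$) together with the Hirzebruch-Jung reciprocity $\alpha_{k+1}\beta_k - \alpha_k\beta_{k+1} = w_i$, which pins down $\alpha_{3,s_3}$ and $\alpha_{4,s_4-a_2+1}$ in closed form (the neighboring $\beta$'s being small due to the trailing $-2$ tail at each singularity). The main obstacle will be precisely this algebraic simplification: it requires careful bookkeeping of the HJ invariants $\alpha_{i,j}, \beta_{i,j}$ at $p_3$ and $p_4$ as transformed through the blow-ups of Proposition~\ref{C(-1)}, and matching them against the congruences built into the Koll\'ar data. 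As a conceptual sanity check on the target value $-1$, Theorem~\ref{morphism} provides a birational morphism $\tilde{X'} \to \P^2$ (descended by factoring $\psi\circ\sigma\circ\varphi$ through $\tilde{X'}$, using $w^*=1$) which realizes each $\Gamma'_{i,i+1}$ as the final blow-up in a tower sitting over an intersection point of two coordinate lines of $\P^2$—such a final blow-up is automatically a $(-1)$-curve, matching Hwang-Keum's model and confirming the expected answer.
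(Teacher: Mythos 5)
Your strategy is genuinely different from the paper's, and it has a real gap: the computation is never finished. In your setup the entire content of the statement is the algebraic identity
\[
\Gamma_{3,4}^2 \;-\; \frac{\alpha_{3,s_3}}{w_3} \;-\; \frac{a_2\,\alpha_{4,s_4-a_2+1}}{w_4} \;=\; -1,
\]
and you explicitly defer its verification (``the main obstacle will be precisely this algebraic simplification''). Until that identity is actually established, nothing has been proved. Two of the ingredients you feed into it are also shaky. First, Proposition \ref{C(-1)} does \emph{not} say that $\tilde{X}\to\tilde{X'}$ contracts precisely $C'_1$ and $C'_2$: it contracts as many curves as are needed to shrink the chains $K_1$ and $K_2$ down to the minimal resolutions of $\frac{1}{s_1}(w_2,w_4)$ and $\frac{1}{s_2}(w_1,w_3)$, i.e.\ whole sub-chains sitting between the $-a_3$ and $-a_1$ (resp.\ $-a_2$ and $-a_4$) positions. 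So to transfer the self-intersection you must check that $\Gamma'_{3,4}$ is disjoint from \emph{all} of these, not just from $C'_1$ and $C'_2$. (This is true --- $\Gamma'_{3,4}$ meets the chains only at $E_{3,s_3}$ and at the component over $p_4$ with $\beta_{4,j}=a_2$, both of which survive --- but it is exactly the kind of bookkeeping you are postponing.) Second, your formula for $C_1^2$ invokes adjunction as if $X$ were smooth along $C_1$; but $C_1$ passes through the quotient singularities $p_2$ and $p_4$, so adjunction acquires correction terms (the different) there, and the stated closed form is unjustified. A cleaner route to $\Gamma_{3,4}^2$ is $\Gamma_{3,4}\cdot(w_1H-C_1)$ together with $C_1\cdot\Gamma_{3,4}=a_2/w_4$ computed as a local orbifold intersection number at $p_4$, which avoids adjunction altogether.

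By contrast, the paper's proof avoids all computation: $\psi\circ\sigma\colon\tilde{X}\to\P^2$ is a birational morphism, so after passing to $\tilde{X'}$ the remaining map to $\P^2$ is a composition of blow-downs of $(-1)$-curves supported in the configuration of Figure \ref{curveConfigurationXtilde}; the four curves of self-intersection $-a_i$ map onto the four lines and cannot be contracted, every other exceptional component is a $(-2)$-curve, so the first curve contracted must be one of the $\Gamma'_{i,i+1}$, and iterating the contraction (Figure \ref{XtildeContraction}) forces all four to be $(-1)$-curves. Your closing ``sanity check'' is essentially this argument; had you promoted it from a heuristic to the actual proof --- which only requires observing which curves in the configuration can possibly have self-intersection $-1$ --- you would not have needed the intersection-theoretic machinery at all.
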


\begin{proof}
We have a birational morphism $\psi\circ \sigma \colon \tilde{X} \to \P^2$, so it is a composition of blowups, which contracts $(-1)$-curves to reach $\P^2$. We start by contracting the curves from the proof of Proposition \ref{C(-1)} to obtain $\tilde{X'}$ with the curve configuration of Figure \ref{curveConfigurationXtilde}. Recall from Theorem \ref{definedTransversally} that the image of the curves with self-intersection $-a_i$ are the four lines in general position in $\P^2$, so they cannot be contracted. Then, one of the $\Gamma_{i,i+1}'$ is a $(-1)$-curve, say that it is $\Gamma_{1,2}'$. We contract $\Gamma_{1,2}'$ and the chain of $(-2)$-curves connected to it, to obtain the diagram in Figure \ref{XtildeContraction}.

\begin{figure}[ht]
\begin{center}
\psscalebox{.7 .7} % Change this value to rescale the drawing.
{
\begin{pspicture}(0,-2.3905556)(7.5288887,2.3905556)
\psline[linecolor=black, linewidth=0.02](1.7555555,-1.2127777)(7.3555555,1.1872222)
\psline[linecolor=black, linewidth=0.02](3.3555555,1.1872222)(1.7555555,-1.2127777)
\psline[linecolor=black, linewidth=0.04, linestyle=dotted, dotsep=0.10583334cm](5.390338,-1.2040821)(7.3033814,-1.2040821)
\psline[linecolor=black, linewidth=0.02](1.7555555,-1.2127777)(5.3555555,-1.2127777)
\psframe[linecolor=white, linewidth=0.04, fillstyle=solid, dimen=outer](3.3777778,-0.235)(3.1555555,-0.67944443)
\psframe[linecolor=white, linewidth=0.04, fillstyle=solid, dimen=outer](5.577778,0.565)(5.1555557,0.12055556)
\psframe[linecolor=white, linewidth=0.04, fillstyle=solid, dimen=outer](2.5777779,0.165)(2.3555555,-0.27944446)
\psline[linecolor=black, linewidth=0.02](4.3555555,1.1872222)(7.3555555,-1.2127777)
\psline[linecolor=black, linewidth=0.04, linestyle=dotted, dotsep=0.10583334cm](0.39033815,1.1959178)(2.3033817,1.1959178)
\psline[linecolor=black, linewidth=0.04, linestyle=dotted, dotsep=0.10583334cm](5.390338,1.1959178)(7.3033814,1.1959178)
\psline[linecolor=black, linewidth=0.02](2.3555555,1.1872222)(5.3555555,1.1872222)
\psdots[linecolor=black, dotstyle=o, dotsize=0.2, fillcolor=white](2.3555555,1.1872222)
\psdots[linecolor=black, dotstyle=o, dotsize=0.2, fillcolor=white](3.3555555,1.1872222)
\psdots[linecolor=black, dotstyle=o, dotsize=0.2, fillcolor=white](4.3555555,1.1872222)
\psdots[linecolor=black, dotstyle=o, dotsize=0.2, fillcolor=white](5.3555555,1.1872222)
\psdots[linecolor=black, dotstyle=o, dotsize=0.2, fillcolor=white](7.3555555,1.1872222)
\psdots[linecolor=black, dotstyle=o, dotsize=0.2, fillcolor=white](7.3555555,-1.2127777)
\psdots[linecolor=black, dotstyle=o, dotsize=0.2, fillcolor=white](5.3555555,-1.2127777)
\psdots[linecolor=black, dotstyle=o, dotsize=0.2, fillcolor=white](1.7555555,-1.2127777)
\psdots[linecolor=black, dotstyle=o, dotsize=0.2, fillcolor=white](4.5555553,-0.012777777)
\psdots[linecolor=black, dotstyle=o, dotsize=0.2, fillcolor=white](6.3555555,-0.41277778)
\rput[bl](0.0,1.4094445){$-2$}
\rput[bl](2.0,1.4094445){$-2$}
\rput[bl](5.0,1.4094445){$-2$}
\rput[bl](7.0,1.4094445){$-2$}
\rput[bl](3.2,1.4094445){$0$}
\rput[bl](4.0,1.4094445){$-a_1$}
\rput[bl](5.088889,-1.6794444){$-2$}
\rput[bl](7.088889,-1.6794444){$-2$}
\rput[bl](1.6888889,-1.6794444){$-a_2+1$}
\rput[bl](3.8888888,-1.6794444){$-a_4$}
\psline[linecolor=black, linewidth=0.02](0.35555556,1.7872223)(0.35555556,1.9872222)(2.3555555,1.9872222)(2.3555555,1.7872223)
\psline[linecolor=black, linewidth=0.02](5.3555555,1.7872223)(5.3555555,1.9872222)(7.3555555,1.9872222)(7.3555555,1.7872223)
\psline[linecolor=black, linewidth=0.02](5.3555555,-1.8127778)(5.3555555,-2.0127778)(7.3555555,-2.0127778)(7.3555555,-1.8127778)
\rput[bl](0.8888889,2.1205556){$a_4-1$}
\rput[bl](5.888889,2.1205556){$a_2-1$}
\rput[bl](6.0,-2.3905556){$a_1-1$}
\rput[bl](3.0,-0.39055556){$\Gamma_{2,3}'$}
\rput[bl](4.8,-0.39055556){$\Gamma_{4,1}'$}
\rput[bl](6.6,-0.39055556){$\Gamma_{3,4}'$}
\psline[linecolor=black, linewidth=0.02](4.3555555,-1.2127777)(0.35555556,1.1872222)
\psdots[linecolor=black, dotstyle=o, dotsize=0.2, fillcolor=white](0.35555556,1.1872222)
\psdots[linecolor=black, dotstyle=o, dotsize=0.2, fillcolor=white](2.9555554,-0.41277778)
\psdots[linecolor=black, dotstyle=o, dotsize=0.2, fillcolor=white](4.3555555,-1.2127777)
\end{pspicture}
}
\end{center}
\caption{Contraction of $\Gamma_{1,2}'$ and the chain of $(-2)$-curves.}
\label{XtildeContraction}
\end{figure}
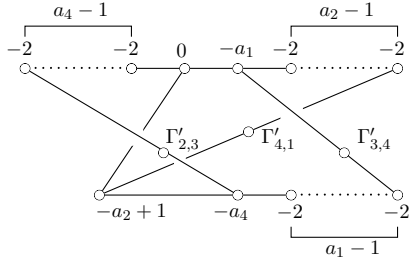

By repeating the procedure, we obtain that all curves $\Gamma_{i,i+1}'$ are $(-1)$-curves.
\end{proof}

\begin{proof}[Proof of Theorem \ref{kollarEqualHK}]
From Proposition \ref{C(-1)} and Proposition \ref{Gamma(-1)}, we conclude that $\tilde{X'}$ and $Z(a_1,a_2,a_3,a_4)$ are obtained from the same sequence of blowups of $\P^2$. Therefore $\tilde{X'} \simeq Z(a_1,a_2,a_3,a_4)$ and so $X'(a_1,a_2,a_3,a_4) \simeq T(a_1,a_2,a_3,a_4)$.
\end{proof}

\begin{remark}
Notice that if $w^* \neq 1$, then the surface $T(a_1,a_2,a_3,a_4)$ does not correspond to a Koll\'ar surface, so Koll\'ar surfaces with $w^*=1$ and $a_i \geq 2$ are strictly contained in Hwang-Keum surfaces.
\end{remark}

Finally, we check what happens when some $a_i = 1$, say $a_1 = 1$.

\begin{corollary}
Let $a_1 = 1$. Then the point $p_4$ is smooth, and the map $\psi$ is defined in the log resolution $\hat{X}$ of the key curves. The curve $\Gamma_{3,4}$ is smooth, and $\psi$ does not contract $C_1$. The surface $\hat{X}$ is obtained by doing blowups from $Z(1,a_2,a_3,a_4)$. The curve $C_1 \subset X(1,a_2,a_3,a_4)$ is contractible if and only if $a_3>a_2$.

\label{a=1}
\end{corollary}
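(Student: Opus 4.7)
The plan is to exploit the arithmetic simplification brought by $a_1 = 1$: one has $W_4 = a_3 - 1$ and $W_2 = a_4(a_3 - 1)$, so $w_4 \mid w_2$, and consequently the $\Z/w_4$-action in the chart around $p_4$ is trivial on $x_2$. Using $a_3 W_3 + W_4 = D = a_2 W_2 + W_3$ together with $W_3 = a_2 a_4 - 1$, a short computation shows that $\gcd(W_3, W_4)$ already divides both $W_1$ and $W_2$, hence $w^* = \gcd(W_3, W_4)$ when $a_1 = 1$. Under the standing hypothesis $w^* = 1$ this forces $\gcd(w_3, w_4) = 1$, so one can take smooth local coordinates $(x_2, y)$ at $p_4$ with $y = x_3^{w_4} = x_3^{a_3-1}$; in particular $p_4$ is smooth on $X$.

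In these coordinates the curve $\Gamma_{3,4}$, given on the universal cover by $x_2^{a_2} + x_3^{a_3-1} = 0$, becomes $x_2^{a_2} + y = 0$, manifestly smooth; since $\Gamma_{3,4}$ is only possibly singular at $p_4$, this yields smoothness globally. Smoothness together with Proposition \ref{intersectTransversally} gives transversal intersection of the strict transform of $\Gamma_{3,4}$ with the exceptional divisor at the remaining (still singular) points, and then the argument of Theorem \ref{definedTransversally} extends to show that $\psi$ is defined on the log resolution $\hat{X}$ of the key curves. For the claim that $\psi$ does not contract $C_1$, one recomputes Proposition \ref{imageCurves} with $a_1=1$: setting $x_4=1$ and using $x_1 = -(x_2^{a_2} x_3 + x_3^{a_3})/(x_2 + 1)$, the extended morphism sends a point of $C_1$ with $x_2 = b$ to $(-b : b+1 : 0 : -1)$, which depends on $b$, so $C_1$ maps onto the whole line $L_3 = (y_3 = 0)$ rather than to a point.

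The identification of $\hat{X}$ with blowups of $Z(1,a_2,a_3,a_4)$ proceeds as in Theorem \ref{kollarEqualHK}, adjusted for the fact that the exceptional chain over $p_4$ is now empty and the log resolution at $p_4$ demands $a_2$ further blowups to separate $\Gamma_{3,4}$ and $C_1$, which have contact of order $a_2$ in the smooth coordinates above. Tracking the resulting chain of exceptional curves and the strict transforms of $C_1, C_2, \Gamma_{i,i+1}$, and matching them with the Hwang--Keum picture (using the simplification of the chain $[2,\ldots,2,a_3,1,2,\ldots,2]$ that appears at $p_2$ when $a_1=1$) gives the desired relation. Finally, contractibility of $C_1 \subset X$ is equivalent to $C_1^2 < 0$, which by Theorem \ref{singularitiesContraction} amounts to $s_1 := a_4 w_4 - w_3 > 0$; substituting $w_4 = a_3 - 1$ and $w_3 = a_2 a_4 - 1$ yields $s_1 = a_4(a_3 - a_2 - 1) + 1$, positive exactly when $a_3 > a_2$. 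The main technical obstacle is the local analysis at the smooth point $p_4$: one must descend to the invariant ring of the $\Z/w_4$-action to see that $\Gamma_{3,4}$, though apparently singular on the universal cover, is in fact smooth, and this smoothness is what makes everything else go through.
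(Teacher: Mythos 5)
Your argument is correct, and for most of the statement it runs parallel to the paper's proof: the paper likewise observes that $a_1=1$ forces $w_4=a_3-1$ to divide $w_2=a_4(a_3-1)$, so that the whole group at $p_4$ acts by quasi-reflections and $p_4$ is smooth with invariant coordinates $(x_2, x_3^{a_3-1})$, in which $\Gamma_{3,4}$ becomes $x_2^{a_2}+y=0$ (smooth, with contact of order $a_2$ with $C_1$, whence the $a_2$ blowups in the log resolution), and it performs the same recomputation of Proposition \ref{imageCurves} to get $\psi(0:1:0:b)=(b:-1-b:0:1)$, showing $C_1$ is not contracted. Your observation that $w^*=\gcd(W_3,W_4)$ (hence $\gcd(w_3,w_4)=1$) is a clean way to justify the coordinates, though it is just the identity $w^*=\gcd(W_i,W_{i+1})$ already recorded in the introduction. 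Where you genuinely diverge is the contractibility criterion: the paper blows down to $Z(1,a_2,a_3,a_4)$, tracks the strict transform of $C_1$ through the contraction of the top $(-1)$-curve and the adjacent $(-2)$-chain to find $C_1'^2=-a_3+a_2$, and reads off the condition $a_3>a_2$; you instead test the sign of the Hwang--Keum index $s_1=a_4w_4-w_3=a_4(a_3-a_2-1)+1$. Your route is shorter and purely arithmetic, but note that Theorem \ref{singularitiesContraction} as quoted only describes the singularity produced \emph{when} $C_1$ is contracted, so it directly gives $s_1>0$ as a necessary condition; for the converse direction of the equivalence you should add the one-line computation that $C_1^2$ is a negative rational multiple of $s_1$ (e.g.\ from $w_1H|_X=C_1+\Gamma_{3,4}$ and the local intersection $C_1\cdot\Gamma_{3,4}$ at $p_4$, using $w_1-a_2w_2=w_3-a_4w_4=-s_1$), after which both approaches yield the same answer $a_3>a_2$.
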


\begin{proof}
If $a_1 = 1$, then $w_2 = a_4(a_3-1)$ and $w_4 = a_3-1$. Then by Proposition \ref{singularityType} we have that the point $p_4$ is smooth, and at the point $p_2$ the singularity is of type $\frac{1}{a_4}(1,a_2a_3a_4-a_3a_4+a_4-1) = \frac{1}{a_4}(1,a_4-1)$. The curve $\Gamma_{1,2}$ intersects transversally the curve $C_1$ at the point $(0:-1:0:1)$, and following the proof of Proposition \ref{imageCurves} we have that $\psi(0:1:0:b) = (b:-1-b:0:1)$, so the curve $\psi$ does not contract $C_1$. The curve $\Gamma_{3,4}$ restricted to the weighted projective plane $(x_1 = 0)$ and to the open set $(x_4 \neq 1)$ is $(x_2^{a_2} + x_3 = 0)\subset \A^2$, so it is smooth and to obtain the log resolution $\hat{X}$ is necessary to do $a_2$ blowups.

Now assume that all the other $a_i\geq 2$. Therefore $C_2$ is contractible, and by contracting it and all the other $(-1)$-curves in $\hat{X}$ we obtain the surface $\hat{X'}$ with the curve configuration shown in Figure \ref{curvesOnXnI}.
\begin{figure}[htb]
\psscalebox{.7 .7} % Change this value to rescale the drawing.
{
\begin{pspicture}(0,-2.3905556)(7.45625,2.3905556)
\psline[linecolor=black, linewidth=0.02](4.3555555,-1.2127777)(0.35555556,1.1872222)
\psframe[linecolor=white, linewidth=0.04, fillstyle=solid, dimen=outer](3.9777777,-0.635)(3.7555556,-1.0794444)
\psline[linecolor=black, linewidth=0.02](3.3555555,-1.2127777)(7.3555555,1.1872222)
\psframe[linecolor=white, linewidth=0.04, fillstyle=solid, dimen=outer](5.077778,-0.035)(4.8555555,-0.47944444)
\psframe[linecolor=white, linewidth=0.04, fillstyle=solid, dimen=outer](2.1777778,0.365)(1.9555556,-0.079444446)
\psline[linecolor=black, linewidth=0.02](3.3555555,1.1872222)(0.35555556,-1.2127777)
\psline[linecolor=black, linewidth=0.02](4.3555555,1.1872222)(5.3555555,-1.2127777)
\psline[linecolor=black, linewidth=0.04, linestyle=dotted, dotsep=0.10583334cm](0.39033815,1.1959178)(2.3033817,1.1959178)
\psline[linecolor=black, linewidth=0.04, linestyle=dotted, dotsep=0.10583334cm](5.390338,1.1959178)(7.3033814,1.1959178)
\psline[linecolor=black, linewidth=0.02](2.3555555,1.1872222)(5.3555555,1.1872222)
\psdots[linecolor=black, dotstyle=o, dotsize=0.2, fillcolor=white](2.3555555,1.1872222)
\psdots[linecolor=black, dotstyle=o, dotsize=0.2, fillcolor=white](0.35555556,1.1872222)
\psdots[linecolor=black, dotstyle=o, dotsize=0.2, fillcolor=white](3.3555555,1.1872222)
\psdots[linecolor=black, dotsize=0.2](4.3555555,1.1872222)
\psdots[linecolor=black, dotstyle=o, dotsize=0.2, fillcolor=white](5.3555555,1.1872222)
\psdots[linecolor=black, dotstyle=o, dotsize=0.2, fillcolor=white](7.3555555,1.1872222)
\psline[linecolor=black, linewidth=0.04, linestyle=dotted, dotsep=0.10583334cm](0.39033815,-1.2040821)(2.3033817,-1.2040821)
\psline[linecolor=black, linewidth=0.02](2.3555555,-1.2127777)(5.3555555,-1.2127777)
\psdots[linecolor=black, dotstyle=o, dotsize=0.2, fillcolor=white](2.3555555,-1.2127777)
\psdots[linecolor=black, dotstyle=o, dotsize=0.2, fillcolor=white](0.35555556,-1.2127777)
\psdots[linecolor=black, dotstyle=o, dotsize=0.2, fillcolor=white](3.3555555,-1.2127777)
\psdots[linecolor=black, dotstyle=o, dotsize=0.2, fillcolor=white](4.3555555,-1.2127777)
\psdots[linecolor=black, dotstyle=o, dotsize=0.2, fillcolor=white](5.3555555,-1.2127777)
\psdots[linecolor=black, dotstyle=o, dotsize=0.2, fillcolor=white](1.3555555,-0.41277778)
\psdots[linecolor=black, dotstyle=o, dotsize=0.2, fillcolor=white](2.9555554,-0.41277778)
\psdots[linecolor=black, dotstyle=o, dotsize=0.2, fillcolor=white](5.555556,0.08722222)
\rput[bl](0.0,1.4094445){$-2$}
\rput[bl](2.0,1.4094445){$-2$}
\rput[bl](5.0,1.4094445){$-2$}
\rput[bl](7.0,1.4094445){$-2$}
\rput[bl](3.0,1.4094445){$C'^2_1$}
\rput[bl](4.0,1.4094445){$-1$}
\rput[bl](0.08888889,-1.6794444){$-2$}
\rput[bl](2.088889,-1.6794444){$-2$}
\rput[bl](2.8888888,-1.6794444){$-a_2$}
\rput[bl](3.8888888,-1.6794444){$-a_4$}
\psline[linecolor=black, linewidth=0.02](0.35555556,1.7872223)(0.35555556,1.9872222)(2.3555555,1.9872222)(2.3555555,1.7872223)
\psline[linecolor=black, linewidth=0.02](5.3555555,1.7872223)(5.3555555,1.9872222)(7.3555555,1.9872222)(7.3555555,1.7872223)
\psline[linecolor=black, linewidth=0.02](0.35555556,-1.8127778)(0.35555556,-2.0127778)(2.3555555,-2.0127778)(2.3555555,-1.8127778)
\rput[bl](0.8888889,2.1205556){$a_4-1$}
\rput[bl](5.888889,2.1205556){$a_2-1$}
\rput[bl](0.8,-2.3905556){$a_3-1$}
\rput[bl](0.6,-0.39055556){$\Gamma_{1,2}'$}
\rput[bl](3.0,-0.39055556){$\Gamma_{2,3}'$}
\rput[bl](5.8,-0.19055556){$\Gamma_{4,1}'$}
\rput[bl](5.0,-1.7905556){$\Gamma_{3,4}'$}
\end{pspicture}
}
\caption{Curve configuration on $\hat{X'}$.}
\label{curvesOnXnI}
\end{figure}
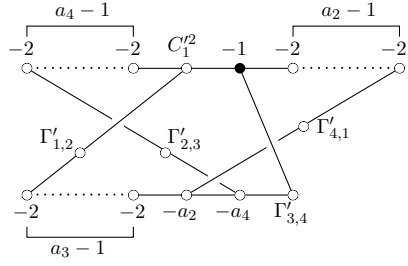
If also $a_2 = 1$, then all the points are smooth but point $p_2$ with a singularity of type $\frac{1}{a_4}(1,a_4-1)$, and we obtain the curve configuration on $\hat{X}$ shown in Figure \ref{curvesOnXnII}.
\begin{figure}[htb]
\psscalebox{.7 .7} % Change this value to rescale the drawing.
{
\begin{pspicture}(0,-2.3905556)(5.7,2.3905556)
\psline[linecolor=black, linewidth=0.02](4.3555555,-1.2127777)(0.35555556,1.1872222)
\psframe[linecolor=white, linewidth=0.04, fillstyle=solid, dimen=outer](3.7777777,-0.635)(3.5555556,-1.0794444)
\psline[linecolor=black, linewidth=0.02](3.3555555,-1.2127777)(5.3,1.2205555)
\psframe[linecolor=white, linewidth=0.04, fillstyle=solid, dimen=outer](4.7777777,0.665)(4.5555553,0.22055556)
\psframe[linecolor=white, linewidth=0.04, fillstyle=solid, dimen=outer](2.1777778,0.365)(1.9555556,-0.079444446)
\psline[linecolor=black, linewidth=0.02](3.3555555,1.1872222)(0.35555556,-1.2127777)
\psline[linecolor=black, linewidth=0.02](4.3555555,1.1872222)(5.3555555,-1.2127777)
\psline[linecolor=black, linewidth=0.04, linestyle=dotted, dotsep=0.10583334cm](0.39033815,1.1959178)(2.3033817,1.1959178)
\psline[linecolor=black, linewidth=0.02](2.3555555,1.1872222)(5.3555555,1.1872222)
\psdots[linecolor=black, dotstyle=o, dotsize=0.2, fillcolor=white](2.3555555,1.1872222)
\psdots[linecolor=black, dotstyle=o, dotsize=0.2, fillcolor=white](0.35555556,1.1872222)
\psdots[linecolor=black, dotstyle=o, dotsize=0.2, fillcolor=white](3.3555555,1.1872222)
\psdots[linecolor=black, dotsize=0.2](4.3555555,1.1872222)
\psdots[linecolor=black, dotstyle=o, dotsize=0.2, fillcolor=white](5.3555555,1.1872222)
\psline[linecolor=black, linewidth=0.04, linestyle=dotted, dotsep=0.10583334cm](0.39033815,-1.2040821)(2.3033817,-1.2040821)
\psline[linecolor=black, linewidth=0.02](2.3555555,-1.2127777)(5.3555555,-1.2127777)
\psdots[linecolor=black, dotstyle=o, dotsize=0.2, fillcolor=white](2.3555555,-1.2127777)
\psdots[linecolor=black, dotstyle=o, dotsize=0.2, fillcolor=white](0.35555556,-1.2127777)
\psdots[linecolor=black, dotsize=0.2](3.3555555,-1.2127777)
\psdots[linecolor=black, dotstyle=o, dotsize=0.2, fillcolor=white](4.3555555,-1.2127777)
\psdots[linecolor=black, dotstyle=o, dotsize=0.2, fillcolor=white](5.3555555,-1.2127777)
\psdots[linecolor=black, dotstyle=o, dotsize=0.2, fillcolor=white](1.3555555,-0.41277778)
\psdots[linecolor=black, dotstyle=o, dotsize=0.2, fillcolor=white](2.9555554,-0.41277778)
\rput[bl](0.0,1.4094445){$-2$}
\rput[bl](2.0,1.4094445){$-2$}
\rput[bl](3.0,1.4094445){$C'^2_1$}
\rput[bl](4.0,1.4094445){$-1$}
\rput[bl](0.08888889,-1.6794444){$-2$}
\rput[bl](2.088889,-1.6794444){$-2$}
\rput[bl](3.088889,-1.6794444){$-1$}
\rput[bl](4.0888886,-1.7794445){$C'^2_2$}
\psline[linecolor=black, linewidth=0.02](0.35555556,1.7872223)(0.35555556,1.9872222)(2.3555555,1.9872222)(2.3555555,1.7872223)
\psline[linecolor=black, linewidth=0.02](0.35555556,-1.8127778)(0.35555556,-2.0127778)(2.3555555,-2.0127778)(2.3555555,-1.8127778)
\rput[bl](0.8888889,2.1205556){$a_4-1$}
\rput[bl](0.8,-2.3905556){$a_3-1$}
\rput[bl](0.6,-0.39055556){$\Gamma_{1,2}'$}
\rput[bl](3.0,-0.39055556){$\Gamma_{2,3}'$}
\rput[bl](5.1,1.4094445){$\Gamma_{4,1}'$}
\rput[bl](5.0,-1.7905556){$\Gamma_{3,4}'$}
\end{pspicture}
}
\caption{Curve configuration on $X'_n$ when $a_2=1$.}
\label{curvesOnXnII}
\end{figure}
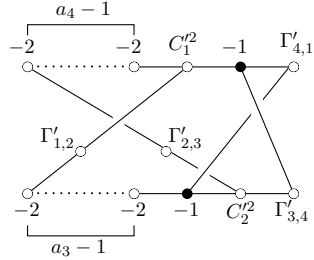

Following the proof of Proposition \ref{Gamma(-1)} we have that the curves $\Gamma_{i,i+1}'$ are $(-1)$-curves, $C'^2_1 = -a_3$ and $C'^2_2 = -a_4$. Therefore $\hat{X'} \simeq Z(1,a_2,a_3,a_4)$, and by contracting the $(-1)$-curve in the top chain along with the $(-2)$-curves to the right, we obtain that $C'^2_1 = -a_3+a_2$. Therefore $C_2$ is contractible if and only if $C'^2_1 <0$, and this is equivalent to $a_3>a_2$. \end{proof}

%-------------------------------------------------------------------------------------------------------------------
\section{Koll\'ar surfaces as branch covers of $\P^2$} \label{s4}

We now consider the birational model $Y':= \spec_{\P^{2}} \Big( \bigoplus_{i=0}^{w^*-1} {\L^{(i)}}^{-1} \Big)$ of $X(a_1,a_2,a_3,a_4)$, which was defined at the end of Section \ref{s1} as the $w^*$-th root cover of $(L_1^{\mu_1} L_2^{\mu_2} L_3^{\mu_3} L_4^{\mu_4}=0) \subset \P^2$. We recall that $0<\mu_i<w^*$ are $$\mu_1 \equiv a_2 a_3 a_4, \ \ \ \mu_2 \equiv -a_3 a_4, \ \ \ \mu_3 \equiv a_4, \ \ \ \mu_4 \equiv -1$$ modulo $w^*$, and that by definition gcd$(\mu_i,w^*)=1$. The lines $L_1,L_2,L_3,L_4$ form a plane curve with six nodes. We also recall that $$ \L^{(i)}:= \O_{\P^{2}}(ti) \otimes \O_{\P^{2}}\Bigl( - \sum_{j=1}^4 \Bigl[\frac{\mu_j \ i}{w^*}\Bigr] L_j \Bigr)$$ for $i \in{\{0,1,...,w^*-1 \}}$, where $[x]$ is the integer part of $x$, and $t w^* = \sum_{i=1}^4 \mu_i$. Let $Y$ be the minimal resolution of all singularities in $Y'$.

\begin{theorem}
A $X(a_1,a_2,a_3,a_4)$ is birational to $$X(a'_1,a'_2,a'_3,a'_4) \subset \P(w'_1,w'_2,w'_3,w'_4)$$ with gcd$(w'_1,w'_3)=$gcd$(w'_2,w'_4)=1$, for infinitely many $4$-tuples $(a'_1,a'_2,a'_3,a'_4)$.
\label{gcd}
\end{theorem}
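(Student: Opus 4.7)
The plan is to use Corollary~\ref{bir}, which identifies $X(a_1,\ldots,a_4)$ birationally with the cyclic cover $Y'\to\P^2$ determined by the data $(w^*;\mu_1,\ldots,\mu_4)$. Two Kollár tuples producing the same such data therefore yield birational surfaces, so I would look for infinitely many companion tuples sharing this data and additionally satisfying $\gcd(w'_1,w'_3)=\gcd(w'_2,w'_4)=1$.

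For the candidates, I take $a'_i := a_i + k_i w^*$ with $k_i\in\Z_{\geq 0}$. Direct substitution in $W_i=a_{i+1}a_{i+2}a_{i+3}-a_{i+2}a_{i+3}+a_{i+3}-1$ gives $W'_i-W_i\in w^*\Z$, so $w^*\mid w'^*$; likewise $\mu'_i\equiv\mu_i\pmod{w^*}$ is immediate from the defining congruences. Set $v'_i:=W'_i/w^*=w_i+R_i(\vec{k})$ with $R_i\in\Z[\vec{k}]$, so that $w'^*=w^*\cdot\gcd(v'_1,v'_2,v'_3,v'_4)$. If we arrange $\gcd(v'_1,v'_3)=1$ then \emph{a fortiori} $\gcd(v'_1,\ldots,v'_4)=1$, forcing $w'^*=w^*$ and hence $w'_i=v'_i$. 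Since $\mu'_i,\mu_i\in(0,w^*)$ agree modulo $w^*$, they are equal, and Corollary~\ref{bir} produces birational maps $X(a)\dashrightarrow Y'$ and $X(a')\dashrightarrow Y'$, giving that $X(a)$ is birational to $X(a')$.

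The remaining task is to find infinitely many $\vec{k}$ with $\gcd(v'_1,v'_3)=1$ and $\gcd(v'_2,v'_4)=1$ (the condition $a'_i>1$ is automatic for large $k_i$). Explicit computation of the $R_i$'s shows that their linear parts have coefficients which are products of various $a_j$'s; combined with the relation $a_iW_i+W_{i+1}=D$ and $\gcd(a_j,w^*)=1$, one verifies for every prime $p$ that the bad locus $\{p\mid v'_1,\; p\mid v'_3\}\subset(\Z/p)^4$ (and similarly for $v'_2,v'_4$) is a proper subvariety. A sieve over primes, together with an elementary counting estimate using the Chinese Remainder Theorem, then yields infinitely many admissible $\vec{k}$. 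The main technical obstacle is this prime-by-prime check: for primes $p$ dividing $\gcd(w_1,w_3)$, varying a single $k_j$ may fail to move $(v'_1,v'_3)\pmod{p}$ off the zero locus, and one must exploit the \emph{joint} variation in $(k_1,\ldots,k_4)$, using $\gcd(a_j,w^*)=1$ and the cyclic relations among the $W_i$, to show that the bad locus remains a proper subvariety in all cases.
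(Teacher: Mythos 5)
Your reduction is the same one the paper uses: replace $a_i$ by $a_i+k_iw^*$, observe that the root-cover data $(w^*;\mu_1,\ldots,\mu_4)$ of Corollary \ref{bir} is unchanged provided the gcd of the new $W'_i$ is still $w^*$, and note that $\gcd(v'_1,v'_3)=1$ already forces this. Up to that point your argument is correct and matches the paper. The gap is in the second half: producing infinitely many $\vec{k}$ with $\gcd(v'_1,v'_3)=\gcd(v'_2,v'_4)=1$ is where essentially all of the work lies, and you assert it rather than prove it --- indeed you explicitly label the prime-by-prime verification ``the main technical obstacle'' and leave it open. Two concrete problems remain. First, the claim that the bad locus $\{p\mid v'_1,\ p\mid v'_3\}\subset(\Z/p)^4$ is proper for every prime $p$ requires actually computing the linear parts (the coefficients of $k_2,k_3,k_4$ in $v'_1$ are $a_3a_4$, $a_4(a_2-1)$, $a_2a_3-a_3+1$, and similarly for $v'_3$) and checking that for the problematic primes, e.g.\ $p\mid\gcd(w_1,w_3)$, these data cannot conspire to make the locus all of $(\Z/p)^4$; this check is not carried out. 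Second, even granting properness, a sieve over all primes needs the bad density at $p$ to be $O(1/p^2)$; a proper subvariety cut out by two congruences can still have density $1/p$ if the congruences are dependent mod $p$, and $\sum_p 1/p$ diverges, so the ``elementary counting estimate'' does not follow from what you state.

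The paper avoids both issues by exploiting the structural fact that $W_i$ does not involve $a_i$. It first adjusts $a_3$ and then $a_2$ so that $w'_1$ is coprime to $6(a_4-1)$ and has all prime factors larger than $3$; since $w''_1$ is then completely determined independently of the choice of $a'_1$, only the finitely many primes dividing $w''_1$ need to be avoided when selecting $a'_1$, which is done by the Chinese Remainder Theorem, and the final condition $\gcd(w''_2,w''_4)=1$ is extracted by a $2\times 2$ determinant argument showing that a common prime divisor would force a nonzero determinant to vanish. To complete your version you would need either to carry out the prime-by-prime analysis of the polynomials $v'_i(\vec{k})$ in full (including the dependent primes), or to replace the global sieve by this kind of sequential, finitely-many-primes construction.
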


\begin{proof}
By Corollary \ref{bir}, the surface $X(a_1,a_2,a_3,a_4)$ is birational to $Y'$, and so for any  $t_i \in \Z_{\geq 0}$ we have that $X(a_1,a_2,a_3,a_4)$ is birational to $$X(a_1+t_1w^*,a_2+t_2w^*,a_3+t_3w^*,a_4+t_4w^*),$$ as soon as $w^*=$gcd$(W'_1,\ldots,W'_4)$ for the corresponding $W'_i$. This is because, for a fixed $w^*$, the isomorphism type of $Y'$ depends only on the multiplicities $\mu_i$ modulo $w^*$. In this way, we must find $t_i \in \Z_{\geq 0}$ such that $\text{gcd}(w'_1,w'_3)=\text{gcd}(w'_2,w'_4) = 1$, and $w^*=$gcd$(W'_1,\ldots,W'_4)$.

First, choose $t_3$ such that $\text{gcd}(a_3+t_3w^*,6(a_4-1)) = 1$, and let $a'_3 := a_3+ t_3w^*$ and $W'_1 := a_2a'_3a_4-a'_3a_4 + a_4 -1  = w'_1 w^*$. Next take $t_2$ such that $\text{gcd}(w'_1 + t_2a'_3a_4,6(a_4-1)) = 1$, and then define $a'_2:= a_2 + t_2w^*$. Now we will choose $t_1$ such that the final weights $(w''_1,w''_2,w''_3,w''_4)$ satisfy $\text{gcd}(w''_1,w''_3) = \text{gcd}(w''_2,w''_4) = 1$, and $w^*=\text{gcd}(W''_1,\ldots,W''_4)$.

Let $W'_2:= a'_3a_4a_1 - a_4a_1 + a_1 - 1= w'_2w^*$, $W'_3 := a_4a_1a'_2 - a_1a'_2 + a'_2 -1= w'_3w^*$, and $W'_4 := a_1a'_2a'_3 - a'_2a'_3 + a'_3 -1= w'_4w^*$,
and define $$ W''_1 := w''_1w^*, \ \ \ \ \ \ \ W''_2 := w''_2w^* = (w'_2 + t(a'_3a_4-a_4+1)) w^*,$$ $$W''_3  :=  w''_3w^* = (w'_3 + t(a_4a'_2-a'_2)) w^*, \ \ \ \ \ \ W''_4 := w''_4w^* = (w'_4 + ta'_2a'_3) w^*,$$ where $t$ will be found.

Let $w''_1 = \prod q_{1,j}^{\lambda_{1,j}}$ be its prime factorization. Then we have to find a solution $t$ for $w'_4 + ta'_2a'_3 \not\equiv 0 \,(\text{mod } q_{1,j})$, $w'_3 + ta'_2(a_4-1) \not \equiv 0 \,(\text{mod } q_{1,j})$, and $t\not \equiv 0 \,(\text{mod } q_{1,j})$, for all $j$. This $t$ will exist because we have that $\text{gcd}(a_4-1,w''_1) = 1$, and that all $p_{1,j}$ are greater than $3$, by the previous choice of $t_2$ and $t_3$.

By the Chinese Remainder Theorem, we know that the solutions are of the form $t_1 + r\cdot \prod q_{1,j}$, $r\in \Z$. Hence we have that gcd$(w''_1,w''_3) =$gcd$(w''_1,w''_4)=1$, for any choice of $r$. Therefore, considering \[ w''_2 = w'_2 + t_1(a'_3a_4-a_4+1) +r\cdot (a'_3a_4-a_4+1)\cdot \prod q_{1,j} \] and $w''_4 = w'_4 + t_1a'_2a'_3 + r\cdot a'_2a'_3\cdot \prod q_{1,j}$, it is enough to find an $r\in \Z_{\geq 0}$ such that $\text{gcd}(w''_2,w''_4) = 1$. Let
$$ A := w'_2 + t_1(a'_3a_4-a_4+1) \ \ \ B := (a'_3a_4-a_4+1)\cdot \prod q_{1,j}$$ $$C  :=  w'_4 + t_1a'_2a'_3 \ \ \ \ D := a'_2a'_3\cdot \prod q_{1,j}.$$
Notice that $\text{gcd}(A,B)=1$ by the definition of $w'_2$ and the way $t_1$ was obtained. Let $AD-BC = q_{2,1}^{\lambda_{2,1}}q_{2,2}^{\lambda_{2,2}}\cdots q_{2,l}^{\lambda_{2,l}}$; $q_{2,j}$ prime number, and let $r_1$ be a solution of \begin{equation} A+Br \not\equiv 0 \,(\text{mod } q_{2,j}).
\label{eqBir}
\end{equation} Now assume that $\text{gcd}(w''_2,w''_4)=\text{gcd}(A+Br_1,C+Dr_1)>1$. This means that there is a prime $p\not = q_{2,j}$ for all $j$, such that it divides both $A+Br$ and $C+Dr$. Then consider the linear transformation $T \colon (\Z/p\Z)^2 \to (\Z/p\Z)^2$ associated to the matrix $\left( \begin{array}{ll}
A & B\\ C & D
\end{array} \right)$. This matrix maps the vector $(1,r_1)$ to $(0,0)$, so the matrix is singular. But the determinant $AD-BC \not = 0 \,(\text{mod } p)$, which is a contradiction. Therefore $\text{gcd}(A+Br_1,C+Dr_1)=1$. Let $a'_1 := a_1 + (t_1+r_1\cdot \prod p_{1,j})w^*$. This gives us that $X(a'_1,a'_2,a'_3,a_4)\subset \P(w''_1,w''_2,w''_3,w''_4)$ is birational to $X(a_1,a_2,a_3,a_4)$, with $\gcd(w''_1,w''_3) = \gcd(w''_2,w''_4)$ and because $\text{gcd}(w''_1,w''_4)  = 1$, then $w^*=$gcd$(W''_1,\ldots,W''_4)$. Because the equation \eqref{eqBir} has infinite solutions, then we have infinite 4-tuples $(a''_1,a''_2,a''_3,a''_4)$ that satisfy the result.
\end{proof}

\begin{corollary}
Let $Y'$ be a $n$-th root cover of $(L_1^{\mu_1} L_2^{\mu_2} L_3^{\mu_3} L_4^{\mu_4}=0) \subset \P^2$, with $\text{gcd}(\mu_i,n) = 1$ for all $i$. Then $Y'$ is birational to a Koll\'ar surface.
\end{corollary}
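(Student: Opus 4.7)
The plan is to exhibit positive integers $(a_1,a_2,a_3,a_4)$ so that the Koll\'ar surface $X(a_1,a_2,a_3,a_4)$ has $w^*=n$ and its associated cyclic cover $Y_K'$ (constructed at the end of Section~\ref{s1}) coincides birationally with the given $Y'$. Corollary~\ref{bir} then finishes the argument.

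First I would use the observation that the birational class of the $n$-th root cover of $(L_1^{\mu_1}L_2^{\mu_2}L_3^{\mu_3}L_4^{\mu_4}=0)$ is insensitive to replacing $(\mu_1,\mu_2,\mu_3,\mu_4)$ by $(c\mu_1,c\mu_2,c\mu_3,c\mu_4) \pmod n$ for any $c\in(\Z/n\Z)^*$, since this merely changes the chosen generator of the cyclic Galois group. Picking $c\equiv -\mu_4^{-1}\pmod n$ normalizes the last multiplicity to $-1\pmod n$, matching the Koll\'ar format $\mu_4^K\equiv -1$. Matching the remaining three against $(\mu_1^K,\mu_2^K,\mu_3^K)\equiv (a_2a_3a_4,-a_3a_4,a_4)\pmod{w^*}$ forces the residues
\[
a_4\equiv -\mu_3/\mu_4,\quad a_3\equiv -\mu_2/\mu_3,\quad a_2\equiv -\mu_1/\mu_2 \pmod n,
\]
each automatically a unit mod $n$. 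I would finally set $a_1\equiv -\mu_4/\mu_1\pmod n$, which is equivalent to $a_1a_2a_3a_4\equiv 1\pmod n$, i.e.~to $n\mid D$; this compatibility follows from the hypothesis $n\mid \mu_1+\mu_2+\mu_3+\mu_4$ (implicit in the existence of the $n$-th root cover), by a direct substitution using the other residues.

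The main obstacle I expect is the next step: lifting these residues to actual positive integers $a_i$ so that $w^*$ equals \emph{exactly} $n$, not a proper multiple. Every $W_j$ is automatically divisible by $n$ by the choice of residues, so the task reduces to arranging $\gcd(W_1/n,W_2/n)=1$. I would adapt the Chinese Remainder / Dirichlet-style manipulation from the proof of Theorem~\ref{gcd}: the replacements $a_i\mapsto a_i+nt_i$ preserve every residue (hence the target root cover), while each shift perturbs $W_j/n$ by a computable amount, so a finite sequence of such adjustments can be chosen to eliminate any prime common to $W_1/n$ and $W_2/n$. Along the way one also easily arranges $a_i\geq 2$, avoiding the degenerate tuples excluded in Section~\ref{s0}.

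Once $w^*=n$ is achieved, Corollary~\ref{bir} yields that $X(a_1,a_2,a_3,a_4)$ is birational to its cyclic cover $Y_K'$, which by construction is the $n$-th root cover with multiplicities $(c\mu_1,c\mu_2,c\mu_3,c\mu_4)\pmod n$. By the invariance recorded in the first paragraph, $Y_K'$ is birational to the given $Y'$, completing the proof.
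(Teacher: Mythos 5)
Your proposal is correct and takes essentially the same route as the paper's proof: multiply the multiplicities by the unit $\xi\equiv-\mu_4^{-1}$ so that the last one becomes $-1$, solve the congruence system for the residues of $a_2a_3a_4$, $-a_3a_4$, $a_4$, $a_1a_2a_3a_4$ modulo $n$, and then invoke Theorem~\ref{gcd} to adjust the lifts so that $w^*$ equals $n$ exactly. You merely spell out the explicit solution of the system and the role of $n\mid\sum\mu_i$, which the paper leaves implicit.
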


\begin{proof}
If we multiply the $\mu_i$ by a unit $\xi$ of $\Z/n\Z$, then the $n$-th root cover does not change. So we take $\xi$ such that $\xi\mu _4 = -1$. In this way, we have to solve the system $a_2a_3a_4 \equiv \xi\mu_1$, $-a_3a_4 \equiv \xi\mu_2$, $a_4 \equiv \xi\mu_3$, and $a_1a_2a_3a_4 \equiv 1$ modulo $n$, which has a solution because $\xi$ and the $\mu_i$ are units in $\Z/n\Z$. Then, with those $a_i$ we can use Theorem \ref{gcd} to find numbers $a'_i$ such that $X(a'_1,a'_2,a'_3,a'_4)$ is a Koll\'ar surface with $w^* = n$, and birational to $Y'$.
\end{proof}

We want to compute the main numerical invariants of $Y$. For that we first define the following numbers.

\begin{definition}
Let $n>1$ be an integer, and let $a,b$ be integers coprime to $n$.

\begin{itemize}
\item[(1)] We define the generalized Dedekind sum \cite[p.94]{HiZa74} as $$s(a,b;n)= \sum_{i=1}^{n-1} \Bigl(\Bigl(\frac{ia}{n}\Bigr)\Bigr)\Bigl(\Bigl(\frac{ib}{n}\Bigr)\Bigr)$$ where $((x))=x-[x]-\frac{1}{2}$ for any rational number $x$.

\item[(2)] Let $0<q<n$ be such that $a+qb\equiv 0$ modulo $n$. We define the HJ length $l=l(a,b;n)$ as the length of the Hirzebruch-Jung continued fraction $$ \frac{n}{q} = [b_1,...,b_{l}].$$
\end{itemize}
\end{definition}

Dedekind sums and Hirzebruch-Jung continued fractions relate as (see e.g. \cite{Ba77}, \cite[Example 3.5]{Urz10}) $$12 s(a,b;n)= \frac{q+q^{-1}}{n} + \sum_{i=1}^{l(a,b;n)} (e_i - 3),$$ where $0<q^{-1}<n$ and $qq^{-1}\equiv1$ modulo $n$.

\begin{proposition}
We have that $\pi_1(Y)=0$, and $$p_g(Y)=  2 s(1,1;w^*) + \sum_{i<j} s(\mu_i,\mu_j;w^*)$$ where $s(1,1;w^*)=\frac{w^*}{12} + \frac{1}{6 w^*} - \frac{1}{4}$.
\label{pg}
\end{proposition}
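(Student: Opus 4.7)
The strategy is to compute $\chi(\O_Y)$ via the cyclic cover structure, use the vanishing of the irregularity $q(Y)$ to extract $p_g$, and then match the answer to the claimed Dedekind sum formula.

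For $\pi_1(Y)=0$: By Corollary \ref{bir}, $Y$ is birational to the Koll\'ar hypersurface $X(a_1,\ldots,a_4)\subset\P(w_1,\ldots,w_4)$. Since $X$ is quasi-smooth by Theorem \ref{kollarTheorem}(2), a Lefschetz-type theorem for quasi-smooth weighted projective hypersurfaces gives $\pi_1(X)=0$. Cyclic quotient surface singularities are rational (their exceptional divisors are simply connected trees of $\P^1$'s), so $\pi_1$ is preserved under minimal resolution, and it is a birational invariant of smooth projective surfaces; hence $\pi_1(Y)=0$ and $q(Y)=0$.

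For $\chi(\O_Y)$: the resolution $Y\to Y'$ is of rational singularities, so $\chi(\O_Y)=\chi(\O_{Y'})$. For the cyclic cover $\pi\colon Y'\to\P^2$, the normalization constructed in Section \ref{s1} gives $\pi_*\O_{Y'}=\bigoplus_{i=0}^{w^*-1}(\L^{(i)})^{-1}$, so
\[ \chi(\O_Y)=1+\sum_{i=1}^{w^*-1}\chi((\L^{(i)})^{-1}). \]
From $\L^{(i)}=\O(ti)\otimes\O\bigl(-\sum_j[i\mu_j/w^*]L_j\bigr)$ and $tw^*=\sum\mu_j$ one checks that $\deg\L^{(i)}=\delta_i:=\sum_{j=1}^4\{i\mu_j/w^*\}$, a positive integer (each fractional part is nonzero since $\gcd(\mu_j,w^*)=1$) lying in $\{1,2,3\}$. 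Riemann--Roch on $\P^2$ then yields $\chi((\L^{(i)})^{-1})=(\delta_i-1)(\delta_i-2)/2$.

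For the Dedekind sum identification: since $p_g(Y)=\chi(\O_Y)-1$, it suffices to prove
\[ \sum_{i=1}^{w^*-1}\frac{(\delta_i-1)(\delta_i-2)}{2}=2s(1,1;w^*)+\sum_{1\le j<k\le 4}s(\mu_j,\mu_k;w^*). \]
I would expand $\delta_i^2=\sum_j\{i\mu_j/w^*\}^2+2\sum_{j<k}\{i\mu_j/w^*\}\{i\mu_k/w^*\}$ and sum over $i$. Two elementary identities suffice: (a) when $\gcd(a,w^*)=1$, the map $i\mapsto ia\pmod{w^*}$ is a bijection of $\{1,\ldots,w^*-1\}$, so $\sum_i\{ia/w^*\}^2=\sum_i\{i/w^*\}^2$, while $\sum_i\{i/w^*\}=(w^*-1)/2$; and (b) directly from $((x))=\{x\}-1/2$ one gets $\sum_{i=1}^{n-1}\{ia/n\}\{ib/n\}=s(a,b;n)+(n-1)/4$. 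Combined with $\sum_i\delta_i=2(w^*-1)$, these collapse the expansion into the stated formula. The closed form $s(1,1;w^*)=\frac{(w^*-1)(w^*-2)}{12w^*}=\frac{w^*}{12}+\frac{1}{6w^*}-\frac{1}{4}$ is classical.

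The main obstacle is the vanishing $q(Y)=0$: it is the only step that invokes an outside theorem (Lefschetz for quasi-smooth weighted hypersurfaces). The Dedekind sum bookkeeping, while lengthy, is an elementary consequence of the two identities above.
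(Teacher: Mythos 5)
Your proposal is correct, and the computations check out: with $\delta_i=\sum_j\{i\mu_j/w^*\}\in\{1,2,3\}$ one has $\sum_i\delta_i=2(w^*-1)$, $\sum_i\{i\mu_j/w^*\}^2=\frac{(w^*-1)(2w^*-1)}{6w^*}$, and your identity $\sum_i\{i\mu_j/w^*\}\{i\mu_k/w^*\}=s(\mu_j,\mu_k;w^*)+\frac{w^*-1}{4}$ is exactly right; assembling these does collapse $\sum_{i\ge 1}\frac{(\delta_i-1)(\delta_i-2)}{2}$ to $2s(1,1;w^*)+\sum_{j<k}s(\mu_j,\mu_k;w^*)$, using $2s(1,1;w^*)=\frac{(w^*-1)(w^*-2)}{6w^*}$. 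The route is genuinely different from the paper's, which disposes of the proposition in one line by citing \cite[Prop.~3.2 and Thm.~8.5]{Urz10}: those results establish simple connectedness and the $p_g$ formula for $n$-th root covers of line arrangements in general, and the proposition is the special case of four general lines. Your reconstruction of the $p_g$ formula is essentially the argument underlying the cited theorem (pushforward decomposition $\bigoplus(\L^{(i)})^{-1}$, rationality of the quotient singularities, Riemann--Roch on $\P^2$, and the elementary Dedekind-sum bookkeeping), so there you lose nothing and gain a self-contained proof. The one place where your route diverges in substance is $\pi_1(Y)=0$: you pass through the weighted-hypersurface model $X$ and invoke a Lefschetz-type theorem for quasi-smooth hypersurfaces plus invariance of $\pi_1$ under resolution of quotient singularities and under birational maps of smooth surfaces; the cited reference instead proves simple connectedness directly on the root cover $Y$ (via Nori's results applied to the four-line arrangement), which keeps the argument entirely on the $\P^2$ side and does not require the quasi-smoothness input from Theorem \ref{kollarTheorem}. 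Both are legitimate; yours trades one black box for another.
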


\begin{proof}
See \cite[Prop.3.2 and Thm.8.5]{Urz10}.
\end{proof}

\begin{remark}
Since the geometric genus $p_g(Y)$ is a nonnegative number, we have $2 s(1,1;w^*) + \sum_{i<j} s(\mu_i,\mu_j;w^*) \geq 0$, which can be rewritten using basic properties of Dedekind sums as $$p_g(Y) = 2s(1,1;w^*) - \sum_{i=1}^4 s(1,a_i;w^*) + s(1,a_1a_4;w^*)+ s(1,a_1 a_2; w^*) \geq 0 .$$ We will tell more on this expression in the next section.
\end{remark}

\begin{proposition}
We have that $e(Y)=w^*+2+\sum_{i<j} l(\mu_i,\mu_j;w^*)$, and $$K_Y^2=w^*+\frac{4}{w^*}+4 + \sum_{i<j} (12s(\mu_i,\mu_j;w^*)-l(\mu_i,\mu_j;w^*))  .$$
\label{ek^2}
\end{proposition}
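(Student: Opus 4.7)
My plan is two-step: first compute $e(Y)$ from the cyclic cover structure together with the local resolutions of the singularities, then derive $K_Y^2$ from Noether's formula using $p_g(Y)$ from Proposition \ref{pg}.

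For the Euler characteristic of $Y'$, the map $\pi\colon Y' \to \P^2$ is totally branched along $D = L_1+L_2+L_3+L_4$ because $\gcd(\mu_i,w^*)=1$ for each $i$; consequently $\pi^{-1}(D)_{\mathrm{red}} \to D$ is a bijection while $\pi$ restricted to the complement is an unramified $w^*$-sheeted cover. The standard stratification argument then yields $e(Y') = w^*(e(\P^2)-e(D)) + e(D)$. The four lines in general position meet at six nodes, so $e(D) = 4\cdot 2 - 6 = 2$, and hence $e(Y') = w^* + 2$. Passing to the minimal resolution $Y \to Y'$, I would next pin down the local nature of the singularities. They lie over the six nodes $L_i \cap L_j$, and locally (in coordinates $y_i, y_j$) the normalized $w^*$-th root cover of $y_i^{\mu_i}y_j^{\mu_j}$ produces a cyclic quotient singularity of type $\frac{1}{w^*}(\mu_i,\mu_j)$; this is part of the Esnault--Viehweg description of $Y'$ recalled in Section \ref{s1}. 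Since the minimal resolution of $\frac{1}{w^*}(\mu_i,\mu_j)$ replaces the point by a chain of exactly $l(\mu_i,\mu_j;w^*)$ smooth rational curves, each resolution increases $e$ by $l(\mu_i,\mu_j;w^*)$. Summing over the six nodes gives the claimed formula for $e(Y)$.

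For $K_Y^2$, I would invoke Noether. Because $\pi_1(Y) = 0$ by Proposition \ref{pg}, the irregularity $q(Y)$ vanishes, so $\chi(\O_Y) = 1 + p_g(Y)$, and hence
\[
K_Y^2 = 12\chi(\O_Y) - e(Y) = 12 + 12\, p_g(Y) - e(Y).
\]
Substituting the formula for $p_g(Y)$ from Proposition \ref{pg}, the identity $12\, s(1,1;w^*) = w^* + 2/w^* - 3$ recorded there, and the expression for $e(Y)$ just established, the constant and linear-in-$w^*$ terms collect as $2w^* + 4/w^* - 6 + 12 - w^* - 2 = w^* + 4/w^* + 4$, leaving the Dedekind-sum and HJ-length contributions as $\sum_{i<j}(12\, s(\mu_i,\mu_j;w^*) - l(\mu_i,\mu_j;w^*))$.

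The only nontrivial input is the identification of the singularities of $Y'$ as cyclic quotients $\frac{1}{w^*}(\mu_i,\mu_j)$; once this is extracted from \cite{EV92} the rest is a direct calculation. For this reason I do not anticipate any essential obstacle, though care is needed in keeping track of the numerical cancellations between the Noether contribution and the Euler-characteristic terms from the six resolutions.
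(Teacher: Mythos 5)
Your argument is correct and is essentially the paper's proof: the paper simply cites \cite[Prop. 3.6]{Urz10} for the Euler characteristic formula and then applies Noether's formula exactly as you do, and your stratification computation of $e(Y')=w^*+2$ together with the contribution $l(\mu_i,\mu_j;w^*)$ from each resolved node is precisely the content of that citation. The only point to watch is the labelling of the six singularities: the normalization of $z^{w^*}=y_i^{\mu_i}y_j^{\mu_j}$ is the quotient singularity $\frac{1}{w^*}(1,q)$ with $\mu_i+q\mu_j\equiv 0 \pmod{w^*}$ (rather than $\frac{1}{w^*}(\mu_i,\mu_j)$ in the paper's normalization convention for quotient singularities), which is exactly the $q$ entering the definitions of $l(\mu_i,\mu_j;w^*)$ and $s(\mu_i,\mu_j;w^*)$, so the count of exceptional curves and the final arithmetic are as you claim.
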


\begin{proof}
See \cite[Prop. 3.6]{Urz10} and use Noether's formula.
\end{proof}

\begin{corollary}
For $X=X(a_1,a_2,a_3,a_4)$ we have $e(X)=w^*+4$, $\pi_1(X)=0$, and $p_g(X) = 2s(1,1;w^*) - \sum_{i=1}^4 s(1,a_i;w^*) + s(1,a_1a_4;w^*)+ s(1,a_1 a_2; w^*)$.
\label{invariantsFormula}
\end{corollary}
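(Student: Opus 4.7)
The three assertions of the corollary will be proved independently by combining the birational model $Y' \dashrightarrow X$ of Corollary \ref{bir} with the explicit geometry of the map $\psi|_X$ developed in Section \ref{s2}.

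For the fundamental group and geometric genus, I first note that the four singularities of $X$ are cyclic quotient, hence rational, and their minimal resolutions are chains of rational curves. For such singularities, the minimal resolution $\tilde{X} \to X$ satisfies both $p_g(\tilde{X}) = p_g(X)$ (rationality) and $\pi_1(\tilde{X}) = \pi_1(X)$ (the exceptional fibers are simply connected). By Corollary \ref{bir}, $\tilde{X}$ and $Y$ are birational smooth projective surfaces; birational invariance of $p_g$ and $\pi_1$ together with Proposition \ref{pg} give $\pi_1(X) = 0$ and $p_g(X) = p_g(Y)$. Finally, the claimed formula for $p_g(X)$ is the rewriting of $p_g(Y) = 2s(1,1;w^*) + \sum_{i<j}s(\mu_i,\mu_j;w^*)$ already recorded in the Remark after Proposition \ref{pg}: apply to each of the six pairs the identities $s(ac,bc;n) = s(a,b;n)$ for $\gcd(c,n)=1$, $s(-a,b;n) = -s(a,b;n)$, and $s(1,a^{-1};n) = s(1,a;n)$, using $\mu_1 \equiv a_2 a_3 a_4$, $\mu_2 \equiv -a_3 a_4$, $\mu_3 \equiv a_4$, $\mu_4 \equiv -1$, together with $a_1 a_2 a_3 a_4 \equiv 1 \pmod{w^*}$.

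For $e(X) = w^* + 4$, the plan is a direct stratification using that $\psi|_X$ is cyclic of degree $w^*$ totally branched along the four lines $L_i = (y_i=0) \subset \P^2$. Since $y_1 y_2 y_3 y_4 = x_1^{a_1+1} x_2^{a_2+1} x_3^{a_3+1} x_4^{a_4+1}$, the preimage in $X$ of the branch locus equals $(x_1 x_2 x_3 x_4 = 0) \cap X$. Restricting the Koll\'ar equation to each hyperplane $(x_i = 0)$ and factoring shows that $(x_i = 0) \cap X$ decomposes as one of $C_1, C_2$ together with one of the curves $\Gamma_{j,j+1}$; the four such hyperplane sections together recover exactly the six special curves of Figure \ref{curveConfiguration}. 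Setting $U := X \setminus (\{p_1,\ldots,p_4\} \cup C_1 \cup C_2 \cup \Gamma_{1,2} \cup \Gamma_{2,3} \cup \Gamma_{3,4} \cup \Gamma_{4,1})$ and $V := \P^2 \setminus (L_1 \cup \cdots \cup L_4)$, the restriction $\psi|_U \colon U \to V$ is étale of degree $w^*$, so $e(U) = w^* \cdot e(V) = w^*(3 - (8-6)) = w^*$.

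It remains to compute $e(X \setminus U) = 4$. The four points $p_i$ contribute $4$; each of $C_1, C_2$ is isomorphic to $\P^1$ and passes through exactly two of the $p_i$'s; and each $\Gamma_{i,i+1}$, being rational and unibranch everywhere, is topologically $\P^1$ and meets only $p_i$ and $p_{i+1}$. Since the six curves are pairwise disjoint off $\{p_1,\ldots,p_4\}$, each ``curve minus its two marked points'' contributes $0$ to the Euler characteristic, giving $e(X \setminus U) = 4$ and hence $e(X) = w^* + 4$. The only point that deserves real care is the identification $\pi_1(X) = \pi_1(\tilde{X})$ for normal surfaces with cyclic quotient singularities, which follows from the simple connectedness of the exceptional chains together with the standard fact that a proper morphism with simply connected fibers induces an isomorphism on $\pi_1$; all remaining steps reduce to the explicit geometric description of $\psi$ already at hand.
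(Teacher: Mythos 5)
Your argument is correct and follows the route the paper intends: the corollary is stated there without proof, with $\pi_1$ and $p_g$ coming from Proposition \ref{pg} via birational invariance (using that the quotient singularities are rational and their exceptional chains are simply connected) together with the Dedekind-sum rewriting already recorded in the remark after that proposition, and $e(X)=w^*+4$ coming from the degree-$w^*$ totally branched cyclic structure of $\psi|_X$ over the four lines — exactly your stratification $e(X)=w^{*}\cdot e(V)+e(X\setminus U)=w^*+4$. The only step you assert rather than justify is that $\psi|_U\colon U\to V$ is a genuine finite covering (not merely unramified and quasi-finite), which is what multiplicativity of $e$ requires; this holds because $U$ is the full preimage of $V$ under the proper morphism $\hat X\to\P^2$ of Theorem \ref{morphism}, all exceptional curves of $\hat X\to X$ being mapped into the branch lines, so it does not affect correctness.
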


\begin{corollary}
Let gcd$(w_i,w_{i+2}) =1$ for all $i$. Then $$12 \Big( \sum_{i<j} s(\mu_i,\mu_j;w^*) + \sum_i s(w_{i+2},w_{i+3};w_i) \Big) = \ \ \ \ \ \ \ \ \ \ \ \ \ \ \ \ \ \ \ \ \ \ \ \ \ \ \ \ \ \ \ \ \ \ \ \ $$ $$ \ \ \ \ \ \ \ \ \ \ \ \ \ \ \ \ \ \ \ \ \ \ \ \ \ \ \ \ \ \ \ \ \ \ \ \ \ \ \ \ \ \ \ \ \ \ \frac{d(d-\sum_i w_i)^2}{ \prod_i w_i} - \sum_i \frac{2}{w_i} - \frac{{w^*}^2-6 w^* +4}{w^*}.$$
\label{identity}
\end{corollary}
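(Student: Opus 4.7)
The plan is to apply Noether's formula simultaneously to the minimal resolution $\tilde X\to X$ and to $Y\to Y'$, and equate the two expressions for $12\chi(\mathcal O)$ using that $X$ and $Y'$ are birational with rational singularities (so both resolutions have the same $\chi(\mathcal O)$).

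First I would compute $K_X^2$ directly. Since $a_iw_i+w_{i+1}=d$ for all $i$, the defining equation has degree $d$, so $X$ is a degree-$d$ hypersurface in $\P(w_1,\ldots,w_4)$ with canonical class $\O_X(d-\sum w_i)$, giving
$$K_X^2 \;=\; \frac{d\,(d-\sum_i w_i)^2}{\prod_i w_i}.$$
Next, for each singular point $p_i$ of type $\frac{1}{w_i}(1,q_i)$ (where $q_iw_{i+2}\equiv w_{i+3}\pmod{w_i}$, and there are exactly four such points under the hypothesis $\gcd(w_i,w_{i+2})=1$ by Proposition \ref{singularityType}), set $D_i := K_{\tilde X}-\sigma^*K_X$ supported on the chain $E_{i,1},\ldots,E_{i,l_i}$. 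Adjunction gives $D_i\cdot E_{i,k}=b_{i,k}-2$, hence
$$D_i^2 \;=\; \sum_k d_{i,k}(b_{i,k}-2)\;=\;-\sum_k(b_{i,k}-2)+\tfrac{1}{w_i}\sum_k(\alpha_{i,k}+\beta_{i,k})(b_{i,k}-2).$$
Using the recursions $b_k\beta_k=\beta_{k-1}+\beta_{k+1}$ with $\beta_0=w_i$, $\beta_1=q_i$, $\beta_{l_i}=1$, $\beta_{l_i+1}=0$ (and the dual identities for $\alpha_k$), one obtains $\sum_k\beta_k(b_k-2)=w_i-q_i-1$ and $\sum_k\alpha_k(b_k-2)=w_i-q_i^{-1}-1$. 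Combined with the paper's formula $12s(1,q_i;w_i)=(q_i+q_i^{-1})/w_i+\sum_k(b_{i,k}-3)$ and the identity $s(w_{i+2},w_{i+3};w_i)=s(1,q_i;w_i)$ (since $s(a,b;n)=s(1,ba^{-1};n)$), this collapses to the clean local formula
$$D_i^2 \;=\; 2-\tfrac{2}{w_i}-l_i-12\,s(w_{i+2},w_{i+3};w_i),$$
where $l_i = l(w_{i+2},w_{i+3};w_i)$.

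Finally, I apply Noether to both surfaces. Since $X$ has rational singularities and is birational to $Y'$ (Corollary \ref{bir}), $\chi(\O_{\tilde X})=\chi(\O_Y)$, so
$$K_X^2+\sum_iD_i^2+e(X)+\sum_il_i \;=\; K_{\tilde X}^2+e(\tilde X) \;=\; K_Y^2+e(Y).$$
On the left I substitute $e(X)=w^*+4$ (Corollary \ref{invariantsFormula}) and the formula for $D_i^2$ above; on the right I insert Proposition \ref{ek^2}. The $l_i$'s cancel between $\sum D_i^2$ and $e(\tilde X)$, the $\sum_{i<j}l(\mu_i,\mu_j;w^*)$ cancels between $K_Y^2$ and $e(Y)$, and rearranging produces exactly the stated identity.

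The main obstacle is Step 2: the derivation of the local formula $D_i^2=2-2/w_i-l_i-12s(w_{i+2},w_{i+3};w_i)$ is where all the careful bookkeeping happens. In particular one must reconcile the paper's convention $aq\equiv b\pmod m$ with the reduction $s(a,b;n)=s(1,ba^{-1};n)$ to make sure the Dedekind sum that appears is exactly $s(w_{i+2},w_{i+3};w_i)$ with the correct sign, not $\pm s(1,-q_i;w_i)$. Once this local correction is pinned down, the remainder of the proof is a bookkeeping cancellation.
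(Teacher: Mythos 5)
Your proposal is correct and follows essentially the same route as the paper: both compute the invariants of the model $X$ via Noether's formula on the minimal resolution $\tilde X$ (with the local correction $K_{\tilde X}^2-K_X^2$ expressed through Dedekind sums and HJ lengths, which you derive from the recursions for $\alpha_k,\beta_k$ while the paper simply cites Prop.~3.4 of \cite{Urz10}) and then equate with the invariants of $Y$ from Propositions \ref{pg} and \ref{ek^2} using birational invariance of $\chi(\O)$. Your bookkeeping, including the identification $s(w_{i+2},w_{i+3};w_i)=s(1,q_i;w_i)$ and the final cancellation of the $l_i$ and $l(\mu_i,\mu_j;w^*)$ terms, checks out.
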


\begin{proof}
Let $X=X(a_1,a_2,a_3,a_4)$. We are going to compute $p_g(X)$ from $X$, and then the equality follows from $p_g(X)=p_g(Y)$. Let $\tilde X \to X$ be the minimal resolution of singularities. As in the proof of Prop. 3.4 in \cite{Urz10}, we have $$K_{\tilde X}^2 - K_X^2= - 12 \sum_i s(w_{i+2},w_{i+3};w_i) - \sum_i l(w_{i+2},w_{i+3};w_i) + \sum_i \frac{2(w_i-1)}{w_i},$$ and $e(\tilde X) - e(X)= \sum_i l(w_{i+2},w_{i+3};w_i)$. Since $K_X^2= \frac{d(d-\sum_i w_i)^2}{ \prod_i w_i}$ and $e(X)=w^*+4$, then the formula $$p_g(X) =  \frac{d(d-\sum_{i=1}^4 w_i)^2}{12 w_1 w_2 w_3 w_4} - \sum_i s(w_{i+2},w_{i+3};w_i) -\frac{1}{6} \sum_i \frac{1}{w_i} +\frac{w^*}{12}$$ is a consequence of the Noether's equality $12 \chi(O_{\tilde X})=K_{\tilde X}^2 + e(\tilde X)$.
\end{proof}

%-------------------------------------------------------------------------------------------------------------------
\section{Theorems on geometric genus} \label{s5}

In this section we prove results related to the geometric genus of Koll\'ar surfaces. All our computations will be done in terms of Dedekind sums, and so we state the Reciprocity law.

\begin{theorem}[see e.g. \cite{HiZa74}, p.93]
If $n$ and $k$ are relatively prime, then \begin{equation} s(1,k;n) + s(1,n;k) = \frac{1}{12}\left(\frac{n}{k} + \frac{1}{nk} + \frac{k}{n}\right) - \frac{1}{4}.
\label{reciprocity}
\end{equation}
\end{theorem}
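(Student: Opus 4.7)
The plan is to prove Dedekind's reciprocity law via Rademacher's contour integration method. The first step is to establish the finite cotangent representation
$$s(1,k;n) = \frac{1}{4n}\sum_{j=1}^{n-1}\cot\!\left(\frac{\pi j}{n}\right)\cot\!\left(\frac{\pi j k}{n}\right),$$
by substituting the Fourier series $((x)) = -\frac{1}{\pi}\sum_{m\ge 1}\frac{\sin(2\pi m x)}{m}$ (valid for $x \notin \Z$) into the definition of $s(1,k;n)$, interchanging summations in the Abel sense, and summing the resulting geometric series in closed form. This reduces the reciprocity assertion to an identity on finite cotangent sums, avoiding the non-smooth sawtooth function entirely.

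Next I would apply the residue theorem to
$$f(z) = \pi\,\cot(\pi n z)\cot(\pi k z)\cot(\pi z),$$
integrated over the boundary of the fundamental rectangle $[0,1]\times[-Y,Y]$ as $Y\to\infty$. The function $f$ is $1$-periodic in $z$, so the two vertical edges cancel. On the horizontal edges, $\cot(\pi m z)\to \mp i$ as $\mathrm{Im}(z)\to\pm\infty$ for any $m>0$, so $f(z)\to \pm i\pi$, and the boundary integral approaches $-2\pi i$. Coprimality of $n$ and $k$ ensures that the interior poles of $f$ are the simple poles at $z=j/n$ for $j=1,\dots,n-1$, the simple poles at $z=j/k$ for $j=1,\dots,k-1$, and a triple pole at $z=0$.

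The simple-pole residue at $z=j/n$ equals $\frac{1}{n}\cot(\pi j/n)\cot(\pi jk/n)$, and summing over $j=1,\dots,n-1$ yields $4\,s(1,k;n)$ by the cotangent representation of Step 1; symmetrically, the sum of residues at $z=j/k$ equals $4\,s(1,n;k)$. For the triple pole at $z=0$, I would use the Laurent expansion $\cot(\pi w) = \frac{1}{\pi w} - \frac{\pi w}{3} - \frac{\pi^3 w^3}{45} - \cdots$ to write $f(z) = \frac{P(z)\,P(nz)\,P(kz)}{\pi^{2}\,n\,k\,z^{3}}$ with $P(w)=1-\frac{\pi^2 w^2}{3}-\cdots$ even, so that the residue equals $\frac{1}{\pi^2 nk}$ times the $z^2$-coefficient of $P(z)P(nz)P(kz)$, namely $-\frac{n^2+k^2+1}{3nk}$. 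Equating the sum of residues to $\frac{1}{2\pi i}\cdot(-2\pi i)=-1$ gives
$$4\bigl(s(1,k;n)+s(1,n;k)\bigr)-\frac{n^2+k^2+1}{3nk}=-1,$$
which rearranges to the stated reciprocity. The principal obstacle is the Laurent-bookkeeping at the triple pole: each of the three cotangents must be expanded two orders past its leading $1/z$ term, and the three quadratic cross-terms $1+n^2+k^2$ must be tracked carefully. The simple-pole residues and the boundary-integral estimate are straightforward once the periodicity and the $\pm i$ limits of $\cot$ at infinity are in hand.
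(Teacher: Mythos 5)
Your proposal is correct, and it is the classical Rademacher contour-integration proof of Dedekind reciprocity. Note that the paper itself offers no proof of this statement: it is quoted as a known result with a reference to Hirzebruch--Zagier, so there is nothing internal to compare against; what you have written is a complete, self-contained argument for a fact the authors only cite. All the key computations check out: the cotangent representation $s(1,k;n)=\frac{1}{4n}\sum_{j=1}^{n-1}\cot(\pi j/n)\cot(\pi jk/n)$, the limits $\cot(\pi m z)\to \mp i$ as $\mathrm{Im}(z)\to\pm\infty$ giving boundary contribution $-2\pi i$, the simple-pole residues summing to $4s(1,k;n)+4s(1,n;k)$ (with coprimality guaranteeing the poles at $j/n$ and $j/k$ stay simple and disjoint), and the triple-pole residue $-\frac{n^2+k^2+1}{3nk}$ at the origin. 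The only point to tighten is the contour itself: the rectangle $[0,1]\times[-Y,Y]$ has poles of $f$ sitting on its vertical edges at $z=0$ and $z=1$, so you should either indent the contour by small semicircles around these two points (periodicity then makes the two indentations combine into one full residue at a point congruent to $0$, which is exactly the triple-pole contribution you count), or translate the rectangle to $[\varepsilon,1+\varepsilon]\times[-Y,Y]$ and track which representative of the pole at $0\bmod 1$ lies inside. As literally stated, "the vertical edges cancel and $z=0$ is an interior pole" conflates these two steps, but the net bookkeeping you perform is the correct one and the final identity follows.
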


Throughout this section, $w^*$ will be greater than $1$. All equalities involving $\equiv$ will be modulo $w^*$, unless stated otherwise. The symbol $q^{-1}$ will denote the inverse of $q$ modulo $w^*$. To avoid confusions, we will write $\frac{1}{q}$ when it corresponds to a number in $\Q$.

\begin{proposition}
Any $n\geq 0$ is realizable as the geometric genus of a Koll\'ar surface.
\label{anypg}
\end{proposition}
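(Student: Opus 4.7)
The plan is to handle $n = 0$ trivially and, for $n \geq 1$, to exhibit a single explicit one-parameter family of cyclic covers of $\P^2$ whose geometric genus equals $n$.

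For $n = 0$, any Koll\'ar surface with $w^* = 1$ is rational by Theorem \ref{kollarTheorem}(3), so $p_g = 0$.

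For $n \geq 1$, set $N = 3n + 1$, which is coprime to $3$ and hence to $N-3$, and consider the $N$-th root cover $Y'_n$ of $\P^2$ branched along four general lines with multiplicities $(\mu_1, \mu_2, \mu_3, \mu_4) = (1, 1, 1, N-3)$. Since $\gcd(\mu_i, N) = 1$ for each $i$, the corollary following Theorem \ref{gcd} provides a Koll\'ar surface birational to $Y'_n$; as $p_g$ is a birational invariant of smooth projective models, it is enough to show $p_g(Y_n) = n$ where $Y_n \to Y'_n$ is the minimal resolution.

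By Proposition \ref{pg} together with the identity $s(1, N-3; N) = -s(1, 3; N)$,
$$p_g(Y_n) \;=\; 2\, s(1,1;N) + 3\, s(1,1;N) + 3\, s(1, N-3; N) \;=\; 5\, s(1,1;N) - 3\, s(1, 3; N).$$
Substituting the closed form $s(1,1;N) = (N-1)(N-2)/(12N)$ and using the reciprocity law (Theorem \ref{reciprocity}) to express $s(1,3;N)$ in terms of $s(1, N; 3) = \pm 1/18$ (positive when $N \equiv 1$, negative when $N \equiv 2$ modulo $3$), a routine algebraic simplification collapses the right-hand side to $(2N - 3 + \epsilon)/6$ with $\epsilon = \pm 1$ according to $N \bmod 3$. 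Plugging in $N = 3n+1$ (so $\epsilon = 1$) evaluates this to $n$, as required.

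The only non-routine step is choosing the family: the multiplicities $(1,1,1,N-3)$ are picked so that exactly two distinct Dedekind sums survive in the sum of six from Proposition \ref{pg}, namely $s(1,1;N)$ and $s(1,3;N)$, both of which admit clean closed-form evaluations via the Reciprocity Law; everything after this choice is mechanical.
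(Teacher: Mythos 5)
Your proof is correct and follows essentially the same route as the paper: for $n\geq 1$ you take $w^*=3n+1$ and a root cover whose multiplicities reduce the genus formula to $5\,s(1,1;w^*)-3\,s(1,3;w^*)$, then evaluate via reciprocity; your multiplicities $(1,1,1,N-3)$ are the paper's $(3,-1,-1,-1)$ multiplied by the unit $-1$ modulo $N$, hence the same cover. The only cosmetic difference is that you invoke the corollary to Theorem \ref{gcd} to realize the cover as a Koll\'ar surface, whereas the paper writes the residues $a_i$ explicitly.
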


\begin{proof}
We know that $w^* = 1$ implies rational, and so $p_g =0$. Assume that $n>0$, and let $w^* = 3n + 1$ and $a_1 \equiv 3^{-1}$, $a_2\equiv 3$, $a_3\equiv a_4 \equiv w^* - 1$. This gives the $w^*$-th root cover $Y$ with $\mu_1 = 3$, $\mu_2 = \mu_3 = \mu_4 = w^* - 1$. The geometric genus of $Y$ is \begin{eqnarray*}
p_g(Y') & = & 5s(1,1;w^*) - 3s(1,3;w^*)\\
 & = & 5\left( \frac{w^*}{12} + \frac{1}{6w^*} - \frac{1}{4} \right) - 3\left(\frac{w^*}{36} + \frac{1}{4w^*} + \frac{1}{36w^*} -\frac{1}{18} - \frac{1}{4} \right)\\
 & = & n.
\end{eqnarray*} \end{proof}

%-------------------------------------------------------------------------------------------------------------------
\subsection{$p_g=0$ surfaces are rational}

\begin{theorem}
Let $X=X(a_1,a_2,a_3,a_4)$ a Koll\'ar surface with $w^* >1$. Then the following are equivalent
\begin{itemize}
\item[(a)] $p_g(X) = 0$.
\item[(b)] $a_i \equiv 1$ or $a_ia_{i+1} \equiv -1$ modulo $w^*$ for some $i$.
\item[(c)] $X$ is rational.
\end{itemize}
\label{pg0}
\end{theorem}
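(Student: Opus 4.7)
The strategy is to establish $(c)\Rightarrow(a)\Rightarrow(b)\Rightarrow(c)$. The implication $(c)\Rightarrow(a)$ is immediate since $p_g$ is a birational invariant and vanishes on $\P^2$. For the other two implications, the central tool is the Dedekind sum formula of Corollary \ref{invariantsFormula},
\[
p_g(X) = 2s(1,1;w^*) - \sum_{i=1}^4 s(1,a_i;w^*) + s(1,a_1 a_4;w^*) + s(1,a_1 a_2;w^*),
\]
combined with the observation that the divisibility $w^* \mid W_i$ forces the hypotheses of $(b)$ to propagate into strong symmetry conditions on the residues $a_i \pmod{w^*}$.

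For $(b)\Rightarrow(a)$, suppose first $a_1 \equiv 1 \pmod{w^*}$. Reducing $W_2 = a_3 a_4 a_1 - a_4 a_1 + a_1 - 1$ modulo $w^*$ yields $a_4(a_3-1)\equiv 0$, and since $\gcd(a_4,w^*)=1$ we deduce $a_3 \equiv 1 \pmod{w^*}$. Plugging $a_1, a_3 \equiv 1$ into the formula and using $s(1,a_1 a_2;w^*)=s(1,a_2;w^*)$ and $s(1,a_1 a_4;w^*)=s(1,a_4;w^*)$ makes the expression collapse to $p_g = 2s(1,1;w^*) - 2s(1,1;w^*) = 0$. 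If instead $a_1 a_2 \equiv -1 \pmod{w^*}$, a similar reduction of $W_3$ and $W_4$ forces $a_2 \equiv a_4$ and $a_2 a_3 \equiv -1$; hence $a_1 \equiv a_3$ and $a_1 a_4 \equiv -1$, and the identities $s(1,-a;n)=-s(1,a;n)$ and $s(1,a^{-1};n)=s(1,a;n)$ cancel the Dedekind sums in pairs to give $p_g=0$.

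For $(b)\Rightarrow(c)$, the same arithmetic relations translate into pairings of the cover multiplicities from Section \ref{s4}: in the case $a_1 \equiv 1$ one checks $\mu_1+\mu_4\equiv 0$ and $\mu_2+\mu_3\equiv 0$ modulo $w^*$, while in the case $a_1 a_2 \equiv -1$ one has $\mu_1+\mu_3\equiv 0$ and $\mu_2+\mu_4\equiv 0$ (this is immediate from $\mu_1\equiv a_2a_3a_4$, $\mu_2\equiv -a_3a_4$, $\mu_3\equiv a_4$, $\mu_4\equiv -1$ using the derived relations). In either case, the substitution $Z = z/(y_i y_j)$ with $\{i,j\}$ the appropriate pair reduces the defining equation of $Y'$ to the toric form $Z^{w^*} = U^{\mu_1} V^{\mu_2}$, where $(U,V)$ are ratios coming from the two pencils of lines through the opposite nodes of $L_1\cup L_2\cup L_3\cup L_4$ and realize a birational map $\P^2 \dashrightarrow \P^1\times\P^1$. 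A surface $Z^{w^*}=U^a V^b$ with $\gcd(a,w^*)=\gcd(b,w^*)=1$ is a quotient of $\C^2$ by a finite cyclic group and in particular toric; hence rational, and so is $X$.

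The genuinely hard implication is $(a)\Rightarrow(b)$, where the plan is to show that outside the listed cases the Dedekind sum combination for $p_g(X)$ is strictly positive, using the sharp bounds of Girstmair \cite{Girs16}. Grouping terms via reciprocity and the symmetries $s(1,a)=s(1,a^{-1})$, $s(1,-a)=-s(1,a)$ reduces the problem to ruling out a finite list of near-degenerate residue configurations for $(a_1,\ldots,a_4)\pmod{w^*}$; Girstmair's quantitative estimates then exclude every configuration except those in $(b)$. This quantitative step is the main technical obstacle, since the identities alone do not suffice to bound the expression away from zero, and a careful case analysis calibrated to Girstmair's inequalities is required.
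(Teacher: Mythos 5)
Your implications $(c)\Rightarrow(a)$, $(b)\Rightarrow(a)$ and $(b)\Rightarrow(c)$ are correct. The arithmetic you extract from $w^*\mid W_i$ (namely $a_1\equiv 1\Rightarrow a_3\equiv 1$, and $a_1a_2\equiv -1\Rightarrow a_2\equiv a_4,\ a_2a_3\equiv -1$) checks out, and the resulting cancellation of Dedekind sums via $s(1,-a;n)=-s(1,a;n)$ and $s(1,a^{-1};n)=s(1,a;n)$ does give $p_g=0$. Your route to rationality is genuinely different from the paper's: you use \emph{both} pairings $\mu_i+\mu_j\equiv\mu_k+\mu_l\equiv 0$ to rewrite the function field of $Y'$ as $\C(U,V)[Z]/(Z^{w^*}-U^{\mu_1}V^{\mu_2})$, a cyclic (toric) quotient of $\C^2$, hence rational; the paper needs only \emph{one} pair $\mu_i+\mu_j\equiv 0$ and observes that the pencil of lines through $L_i\cap L_j$ pulls back to a pencil of rational curves on $Y$. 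Both are valid; the paper's argument is slightly more economical, yours is more explicit about the birational model.

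The genuine gap is $(a)\Rightarrow(b)$, which is the entire content of the theorem and which you only outline. Saying that ``grouping terms via reciprocity and the symmetries reduces the problem to ruling out a finite list of near-degenerate residue configurations'' and that ``Girstmair's quantitative estimates then exclude every configuration except those in $(b)$'' is a statement of intent, not a proof: you never specify which configurations survive the crude bound, nor verify that the surviving ones are exactly those of $(b)$. Concretely, what is needed (and what the paper does in Lemma \ref{dedekindBound}, Corollary \ref{coroDedekind} and the ensuing two-page case analysis) is: (i) the inequalities $s(1,1;n)>k\,s(1,a;n)$ for $k=2,3,4$ outside the exceptional residues $a\equiv 1,2,2^{-1},3,3^{-1}$, obtained from Girstmair's asymptotics \emph{plus} explicit thresholds $N=132,320,630$ and computer verification below those thresholds; (ii) a full enumeration of the cases where some $a_i$ or $-a_ia_{i+1}$ lies in the exceptional list (e.g.\ $a_1\equiv 2$, $a_1a_2\equiv -2^{-1}$), each requiring one of the three ad hoc inequalities of Corollary \ref{coroDedekind}; and (iii) the observation that the borderline values $w^*=5,7$, where those inequalities fail, happen to satisfy condition $(b)$ anyway. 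None of this is automatic, and without it the claim that $p_g>0$ off the list in $(b)$ is unsupported.
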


\begin{lemma}
Let $0<a<n$ be relatively prime. Then \begin{enumerate}
\item[(1)] $s(1,1;n) > 2s(1,a;n)$ if $a\not\equiv 1$;
\item[(2)] $s(1,1;n) > 3s(1,a;n)$ if $a\not\equiv 1,2,2^{-1}$;
\item[(3)] $s(1,1;n) > 4s(1,a;n)$ if $a \not\equiv 1,2,2^{-1},3,3^{-1}$.
\end{enumerate}
\label{dedekindBound}
\end{lemma}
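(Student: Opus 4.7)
The central tool is the reciprocity law (Theorem~\ref{reciprocity}) applied to $s(1,a;n)$, which exchanges the ``large'' modulus $n$ for the ``small'' modulus $a$. Combining $12\,s(1,1;n)=n+\tfrac{2}{n}-3$ with the reciprocity identity
\[12\,s(1,a;n) \;=\; \frac{n}{a} + \frac{a}{n} + \frac{1}{na} - 3 - 12\,s(1,n;a),\]
a short rearrangement gives, for each $k\in\{2,3,4\}$,
\[12\bigl(s(1,1;n) - k\, s(1,a;n)\bigr) \;=\; \frac{(a-k)\,n}{a} \;-\; \frac{k(a^2+1)-2a}{n\,a} \;+\; 3(k-1) \;+\; 12k\, s(1,n;a).\]
The sign of the right-hand side is governed by the main term $(a-k)n/a$, which is positive, zero, or negative according as $a$ is greater than, equal to, or less than $k$. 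The residual term $s(1,n;a)$ depends only on $n \bmod a$ and ranges over finitely many values for each fixed $a$.

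My next move would be to reduce by the symmetry $s(1,a;n)=s(1,a^{-1};n)$, obtained by substituting $i\mapsto i\,a^{-1}$ in the defining sum. This explains why the exclusions in (2) and (3) appear in pairs $\{k,k^{-1}\}$: both give numerically equal Dedekind sums. With the symmetry in hand it suffices to treat $a \leq k$ together with $a\geq k+1$ (and dually $a^{-1}\geq k+1$) separately. In the ``bulk'' range $a \geq k+1$ the main term is bounded below by $n/(k+1)$, growing linearly in $n$. To bound the residual I would use the standard sharp estimate $|s(1,m;a)| \leq s(1,1;a) = \tfrac{a}{12}+\tfrac{1}{6a}-\tfrac{1}{4}$, attained only at $m\equiv\pm 1\bmod a$, and then invoke Girstmair's refinement \cite{Girs16}, which gives the strictly smaller bound $|s(1,m;a)|\leq s(1,2;a)$ for $m\not\equiv\pm 1\bmod a$. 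Combined, these show that the main term strictly dominates the residual for every admissible $n>a$.

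There remain the boundary cases $a=2$ for (1), $a\in\{2,3\}$ for (2), and $a\in\{2,3,4\}$ for (3); by the symmetry, the dual residues $a\equiv k^{-1}$ follow for free. In each of these, $s(1,n;a)$ is explicit: $s(1,n;2)=0$ always, $s(1,n;3)=\pm\tfrac{1}{18}$ depending on $n\bmod 3$, and $s(1,n;4)=\pm\tfrac{1}{8}$ depending on $n\bmod 4$. Direct substitution into the identity above then gives simple closed forms such as
\[s(1,1;n)-2\,s(1,2;n) \;=\; \tfrac{1}{4}-\tfrac{1}{4n},\]
positive for $n\geq 2$, which settles (1) at $a=2$; analogous one-line computations handle (2) at $a=3$ and (3) at $a=4$, and confirm that the listed residues are exactly those where the inequality can fail.

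The main obstacle is keeping the bulk estimate sharp enough that no spurious exclusions appear: the crude bound $|((x))|\leq\tfrac{1}{2}$ only gives $|s(1,m;a)|\leq (a-1)/4$, which forces an $n$-range of order $k\,a^2/(a-k)$ to be checked by hand for each small $a$. Girstmair's refined estimates are precisely what removes this gap, after which the remainder is routine arithmetic with Dedekind sums of small modulus.
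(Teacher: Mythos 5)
Your reciprocity identity, the inversion symmetry $s(1,a;n)=s(1,a^{-1};n)$, and the explicit boundary computations at $a=2,3,4$ are all correct, and they match what the paper does for those particular residues. The gap is in the ``bulk'' step. After a single application of reciprocity the term you must control is $12k\,s(1,n;a)$, whose worst case is of order $k\cdot 12\,s(1,1;a)\approx ka$ (or $\approx ka/2$ with the refinement $|s(1,m;a)|\le s(1,2;a)$), while your main term is at most of order $n$ (and you only lower-bound it by $n/(k+1)$). These estimates close only for $a$ up to a small multiple of $n/k$, yet the inversion symmetry replaces $a$ only by $\min(a,a^{-1})$, and for most residues \emph{both} $a$ and $a^{-1}$ are of order $n$. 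For such $a$ your bounds give main term $\lesssim n$ against a residual bound $\gtrsim ka/2$, which does not decide the sign for any $k\ge 2$; the assertion that ``the main term strictly dominates the residual for every admissible $n>a$'' is therefore false as written. What actually saves these middle-range $a$ is that $s(1,n;a)$ is close to its extreme value $-s(1,1;a)$ only when $n\equiv -q\pmod a$ for small $q$, i.e.\ only when $a$ is close to a fraction $cn/d$ with small $d$ --- information that a single reciprocity step plus a worst-case bound over all residues mod $a$ cannot see. Detecting it requires iterating reciprocity along the continued-fraction expansion of $n/a$, which is precisely the content of Girstmair's classification.

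This is also where your use of \cite{Girs16} diverges from the paper's. You invoke Girstmair only for the second-largest-value bound $|s(1,m;a)|\le s(1,2;a)$ on the \emph{residual} (modulus $a$), which, as above, is not strong enough. The paper instead applies \cite[Thm.~1]{Girs16} directly to $s(a,n)$ (modulus $n$): for fixed $k$, every $a$ is either ordinary, with $s(a,n)\le \frac{n}{12(k+1)}+O(1)$, or non-ordinary, with $s(a,n)=\frac{n}{12dq}+O(1)$ for explicit small $d$ and $q$. Since $\frac{1}{k}s(1,1;n)=\frac{n}{12k}+O(1)$, the only residues that can violate the inequality for large $n$ are the non-ordinary ones with $dq\le k$, namely $a\equiv 1,2,2^{-1}$ (and additionally $3,3^{-1},4,4^{-1}$ for $k=4$), which are exactly the ones disposed of by your explicit reciprocity computations. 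The $O(1)$'s are then made effective, giving thresholds $N=132,320,630$ for $k=2,3,4$, and the finitely many $n\le N$ are checked by computer --- a finite verification your proposal omits but which cannot be avoided, since the asymptotic dichotomy says nothing about small $n$. To repair your argument you would either have to quote Girstmair in this stronger form or carry out the recursive reciprocity yourself; the one-step version does not suffice.
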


\begin{proof}
First of all, using the Reciprocity law we have \begin{eqnarray*}
2s(1,2;n) & = & \frac{n^2-6n+5}{12n} < s(1,1;n)\\
3s(1,3;n) & \leq & \frac{n^2 -7n+10}{12n} < s(1,1;n)\\
4s(1,4;n) & \leq & \frac{n^2-6n+17}{12n} < s(1,1;n)
\end{eqnarray*} with $\text{gcd}(n,2) = 1$, $\text{gcd}(n,3) = 1$ and $\text{gcd}(n,4) = 1$ respectively. Notice that $s(1,1;n) = (n-1)(n-2)/12n$. In \cite[Thm.1]{Girs16}, the author describes how Dedekind sums $s(m,n)$ grow for a fixed $m$, given a positive integer $k$. To do so, Girstmair divides the numbers $1\leq m \leq n-1$ as ordinary and not ordinary, and proves that if $m$ is ordinary, then $s(m,n) \leq \dfrac{n}{12(k+1)} + O(1)$, and if $m$ is not ordinary then there exists $d\in \{1,\ldots ,2k+1\}$ and $c\in \{0,1,\ldots,d\}$, gcd$(c,d)=1$, such that $s(m,n) = \dfrac{n}{12dq} + O(1)$, where $q = md-nc$.

First assume that $k=2$. Notice that $\frac{s(1,1;n)}{2} = \frac{n}{24} + O(1)$, also if $m$ is ordinary, then $s(m,n) \leq \dfrac{n}{36} + O(1)$, and if $m$ is not ordinary and $dq \geq 3$, then $s(m,n) \leq \dfrac{n}{36} + O(1)$. Therefore, we have to find a bound for the three $O(1)$ involved, and find an $N$ such that if $n>N$, then $s(1,1;n)/2 > s(m,n)$ for ordinary numbers and nonordinary numbers with $qd\geq 3$. The procedure to do so is shown by Girstmair in \cite[Thm. 2]{Girs16}, and for the case $k=2$ such $N$ is $132$. The nonordinary numbers with $qd\leq 2$ correspond to $m\equiv 1,2,2^{-1}$, but the first case was ruled out in the proposition, and the inequality for $2$ and $2^{-1}$ was shown at the beginning of the proof. Therefore, we have (1) for $n > 132$, and using a computer we can check that it holds true for every $n$.

For $k=3$ and $k=4$ we obtain similar results, with $N=320$ and $N=630$ respectively. The cases with $qd \leq 3$ and $qd\leq 4$ are the ones ruled out in the proposition, and using a computer we can check that (2) and (3) are true for $n\leq 320$ and $n\leq 630$.
\end{proof}

\begin{corollary}
\mbox{}
\begin{enumerate}
\item[(1)] $2s(1,1;n) - 2s(1,2;n) +s(1,4;n) - s(1,3;n) + s(1,2\cdot 3^{-1};n)$ \\ $- s(1,4\cdot 3^{-1};n) > 0$ for all $n > 5$;
\item[(2)] $2s(1,1;n) - s(1,2;n) - s(1,3;n) - s(1,4;n) + s(1,6;n) - s(1,2\cdot 3^{-1};n)$ \\ $+ s(1,4\cdot 3^{-1};n) > 0$ for all $n>7$;
\item[(3)] $2s(1,1;n) - s(1,2;n) - s(1,3;n) - s(1,5;n) + s(1,6;n) + s(1,2\cdot 5^{-1};n)$ \\ $- s(1,6 \cdot 5^{-1};n) > 0$ for all $n>7$.
\end{enumerate}
\label{coroDedekind}
\end{corollary}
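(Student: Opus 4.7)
The strategy for all three inequalities is a direct computation via iterated reciprocity. Every Dedekind sum appearing has the form $s(1,k;n)$ with $k$ either a small fixed integer in $\{2,3,4,5,6\}$ or an inverse expression such as $2\cdot 3^{-1}\pmod n$; under the implicit hypotheses $\gcd(n,6)=1$ in (1), (2) and $\gcd(n,30)=1$ in (3), each such sum evaluates to an explicit rational function of $n$ branching on small residues.

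The first step is to put the inverse sums into a convenient normal form. The invariance $s(a,b;n)=s(ac,bc;n)$ for $\gcd(c,n)=1$ together with the symmetry $s(a,b;n)=s(b,a;n)$ yields $s(1,c;n)=s(1,c^{-1};n)$, so
\[
s(1,2\cdot 3^{-1};n)=s(1,3\cdot 2^{-1};n),\quad s(1,4\cdot 3^{-1};n)=s(1,3\cdot 4^{-1};n),
\]
and analogously for the $5^{-1}$-terms in (3). For odd $n$, $3\cdot 2^{-1}\equiv (n+3)/2\pmod n$ is an explicit integer; analogous formulas with branching on $n\bmod 4$ or $n\bmod 6$ give explicit integer representatives of $3\cdot 4^{-1}$, $5\cdot 2^{-1}$ and $5\cdot 6^{-1}$.

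Next I apply Theorem 5.1 to every term. For a fixed small $k$ this gives a rational function of $n$ with a correction $s(1,n\bmod k;k)$ depending only on residues modulo $k$. For each inverse sum two iterations of reciprocity are needed: one with modulus $n$, and a second with modulus $\Theta(n)$ such as $(n+3)/2$; the latter reduces to sums $s(1,3;m)$ or $s(1,5;m)$ that are again explicit by reciprocity. A careful expansion yields the leading $n$-coefficients
\[
\text{(1):}\ \tfrac{1}{12},\quad \text{(2):}\ \tfrac{1}{12},\quad \text{(3):}\ \tfrac{1}{10},
\]
all strictly positive. The $O(1)$ remainders depend on $n$ modulo $12$ (four admissible residue classes for (1), (2)) or modulo $60$ (eight classes for (3)) and are uniformly bounded by a small explicit constant, so positivity holds for $n$ above an explicit threshold. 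The remaining admissible small $n$ are verified directly; for instance at $n=7$ the left-hand side of (1) equals $1$, and at $n=11$ both (1) and (2) evaluate to $1$ while (3) evaluates to $2$.

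The main subtlety is that the inverse sums such as $s(1,3\cdot 2^{-1};n)$ grow linearly in $n$, contrary to the naive heuristic $s(1,c;n)\sim n/(12c)$ applied with $c\approx n/2$: the linear growth comes entirely from the reciprocity correction $-s(1,n;c)=s(1,3;c)$, which is itself of order $c/36=\Theta(n)$. Keeping track of signs and coefficients through this chain of reductions is the main bookkeeping task, but once done each inequality becomes a finite residue check plus a straightforward asymptotic argument.
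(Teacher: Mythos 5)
Your proposal is correct in its essential claims and would yield a complete proof, but it takes a genuinely different route from the paper. The paper deduces the corollary almost immediately from Lemma \ref{dedekindBound}: it uses the bounds $s(1,a;n)<\tfrac{1}{k}s(1,1;n)$ (which themselves rest on Girstmair's theorem on the largest values of Dedekind sums) to reduce each inequality to a short list of exceptional residues and exceptional values of $n$, which are then checked by computer. You instead make every term explicit: reciprocity for the fixed small arguments $2,3,4,5,6$, and a two-step reciprocity (equivalently, the three-term reciprocity applied to $s(2,3;n)$, $s(3,4;n)$, $s(2,5;n)$, $s(5,6;n)$) for the inverse arguments, whose linear growth you correctly attribute to the correction term $-s(1,n;m)=s(1,3;m)$ with $m=\Theta(n)$. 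I verified your leading coefficients $\tfrac{1}{12},\tfrac{1}{12},\tfrac{1}{10}$ and your numerical values at $n=7$ and $n=11$; all are right, and in particular (2) vanishing at $n=7$ matches the paper's threshold $n>7$. What your approach buys is self-containedness (only the reciprocity law, no appeal to Girstmair) and exact asymptotics rather than one-sided bounds; what it costs is the residue-by-residue bookkeeping, which you assert ("a careful expansion yields\dots", "uniformly bounded by a small explicit constant") rather than carry out, and a threshold from the asymptotic argument that will exceed $5$ or $7$, so the finite check covers more values of $n$ than you exhibit. Two small points to tighten: for (3) the correction terms naturally live modulo $30$ (eight admissible classes), not modulo $60$ as written; and the final write-up must actually record the explicit constant bounding the $O(1)$ remainder in each residue class, since that constant determines how far the direct verification must go.
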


\begin{proof}
Using the inequalities from \ref{dedekindBound} we see that to prove (1) it is enough to prove that $\frac{2}{3}s(1,1;n) + s(1,4;n) + s(1,2\cdot 3^{-1};n) - s(1,4\cdot 3^{-1};n) > 0$. On the other hand, we have that $s(1,4;n) > 0$ if $n\not \in \{7,13,19,25,31\}$, that $s(1,-2\cdot 3^{-1};n) < s(1,1;n)/3$ if $n\not\in \{5,7\}$ and $s(1,4\cdot 3^{-1};n) < s(1,1;n)/3$ if $n\neq 5$. Therefore, if $n$ is not one of those cases, then the inequality holds. We check the remaining cases and find that (1) is false only if $n=5$.
We repeat the same argument and prove that we have to check the cases when $n\in \{7,11,13,19,25,31\}$ for (2), and when $n\in \{7,13,19,31\}$ for (3). Both cases give us that (2) or (3) are false only if $n=7$.
\end{proof}

\begin{proof}[Proof of Theorem \ref{pg0}]
By Corollary \ref{invariantsFormula}, we have that the geometric genus of $X(a_1,a_2,a_3,a_4)$ is \[ p_g(X) = 2s(1,1;w^*) - \sum_{i=1}^4 s(1,a_i;w^*) + s(1,a_1a_4;w^*)+ s(1,a_1 a_2; w^*) \]

\underline{$(c) \Rightarrow (a)$:} This is trivial.
\bigskip

\underline{$(a) \Rightarrow (b)$:} Assume that $a_i \not \equiv 1$ and $a_ia_{i+1} \not \equiv -1$ for all $i$. First, if $a_i\not \equiv 2,2^{-1}$ and $a_ia_{i+1}\not \equiv -2,-2^{-1}$ for all $i$, then by Lemma \ref{dedekindBound},(2) we have that $ p_g >2s(1,1;w^*) - \frac{6}{3}s(1,1;w^*) >0$. Therefore it is enough to rule out the cases when $a_1 \equiv 2$ or $a_1a_2 \equiv -2^{-1}$. First suppose that $a_1 \equiv 2$, so \[ p_g = 2s(1,1;w^*) + s(1,2a_2;w^*) + s(1,2a_4;w^*) - s(1,2;w^*) -\sum_{i=2}^4 s(1,a_i;w^*), \] and we have to check the cases when we cannot use Lemma \ref{dedekindBound},(3).

If $a_3 \equiv 2$ or $a_3 \equiv 2^{-1}$, then $a_1a_2 \equiv -1$ or $a_4 \equiv 1$ respectively, so they satisfy the hypothesis for $p_g = 0$.

If $a_2\equiv 2^{-1}$, $2a_2 \equiv -2$, $2a_4 \equiv -2$, $a_4\equiv 3^{-1}$ or $2a_2\equiv -3$, then one of the terms is equal to $s(1,1;w^*)$ or two of the terms cancel, so by Lemma \ref{dedekindBound},(1) we have that $p_g >0$.

If $a_2\equiv 2$, $2a_2 \equiv -2^{-1}$ or $2a_4\equiv -2^{-1}$, then \[ p_g =2s(1,1;w^*) - 2s(1,2;w^*) +s(1,4;w^*) - s(1,3;w^*) + s(1,2\cdot 3^{-1};w^*)\] \[ - s(1,4\cdot 3^{-1};w^*) \ \ \ \ \ \ \ \ \ \ \ \ \ \ \ \ \ \ \ \ \ \ \ \ \ \ \ \ \ \ \ \ \ \ \ \ \ \ \ \ \ \ \ \ \ \ \ \ \ \ \ \ \] and by Corollary \ref{coroDedekind},(1) $p_g>0$ when $w^* >5$. If $w^* = 5$, then it satisfies the conditions for $p_g=0$.

If $a_2 \equiv 3$ or $2a_4 \equiv -3^{-1}$, then \[ p_g = 2s(1,1;w^*) - s(1,2;w^*) - s(1,3;w^*) - s(1,4;w^*) + s(1,6;w^*)\] \[ - s(1,2\cdot 3^{-1};w^*) + s(1,4\cdot 3^{-1};w^*) \ \ \ \ \ \ \ \ \ \ \ \ \ \ \ \ \ \ \] and by Corollary \ref{coroDedekind},(2) $p_g>0$ when $w^* >7$. If $w^* = 7$, then it satisfies the conditions for $p_g=0$.

If $a_4 \equiv 3$ or $2a_2 \equiv -3^{-1}$, then \[ p_g = 2s(1,1;w^*) - s(1,2;w^*) - s(1,3;w^*) - s(1,5;w^*) + s(1,6;w^*) \] \[+ s(1,2\cdot 5^{-1};w^*) - s(1,6 \cdot 5^{-1};w^*) \ \ \ \ \ \ \ \ \ \ \ \ \ \ \ \ \ \] and by Corollary \ref{coroDedekind},(3) $p_g>0$ when $w^* >7$. If $w^* = 7$, then it satisfies the conditions for $p_g=0$.

These cover all the cases for $a_1 \equiv 2$. Now assume that $a_1a_2 \equiv -2^{-1}$, so \[ p_g = 2s(1,1;w^*) - s(1,2;w^*) + s(1,a_1a_4;w^*) + s(1,2a_2;w^*) - \sum_{i=2}^4 s(1,a_i;w^*),\] and we proceed as the previous case.

If $a_1a_4 \equiv -2$ or $a_1a_4 \equiv -2^{-1}$, then $a_1\equiv 1$ or $a_4 \equiv 1$ respectively, so they satisfy the hypothesis for $p_g = 0$.

If $a_2\equiv 3^{-1}$ or $a_3 \equiv 3$, then two of the terms in the sum cancel, so by Lemma \ref{dedekindBound},(1) we have that $p_g >0$.

If $a_4 \equiv 3^{-1}$ or $2a_2 \equiv -3^{-1}$, then \[ p_g = 2s(1,1;w^*) - s(1,2;w^*) - s(1,3;w^*) - s(1,4;w^*) + s(1,6;w^*)\] \[ - s(1,2\cdot 3^{-1};w^*) + s(1,4\cdot 3^{-1};w^*) \ \ \ \ \ \ \ \ \ \ \ \ \ \ \ \ \ \ \] and by Corollary \ref{coroDedekind},(2) $p_g>0$ when $w^* >7$. If $w^* = 7$, then it satisfies the conditions for $p_g=0$.

If $a_2 \equiv 3$ or $a_3 \equiv 3^{-1}$, then \[ p_g = 2s(1,1;w^*) - s(1,2;w^*) - s(1,3;w^*) - s(1,5;w^*) + s(1,6;w^*) \] \[+ s(1,2\cdot 5^{-1};w^*) - s(1,6 \cdot 5^{-1};w^*) \ \ \ \ \ \ \ \ \ \ \ \ \ \ \ \ \ \] and by Corollary \ref{coroDedekind},(3) $p_g>0$ when $w^* >7$. If $w^* = 7$, then it satisfies the conditions for $p_g=0$.

These cover all the cases for $a_1a_2 \equiv -2^{-1}$.
\bigskip

\underline{$(b) \Rightarrow (c)$:} Notice that $b)$ implies the existence of $\mu_i$ and $\mu_j$ such that $\mu_i+\mu_j \equiv 0 ($mod $w^*)$. Consider the trivial pencil of lines trough $L_i \cap L_j$. Since $\mu_i+\mu_j \equiv 0 ($mod $w^*)$, this pencil defines a pencil of smooth rational curves in $Y$ via pull-back. Therefore $Y$ is rational, and so is $X$.
\end{proof}

%-------------------------------------------------------------------------------------------------------------------
\subsection{$p_g=1$ surfaces are K3} In Table 1, we show the total transform of the key configuration of curves after successively blowing down several $(-1)$-curves from the minimal resolution of the indicated surfaces $X(a_1,a_2,a_3,a_4)$.

\begin{theorem}
Let $X=X(a_1,a_2,a_3,a_4)$ a Koll\'ar surface with $w^* >1$. Then the following are equivalent
\begin{itemize}
\item[(a)] $p_g(X) = 1$.
\item[(b)] $X$ is birational to one of the $8$ surfaces in Table 1.
\item[(c)] $X$ is birational to a K3 surface.
\end{itemize}
\label{pg1}
\end{theorem}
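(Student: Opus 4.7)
The plan is to establish the cyclic chain of implications $(c)\Rightarrow(a)\Rightarrow(b)\Rightarrow(c)$, following the template of the proof of Theorem~\ref{pg0}. The main numerical input remains Corollary~\ref{invariantsFormula}, which expresses $p_g(X)$ as a $\Z$-linear combination of Dedekind sums in the residues $a_i\pmod{w^*}$, together with the sharp asymptotic bounds of Girstmair~\cite{Girs16} that already powered Lemma~\ref{dedekindBound} and Corollary~\ref{coroDedekind}.

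The implication $(c)\Rightarrow(a)$ is immediate: $p_g$ is a birational invariant of smooth projective surfaces, and a K3 surface has $p_g=1$, so passing to a smooth model $\tilde X$ of $X$ and comparing with a K3 model yields $p_g(X)=p_g(\tilde X)=1$. For $(b)\Rightarrow(c)$, I would treat each of the 8 entries of Table~1 separately. In each case the table prescribes a sequence of blow-downs starting from the minimal log resolution of the key configuration; performing these contractions, one is left with a smooth surface whose remaining curves form ADE chains of $(-2)$-curves. These contract to rational double points on a smooth minimal model with $K\equiv 0$, and since $q=0$ and $\pi_1=0$ by Proposition~\ref{pg}, the only possibility is a K3 surface. (The computation of self-intersections uses the same Hirzebruch--Jung machinery that was set up in Section~\ref{s2}.)

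The substance of the theorem lies in $(a)\Rightarrow(b)$. Starting from the identity
\[
2s(1,1;w^*)-\sum_{i=1}^4 s(1,a_i;w^*)+s(1,a_1a_4;w^*)+s(1,a_1a_2;w^*)=1,
\]
I would argue in the style of Theorem~\ref{pg0}: by Lemma~\ref{dedekindBound} and its variants, the left-hand side grows linearly in $w^*$ unless several of the residues $a_i,\;a_ia_{i+1}\pmod{w^*}$ lie in the finite exceptional set $\{\pm 1,\pm 2,\pm 2^{-1},\pm 3,\pm 3^{-1},\ldots\}$. A systematic enumeration of which of these "small-denominator" residues can occur simultaneously (using the cyclic symmetry $a_i\mapsto a_{i+1}$ and the birational reductions of Theorem~\ref{gcd} to identify equivalent tuples) narrows the possibilities to a finite list. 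For each surviving candidate, one evaluates the Dedekind sum expression exactly, via the reciprocity law, and records those tuples for which it equals~$1$; this produces the 8 cases of Table~1. The small values of $w^*$ for which Girstmair's asymptotic bounds are not yet in force are dispatched by a finite computer check, exactly as in Corollary~\ref{coroDedekind}.

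The principal obstacle will be the bookkeeping in $(a)\Rightarrow(b)$. In Theorem~\ref{pg0}, any "small" deviation from the rational configuration already made $p_g$ strictly positive, so the bounds had only to separate $0$ from the next admissible value; here we must separate $1$ from both sides. Several near-miss configurations (for instance, those where a single $a_i\equiv 2$ together with an $a_ia_{i+1}\equiv -3^{-1}$) produce Dedekind sum expressions very close to $1$, and distinguishing the exact equalities from strict inequalities requires refined reciprocity identities rather than just coarse bounds. I expect that after appropriate normalization by the cyclic symmetry of the $\mu_i$'s, the eight cases of Table~1 emerge naturally as the intersection of these refined inequalities, but pinning them down without gaps is the delicate part of the argument.
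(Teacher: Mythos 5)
Your cyclic structure $(c)\Rightarrow(a)\Rightarrow(b)\Rightarrow(c)$ matches the paper's, and $(c)\Rightarrow(a)$ is fine, but your argument for $(b)\Rightarrow(c)$ has a genuine gap. You claim that after the blow-downs one is left with only ADE chains of $(-2)$-curves, which contract to give a minimal model with $K\equiv 0$. Neither step holds: the configurations in Table 1 contain curves of self-intersection $-3,-4,\ldots,-10$ (only the first entry, $X(7,7,15,15)$, consists entirely of $(-2)$-curves), and even a configuration made only of $(-2)$-curves would not force $K\equiv 0$. Worse, the invariants you invoke ($p_g=1$, $q=0$, $\pi_1=0$) do not by themselves characterize K3 surfaces: there exist simply connected properly elliptic surfaces with $p_g=1$ and $q=0$ (e.g.\ logarithmic transforms of the K3 elliptic surface), so Kodaira dimension $1$ must be actively excluded. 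The paper does this by exhibiting, in each of the 8 configurations, an effective connected divisor $F$ with $F^2=0$ and $p_a(F)=1$ (a Kodaira fiber of type $I_{12}$, $II$, $III$, $IV$ or $III^*$) together with a $(-2)$-curve $C$ with $F\cdot C=1$; Riemann--Roch then produces an elliptic fibration with $F$ as a fiber and $C$ as a section, and the canonical bundle formula plus the existence of the section rules out multiple fibers, forcing the relative minimal model to have trivial canonical class. The explicit identification of $F$ and $C$ in each of the 8 cases is the content you are missing.

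For $(a)\Rightarrow(b)$ your route is workable in principle but substantially harder than the paper's. You propose to classify the residue tuples symbolically and evaluate the Dedekind-sum expression exactly via reciprocity, separating the equality $p_g=1$ from nearby strict inequalities; as you note yourself, this requires refined identities and is delicate. The paper instead proves a clean uniform statement (Lemma \ref{pgBound}): for every $m$ there is an $N$ with $p_g>m$ whenever $w^*>N$ and $p_g\neq 0$, and for $m=1$ one may take $N=75$. This reduces $(a)\Rightarrow(b)$ to an exhaustive computer search over all $w^*$-th root covers with $w^*\leq 75$, with no exact symbolic evaluation needed. Your sketch gestures at a finite check only for "small $w^*$ where the asymptotics are not in force," but the real point is that the asymptotic bound alone disposes of \emph{all} $w^*>75$ at once, and the finite check does the rest.
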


\begin{proof}
\underline{$(c) \Rightarrow (a)$:} It is trivial.
\bigskip

\underline{$(a) \Rightarrow (b)$:} First we prove the following lemma.

\begin{lemma}
Let $m$ be a positive integer. Then there is a positive integer $N$ such that if $w^*>N$ and $p_g\neq 0$, then $p_g > m$.
\label{pgBound}
\end{lemma}

\begin{proof}
If all $a_i$, and $-a_1a_2$ and $-a_1a_4$ are not equivalent to $2,2^{-1},3,3^{-1}$, then by Lemma \ref{dedekindBound},(3) we have that \[ p_g > 2s(1,1;w^*) - \frac{6}{4}s(1,1;w^*) = \frac{1}{2}s(1,1;w^*). \] Also we note that if we fix two of these values, say for example $a_1\equiv 2$ and $a_1a_2 \equiv -3$, then the rest of the $a_i$ are completely determined, and they are equivalent to $2,2^{-1},3,3^{-1}$ only for finitely many $w^*$. Therefore if we set that two of the $a_i$, $-a_1a_2$ or $-a_1a_4$ to be equivalent to $3$ or $3^{-1}$, then for $w^*>>0$ we have that \[ p_g > 2s(1,1;w^*) - \frac{2}{3}s(1,1;w^*) - s(1,1;w^*) = \frac{1}{3}s(1,1;w^*). \] If one of the values is $2$ or $2^{-1}$ and the other is $3$ or $3^{-1}$, then for $w^*>>0$ \[ p_g > 2s(1,1;w^*) - \frac{1}{2}s(1,1;w^*) - \frac{1}{3}s(1,1;w^*) - s(1,1;w^*) = \frac{1}{6}s(1,1;w^*). \] Both of these cases happen when $w^* > 28$, hence we have to check the case when two of the values are $2$ or $2^{-1}$. This was done in the proof of \ref{pg0}, and the only relevant case is when $p_g$ is $2s(1,1;w^*) - 2s(1,2;w^*) +s(1,4;w^*) - s(1,3;w^*) + s(1,2\cdot 3^{-1};w^*) - s(1,4\cdot 3^{-1};w^*)$. For $w^* >> 0$ we have that \[ p_g > 2s(1,1;w^*) - s(1,1;w^*) - \frac{1}{3}s(1.1;w^*) - \frac{1}{2}s(1,1;w^*) + s(1,4;w^*), \] and because $s(1,4;w^*) \geq 0$ for $w^* \geq 15$, we have that $p_g > s(1,1;w^*)/6$.

Therefore $N$ is the first integer such that $s(1,1;N) > 6m$.
\end{proof}

To prove this implication, we first use Lemma \ref{pgBound} for $m=1$, which gives us that $N = 75$. We check using a computer all the possible $w^*$-th root covers for $w^* \leq 75$, and find that there are 8 cases with $p_g = 1$, which are represented by a Koll\'ar surface in Table 1.
\bigskip

\underline{$(b) \Rightarrow (c)$:} We prove this implication by means of the following simple lemma.

\begin{lemma}
Let $S$ be a smooth projective surface with $p_g=1$ and $q=0$. Assume it has an effective connected divisor $F$ with $F^2=0$ and $p_a(F)=1$, and a $(-2)$-curve $C$ such that $F \cdot C=1$. Then $S$ is birational to a K3 surface, and $F$ is a fiber of an elliptic fibration $S \to \P^1$, where $C$ is a section.
\end{lemma}

\begin{proof}
Notice that $F$ has the type of a non-multiple fiber of an elliptic fibration. We want to get such a fibration on $S$. By the Riemann-Roch inequality and $F \cdot (F-K_S)=0$, we have $h^0(F)+h^2(F) \geq \chi(\O_S)=2$. Since in addition $h^2(F)=h^0(K_S-F)$ and $C \cdot (K_S-F)=-1$, we have $h^2(F)=0$. Therefore, there is a fibration $S \to \P^1$ with general fiber of genus $1$ and $F$ is a fiber. Let $S'$ be the relative minimal model of this fibration. By the canonical class formula, $K_S \sim (-2+\chi(\O_S))F + \sum_i (m_i-1)G_i + E$ where $G_i$ are the multiple fibers, and $E$ is the exceptional divisor from $S \to S'$. But there is a section $C$, and so $G_i=0$ for all $i$. Then $S'$ has trivial canonical class, and so it is a K3 surface.
\end{proof}

\begin{center}
\captionof{table}{List for $p_g=1$}
\begin{longtable}{|c|c|c|}
\hline
$X(a_1,a_2,a_3,a_4)$ & $w^*$ & Total transform of key configuration \\
\hline
$X(7,7,15,15)$ & $4$ & \psscalebox{.6 .6} % Change this value to rescale the drawing.
{
\begin{pspicture}(0,-1.495)(8.54,1.795)
\psline[linecolor=black, linewidth=0.02](0.3,-0.495)(8.3,-0.495)
\psline[linecolor=black, linewidth=0.02](0.3,0.505)(8.3,0.505)
\psdots[linecolor=black, dotstyle=o, dotsize=0.2, fillcolor=white](1.1,0.505)
\psdots[linecolor=black, dotstyle=o, dotsize=0.2, fillcolor=white](1.9,0.505)
\psdots[linecolor=black, dotstyle=o, dotsize=0.2, fillcolor=white](3.5,0.505)
\psdots[linecolor=black, dotstyle=o, dotsize=0.2, fillcolor=white](4.3,0.505)
\psdots[linecolor=black, dotstyle=o, dotsize=0.2, fillcolor=white](5.1,0.505)
\psdots[linecolor=black, dotstyle=o, dotsize=0.2, fillcolor=white](6.7,0.505)
\psdots[linecolor=black, dotstyle=o, dotsize=0.2, fillcolor=white](7.5,0.505)
\rput[bl](4.1,0.705){$-2$}
\rput[bl](3.3,0.705){$-2$}
\rput[bl](2.5,0.705){$-2$}
\rput[bl](1.7,0.705){$-2$}
\rput[bl](0.9,0.705){$-2$}
\rput[bl](0.1,0.705){$-2$}
\rput[bl](4.9,0.705){$-2$}
\rput[bl](2.5,1.105){$L_1$}
\rput[bl](5.7,0.705){$-2$}
\rput[bl](6.5,0.705){$-2$}
\rput[bl](7.3,0.705){$-2$}
\rput[bl](5.7,1.105){$L_2$}
\rput[bl](0.0,-0.995){$-2$}
\rput[bl](8.1,0.705){$-2$}
\psdots[linecolor=black, dotstyle=o, dotsize=0.2, fillcolor=white](1.1,-0.495)
\psdots[linecolor=black, dotstyle=o, dotsize=0.2, fillcolor=white](1.9,-0.495)
\psdots[linecolor=black, dotstyle=o, dotsize=0.2, fillcolor=white](3.5,-0.495)
\psdots[linecolor=black, dotstyle=o, dotsize=0.2, fillcolor=white](4.3,-0.495)
\psdots[linecolor=black, dotstyle=o, dotsize=0.2, fillcolor=white](5.1,-0.495)
\psdots[linecolor=black, dotstyle=o, dotsize=0.2, fillcolor=white](6.7,-0.495)
\psdots[linecolor=black, dotstyle=o, dotsize=0.2, fillcolor=white](7.5,-0.495)
\rput[bl](0.8,-0.995){$-2$}
\rput[bl](1.6,-0.995){$-2$}
\rput[bl](2.4,-0.995){$-2$}
\rput[bl](3.2,-0.995){$-2$}
\rput[bl](4.0,-0.995){$-2$}
\rput[bl](4.8,-0.995){$-2$}
\rput[bl](5.6,-0.995){$-2$}
\rput[bl](6.4,-0.995){$-2$}
\rput[bl](7.2,-0.995){$-2$}
\rput[bl](8.0,-0.995){$-2$}
\rput[bl](2.4,-1.395){$L_3$}
\rput[bl](5.6,-1.395){$L_4$}
\rput[bl](0.1,1.105){$F_1$}
\rput[bl](0.9,1.105){$F_2$}
\rput[bl](1.7,1.105){$F_3$}
\rput[bl](3.3,1.105){$F_4$}
\rput[bl](4.1,1.105){$F_5$}
\rput[bl](4.9,1.105){$F_6$}
\rput[bl](6.5,1.105){$F_7$}
\rput[bl](7.3,1.105){$F_8$}
\rput[bl](8.1,1.105){$F_9$}
\psline[linecolor=black, linewidth=0.02](5.9,0.505)(8.3,-0.495)
\rput[bl](0.0,-1.395){$F_{10}$}
\rput[bl](0.8,-1.395){$F_{11}$}
\rput[bl](1.6,-1.395){$F_{12}$}
\rput[bl](3.2,-1.395){$F_{13}$}
\rput[bl](4.0,-1.395){$F_{14}$}
\rput[bl](4.8,-1.395){$F_{15}$}
\rput[bl](6.4,-1.395){$F_{16}$}
\rput[bl](7.2,-1.395){$F_{17}$}
\rput[bl](8.0,-1.395){$F_{18}$}
\psframe[linecolor=white, linewidth=0.02, fillstyle=solid, dimen=outer](6.9,0.305)(6.5,0.105)
\psline[linecolor=black, linewidth=0.02](0.3,-0.495)(2.7,0.505)
\psframe[linecolor=white, linewidth=0.02, fillstyle=solid, dimen=outer](2.1,0.305)(1.7,0.105)
\psline[linecolor=black, linewidth=0.02](5.9,-0.495)(0.3,0.505)
\psframe[linecolor=white, linewidth=0.02, fillstyle=solid, dimen=outer](4.5,-0.095)(4.1,-0.295)
\psline[linecolor=black, linewidth=0.02](2.7,-0.495)(8.3,0.505)
\psdots[linecolor=black, dotstyle=o, dotsize=0.2, fillcolor=white](8.3,0.505)
\psdots[linecolor=black, dotstyle=o, dotsize=0.2, fillcolor=white](0.3,0.505)
\psdots[linecolor=black, dotstyle=o, dotsize=0.2, fillcolor=white](2.7,0.505)
\psdots[linecolor=black, dotstyle=o, dotsize=0.2, fillcolor=white](5.9,0.505)
\psdots[linecolor=black, dotstyle=o, dotsize=0.2, fillcolor=white](0.3,-0.495)
\psdots[linecolor=black, dotstyle=o, dotsize=0.2, fillcolor=white](2.7,-0.495)
\psdots[linecolor=black, dotstyle=o, dotsize=0.2, fillcolor=white](5.9,-0.495)
\psdots[linecolor=black, dotstyle=o, dotsize=0.2, fillcolor=white](8.3,-0.495)
\end{pspicture}
} \\
\hline
$X(8,9,14,22)$ & $5$ & \psscalebox{.6 .6} % Change this value to rescale the drawing.
{
\begin{pspicture}(0,-1.495)(9.32,1.795)
\psline[linecolor=black, linewidth=0.02](6.6,-0.495)(0.2,0.505)
\psframe[linecolor=white, linewidth=0.02, fillstyle=solid, dimen=outer](1.8,0.505)(1.4,0.105)
\psline[linecolor=black, linewidth=0.02](1.0,-0.495)(8.2,-0.495)
\psline[linecolor=black, linewidth=0.02](0.2,0.505)(9.0,0.505)
\psdots[linecolor=black, dotstyle=o, dotsize=0.2, fillcolor=white](1.0,0.505)
\psdots[linecolor=black, dotstyle=o, dotsize=0.2, fillcolor=white](3.4,0.505)
\psdots[linecolor=black, dotstyle=o, dotsize=0.2, fillcolor=white](4.2,0.505)
\psdots[linecolor=black, dotstyle=o, dotsize=0.2, fillcolor=white](5.0,0.505)
\psdots[linecolor=black, dotstyle=o, dotsize=0.2, fillcolor=white](6.6,0.505)
\psdots[linecolor=black, dotstyle=o, dotsize=0.2, fillcolor=white](7.4,0.505)
\rput[bl](4.0,0.705){$-2$}
\rput[bl](3.2,0.705){$-2$}
\rput[bl](2.4,0.705){$-2$}
\rput[bl](1.6,0.705){$-2$}
\rput[bl](0.8,0.705){$-2$}
\rput[bl](0.0,0.705){$-3$}
\rput[bl](4.8,0.705){$-2$}
\rput[bl](2.4,1.105){$F_3$}
\rput[bl](5.6,0.705){$-2$}
\rput[bl](6.4,0.705){$-2$}
\rput[bl](7.2,0.705){$-2$}
\rput[bl](5.6,1.105){$L_2$}
\rput[bl](8.0,0.705){$-2$}
\psdots[linecolor=black, dotstyle=o, dotsize=0.2, fillcolor=white](1.8,-0.495)
\psdots[linecolor=black, dotstyle=o, dotsize=0.2, fillcolor=white](3.4,-0.495)
\psdots[linecolor=black, dotstyle=o, dotsize=0.2, fillcolor=white](7.4,-0.495)
\rput[bl](0.7,-0.995){$-2$}
\rput[bl](1.5,-0.995){$-2$}
\rput[bl](2.3,-0.995){$-2$}
\rput[bl](3.1,-0.995){$-2$}
\rput[bl](3.9,-0.995){$-2$}
\rput[bl](4.7,-0.995){$-2$}
\rput[bl](5.5,-0.995){$-3$}
\rput[bl](6.3,-0.995){$-1$}
\rput[bl](7.1,-0.995){$-3$}
\rput[bl](7.9,-0.995){$-2$}
\rput[bl](3.9,-1.395){$L_3$}
\rput[bl](0.0,1.105){$F_1$}
\rput[bl](0.8,1.105){$F_2$}
\rput[bl](1.6,1.105){$L_1$}
\rput[bl](3.2,1.105){$F_4$}
\rput[bl](4.0,1.105){$F_5$}
\rput[bl](4.8,1.105){$F_6$}
\rput[bl](6.4,1.105){$F_7$}
\rput[bl](7.2,1.105){$F_8$}
\rput[bl](8.0,1.105){$F_9$}
\psline[linecolor=black, linewidth=0.02](5.8,0.505)(8.2,-0.495)
\rput[bl](0.7,-1.395){$F_{11}$}
\rput[bl](1.5,-1.395){$F_{12}$}
\rput[bl](2.3,-1.395){$F_{13}$}
\rput[bl](3.1,-1.395){$F_{14}$}
\rput[bl](4.7,-1.395){$F_{15}$}
\rput[bl](5.5,-1.395){$F_{16}$}
\rput[bl](7.1,-1.395){$F_{17}$}
\rput[bl](7.9,-1.395){$F_{18}$}
\psframe[linecolor=white, linewidth=0.02, fillstyle=solid, dimen=outer](7.0,0.105)(6.6,-0.095)
\psframe[linecolor=white, linewidth=0.02, fillstyle=solid, dimen=outer](5.4,-0.095)(5.0,-0.495)
\psline[linecolor=black, linewidth=0.02](4.2,-0.495)(9.0,0.505)
\psdots[linecolor=black, dotstyle=o, dotsize=0.2, fillcolor=white](8.2,0.505)
\psdots[linecolor=black, dotstyle=o, dotsize=0.2, fillcolor=white](0.2,0.505)
\psdots[linecolor=black, dotstyle=o, dotsize=0.2, fillcolor=white](2.6,0.505)
\psdots[linecolor=black, dotstyle=o, dotsize=0.2, fillcolor=white](5.8,0.505)
\psdots[linecolor=black, dotstyle=o, dotsize=0.2, fillcolor=white](2.6,-0.495)
\psdots[linecolor=black, dotstyle=o, dotsize=0.2, fillcolor=white](5.8,-0.495)
\psdots[linecolor=black, dotstyle=o, dotsize=0.2, fillcolor=white](8.2,-0.495)
\rput[bl](8.8,0.705){$-2$}
\rput[bl](8.8,1.105){$F_{10}$}
\psdots[linecolor=black, dotstyle=o, dotsize=0.2, fillcolor=white](5.0,-0.495)
\psdots[linecolor=black, dotsize=0.2](6.6,-0.495)
\psline[linecolor=black, linewidth=0.02](1.0,-0.495)(1.8,0.505)
\psdots[linecolor=black, dotstyle=o, dotsize=0.2, fillcolor=white](1.0,-0.495)
\psdots[linecolor=black, dotstyle=o, dotsize=0.2, fillcolor=white](4.2,-0.495)
\rput[bl](6.3,-1.395){$L_4$}
\psdots[linecolor=black, dotstyle=o, dotsize=0.2, fillcolor=white](1.8,0.505)
\psdots[linecolor=black, dotstyle=o, dotsize=0.2, fillcolor=white](9.0,0.505)
\end{pspicture}
} \\
\hline
$X(11,27,10,18)$ & $7$ & \psscalebox{.6 .6} % Change this value to rescale the drawing.
{
\begin{pspicture}(0,-1.495)(10.12,1.795)
\psline[linecolor=black, linewidth=0.02](6.6,-0.495)(0.2,0.505)
\psframe[linecolor=white, linewidth=0.02, fillstyle=solid, dimen=outer](2.0,0.505)(1.6,0.105)
\psline[linecolor=black, linewidth=0.02](1.8,-0.495)(8.2,-0.495)
\psline[linecolor=black, linewidth=0.02](0.2,0.505)(9.8,0.505)
\psdots[linecolor=black, dotstyle=o, dotsize=0.2, fillcolor=white](1.0,0.505)
\psdots[linecolor=black, dotstyle=o, dotsize=0.2, fillcolor=white](3.4,0.505)
\psdots[linecolor=black, dotstyle=o, dotsize=0.2, fillcolor=white](4.2,0.505)
\psdots[linecolor=black, dotstyle=o, dotsize=0.2, fillcolor=white](6.6,0.505)
\psdots[linecolor=black, dotstyle=o, dotsize=0.2, fillcolor=white](7.4,0.505)
\rput[bl](4.0,0.705){$-2$}
\rput[bl](3.2,0.705){$-2$}
\rput[bl](2.4,0.705){$-3$}
\rput[bl](1.6,0.705){$-1$}
\rput[bl](0.8,0.705){$-4$}
\rput[bl](0.0,0.705){$-2$}
\rput[bl](4.8,0.705){$-2$}
\rput[bl](2.4,1.105){$F_3$}
\rput[bl](5.6,0.705){$-2$}
\rput[bl](6.4,0.705){$-2$}
\rput[bl](7.2,0.705){$-2$}
\rput[bl](4.8,1.105){$L_2$}
\rput[bl](8.0,0.705){$-2$}
\psdots[linecolor=black, dotstyle=o, dotsize=0.2, fillcolor=white](3.4,-0.495)
\psdots[linecolor=black, dotstyle=o, dotsize=0.2, fillcolor=white](7.4,-0.495)
\rput[bl](1.5,-0.995){$-3$}
\rput[bl](2.3,-0.995){$-2$}
\rput[bl](3.1,-0.995){$-2$}
\rput[bl](3.9,-0.995){$-2$}
\rput[bl](4.7,-0.995){$-2$}
\rput[bl](5.5,-0.995){$-4$}
\rput[bl](6.3,-0.995){$-1$}
\rput[bl](7.1,-0.995){$-4$}
\rput[bl](7.9,-0.995){$-2$}
\rput[bl](3.9,-1.395){$L_3$}
\rput[bl](0.0,1.105){$F_1$}
\rput[bl](0.8,1.105){$F_2$}
\rput[bl](1.6,1.105){$L_1$}
\rput[bl](3.2,1.105){$F_4$}
\rput[bl](4.0,1.105){$F_5$}
\rput[bl](5.6,1.105){$F_6$}
\rput[bl](6.4,1.105){$F_7$}
\rput[bl](7.2,1.105){$F_8$}
\rput[bl](8.0,1.105){$F_9$}
\psline[linecolor=black, linewidth=0.02](5.0,0.505)(8.2,-0.495)
\rput[bl](1.5,-1.395){$F_{12}$}
\rput[bl](2.3,-1.395){$F_{13}$}
\rput[bl](3.1,-1.395){$F_{14}$}
\rput[bl](4.7,-1.395){$F_{15}$}
\rput[bl](5.5,-1.395){$F_{16}$}
\rput[bl](7.1,-1.395){$F_{17}$}
\rput[bl](7.9,-1.395){$F_{18}$}
\psframe[linecolor=white, linewidth=0.02, fillstyle=solid, dimen=outer](7.0,0.105)(6.6,-0.295)
\psframe[linecolor=white, linewidth=0.02, fillstyle=solid, dimen=outer](5.6,-0.095)(5.2,-0.495)
\psline[linecolor=black, linewidth=0.02](4.2,-0.495)(9.8,0.505)
\psdots[linecolor=black, dotstyle=o, dotsize=0.2, fillcolor=white](8.2,0.505)
\psdots[linecolor=black, dotstyle=o, dotsize=0.2, fillcolor=white](0.2,0.505)
\psdots[linecolor=black, dotstyle=o, dotsize=0.2, fillcolor=white](2.6,0.505)
\psdots[linecolor=black, dotstyle=o, dotsize=0.2, fillcolor=white](5.8,0.505)
\psdots[linecolor=black, dotstyle=o, dotsize=0.2, fillcolor=white](2.6,-0.495)
\psdots[linecolor=black, dotstyle=o, dotsize=0.2, fillcolor=white](5.8,-0.495)
\psdots[linecolor=black, dotstyle=o, dotsize=0.2, fillcolor=white](8.2,-0.495)
\rput[bl](8.8,0.705){$-2$}
\rput[bl](8.8,1.105){$F_{10}$}
\psdots[linecolor=black, dotstyle=o, dotsize=0.2, fillcolor=white](5.0,-0.495)
\psdots[linecolor=black, dotsize=0.2](6.6,-0.495)
\psline[linecolor=black, linewidth=0.02](1.8,-0.495)(1.8,0.505)
\psdots[linecolor=black, dotstyle=o, dotsize=0.2, fillcolor=white](4.2,-0.495)
\rput[bl](6.3,-1.395){$L_4$}
\psdots[linecolor=black, dotstyle=o, dotsize=0.2, fillcolor=white](9.0,0.505)
\psdots[linecolor=black, dotstyle=o, dotsize=0.2, fillcolor=white](9.8,0.505)
\rput[bl](9.6,0.705){$-2$}
\rput[bl](9.6,1.105){$F_{11}$}
\psdots[linecolor=black, dotstyle=o, dotsize=0.2, fillcolor=white](5.0,0.505)
\psdots[linecolor=black, dotstyle=o, dotsize=0.2, fillcolor=white](1.8,-0.495)
\psdots[linecolor=black, dotsize=0.2](1.8,0.505)
\end{pspicture}
} \\
\hline
$X(17,14,42,18)$ & $11$ & \psscalebox{.6 .6} % Change this value to rescale the drawing.
{
\begin{pspicture}(0,-1.495)(8.54,1.795)
\psline[linecolor=black, linewidth=0.02](6.7,-0.495)(1.0,0.505)
\psframe[linecolor=white, linewidth=0.02, fillstyle=solid, dimen=outer](3.1,0.305)(2.7,-0.095)
\psline[linecolor=black, linewidth=0.02](0.3,-0.495)(8.3,-0.495)
\psline[linecolor=black, linewidth=0.02](1.0,0.505)(8.3,0.505)
\psdots[linecolor=black, dotstyle=o, dotsize=0.2, fillcolor=white](1.1,0.505)
\psdots[linecolor=black, dotstyle=o, dotsize=0.2, fillcolor=white](3.5,0.505)
\psdots[linecolor=black, dotstyle=o, dotsize=0.2, fillcolor=white](7.5,0.505)
\rput[bl](4.1,0.705){$-2$}
\rput[bl](3.3,0.705){$-2$}
\rput[bl](2.5,0.705){$-2$}
\rput[bl](1.7,0.705){$-3$}
\rput[bl](0.9,0.705){$-2$}
\rput[bl](4.9,0.705){$-2$}
\rput[bl](2.5,1.105){$F_3$}
\rput[bl](5.7,0.705){$-6$}
\rput[bl](6.5,0.705){$-1$}
\rput[bl](7.3,0.705){$-3$}
\rput[bl](6.5,1.105){$L_2$}
\rput[bl](8.1,0.705){$-4$}
\psdots[linecolor=black, dotstyle=o, dotsize=0.2, fillcolor=white](3.5,-0.495)
\psdots[linecolor=black, dotstyle=o, dotsize=0.2, fillcolor=white](7.5,-0.495)
\rput[bl](1.6,-0.995){$-2$}
\rput[bl](2.4,-0.995){$-2$}
\rput[bl](3.2,-0.995){$-3$}
\rput[bl](4.0,-0.995){$-1$}
\rput[bl](4.8,-0.995){$-3$}
\rput[bl](5.6,-0.995){$-4$}
\rput[bl](6.4,-0.995){$-1$}
\rput[bl](7.2,-0.995){$-6$}
\rput[bl](8.0,-0.995){$-2$}
\rput[bl](4.0,-1.395){$L_3$}
\rput[bl](0.9,1.105){$F_1$}
\rput[bl](1.7,1.105){$F_2$}
\rput[bl](4.1,1.105){$L_1$}
\rput[bl](3.3,1.105){$F_4$}
\rput[bl](4.9,1.105){$F_5$}
\rput[bl](5.7,1.105){$F_6$}
\rput[bl](7.3,1.105){$F_7$}
\rput[bl](8.1,1.105){$F_8$}
\psline[linecolor=black, linewidth=0.02](6.7,0.505)(8.3,-0.495)
\rput[bl](2.4,-1.395){$F_{12}$}
\rput[bl](3.2,-1.395){$F_{13}$}
\rput[bl](4.8,-1.395){$F_{14}$}
\rput[bl](5.6,-1.395){$F_{15}$}
\rput[bl](7.2,-1.395){$F_{16}$}
\rput[bl](8.0,-1.395){$F_{17}$}
\psframe[linecolor=white, linewidth=0.02, fillstyle=solid, dimen=outer](7.3,0.305)(7.1,-0.095)
\psframe[linecolor=white, linewidth=0.02, fillstyle=solid, dimen=outer](5.5,-0.095)(5.1,-0.495)
\psline[linecolor=black, linewidth=0.02](4.3,-0.495)(8.3,0.505)
\psdots[linecolor=black, dotstyle=o, dotsize=0.2, fillcolor=white](8.3,0.505)
\psdots[linecolor=black, dotstyle=o, dotsize=0.2, fillcolor=white](2.7,0.505)
\psdots[linecolor=black, dotstyle=o, dotsize=0.2, fillcolor=white](5.9,0.505)
\psdots[linecolor=black, dotstyle=o, dotsize=0.2, fillcolor=white](2.7,-0.495)
\psdots[linecolor=black, dotstyle=o, dotsize=0.2, fillcolor=white](5.9,-0.495)
\psdots[linecolor=black, dotstyle=o, dotsize=0.2, fillcolor=white](8.3,-0.495)
\psdots[linecolor=black, dotstyle=o, dotsize=0.2, fillcolor=white](5.1,-0.495)
\psdots[linecolor=black, dotsize=0.2](6.7,-0.495)
\psline[linecolor=black, linewidth=0.02](0.3,-0.495)(4.3,0.505)
\psdots[linecolor=black, dotsize=0.2](4.3,-0.495)
\rput[bl](6.4,-1.395){$L_4$}
\psdots[linecolor=black, dotstyle=o, dotsize=0.2, fillcolor=white](5.1,0.505)
\psdots[linecolor=black, dotstyle=o, dotsize=0.2, fillcolor=white](1.9,-0.495)
\psdots[linecolor=black, dotstyle=o, dotsize=0.2, fillcolor=white](1.9,0.505)
\psdots[linecolor=black, dotstyle=o, dotsize=0.2, fillcolor=white](1.1,-0.495)
\psdots[linecolor=black, dotstyle=o, dotsize=0.2, fillcolor=white](0.3,-0.495)
\rput[bl](0.0,-0.995){$-2$}
\rput[bl](0.8,-0.995){$-2$}
\rput[bl](0.8,-1.395){$F_{10}$}
\rput[bl](1.6,-1.395){$F_{11}$}
\psdots[linecolor=black, dotstyle=o, dotsize=0.2, fillcolor=white](4.3,0.505)
\psdots[linecolor=black, dotsize=0.2](6.7,0.505)
\rput[bl](0.2,-1.395){$F_{9}$}
\end{pspicture}
} \\
\hline
$X(20,21,43,22)$ & $13$ & \psscalebox{.6 .6} % Change this value to rescale the drawing.
{
\begin{pspicture}(0,-1.495)(8.72,1.795)
\psline[linecolor=black, linewidth=0.02](6.7,-0.495)(1.0,0.505)
\psframe[linecolor=white, linewidth=0.02, fillstyle=solid, dimen=outer](2.7,0.305)(2.5,-0.095)
\psline[linecolor=black, linewidth=0.02](0.3,-0.495)(8.3,-0.495)
\psline[linecolor=black, linewidth=0.02](1.0,0.505)(8.3,0.505)
\psdots[linecolor=black, dotstyle=o, dotsize=0.2, fillcolor=white](1.1,0.505)
\psdots[linecolor=black, dotstyle=o, dotsize=0.2, fillcolor=white](7.5,0.505)
\rput[bl](4.1,0.705){$-2$}
\rput[bl](3.3,0.705){$-1$}
\rput[bl](2.5,0.705){$-5$}
\rput[bl](1.7,0.705){$-2$}
\rput[bl](0.9,0.705){$-2$}
\rput[bl](4.9,0.705){$-7$}
\rput[bl](2.5,1.105){$F_3$}
\rput[bl](5.7,0.705){$-1$}
\rput[bl](6.5,0.705){$-3$}
\rput[bl](7.3,0.705){$-3$}
\rput[bl](5.7,1.105){$L_2$}
\rput[bl](8.1,0.705){$-2$}
\psdots[linecolor=black, dotstyle=o, dotsize=0.2, fillcolor=white](7.5,-0.495)
\rput[bl](1.6,-0.995){$-2$}
\rput[bl](2.4,-0.995){$-2$}
\rput[bl](3.2,-0.995){$-2$}
\rput[bl](4.0,-0.995){$-2$}
\rput[bl](4.8,-0.995){$-2$}
\rput[bl](5.6,-0.995){$-5$}
\rput[bl](6.4,-0.995){$-1$}
\rput[bl](7.2,-0.995){$-7$}
\rput[bl](8.0,-0.995){$-2$}
\rput[bl](3.4,-1.395){$L_3$}
\rput[bl](0.9,1.105){$F_1$}
\rput[bl](1.7,1.105){$F_2$}
\rput[bl](3.3,1.105){$L_1$}
\rput[bl](4.1,1.105){$F_4$}
\rput[bl](4.9,1.105){$F_5$}
\rput[bl](6.5,1.105){$F_6$}
\rput[bl](7.3,1.105){$F_7$}
\rput[bl](8.1,1.105){$F_8$}
\psline[linecolor=black, linewidth=0.02](5.9,0.505)(8.3,-0.495)
\rput[bl](2.6,-1.395){$F_{12}$}
\rput[bl](4.2,-1.395){$F_{13}$}
\rput[bl](5.0,-1.395){$F_{14}$}
\rput[bl](5.8,-1.395){$F_{15}$}
\rput[bl](7.4,-1.395){$F_{16}$}
\rput[bl](8.2,-1.395){$F_{17}$}
\psframe[linecolor=white, linewidth=0.02, fillstyle=solid, dimen=outer](6.9,0.305)(6.5,-0.095)
\psframe[linecolor=white, linewidth=0.02, fillstyle=solid, dimen=outer](5.1,-0.095)(4.7,-0.495)
\psline[linecolor=black, linewidth=0.02](3.5,-0.495)(8.3,0.505)
\psdots[linecolor=black, dotstyle=o, dotsize=0.2, fillcolor=white](8.3,0.505)
\psdots[linecolor=black, dotstyle=o, dotsize=0.2, fillcolor=white](2.7,0.505)
\psdots[linecolor=black, dotsize=0.2](5.9,0.505)
\psdots[linecolor=black, dotstyle=o, dotsize=0.2, fillcolor=white](2.7,-0.495)
\psdots[linecolor=black, dotstyle=o, dotsize=0.2, fillcolor=white](5.9,-0.495)
\psdots[linecolor=black, dotstyle=o, dotsize=0.2, fillcolor=white](8.3,-0.495)
\psdots[linecolor=black, dotstyle=o, dotsize=0.2, fillcolor=white](5.1,-0.495)
\psdots[linecolor=black, dotsize=0.2](6.7,-0.495)
\psline[linecolor=black, linewidth=0.02](0.3,-0.495)(3.5,0.505)
\psdots[linecolor=black, dotstyle=o, dotsize=0.2, fillcolor=white](4.3,-0.495)
\rput[bl](6.6,-1.395){$L_4$}
\psdots[linecolor=black, dotstyle=o, dotsize=0.2, fillcolor=white](5.1,0.505)
\psdots[linecolor=black, dotstyle=o, dotsize=0.2, fillcolor=white](1.9,-0.495)
\psdots[linecolor=black, dotstyle=o, dotsize=0.2, fillcolor=white](1.9,0.505)
\psdots[linecolor=black, dotstyle=o, dotsize=0.2, fillcolor=white](1.1,-0.495)
\psdots[linecolor=black, dotstyle=o, dotsize=0.2, fillcolor=white](0.3,-0.495)
\rput[bl](0.0,-0.995){$-4$}
\rput[bl](0.8,-0.995){$-2$}
\rput[bl](1.0,-1.395){$F_{10}$}
\rput[bl](1.8,-1.395){$F_{11}$}
\psdots[linecolor=black, dotstyle=o, dotsize=0.2, fillcolor=white](4.3,0.505)
\psdots[linecolor=black, dotstyle=o, dotsize=0.2, fillcolor=white](6.7,0.505)
\psdots[linecolor=black, dotsize=0.2](3.5,0.505)
\psdots[linecolor=black, dotstyle=o, dotsize=0.2, fillcolor=white](3.5,-0.495)
\rput[bl](0.2,-1.395){$F_{9}$}
\end{pspicture}
} \\
\hline
$X(26,56,39,64)$ & $17$ & \psscalebox{.6 .6} % Change this value to rescale the drawing.
{
\begin{pspicture}(0,-1.495)(8.44,1.795)
\psline[linecolor=black, linewidth=0.02](5.8,-0.495)(0.2,0.505)
\psframe[linecolor=white, linewidth=0.02, fillstyle=solid, dimen=outer](2.6,0.305)(2.2,-0.095)
\psline[linecolor=black, linewidth=0.02](1.0,-0.495)(7.4,-0.495)
\psline[linecolor=black, linewidth=0.02](0.2,0.505)(8.2,0.505)
\psdots[linecolor=black, dotstyle=o, dotsize=0.2, fillcolor=white](1.0,0.505)
\psdots[linecolor=black, dotstyle=o, dotsize=0.2, fillcolor=white](7.4,0.505)
\rput[bl](4.0,0.705){$-2$}
\rput[bl](3.2,0.705){$-1$}
\rput[bl](2.4,0.705){$-5$}
\rput[bl](1.6,0.705){$-2$}
\rput[bl](0.8,0.705){$-2$}
\rput[bl](0.0,0.705){$-2$}
\rput[bl](4.8,0.705){$-9$}
\rput[bl](1.6,1.105){$F_3$}
\rput[bl](5.6,0.705){$-1$}
\rput[bl](6.4,0.705){$-3$}
\rput[bl](7.2,0.705){$-2$}
\rput[bl](5.6,1.105){$L_2$}
\rput[bl](8.0,0.705){$-4$}
\rput[bl](1.5,-0.995){$-2$}
\rput[bl](2.3,-0.995){$-3$}
\rput[bl](3.1,-0.995){$-1$}
\rput[bl](3.9,-0.995){$-3$}
\rput[bl](4.7,-0.995){$-6$}
\rput[bl](5.5,-0.995){$-1$}
\rput[bl](6.3,-0.995){$-9$}
\rput[bl](7.1,-0.995){$-2$}
\rput[bl](3.1,-1.395){$L_3$}
\rput[bl](0.0,1.105){$F_1$}
\rput[bl](0.8,1.105){$F_2$}
\rput[bl](3.2,1.105){$L_1$}
\rput[bl](2.4,1.105){$F_4$}
\rput[bl](4.0,1.105){$F_5$}
\rput[bl](4.8,1.105){$F_6$}
\rput[bl](6.4,1.105){$F_7$}
\rput[bl](7.2,1.105){$F_8$}
\rput[bl](8.0,1.105){$F_9$}
\psline[linecolor=black, linewidth=0.02](5.8,0.505)(7.4,-0.495)
\rput[bl](2.3,-1.395){$F_{12}$}
\rput[bl](3.9,-1.395){$F_{13}$}
\rput[bl](4.7,-1.395){$F_{14}$}
\rput[bl](6.3,-1.395){$F_{15}$}
\rput[bl](7.1,-1.395){$F_{16}$}
\psframe[linecolor=white, linewidth=0.02, fillstyle=solid, dimen=outer](6.6,0.305)(6.2,-0.095)
\psframe[linecolor=white, linewidth=0.02, fillstyle=solid, dimen=outer](4.8,-0.095)(4.4,-0.495)
\psline[linecolor=black, linewidth=0.02](3.4,-0.495)(8.2,0.505)
\psdots[linecolor=black, dotstyle=o, dotsize=0.2, fillcolor=white](8.2,0.505)
\psdots[linecolor=black, dotstyle=o, dotsize=0.2, fillcolor=white](0.2,0.505)
\psdots[linecolor=black, dotstyle=o, dotsize=0.2, fillcolor=white](2.6,0.505)
\psdots[linecolor=black, dotsize=0.2](5.8,0.505)
\psdots[linecolor=black, dotstyle=o, dotsize=0.2, fillcolor=white](2.6,-0.495)
\psdots[linecolor=black, dotsize=0.2](5.8,-0.495)
\psdots[linecolor=black, dotstyle=o, dotsize=0.2, fillcolor=white](5.0,-0.495)
\psdots[linecolor=black, dotstyle=o, dotsize=0.2, fillcolor=white](6.6,-0.495)
\psline[linecolor=black, linewidth=0.02](1.0,-0.495)(3.4,0.505)
\psdots[linecolor=black, dotstyle=o, dotsize=0.2, fillcolor=white](4.2,-0.495)
\rput[bl](5.5,-1.395){$L_4$}
\psdots[linecolor=black, dotstyle=o, dotsize=0.2, fillcolor=white](5.0,0.505)
\psdots[linecolor=black, dotstyle=o, dotsize=0.2, fillcolor=white](1.8,-0.495)
\psdots[linecolor=black, dotstyle=o, dotsize=0.2, fillcolor=white](1.8,0.505)
\psdots[linecolor=black, dotstyle=o, dotsize=0.2, fillcolor=white](1.0,-0.495)
\rput[bl](0.7,-0.995){$-4$}
\rput[bl](0.7,-1.395){$F_{10}$}
\rput[bl](1.5,-1.395){$F_{11}$}
\psdots[linecolor=black, dotstyle=o, dotsize=0.2, fillcolor=white](4.2,0.505)
\psdots[linecolor=black, dotstyle=o, dotsize=0.2, fillcolor=white](6.6,0.505)
\psdots[linecolor=black, dotsize=0.2](3.4,0.505)
\psdots[linecolor=black, dotsize=0.2](3.4,-0.495)
\psdots[linecolor=black, dotstyle=o, dotsize=0.2, fillcolor=white](7.4,-0.495)
\end{pspicture}
} \\
\hline
$X(29,30,42,32)$ & $19$ & \psscalebox{.6 .6} % Change this value to rescale the drawing.
{
\begin{pspicture}(0,-1.495)(8.44,1.795)
\psline[linecolor=black, linewidth=0.02](5.0,-0.495)(0.2,0.505)
\psframe[linecolor=white, linewidth=0.02, fillstyle=solid, dimen=outer](2.2,0.305)(1.8,-0.095)
\psline[linecolor=black, linewidth=0.02](1.0,-0.495)(6.6,-0.495)
\psline[linecolor=black, linewidth=0.02](0.2,0.505)(8.2,0.505)
\psdots[linecolor=black, dotstyle=o, dotsize=0.2, fillcolor=white](0.2,0.505)
\psdots[linecolor=black, dotstyle=o, dotsize=0.2, fillcolor=white](6.6,0.505)
\rput[bl](3.2,0.705){$-2$}
\rput[bl](2.4,0.705){$-1$}
\rput[bl](1.6,0.705){$-7$}
\rput[bl](0.8,0.705){$-2$}
\rput[bl](0.0,0.705){$-2$}
\rput[bl](4.0,0.705){$-2$}
\rput[bl](1.6,1.105){$F_3$}
\rput[bl](4.8,0.705){$-7$}
\rput[bl](5.6,0.705){$-1$}
\rput[bl](6.4,0.705){$-3$}
\rput[bl](6.4,1.105){$F_7$}
\rput[bl](7.2,0.705){$-4$}
\rput[bl](0.7,-0.995){$-5$}
\rput[bl](1.5,-0.995){$-4$}
\rput[bl](2.3,-0.995){$-1$}
\rput[bl](3.1,-0.995){$-5$}
\rput[bl](3.9,-0.995){$-4$}
\rput[bl](4.7,-0.995){$-1$}
\rput[bl](5.5,-0.995){$-10$}
\rput[bl](6.3,-0.995){$-2$}
\rput[bl](2.3,-1.395){$L_3$}
\rput[bl](0.0,1.105){$F_1$}
\rput[bl](0.8,1.105){$F_2$}
\rput[bl](3.2,1.105){$L_1$}
\rput[bl](2.4,1.105){$F_4$}
\rput[bl](4.0,1.105){$F_5$}
\rput[bl](4.8,1.105){$F_6$}
\rput[bl](5.6,1.105){$L_2$}
\rput[bl](7.2,1.105){$F_8$}
\rput[bl](8.0,1.105){$F_9$}
\psline[linecolor=black, linewidth=0.02](5.8,0.505)(6.6,-0.495)
\rput[bl](3.1,-1.395){$F_{12}$}
\rput[bl](3.9,-1.395){$F_{13}$}
\rput[bl](5.5,-1.395){$F_{14}$}
\rput[bl](6.3,-1.395){$F_{15}$}
\psframe[linecolor=white, linewidth=0.02, fillstyle=solid, dimen=outer](6.2,0.305)(6.0,-0.095)
\psframe[linecolor=white, linewidth=0.02, fillstyle=solid, dimen=outer](4.0,-0.095)(3.6,-0.495)
\psline[linecolor=black, linewidth=0.02](2.6,-0.495)(8.2,0.505)
\psdots[linecolor=black, dotstyle=o, dotsize=0.2, fillcolor=white](7.4,0.505)
\psdots[linecolor=black, dotstyle=o, dotsize=0.2, fillcolor=white](1.8,0.505)
\psdots[linecolor=black, dotstyle=o, dotsize=0.2, fillcolor=white](5.0,0.505)
\psdots[linecolor=black, dotstyle=o, dotsize=0.2, fillcolor=white](1.8,-0.495)
\psdots[linecolor=black, dotsize=0.2](5.0,-0.495)
\psdots[linecolor=black, dotstyle=o, dotsize=0.2, fillcolor=white](4.2,-0.495)
\psdots[linecolor=black, dotstyle=o, dotsize=0.2, fillcolor=white](5.8,-0.495)
\psline[linecolor=black, linewidth=0.02](1.0,-0.495)(2.6,0.505)
\psdots[linecolor=black, dotstyle=o, dotsize=0.2, fillcolor=white](3.4,-0.495)
\rput[bl](4.7,-1.395){$L_4$}
\psdots[linecolor=black, dotstyle=o, dotsize=0.2, fillcolor=white](4.2,0.505)
\psdots[linecolor=black, dotstyle=o, dotsize=0.2, fillcolor=white](1.0,-0.495)
\psdots[linecolor=black, dotstyle=o, dotsize=0.2, fillcolor=white](1.0,0.505)
\rput[bl](1.5,-1.395){$F_{11}$}
\psdots[linecolor=black, dotstyle=o, dotsize=0.2, fillcolor=white](3.4,0.505)
\psdots[linecolor=black, dotsize=0.2](5.8,0.505)
\psdots[linecolor=black, dotsize=0.2](2.6,0.505)
\psdots[linecolor=black, dotsize=0.2](2.6,-0.495)
\psdots[linecolor=black, dotstyle=o, dotsize=0.2, fillcolor=white](6.6,-0.495)
\rput[bl](8.0,0.705){$-2$}
\psdots[linecolor=black, dotstyle=o, dotsize=0.2, fillcolor=white](8.2,0.505)
\rput[bl](0.7,-1.395){$F_{10}$}
\end{pspicture}
} \\
\hline
$X(47,51,63,91)$ & $20$ & \psscalebox{.6 .6} % Change this value to rescale the drawing.
{
\begin{pspicture}(0,-1.495)(7.64,1.795)
\psline[linecolor=black, linewidth=0.02](5.0,-0.495)(0.2,0.505)
\psframe[linecolor=white, linewidth=0.02, fillstyle=solid, dimen=outer](2.2,0.305)(1.8,-0.095)
\psline[linecolor=black, linewidth=0.02](1.0,-0.495)(6.6,-0.495)
\psline[linecolor=black, linewidth=0.02](0.2,0.505)(7.4,0.505)
\psdots[linecolor=black, dotstyle=o, dotsize=0.2, fillcolor=white](0.2,0.505)
\psdots[linecolor=black, dotstyle=o, dotsize=0.2, fillcolor=white](6.6,0.505)
\rput[bl](3.2,0.705){$-3$}
\rput[bl](2.4,0.705){$-1$}
\rput[bl](1.6,0.705){$-2$}
\rput[bl](0.8,0.705){$-6$}
\rput[bl](0.0,0.705){$-2$}
\rput[bl](4.0,0.705){$-7$}
\rput[bl](1.6,1.105){$F_3$}
\rput[bl](4.8,0.705){$-1$}
\rput[bl](5.6,0.705){$-2$}
\rput[bl](6.4,0.705){$-6$}
\rput[bl](4.8,1.105){$L_2$}
\rput[bl](7.2,0.705){$-2$}
\rput[bl](0.7,-0.995){$-7$}
\rput[bl](1.5,-0.995){$-3$}
\rput[bl](2.3,-0.995){$-1$}
\rput[bl](3.1,-0.995){$-7$}
\rput[bl](3.9,-0.995){$-3$}
\rput[bl](4.7,-0.995){$-1$}
\rput[bl](5.5,-0.995){$-7$}
\rput[bl](6.3,-0.995){$-3$}
\rput[bl](2.3,-1.395){$L_3$}
\rput[bl](0.0,1.105){$F_1$}
\rput[bl](0.8,1.105){$F_2$}
\rput[bl](2.4,1.105){$L_1$}
\rput[bl](3.2,1.105){$F_4$}
\rput[bl](4.0,1.105){$F_5$}
\rput[bl](5.6,1.105){$F_6$}
\rput[bl](6.4,1.105){$F_7$}
\rput[bl](7.2,1.105){$F_8$}
\psline[linecolor=black, linewidth=0.02](5.0,0.505)(6.6,-0.495)
\rput[bl](3.9,-1.395){$F_{12}$}
\rput[bl](5.5,-1.395){$F_{13}$}
\rput[bl](6.3,-1.395){$F_{14}$}
\psframe[linecolor=white, linewidth=0.02, fillstyle=solid, dimen=outer](5.8,0.305)(5.4,-0.095)
\psframe[linecolor=white, linewidth=0.02, fillstyle=solid, dimen=outer](4.0,-0.095)(3.6,-0.495)
\psline[linecolor=black, linewidth=0.02](2.6,-0.495)(7.4,0.505)
\psdots[linecolor=black, dotstyle=o, dotsize=0.2, fillcolor=white](7.4,0.505)
\psdots[linecolor=black, dotstyle=o, dotsize=0.2, fillcolor=white](1.8,0.505)
\psdots[linecolor=black, dotsize=0.2](5.0,0.505)
\psdots[linecolor=black, dotstyle=o, dotsize=0.2, fillcolor=white](1.8,-0.495)
\psdots[linecolor=black, dotsize=0.2](5.0,-0.495)
\psdots[linecolor=black, dotstyle=o, dotsize=0.2, fillcolor=white](4.2,-0.495)
\psdots[linecolor=black, dotstyle=o, dotsize=0.2, fillcolor=white](5.8,-0.495)
\psline[linecolor=black, linewidth=0.02](1.0,-0.495)(2.6,0.505)
\psdots[linecolor=black, dotstyle=o, dotsize=0.2, fillcolor=white](3.4,-0.495)
\rput[bl](4.7,-1.395){$L_4$}
\psdots[linecolor=black, dotstyle=o, dotsize=0.2, fillcolor=white](4.2,0.505)
\psdots[linecolor=black, dotstyle=o, dotsize=0.2, fillcolor=white](1.0,-0.495)
\psdots[linecolor=black, dotstyle=o, dotsize=0.2, fillcolor=white](1.0,0.505)
\rput[bl](3.1,-1.395){$F_{11}$}
\psdots[linecolor=black, dotstyle=o, dotsize=0.2, fillcolor=white](3.4,0.505)
\psdots[linecolor=black, dotstyle=o, dotsize=0.2, fillcolor=white](5.8,0.505)
\psdots[linecolor=black, dotsize=0.2](2.6,0.505)
\psdots[linecolor=black, dotsize=0.2](2.6,-0.495)
\psdots[linecolor=black, dotstyle=o, dotsize=0.2, fillcolor=white](6.6,-0.495)
\rput[bl](1.5,-1.395){$F_{10}$}
\rput[bl](0.7,-1.395){$F_9$}
\end{pspicture}
} \\ \hline

\end{longtable}

\end{center}

We now go case by case, showing what the support supp$(F)$ of $F$ is and its type (using Kodaira's notation), and showing $C$. Here we are choosing $F$ and $C$, there are other choices in general.

\begin{itemize}
\item[4)] supp$(F)=\sum_{i=1}^6 F_i + L_1 +L_2+ L_4 +F_{16}+F_{17}+F_{18}$, type $I_{12}$, $C=F_7$.

\item[5)] supp$(F)=F_1+F_{16}+F_{17}+L_4$, type $IV$, $C=F_2$.

\item[7)] supp$(F)=F_1+F_{16}+F_{17}+L_4$, type $III$, $C=F_{15}$.

\item[11)] supp$(F)=F_6+L_2+F_{17}+F_7$, type $II$, $C=F_{5}$.

\item[13)] supp$(F)=F_1+F_2+L_4+ L_3+F_8+ \sum_{i=10}^{15} F_i$, type $III^*$, $C=F_3$.

\item[17)] supp$(F)=L_2+  \sum_{i=7}^{9} F_i + F_{12}+L_3+F_{13}+F_{16}$, type $IV$, $C=F_{11}$.

\item[19)] supp$(F)=F_4+L_1+F_5+F_6+F_7+L_2+F_{15}$, type $II$, $C=F_{3}$.

\item[20)] supp$(F)=F_3+L_1+F_4+F_5+F_6+L_2+F_{14}$, type $II$, $C=F_{2}$.

\end{itemize}

\end{proof}

%-------------------------------------------------------------------------------------------------------------------
\subsection{$p_g \geq 2$ generic surfaces are of general type}

In this sub-section, we assume that $p_g\geq 2$. We recall that Koll\'ar surfaces are simply-connected. By classification of algebraic surfaces, the Kodaira dimension of the associate surface $Y$ is either $1$ or $2$. We first present families of explicit examples for each of the two possible Kodaira dimensions, and then we show the general picture for $w^*>>0$.

Let $g \colon Y' \to \P^2$ be the normal $w^*$-th root cover branch on $(L_1^{\mu_1} L_2^{\mu_2} L_3^{\mu_3} L_4^{\mu_4}=0)$, and let $f \colon Y \to \P^2$ be $g$ composed with the minimal resolution of singularities of $Y'$. Let $p_{i,j}=L_i \cap L_j$ for $i<j$. Let $E_{i,j,k}$ be the $k$-th exceptional curve over $p_{i,j}$. Then $$K_Y \equiv f^*\big(-3H + \frac{w^*-1}{w^*} (L_1+L_2+L_3+L_4)\big) - \sum_{i<j} \sum_k \left(1- \frac{\alpha_{i,j,k}+\beta_{i,j,k}}{w^*} \right)E_{i,j,k}$$ where $H$ is a line in $\P^2$, and so $$K_Y \equiv \frac{w^*-4}{4} \big(L'_1+L'_2+L'_3+L'_4\big) + \sum_{i<j} \sum_k \left(\frac{\alpha_{i,j,k}+\beta_{i,j,k}-4}{4}\right) E_{i,j,k},$$ where we are using notation and facts from the beginning of Section \ref{s2}, and $L'_i \simeq \P^1$ is the (reduced, irreducible) pre-image of $L_i$.

\begin{example}
Let $b \geq 2$. Consider $w^*=4(b-1)$, $\mu_1=\mu_2=1$, and $\mu_3=\mu_4=2b-3$. Then, over $p_{1,2}$ and $p_{3,4}$ we have $A_{w^*-1}$ singularities in $Y'$, and over the rest of the $p_{i,j}$ we have $\frac{1}{w^*}(1,2b-1)$. Notice that $\frac{w^*}{2b-1}=[2,b,2]$. We have that ${L'}_i^2=-2$, and
$$K_Y \equiv \frac{b-2}{2} \Big( 2 \sum_i L'_i + \sum_k 2(E_{1,2,k} + E_{3,4,k}) + (E_{1,3,k} + E_{1,4,k}+ E_{2,3,k} + E_{2,4,k}) \Big).$$ Therefore $Y$ is a minimal surface with $K_Y^2=0$ and $e(Y)=3 w^*+12$, and so $p_g(Y)=b-1$. The surface $Y$ is K3 when $b=2$, and Kodaira dimension $1$ when $b>2$. In fact, one can show that $E_{1,3,2}, E_{1,4,2}, E_{2,3,2}, E_{2,4,2}$ are sections (and $(-b)$-curves) for an elliptic fibration $Y \to \P^1$, and the complement of them in the support above of $K_Y$ give two $I_{w^*}^*$ singular fibers (using Kodaira notation).
\label{ex1}
\end{example}

\begin{example}
Let $b\geq 1$. Consider $w^*=28b+1$, $\mu_1=1$, $\mu_2=2$, $\mu_3=4$, and $\mu_4=28b-6$. Then, over $p_{i,j}$ we have:

\begin{itemize}
\item[$p_{1,2}$]: $\frac{1}{w^*}(1,w^*-2)$, $[2,\ldots,2,3]$ with $(14b-1)$ $2$'s

\item[$p_{1,3}$]: $\frac{1}{w^*}(1,7b)$, $[5,2,\ldots,2]$ with $(7b-1)$ $2$'s

\item[$p_{1,4}$]: $\frac{1}{w^*}(1,7)$, $[4b+1,2,2,2,2,2,2]$

\item[$p_{2,3}$]: $\frac{1}{w^*}(1,w^*-2)$, $[2,\ldots,2,3]$ with $(14b-1)$ $2$'s

\item[$p_{2,4}$]: $\frac{1}{w^*}(1,14b+4)$, $[2,2b+1,3,2,2]$

\item[$p_{3,4}$]: $\frac{1}{w^*}(1,7b+2)$, $[4,b+1,2,2,3]$
\end{itemize}

One can also compute that ${L'}_1^2={L'}_2^2={L'}_4^2=-2$ and ${L'}_3^2=-1$. The configuration of all these curves is shown in Figure \ref{curveConfGeneralType}.

\begin{figure}[htb]
\psscalebox{.7 .7} % Change this value to rescale the drawing.
{
\begin{pspicture}(0,-2.085)(13.74,2.085)
\psline[linecolor=black, linewidth=0.02](6.7,0.815)(0.3,-0.785)
\psline[linecolor=black, linewidth=0.02](3.3,-0.785)(13.5,0.815)
\psframe[linecolor=white, linewidth=0.02, fillstyle=solid, dimen=outer](11.5,0.615)(10.9,0.215)
\psframe[linecolor=white, linewidth=0.02, fillstyle=solid, dimen=outer](6.5,-0.185)(6.1,-0.585)
\psframe[linecolor=white, linewidth=0.02, fillstyle=solid, dimen=outer](4.1,0.415)(3.7,0.015)
\psline[linecolor=black, linewidth=0.02](10.5,0.815)(13.3,-0.785)
\psline[linecolor=black, linewidth=0.02](8.7,-0.785)(0.3,0.815)
\psline[linecolor=black, linewidth=0.02](12.7,0.815)(13.5,0.815)
\psline[linecolor=black, linewidth=0.04, linestyle=dotted, dotsep=0.10583334cm](11.3,0.815)(12.7,0.815)
\psline[linecolor=black, linewidth=0.02](8.9,0.815)(11.3,0.815)
\psline[linecolor=black, linewidth=0.04, linestyle=dotted, dotsep=0.10583334cm](7.5,0.815)(8.9,0.815)
\psline[linecolor=black, linewidth=0.02](0.3,0.815)(7.5,0.815)
\psdots[linecolor=black, dotstyle=o, dotsize=0.2, fillcolor=white](0.3,0.815)
\psdots[linecolor=black, dotstyle=o, dotsize=0.2, fillcolor=white](1.1,0.815)
\psdots[linecolor=black, dotstyle=o, dotsize=0.2, fillcolor=white](1.9,0.815)
\psdots[linecolor=black, dotstyle=o, dotsize=0.2, fillcolor=white](2.7,0.815)
\psdots[linecolor=black, dotstyle=o, dotsize=0.2, fillcolor=white](3.5,0.815)
\psdots[linecolor=black, dotstyle=o, dotsize=0.2, fillcolor=white](4.3,0.815)
\psdots[linecolor=black, dotstyle=o, dotsize=0.2, fillcolor=white](5.5,0.815)
\psdots[linecolor=black, dotstyle=o, dotsize=0.2, fillcolor=white](6.7,0.815)
\psdots[linecolor=black, dotstyle=o, dotsize=0.2, fillcolor=white](7.5,0.815)
\psdots[linecolor=black, dotstyle=o, dotsize=0.2, fillcolor=white](8.9,0.815)
\rput[bl](4.1,1.015){$-2$}
\rput[bl](4.9,1.015){$-4b-1$}
\rput[bl](3.3,1.015){$-2$}
\rput[bl](2.5,1.015){$-2$}
\rput[bl](1.7,1.015){$-2$}
\rput[bl](0.9,1.015){$-2$}
\rput[bl](0.1,1.015){$-2$}
\rput[bl](6.5,1.015){$-2$}
\rput[bl](6.5,1.415){$L_1$}
\rput[bl](7.3,1.015){$-2$}
\rput[bl](8.7,1.015){$-2$}
\psdots[linecolor=black, dotstyle=o, dotsize=0.2, fillcolor=white](9.7,0.815)
\rput[bl](9.5,1.015){$-3$}
\psdots[linecolor=black, dotstyle=o, dotsize=0.2, fillcolor=white](10.5,0.815)
\rput[bl](10.3,1.015){$-2$}
\rput[bl](10.3,1.415){$L_2$}
\psdots[linecolor=black, dotstyle=o, dotsize=0.2, fillcolor=white](11.3,0.815)
\psdots[linecolor=black, dotstyle=o, dotsize=0.2, fillcolor=white](12.7,0.815)
\rput[bl](11.1,1.015){$-2$}
\rput[bl](12.5,1.015){$-2$}
\psdots[linecolor=black, dotstyle=o, dotsize=0.2, fillcolor=white](13.5,0.815)
\rput[bl](13.3,1.015){$-3$}
\psline[linecolor=black, linewidth=0.02](7.5,1.415)(7.5,1.615)(9.1,1.615)(9.1,1.415)
\psline[linecolor=black, linewidth=0.02](11.3,1.415)(11.3,1.615)(12.9,1.615)(12.9,1.415)
\rput[bl](7.7,1.815){$14b-1$}
\rput[bl](11.5,1.815){$14b-1$}
\psdots[linecolor=black, dotsize=0.2](3.3,-0.785)
\psline[linecolor=black, linewidth=0.02](1.7,-0.785)(13.3,-0.785)
\psline[linecolor=black, linewidth=0.04, linestyle=dotted, dotsep=0.10583334cm](0.3,-0.785)(1.7,-0.785)
\rput[bl](0.0,-1.285){$-2$}
\rput[bl](1.4,-1.285){$-2$}
\rput[bl](2.2,-1.285){$-5$}
\rput[bl](3.0,-1.285){$-1$}
\rput[bl](3.8,-1.285){$-4$}
\rput[bl](4.6,-1.285){$-b-1$}
\psdots[linecolor=black, dotstyle=o, dotsize=0.2, fillcolor=white](5.1,-0.785)
\rput[bl](6.0,-1.285){$-2$}
\psdots[linecolor=black, dotstyle=o, dotsize=0.2, fillcolor=white](6.3,-0.785)
\rput[bl](6.8,-1.285){$-2$}
\psdots[linecolor=black, dotstyle=o, dotsize=0.2, fillcolor=white](7.1,-0.785)
\rput[bl](7.6,-1.285){$-3$}
\psdots[linecolor=black, dotstyle=o, dotsize=0.2, fillcolor=white](7.9,-0.785)
\rput[bl](8.4,-1.285){$-2$}
\psdots[linecolor=black, dotstyle=o, dotsize=0.2, fillcolor=white](8.7,-0.785)
\rput[bl](3.0,-1.685){$L_3$}
\rput[bl](8.4,-1.685){$L_4$}
\rput[bl](9.2,-1.285){$-2$}
\psdots[linecolor=black, dotstyle=o, dotsize=0.2, fillcolor=white](9.5,-0.785)
\rput[bl](10.0,-1.285){$-2$}
\psdots[linecolor=black, dotstyle=o, dotsize=0.2, fillcolor=white](10.3,-0.785)
\rput[bl](10.8,-1.285){$-3$}
\psdots[linecolor=black, dotstyle=o, dotsize=0.2, fillcolor=white](11.1,-0.785)
\rput[bl](11.6,-1.285){$-2b-1$}
\psdots[linecolor=black, dotstyle=o, dotsize=0.2, fillcolor=white](12.1,-0.785)
\rput[bl](13.0,-1.285){$-2$}
\psdots[linecolor=black, dotstyle=o, dotsize=0.2, fillcolor=white](13.3,-0.785)
\psdots[linecolor=black, dotstyle=o, dotsize=0.2, fillcolor=white](4.1,-0.785)
\psdots[linecolor=black, dotstyle=o, dotsize=0.2, fillcolor=white](2.5,-0.785)
\psdots[linecolor=black, dotstyle=o, dotsize=0.2, fillcolor=white](1.7,-0.785)
\psdots[linecolor=black, dotstyle=o, dotsize=0.2, fillcolor=white](0.3,-0.785)
\psline[linecolor=black, linewidth=0.02](0.3,-1.385)(0.3,-1.585)(1.7,-1.585)(1.7,-1.385)
\rput[bl](0.6,-2.085){$7b-1$}
\end{pspicture}
}
\caption{Curve configuration of a general type example.}
\label{curveConfGeneralType}
\end{figure}
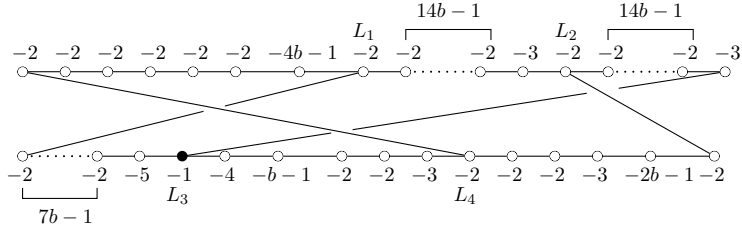

One can verify that $\alpha_{i,j,k}+\beta_{i,j,k}>4$ for all $i,j,k$. Therefore, by the formula above, $K_Y$ can be written with positive coefficients supported in the configuration of curves, so that to obtain the minimal model $Y''$ of $Y$ we only need to contract $L'_3$ since $\frac{w^*-4}{4}>1$ (and see the figure). We compute using the formulas above: $K_{Y''}^2=7(3b-1)$, $e(Y'')=63b+19$, and $p_g(Y'')=7b$. In this way, $Y''$ is of general type for any $b$.
\label{ex2}
\end{example}

We now consider prime numbers $w^*>>0$ and partitions $$\mu_1+\mu_2+\mu_3+\mu_4=w^*$$ with $0<\mu_i<w^*$. Let $\SS$ be the set of all partitions. Then, as we did before, there are smooth projective surfaces $Y$ constructed as $w^*$-th root covers $Y \to Y' \to \P^2$, and there are infinitely many Koll\'ar surfaces $X(a_1,a_2,a_3,a_4)$ birational to each $Y$. Let $X_{\text{min}}$ be a minimal (smooth) model for $Y$ (and so for all $X(a_1,a_2,a_3,a_4)$). The following is based on \cite{Urz10,Urz15}.

\begin{theorem}
There is $\SS' \subset \SS$ with $\SS'/{w^*} \to 0$ as $w^*>>0$ such that if $\{\mu_1,\mu_2,\mu_3,\mu_4\} \in \SS \setminus \SS'$, then $X_{\text{min}}$ is a simply-connected surface of general type with $K_{X_{\text{min}}}^2/ e(X_{\text{min}}) \to 1$ as $w^*>>0$.
\label{generic}
\end{theorem}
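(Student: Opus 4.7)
My plan would follow the asymptotic root-cover methodology of \cite{Urz10,Urz15}: read the Chern geography of $X_{\min}$ off the model $Y\to\P^2$ branched on $D=L_1+L_2+L_3+L_4$ with multiplicities $\mu_1+\cdots+\mu_4=w^*$. The target asymptotic comes from the log geometry of the pair $(\P^2,D)$: since $(K_{\P^2}+D)^2=H^2=1$ and $e(\P^2\setminus\bigcup L_i)=3-4\cdot 2+6=1$, the expected limit of the Chern slope is $1/1=1$, which matches the statement.

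Using Proposition \ref{ek^2} I would write
\[K_Y^2=w^*+O(1)+\sum_{i<j}\bigl(12\,s(\mu_i,\mu_j;w^*)-l(\mu_i,\mu_j;w^*)\bigr),\qquad e(Y)=w^*+O(1)+\sum_{i<j}l(\mu_i,\mu_j;w^*),\]
and reduce $K_Y^2/e(Y)\to 1$ to showing $\sum s(\mu_i,\mu_j;w^*)=o(w^*)$ and $\sum l(\mu_i,\mu_j;w^*)=o(w^*)$ for generic partitions. This I would establish via the Girstmair-type estimates \cite{Girs16} already used in the proof of Theorem \ref{pg0}: for every $\epsilon>0$, the number of residues $a\in(\Z/w^*\Z)^*$ with $12|s(a,1;w^*)|>\epsilon w^*$ is $O_\epsilon(1)$, and reciprocity (\ref{reciprocity}) transfers this to $s(a,b;w^*)$ for arbitrary $b$. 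The analogous bound for HJ lengths $l(a,b;w^*)$ follows from the standard equidistribution of continued-fraction data. Intersecting the ``good'' residues across the six pairs $(\mu_i,\mu_j)$ produces a bad partition set $\SS'\subset\SS$ of cardinality $O_\epsilon(w^{*2})$, which is of density tending to zero inside $|\SS|\sim w^{*3}/6$.

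I would then pass from $Y$ to $X_{\min}$ by enlarging $\SS'$ with finitely many further residue classes so that, outside $\SS'$, no exceptional curve $E_{i,j,k}$ satisfies $\alpha_{i,j,k}+\beta_{i,j,k}\le 3$ and no $L'_i$ is a $(-1)$-curve. Plugging these positivity conditions into the canonical class expression displayed just before Example \ref{ex1} makes $K_Y$ big and nef, and the morphism $Y\to X_{\min}$ contracts at most a uniformly bounded number of curves, so the Chern numbers agree up to an $O(1)$ error and $K_{X_{\min}}^2/e(X_{\min})\to 1$. That $X_{\min}$ is of general type follows from Lemma \ref{pgBound} (forcing $p_g\to\infty$) together with $K_{X_{\min}}^2>0$, which rules out all other Kodaira dimensions, and simple connectedness is transferred from $\pi_1(Y)=0$ (Proposition \ref{pg}) through the birational morphism of smooth projective surfaces.

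The principal technical hurdle is delivering the density-zero set $\SS'$ with the required uniformity: one must control the six Dedekind sums and six HJ lengths simultaneously under the linear constraint $\sum\mu_i=w^*$, and the ``avoid finitely many residue classes'' lattice-point count for partitions must be carried out with enough saving that $|\SS'|/|\SS|\to 0$. Assembling the individual Girstmair-type residue bounds into a single joint statement, sensitive enough to handle the coupled pairs and the global partition constraint, is where the bulk of the work lies.
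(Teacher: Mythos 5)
Your overall strategy coincides with the paper's: use the root-cover formulas of Proposition \ref{ek^2} to write $K_Y^2$ and $e(Y)$ as $w^*+O(1)$ plus Dedekind-sum and Hirzebruch--Jung-length corrections, show these corrections are $o(w^*)$ off a density-zero set of partitions, and then check that passing to the minimal model does not disturb the asymptotics. The difference is that the paper's actual proof is essentially two citations: Theorem 4.1 of \cite{Urz15} supplies $K_Y^2>>0$ and $e(Y)>>0$ for partitions outside a density-zero set (whence general type, using $p_g>>0$ from Noether and $K_Y^2>0$), and Theorem 4.3 of \cite{Urz15}, together with the observation that $K_{Y'}$ is ample (numerically $(1-4/w^*)$ times the pull-back of a line), shows that the number of $(-1)$-curves contracted in reaching $X_{\text{min}}$ is $o(w^*)$, which is all that is needed for the ratio. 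The two items you defer --- the joint density-zero control of the six Dedekind sums and six HJ lengths under the constraint $\sum\mu_i=w^*$, and the average/large-value bounds for $l(a,b;w^*)$ --- are exactly the content of those cited theorems; they are genuinely nontrivial (the HJ-length bound does not ``follow from standard equidistribution'' without an argument such as $\sum_q l(q;n)=O(n(\log n)^2)$), so as written your sketch reopens rather than supplies the core of the proof.

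Two further points on your minimal-model step. First, the conclusion you aim for ($K_Y$ nef, hence $Y=X_{\text{min}}$) is stronger than what the theorem needs and stronger than what the paper proves; the paper only controls the \emph{number} of contractions, which suffices since $K^2$ changes by $+1$ and $e$ by $-1$ at each step. Second, your justification is incomplete: positivity of the coefficients in the displayed effective representative of $K_Y$ shows that any $(-1)$-curve must lie in the support of that divisor, but to exclude those you must still verify that every configuration curve ($L'_i$ and every $E_{i,j,k}$, including those whose coefficient $\frac{\alpha+\beta-4}{4}$ vanishes) has self-intersection $\le -2$, and you must quantify how many residue classes of $(\mu_i\mu_j^{-1})$ violate $\alpha_{i,j,k}+\beta_{i,j,k}\ge 4$. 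This can in fact be done (failure forces $q$ or $q^{-1}$ to lie in a bounded set), so your route is salvageable and would yield a self-contained proof, but in its current form the argument for nefness is asserted rather than established.
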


\begin{proof}
By Proposition \ref{ek^2}, we have $e(Y)=w^*+2+\sum_{i<j} l(\mu_i,\mu_j;w^*)$, and $$K_Y^2=w^*+\frac{4}{w^*}+4 + \sum_{i<j} 12s(\mu_i,\mu_j;w^*)-l(\mu_i,\mu_j;w^*).$$
Notice that by Theorem 4.1 in \cite{Urz15}, both $e(Y)>>0$ and $K_Y^2>>0$. In particular $Y$ is of general type by classification of algebraic surfaces. We also note that $K_{Y'}$ is ample since it is numerically $(1-4/w^*)$ times the pull-back of the class of a line. Thus, by Theorem 4.3 in \cite{Urz15}, the number of potential $(-1)$-curves to be contracted over $w^*$ tends to zero as $w^*$ approaches infinity, and so $X_{\text{min}}$ satisfies $K_{X_{\text{min}}}^2/ e(X_{\text{min}}) \to 1$ as $w^*>>0$.
\end{proof}

%----------------------------------------------------------------------------------------------------------------------------------------------

\end{document}